\definecolor{cobalt}{RGB}{61,89,171}
\newcommand{\referenza}{}
\theoremstyle{plain}
\newtheorem{theorem}{Theorem}[section]
\newtheorem*{theorem*}{Theorem}
\newtheorem{proposition}[theorem]{Proposition}
\newtheorem*{proposition*}{Proposition}
\newtheorem{lemma}[theorem]{Lemma}
\newtheorem{corollary}[theorem]{Corollary}
\newtheorem*{corollary*}{Corollary \referenza}
\theoremstyle{definition}
\newtheorem*{definition1}{Def{}inition}
\newtheorem{example}[theorem]{Example}
\theoremstyle{remark}
\newtheorem{remark}[theorem]{Remark}
\newcommand{\ad}{\mathrm{ad}}
\newcommand{\Aut}{\mathrm{Aut}}
\newcommand{\diag}{\mathrm{diag}}
\newcommand{\Heis}{\mathrm{Heis}}
\newcommand{\bR}{\mathbb{R}}
\newcommand{\bQ}{\mathbb{Q}}
\newcommand{\bZ}{\mathbb{Z}}
\newcommand{\cF}{\mathcal{F}}
\newcommand{\cZ}{\mathcal{Z}}
\newcommand{\fd}{\mathfrak{d}}
\newcommand{\fg}{\mathfrak{g}}
\newcommand{\fh}{\mathfrak{h}}
\newcommand{\fj}{\mathfrak{j}}
\newcommand{\fk}{\mathfrak{k}}
\newcommand{\fn}{\mathfrak{n}}
\newcommand{\fr}{\mathfrak{r}}
\newcommand{\fs}{\mathfrak{s}}
\newcommand{\heis}{\mathfrak{heis}}
\newcommand{\la}{\langle}
\newcommand{\ra}{\rangle}
\newcommand{\vt}{\vartheta}
\newcommand{\ve}{\varepsilon}
\newcommand{\om}{\omega}
\newcommand{\Om}{\Omega}
\begin{document}
\title[lcs Lie algebras]{Structure of locally conformally symplectic Lie algebras and solvmanifolds}

\author[D. Angella]{Daniele Angella}
\address[D. Angella]{Dipartimento di Matematica e Informatica "Ulisse Dini"\\
Universit\`a degli Studi di Firenze\\
viale Morgagni 67/a\\
50134 Firenze, Italy
}
\email{daniele.angella@unifi.it, daniele.angella@gmail.com}

\author[G. Bazzoni]{Giovanni Bazzoni}
\address[G. Bazzoni]{Fachbereich Mathematik und Informatik, Philipps-Universit\"{a}t Marburg, Hans-Meerwein-Str. 6 (Campus Lahnberge), 35032 Marburg, Germany}
\email{bazzoni@mathematik.uni-marburg.de}

\author[M. Parton]{Maurizio Parton}
\address[M. Parton]{
Dipartimento di Economia\\
Universit\`a di Chieti-Pescara\\
viale della Pineta 4\\
65129 Pescara, Italy
} 
\email{parton@unich.it}

\keywords{locally conformally symplectic; Lie algebra; solvmanifold; four-manifold}
\subjclass[2010]{53D05; 22E60; 22E25; 53C55; 53D10; 53A30}

\begin{abstract}
We obtain structure results for locally conformally symplectic Lie algebras. We classify locally conformally symplectic structures on four-dimensional Lie algebras and construct locally conformally symplectic structures on compact quotients of all four-dimensional connected and simply connected solvable Lie groups.
\end{abstract}

\maketitle

\date{\today}

\section*{Introduction}

A {\em locally conformally symplectic} (shortly, {\em lcs}) structure \cite{Lee-1} on a differentiable manifold $M$ consists of an open cover $\{U_j\}_j$ of $M$ and a non-degenerate $2$-form $\Omega$ such that $\Omega_j\coloneq \imath_j^*\Omega$ is closed (hence symplectic), up to a conformal change, on each open set $\imath_j\colon U_j\to M$. If $f_j\in C^\infty(U_j)$ is a smooth function such that $\exp(-f_j)\Omega_j$ is symplectic, then $d\Omega_j-df_j\wedge\Omega_j=0$ on $U_j$. Since $df_j=df_k$ on $U_j\cap U_k$, the local 1-forms $\{df_j\}_j$ satisfy the cocycle condition and piece together to a global $1$-form $\vartheta$ on $M$, the {\em Lee form}, and $(\Omega,\vartheta)$ satisfies the equations
\begin{equation}\label{eq:lcs}
d\vartheta=0, \qquad d\Omega-\vartheta\wedge\Omega=0 .
\end{equation}
By Poincar\'e Lemma, every closed $1$-form is locally exact. Hence a lcs structure is given, equivalently, by a non-degenerate $2$-form $\Omega$ and a $1$-form $\vartheta$ satisfying \eqref{eq:lcs}.
The ``limit'' case $\vartheta=0$ recovers a symplectic structure, while the case $[\vartheta]=0$ means that $\Om$ is globally conformal to a symplectic structure, {\itshape i.e.} globally conformally symplectic. Hence, in a sense, lcs structures can be seen as a generalization of symplectic structures. As shown in \cite{Vaisman}, for instance, lcs manifolds are natural phase spaces of Hamiltonian dynamical systems. They also appear as even-dimensional transitive leaves in Jacobi manifolds, see \cite{Gue_Lich}.

In this paper, however,  we focus on ``genuine'' lcs structures, those whose Lee form $\vt$ satisfies $[\vt]\neq 0$. This condition prevents some manifolds which are lcs from being symplectic. Lcs geometry is currently an active research area, see \cite{ATO,Banyaga,El_Murph,Savelyev,VLV}.

The purpose of this note is to investigate the structure of Lie groups endowed with left-invariant lcs structures and to show, under certain assumptions, how to construct them. Since we consider left-invariant structures, Lie algebras are the natural object of study. In particular, we revisit and extend, in an algebraic setting, some results of Banyaga \cite{Banyaga} and of the second-named author with J.\ C.\ Marrero \cite{Bazzoni_Marrero}. We also adapt to the lcs case some ideas of Ovando \cite{Ovando} on the structure of symplectic Lie algebras. Moreover, we classify left-invariant lcs structures on four-dimensional Lie groups and construct lcs structures on their compact quotients.

Recall that a Hermitian structure $(J,g)$ on a manifold $M$ is \emph{locally conformally K\"ahler}, lcK for short, if its fundamental form $\Om$, defined by $\Om(X,Y)=g(JX,Y)$, satisfies $d\Omega=\vt\wedge\Omega$, where $\vt$ is the Lee form of the Hermitian structure, see \cite{Gray_Hervella}. LcK geometry has received a great deal of attention over the last years, both from the mathematical and from the physical community (see for instance \cite{ACHK,GMO,OrneaVerbitsky-2,Parton1,Parton2,Shahbazi2} and the monograph \cite{DO}). A lcK structure is \emph{Vaisman} if $\nabla\vt=0$. Every lcK structure is a lcs structure in a natural way. In this sense, our results can be seen as the lcs equivalent of the work of Belgun \cite{Belgun} and Hasegawa \emph{et\ al.}\ \cite{HaseKami} on lcK structures on compact complex surfaces modeled on Lie groups.

Let us recollect some definitions of lcs geometry. If $(\Omega,\vartheta)$ is a lcs structure on a manifold $M$, the {\em characteristic field} $V\in\mathcal{X}(M)$ is the dual of the Lee form $\vartheta$ with respect to the non-degenerate form $\Omega$, namely,
$$ \imath_V \Omega = \vartheta . $$
This terminology is due to Vaisman \cite{Vaisman}. If $(J,g,\Om,\vt)$ is a lcK structure, the \emph{Lee vector field} is the metric dual of the Lee form; hence, if the lcs structure comes from an lcK structure, the Lee field equals $J(V)$.

We consider the Lie subalgebra $\mathcal{X}_\Omega(M)\subset\mathcal{X}(M)$ of infinitesimal automorphisms of the lcs structure $(\Omega,\vartheta)$, {\itshape i.e.} $\mathcal{X}_\Omega(M)=\{X\in\mathcal{X}(M) \mid L_X\Omega=0\}$, from which $L_X\vartheta=0$ follows (here $L$ denotes the Lie derivative and $M$ is assumed to be connected of dimension $2n\geq4$). In particular, $V\in\mathcal{X}_\Omega(M)$. For $X\in\mathcal{X}_\Omega(M)$, the function $\imath_X\vartheta$ is constant, hence there is a well-defined morphism of Lie algebras
\[
\ell \colon \mathcal{X}_\Omega(M)\to\mathbb{R}, \quad X\mapsto\imath_X\vartheta\,, 
\]
called the \emph{Lee morphism}; clearly $V\in\ker\ell$. Either $\ell$ is surjective, and we say that the lcs structure is {\em of the first kind} \cite{Vaisman}; or $\ell=0$, and the lcs structure is {\em of the second kind}. If the lcs structure is of the first kind, one can choose $U\in\mathcal{X}_\Omega(M)$ with $\vartheta(U)=1$; we refer to $U$ as a {\em transversal field}. The choice of a transversal field $U$ determines a 1-form $\eta$ by the condition $\eta=-\imath_U\Om$. Clearly $\vt(V)=\eta(U)=0$, while $\vt(U)=-\Omega(U,V)=\eta(V)$; moreover, one has $\Om=d\eta-\vt\wedge\eta$. Lcs structures of the first kind exist on four-manifolds satisfying certain assumptions, see \cite[Corollary 4.12]{Bazzoni_Marrero}.

Given a smooth manifold $M$ and a diffeomorphism $\varphi\colon M\to M$, the \emph{mapping torus} of $M$ and $\varphi$ is the quotient space of $M\times\bR$ by the equivalence relation $(x,t)\sim (\varphi(x),t+1)$. It is a fibre bundle over $S^1$ with fibre $M$. A result of Banyaga (see \cite[Theorem 2]{Banyaga-1}) says that a compact manifold endowed with a lcs structure of the first kind is diffeomorphic to the mapping torus of a contact manifold and a strict contactomorphism. A similar result has been proved by the second-named author and J.\ C.\ Marrero \cite[Theorem 4.7]{Bazzoni_Marrero} for lcs manifolds of the first kind with the property that the foliation $\cF=\{\vt=0\}$ admits a compact leaf. It is possible to see that the lcs structure underlying a Vaisman structure is of the first kind. In \cite{OrneaVerbitsky-2} Ornea and Verbitsky proved that a compact Vaisman manifold is diffeomorphic to the mapping torus of a Sasakian manifold and a Sasaki automorphism. Thus the structure of compact lcs manifolds of the first kind and of compact Vaisman manifolds, as well as their relationships with other notable geometric structures, is well understood. Nothing is known, however, for lcs structures of the second kind, and this was one of the motivations for our research.

Lcs structures can be distinguished according to another criterion. Given a smooth manifold $M$ endowed with a closed 1-form $\vt$, one can define a differential $d_\vt$ on $\Omega^\bullet(M)$ by setting $d_\vt=d-\vt\wedge\_$. The cohomology of the complex $(\Omega^\bullet(M),d_\vt)$, denoted $H^\bullet_\vt(M)$, is known as \emph{Morse-Novikov} or \emph{Lichnerowicz} cohomology of $(M,\vt)$, see \cite{Gue_Lich}. If $(\Omega,\vt)$ is a lcs structure on $M$, the lcs condition is equivalent to $d_\vt\Omega=0$, hence $\Omega$ defines a cohomology class $[\Omega]\in H^2_\vt(M)$. If $[\Omega]=0$, the lcs structure is \emph{exact}, otherwise it is \emph{non-exact}. Notice that a lcs structure of the first kind is automatically exact. As shown in \cite{El_Murph} (see also \cite[Theorem 2.15]{Cha_Murph}), exact lcs structures exist on every closed manifold $M$ with $H^1(M;\bR)\neq 0$ endowed with an almost symplectic form.

As announced, in this paper we restrict our attention to left-invariant lcs structures on Lie groups. Such a structure can be read in the Lie algebra of the Lie group and it is natural to give the following

\begin{definition1}
A \emph{locally conformally symplectic (lcs)} structure on a Lie algebra $\fg$ with $\dim\fg=2n\geq 4$ consists of $\Omega\in\Lambda^2\fg^*$ and $\vt\in\fg^*$ such that\footnote[1]{Hereafter $d$ denotes the Chevalley-Eilenberg differential of 
$\fg$.} $\Omega^n\neq 0$, $d\vt=0$ and $d\Omega=\vt\wedge\Om$. The \emph{characteristic vector} $V$ of the lcs structure is defined by $\imath_V\Om=\vt$.
\end{definition1}

Lcs structures on almost abelian Lie algebras have recently been studied in \cite{AO}. Given a lcs Lie algebra $(\fg,\Omega,\vt)$ we set $\fg_\Om=\{X\in\fg \mid L_X\Om=0\}$; notice that $V\in\fg_\Om$. We have an algebraic analogue of the Lee morphism, $\ell\colon\fg_\Omega\to\bR$, $\ell(X)=\vt(X)$. If it is non-zero then the lcs structure $(\Omega,\vt)$ is of the first kind, otherwise it is of the second kind. An element $U\in\fg_\Om$ with $\vt(U)=1$ is called a \emph{transversal vector} and, as above, the choice of $U$ determines $\eta\in\fg^*$ by the condition $\eta=-\imath_U\Om$. One has $\vt(V)=\eta(U)=0$, $\vt(U)=-\Omega(U,V)=\eta(V)$ and $\Om=d\eta-\vt\wedge\eta$.

The algebraic analogue of the structure result for compact manifolds endowed with lcs structure of the first kind has been proved in \cite[Theorem 5.9]{Bazzoni_Marrero}. Let $(\fg,\Om,\vt)$ be a $2n$-dimensional lcs Lie algebra of the first kind with transversal vector $U$. Then the ideal $\fh\coloneq\ker\vartheta$ is endowed with the contact form $\eta\big|_{\fh}$, denoted again by $\eta$, and with a contact derivation $D$, {\itshape i.e.} $D^*\eta=0$, induced by $\mathrm{ad}_U$ (here our convention is that, given a linear map $D\colon\fg\to\fg$, the dual map $D^*\colon\fg^*\to\fg^*$ is defined by $(D^*\alpha)(X)=\alpha(DX)$). Moreover $\fg\simeq\fh \rtimes_D \bR$, the {\itshape semidirect product} of $\mathfrak{h}$ and $\bR$ by $D$; this is just $\fh\oplus\bR$ with Lie bracket
\[
\left[ (X,a),\, (Y,b) \right] \coloneq  \left( aD(Y)-bD(X)+[X,Y]_{\mathfrak{h}} ,\, 0 \right)\,;
\]
in particular we get an exact sequence of Lie algebras $0 \to \fh\to \fg \to \bR \to 0$. 
Recall that a contact Lie algebra is a $(2n-1)$-dimensional Lie algebra $\fh$ with a 1-form $\eta\in\fh^*$ such that $\eta\wedge d\eta^{n-1}\neq 0$. Conversely, the datum of a contact Lie algebra $(\fh,\eta)$ with a contact derivation $D$ defines a lcs structure of the first kind on $\fh\rtimes_D\bR$. 

A lcs structure on a nilpotent Lie algebra is necessarily of the first kind. In \cite{Bazzoni_Marrero} the authors introduce the notion of {\em lcs extension} and characterize (see \cite[Theorem 5.16]{Bazzoni_Marrero}) every lcs nilpotent Lie algebra of dimension $2n+2$ as the lcs extension of a nilpotent symplectic Lie algebra of dimension $2n$ by a symplectic nilpotent derivation; such nilpotent symplectic Lie algebra can in turn be obtained by a sequence of $n-1$ {\em symplectic double extensions} \cite{Medina_Revoy} by nilpotent derivations from the Abelian $\bR^2$.

As in the geometric case, lcs structures on Lie algebras can be distinguished according to another criterion. Given a Lie algebra $\fg$ and $\vt\in\fg^*$, one can define a differential $d_\vt$ on $\Lambda^\bullet\fg^*$ by setting $d_\vt=d-\vt\wedge\_$. The cohomology of $(\Lambda^\bullet\fg^*,d_\vt)$, denoted $H^\bullet_\vt(\fg)$, is the \emph{Morse-Novikov} or \emph{Lichnerowicz} cohomology of $(\fg,\vt)$. If $(\fg,\Omega,\vt)$ is a lcs Lie algebra, the lcs condition is equivalent to $d_\vt\Omega=0$, hence $\Omega$ defines a cohomology class $[\Omega]\in H^2_\vt(\fg)$. If $[\Omega]=0$, the lcs structure is {\em exact}, otherwise it is {\em non-exact}. As above, a lcs structure of the first kind is automatically exact. The converse is true when the Lie algebra is unimodular, \cite[Proposition 5.5]{Bazzoni_Marrero}. However, there exist exact lcs Lie algebras which are not of the first kind. Therefore, the results of \cite{Bazzoni_Marrero} do not apply to them. In the exact case, a primitive of $\Om$, that is, $\eta\in\fg^*$ such that $d_\vt\eta=\Omega$, determines a unique vector $U\in\fg$ by the equation $\eta=-\imath_U\Om$.

In \cite{Ovando}, Ovando classifies all symplectic structures on four-dimensional Lie algebras up to equivalence, describing them either as solutions of the {\em cotangent extension problem} (see \cite{Boyom}), or as a symplectic double extension of $\mathbb{R}^2$. 

Inspired by the results contained in \cite{Bazzoni_Marrero} and \cite{Ovando}, we study the structure of lcs Lie algebras.

Our first result extends \cite[Theorem 5.9]{Bazzoni_Marrero} to exact lcs structures, not necessarily of the first kind --- see Theorem \ref{corr:exact}.

\begin{theorem*}
 There is a one-to-one correspondence 
 \[
\left\{\begin{array}{c}
\textrm{exact lcs Lie algebras } (\fg,\Om=d\eta-\vt\wedge\eta,\vt),\\
\textrm{dim }\fg=2n, \textrm{ such that }\vt(U)\neq 0,\\
\textrm{ where }\eta=-\imath_U\Om\end{array}\right\}\leftrightarrow
\left\{\begin{array}{c}
\textrm{contact Lie algebras } (\fh,\eta),\\ 
\textrm{dim }\fh=2n-1, \textrm{ with a derivation}\\
D \textrm{ such that } D^*\eta=\alpha\eta, \alpha\neq1
\end{array}\right\}
\]
The correspondence sends $(\fg,\Omega,\vt)$ to $(\ker\vt,\eta,\ad_U)$; conversely, $(\fh,\eta,D)$ is sent to $(\fh\rtimes_D\bR,d\eta-\vt\wedge\eta,\vt)$, where $\vt(X,a)=-a$. The exact lcs structure is of the first kind if and only if $\vt(U)=1$ if and only if $\alpha=0$.
\end{theorem*}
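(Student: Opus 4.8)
The plan is to treat the two assignments separately, show that each is well defined, and then check they are mutually inverse; the whole computation is driven by the two identities $\Omega=d\eta-\vt\wedge\eta$ and $\eta=-\imath_U\Omega$. I would first set up the forward map. Since $d\vt=0$, the Chevalley--Eilenberg formula gives $\vt([\fg,\fg])=0$, so $\fh:=\ker\vt$ is an ideal of codimension one, and $U\notin\fh$ precisely because $\vt(U)\neq0$; thus $\fg=\fh\oplus\bR U$. As $\fh$ is an ideal, $\ad_U$ preserves $\fh$ and restricts, by the Jacobi identity, to a derivation $D$ of $\fh$. The crucial point is a scalar relation: substituting $\Omega=d\eta-\vt\wedge\eta$ into $\eta=-\imath_U\Omega$ and using $\eta(U)=-\Omega(U,U)=0$ together with $\imath_U(\vt\wedge\eta)=\vt(U)\,\eta$, I obtain $\imath_U d\eta=(\vt(U)-1)\eta$. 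Pairing this with $X\in\fh$ and using $d\eta(U,X)=-\eta([U,X])=-(D^*\eta)(X)$ gives $D^*\eta=(1-\vt(U))\eta$. Hence $D^*\eta=\alpha\eta$ with $\alpha:=1-\vt(U)$, and $\alpha\neq1$ is exactly $\vt(U)\neq0$.

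Next I would check that $\eta|_{\fh}$ is contact. Writing $u^*$ for the functional dual to $U$ with $u^*|_{\fh}=0$, the decomposition of a $2$-form reads $\Omega=\omega+u^*\wedge\imath_U\Omega=\omega-u^*\wedge\eta$, where $\omega=\Omega|_{\fh}=d(\eta|_{\fh})$ (the last equality because $\vt|_{\fh}=0$ and restriction to the subalgebra $\fh$ commutes with $d$). Since $\dim\fh=2n-1$ we have $\omega^n=0$, whence $\Omega^n=-n\,u^*\wedge\eta\wedge\omega^{n-1}$. Therefore $\Omega^n\neq0$ is equivalent to $\eta\wedge(d\eta)^{n-1}\neq0$ on $\fh$, i.e. to the contact condition. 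Read in reverse, the same identity shows that in the converse construction the form $\Omega=d\eta-\vt\wedge\eta$ is non-degenerate exactly when $(\fh,\eta)$ is contact and $\alpha\neq1$.

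For the converse map, given $(\fh,\eta,D)$ I would form $\fg=\fh\rtimes_D\bR$. The projection onto $\bR$ is closed because the quotient is abelian, so $\vt$ is closed, and $\Omega=d_\vt\eta=d\eta-\vt\wedge\eta$ automatically satisfies $d_\vt\Omega=d_\vt^2\eta=0$, the lcs equation, since $d_\vt^2=-d\vt\wedge\,\cdot\,=0$; non-degeneracy is the previous paragraph. The delicate point is to identify the distinguished vector: one must verify that the $U$ determined by $\eta=-\imath_U\Omega$ is the generator $(0,1)$ of the $\bR$-factor, so that $\ad_U=D$. Computing $\imath_{(0,1)}d\eta=-\alpha\eta$ from $[(0,1),X]=DX$, and combining with $\imath_{(0,1)}(\vt\wedge\eta)=\vt\big((0,1)\big)\eta$, the requirement $\imath_{(0,1)}\Omega=-\eta$ pins down the value of $\vt$ on $(0,1)$ and is precisely what makes $U=(0,1)$ and $\ad_U=D$.

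I expect the main obstacle to be exactly this mutual-inverse bookkeeping: the forward map records the derivation as $\ad_U$, whereas the converse lets the $\bR$-factor act by $D$, so one must ensure that the $\Omega$-dual $U$ of $-\eta$ is the generator itself and not a nonzero multiple of it, i.e. that the coefficient in $\imath_U d\eta=(\vt(U)-1)\eta$ is compatible with $D^*\eta=\alpha\eta$ and the chosen normalization of $\vt$. Once $U$ is correctly identified, forward-then-back returns $(\ker\vt,\eta,\ad_U)=(\fh,\eta,D)$, and back-then-forward reconstructs $(\fg,\Omega,\vt)$ through the Lie-algebra isomorphism $(X,a)\mapsto X+aU$. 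Finally, for the first-kind statements I would use the Cartan-type computation $L_U\Omega=\imath_U d\Omega+d\imath_U\Omega=(\vt(U)-1)\Omega$ (from $d\Omega=\vt\wedge\Omega$ and $\imath_U\Omega=-\eta$): this shows $U\in\fg_\Om$ if and only if $\vt(U)=1$, in which case $\ell(U)=\vt(U)=1\neq0$ and the structure is of the first kind. Combined with $\alpha=1-\vt(U)$, this gives $\vt(U)=1\iff\alpha=0\iff D^*\eta=0$, so that under the correspondence the first-kind case is exactly the case of a contact derivation, matching the first-kind result already recalled in the introduction.
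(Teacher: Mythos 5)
Your proposal is correct and follows the same skeleton as the paper's proof: the key identity $\imath_U d\eta=(\vt(U)-1)\eta$ (equation \eqref{eq:1} in the text) giving $D^*\eta=(1-\vt(U))\eta$ for $D=\ad_U$ on $\fh=\ker\vt$, the semidirect-product construction for the converse, and the Cartan-type computation $L_U\Om=(\vt(U)-1)\Om$ from Lemma~\ref{lem:plane} for the first-kind criterion. Two differences are worth recording. First, for the contact condition you use the top-power identity $\Om^n=-n\,u^*\wedge\eta\wedge\omega^{n-1}$ uniformly in both directions, whereas the paper argues the forward direction via the symplectic splitting $\fh=\la U,V\ra^{\Om}\oplus\la V\ra$ and only invokes the wedge-power computation in the converse; your version is more symmetric but equivalent. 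Second --- and this is the genuinely different choice --- you normalize $\vt$ on the $\bR$-factor by demanding that the $\Om$-dual $U$ of $-\eta$ be the generator $(0,1)$, which forces $\vt((0,1))=1-\alpha$. The theorem as printed instead fixes $\vt(X,a)=-a$; with that convention the paper itself computes $\imath_{(0,1)}\Om=(1-\alpha)\eta$, hence $U=\left(0,\tfrac{1}{\alpha-1}\right)$, $\vt(U)=\tfrac{1}{1-\alpha}$ and $\ad_U=\tfrac{1}{\alpha-1}D$, so the two assignments are mutually inverse only up to rescaling the derivation (equivalently, up to $\alpha\mapsto\alpha/(\alpha-1)$ on the contact side), which is inconsistent with the forward direction's relation $\alpha=1-\vt(U)$ except when $\alpha\in\{0,2\}$. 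Your normalization removes this discrepancy and makes $\alpha=1-\vt(U)$ hold in both directions, at the cost of departing from the formula $\vt(X,a)=-a$ in the statement; the two conventions agree precisely in the first-kind case $\alpha=0$, $\vt(U)=1$. One caveat you share with the paper: the implication ``first kind $\Rightarrow\vt(U)=1$'' does not follow from $L_U\Om=(\vt(U)-1)\Om$ alone, since a priori the element of $\fg_\Om$ witnessing surjectivity of the Lee morphism need not be the particular $U$ attached to the chosen primitive $\eta$; this gap is already present in Lemma~\ref{lem:plane} and is not introduced by your argument.
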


Notice that there exist four-dimensional lcs Lie algebras which are not exact, hence do not fall in the hypotheses of the previous theorem. There exist also four-dimensional exact lcs Lie algebras for which the hypothesis $\vt(U)\neq 0$ is not fulfilled, see Section \ref{Sec:4.2}.

Our second result as well displays certain lcs Lie algebras as a semidirect product. More precisely, we consider in Section \ref{another} a lcs Lie algebra $(\fg,\Omega,\vt)$ and write
\begin{equation}\label{eq:mixed}
\Om=\om+\eta\wedge\vt\,,
\end{equation}
for some $\om\in\Lambda^2\fg^*$ and $\eta\in\fg^*$. The non-degeneracy of $\Omega$ provides us with a vector $U\in\fg$ determined by the condition $\imath_U\Omega=-\eta$. We assume that
\[
\imath_V\omega=0 \quad \textrm{and} \quad \imath_U\omega=0\,,
\]
where $V$ is the characteristic vector. We write $\fg=\fh\rtimes_D\bR$ where $\fh\coloneq \ker\vartheta$ with $\vt$ corresponding to the linear map $(X,a)\mapsto a$, and $D$ is given by $\mathrm{ad}_U$. Imposing $d\Om=\vt\wedge\Om$, \eqref{eq:mixed} yields the equations
\[
d^{\fh}\omega=0 \quad \textrm{and} \quad \omega+D^*\omega-d^{\fh}\eta=0\,,
\]
where $d^\fh$ denotes the Chevalley-Eilenberg differential on $\fh$. We can solve the above equations at least under some specific {\itshape Ans\"atze}. For example, assuming $\omega=d^{\fh}\eta$ and $D^*\eta=0$, we are back to Theorem \ref{corr:exact} in case of lcs structures of the first kind. Another possible {\itshape Ansatz} is $d^{\fh}\omega=0$, $d^{\fh}\eta=0$ and $D^*\omega=-\omega$; the first two conditions define a \emph{cosymplectic structure} $(\eta,\omega)$ on $\fh$. If $R$ denotes the Reeb vector of the cosymplectic structure, determined by $\imath_R\omega=0$ and $\imath_R\eta=1$, we obtain the following result (see Proposition \ref{cosymp_lcs}):
\begin{theorem*}
Let $(\fh,\eta,\omega)$ be a cosymplectic Lie algebra of dimension $2n-1$, endowed with a derivation $D$ such that $D^*\omega=\alpha\omega$ for some $\alpha\neq 0$. Then $\fg=\fh\rtimes_D\bR$ admits a natural lcs structure. The Lie algebra $\fg$ is unimodular if and only if $\fh$ is unimodular and $D^*\eta=-\alpha(n-1)\eta+\zeta$ for some $\zeta\in\langle R\rangle^\circ$. If $\fh$ is unimodular then the lcs structure $(\Om,\vt)$ on $\fg$ is not exact.
\end{theorem*}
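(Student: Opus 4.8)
The plan is to realize the asserted lcs structure as the instance of the mixed construction of Section~\ref{another} associated with the Ansatz $d^\fh\om=0$, $d^\fh\eta=0$, $D^*\om=\alpha\om$, and then to read off both unimodularity and non-exactness directly from the semidirect-product differential. First I would record the basic tool: writing $\fg=\fh\rtimes_D\bR$ and letting $\tau\in\fg^*$ be the form dual to the $\bR$-generator $e_0$ (so $\tau|_\fh=0$ and $d\tau=0$), every $\beta\in\Lambda^\bullet\fh^*$, viewed on $\fg$, satisfies $d\beta=d^\fh\beta-\tau\wedge D^*\beta$, where $D^*$ is extended to $\Lambda^\bullet\fh^*$ as a derivation; this is checked on $1$-forms and extended by the Leibniz rule. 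For the existence of the structure I would set $\vt:=-\alpha\,\tau$ (the factor $-\alpha$, legitimate since $\alpha\neq0$, is exactly what makes the lcs equation hold) and $\Om:=\om+\eta\wedge\vt$. Using $d^\fh\om=0$, $D^*\om=\alpha\om$ and $d^\fh\eta=0$ one gets $d\om=-\alpha\,\tau\wedge\om$ and $d(\eta\wedge\vt)=0$, so $d\Om=-\alpha\,\tau\wedge\om=\vt\wedge\Om$, while $d\vt=0$ is clear; non-degeneracy follows since $\Om^n$ is a nonzero multiple of $\tau\wedge\eta\wedge\om^{n-1}$, using $\eta\wedge\om^{n-1}\neq0$ and $\alpha\neq0$. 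One also checks that the characteristic vector is $V=R$.

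For the unimodularity criterion, the block-triangular form of $\ad$ on $\fg$ gives $\operatorname{tr}_\fg\ad_X=\operatorname{tr}_\fh\ad^\fh_X$ for $X\in\fh$ and $\operatorname{tr}_\fg\ad_{e_0}=\operatorname{tr}_\fh D$, so $\fg$ is unimodular if and only if $\fh$ is unimodular and $\operatorname{tr}D=0$. I then translate $\operatorname{tr}D=0$ into the stated condition. Extending $D^*$ as a derivation to the top form $\eta\wedge\om^{n-1}$ gives $D^*(\eta\wedge\om^{n-1})=(\operatorname{tr}D)\,\eta\wedge\om^{n-1}$, and expanding the left-hand side with $D^*\om=\alpha\om$ yields $(D^*\eta)\wedge\om^{n-1}=(\operatorname{tr}D-(n-1)\alpha)\,\eta\wedge\om^{n-1}$. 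The key linear-algebra fact is that wedging with $\om^{n-1}$ has kernel exactly $\la R\ra^\circ$ and that $\xi\wedge\om^{n-1}=\xi(R)\,\eta\wedge\om^{n-1}$ for every $\xi\in\fh^*$; applied to $\xi=D^*\eta$ this gives $\operatorname{tr}D=(D^*\eta)(R)+(n-1)\alpha$. Hence $\operatorname{tr}D=0$ is equivalent to $(D^*\eta)(R)=-(n-1)\alpha$, which, via the splitting $D^*\eta=(D^*\eta)(R)\,\eta+\zeta$ with $\zeta\in\la R\ra^\circ$, is precisely $D^*\eta=-\alpha(n-1)\eta+\zeta$.

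For non-exactness I assume only that $\fh$ is unimodular and suppose, for contradiction, that $\Om=d_\vt\beta$ for some $\beta\in\fg^*$. Writing $\beta=\beta_0+b\,\tau$ with $\beta_0\in\fh^*$ and computing $d_\vt\beta=d^\fh\beta_0-\tau\wedge D^*\beta_0+\alpha\,\tau\wedge\beta_0$, comparison of the components in $\Lambda^2\fh^*$ forces $d^\fh\beta_0=\om$. Then $\om^{n-1}=d^\fh(\beta_0\wedge\om^{n-2})$ and, since $d^\fh\eta=0$, the volume form would be exact on $\fh$, namely $\eta\wedge\om^{n-1}=-d^\fh(\eta\wedge\beta_0\wedge\om^{n-2})$; this contradicts $H^{2n-1}(\fh)\cong\bR$ for the unimodular $\fh$. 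Therefore $\Om$ is not $d_\vt$-exact.

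The main obstacle is the cosymplectic linear algebra of the unimodularity step, that is, establishing $\xi\wedge\om^{n-1}=\xi(R)\,\eta\wedge\om^{n-1}$ and keeping careful track of the normalization $\vt=-\alpha\tau$ and the resulting signs. Conceptually the crux is the observation underlying non-exactness, that $d_\vt$-exactness of $\Om$ descends to ordinary exactness of $\om$ on $\fh$, which is obstructed precisely by unimodularity; it is worth noting that the naive route through $[\Om]^n\in H^{2n}_{n\vt}(\fg)$ does not work here, since $H^{2n}_{n\vt}(\fg)\neq0$ would force the modular form of $\fg$ to equal $-n\vt$, which need not hold.
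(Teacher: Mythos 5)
Your proposal is correct, and the construction of the lcs structure and the non-exactness argument coincide with the paper's: the paper also sets $\vt(X,a)=-\alpha a$, $\Om=\om+\eta\wedge\vt$, verifies $d\Om=\vt\wedge\Om$ from the relation between $d$ and $d^\fh$, and derives non-exactness by observing that a $d_\vt$-primitive of $\Om$ would force $\om$ to be $d^\fh$-exact; you usefully supply the detail the paper leaves implicit, namely that $\om=d^\fh\beta_0$ makes the volume $\eta\wedge\om^{n-1}=-d^\fh(\eta\wedge\beta_0\wedge\om^{n-2})$ exact, contradicting unimodularity of $\fh$. The one place where your route genuinely differs is the unimodularity criterion: the paper works on $\fg$ itself, using the characterization ``$\fk$ unimodular iff $d(\Lambda^{\dim\fk-1}\fk^*)=0$,'' decomposing $\Lambda^{2n-1}\fg^*\cong\la\om^{n-1}\wedge\eta\ra\oplus\Lambda^{2n-2}\fh^*\wedge\vt$ and computing $d$ on each summand, whereas you use the trace characterization, reduce to $\operatorname{tr}D=0$ via the block-triangular form of $\ad$, and evaluate $\operatorname{tr}D$ through the derivation action of $D^*$ on the cosymplectic volume $\eta\wedge\om^{n-1}$. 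The two computations are dual to one another and both hinge on the same linear-algebra fact that $\ker(\_\wedge\om^{n-1})=\la R\ra^\circ$, so that $\zeta\wedge\om^{n-1}=0$; your version separates the Lie-theoretic input (unimodularity of $\fg$ $\Leftrightarrow$ unimodularity of $\fh$ plus $\operatorname{tr}D=0$) from the cosymplectic linear algebra a bit more cleanly, while the paper's version gets the splitting of the criterion into the two conditions directly from the direct-sum decomposition of $\Lambda^{2n-1}\fg^*$. Your closing remark that exactness cannot be ruled out via $[\Om]^n$ in twisted top-degree cohomology is a sensible caution and consistent with the paper, which likewise argues through $\om$ on $\fh$ rather than through $H^{2n}_{n\vt}(\fg)$.
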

This result is, up to the authors' knowledge, the first construction of non-exact lcs structures on Lie algebras. Notice that, according to \cite[Corollary 4.3]{AO}, a lcs almost abelian Lie algebra of dimension $\geq 6$ is necessarily of the second kind. A relation between cosymplectic Lie algebras and lcs Lie algebras of the first kind was implicitly discussed in \cite{Marrero_Padron}.

In Section \ref{sec:cotangente-ext} we consider the cotangent extension problem in the lcs setting. As we mentioned above, its symplectic aspect was studied by Ovando, with special emphasis on four-dimensional symplectic Lie algebras; a symplectic Lie algebra is just a $2n$-dimensional Lie algebra $\fs$ with a closed 2-form $\omega\in\Lambda^2\fs^*$ such that $\omega^n\neq 0$. Solutions of this problem in the symplectic case are related to the existence of Lagrangian ideals in $\fs$, {\itshape i.e.} $n$-dimensional ideals $\fh\subset\fs$ such that $\om\big|_{\fh\times\fh}\equiv 0$. In general, Lagrangian ideals play an essential role in the study of symplectic Lie algebras, see \cite{CB}.

Let $\fh$ be a Lie algebra with a closed $1$-form $\hat{\vt}\in\fh^*$; we set $\fg=\fh^*\oplus\fh$ and extend $\hat{\vt}$ to a $1$-form $\vt\in\fg^*$ defined by $\vt(\varphi,X)=\hat{\vt}(X)$. We define $\Om_0\in\Lambda^2\fg^*$ by
\begin{equation}\label{lcs_standard}
\Om_0((\varphi,X),(\psi,Y))\coloneq \varphi(Y)-\psi(X)\,.
\end{equation}
A \emph{solution} of the cotangent extension problem in the lcs context is a Lie algebra structure on $\fg$ such that
\begin{itemize}
\item $\fg$ is an extension $0\longrightarrow\fh^*\longrightarrow\fg\longrightarrow\fh\longrightarrow 0$, where $\fh^*$ is endowed with the structure of an abelian Lie algebra;
\item $(\Omega_0,\vt)$ is a lcs structure on $\fg$, {\itshape i.e.} $d\vt=0$ and $d_\vt\Omega_0=0$.
\end{itemize}
The Lie algebra structure on $\fg$ is encoded in a representation $\rho\colon\fh\to\mathrm{End}(\fh^*)$ and a cocycle $\alpha\in Z^2(\fh,\fh^*)$, by setting
\begin{itemize}
\item $[(\varphi,0),(\psi,0)]_\fg=0$;
\item $[(\varphi,0),(0,X)]_\fg=(-\rho(X)(\varphi),0)$ and
\item $[(0,X),(0,Y)]_\fg=(\alpha(X,Y),[X,Y]_\fh)$.
\end{itemize}
Notice that $\fh^*$ is an abelian ideal contained in $\ker\vt$.
The fact that \eqref{lcs_standard} is a lcs structure yields the following result (compare with Corollary \ref{cor:cotg-ext}):
\begin{theorem*}
 Let $\fh$ be a Lie algebra and let $\vt\in\fh^*$ be a closed 1-form. The Lie algebra structure on $\fg=\fh^*\oplus\fh$ attached to the triple $(\fh,\rho,[\alpha])$, where 
 $\rho\colon\fh\to\mathrm{End}(\fh^*)$ is a representation satisfying
 $$ \rho(X)(\varphi)(Y)-\rho(Y)(\varphi)(X)=d^\fh_\vt\varphi(X,Y) $$
and $[\alpha]\in H^2(\fh,\fh^*)$ satisfies
$$ \alpha(X,Y)(Z)+\alpha(Y,Z)(X)+\alpha(Z,X)(Y)=0 , $$
is a solution to the cotangent extension problem in the locally conformally symplectic context.
\end{theorem*}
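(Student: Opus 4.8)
The plan is to take the Lie-algebra structure on $\fg=\fh^*\oplus\fh$ as given --- it is a genuine Lie algebra by the standard theory of abelian extensions, since $\rho$ is a representation (so that $\rho([X,Y]_\fh)=[\rho(X),\rho(Y)]$) and $\alpha$ is a $2$-cocycle (so that the Jacobi identity closes) --- and to verify directly that $(\Om_0,\vt)$ is a lcs structure on $\fg$.

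Two facts are immediate. The form $\Om_0$ of \eqref{lcs_standard} is the canonical nondegenerate pairing between $\fh^*$ and $\fh$, so $\Om_0^n\neq0$ with $2n=\dim\fg$. And $\vt$ is closed: on pairs with at least one argument in $\fh^*$ the bracket lands in the abelian ideal $\fh^*$, where $\vt$ vanishes, while $d\vt((0,X),(0,Y))=-\vt([(0,X),(0,Y)]_\fg)=-\hat{\vt}([X,Y]_\fh)$ vanishes precisely because $\hat{\vt}$ is closed on $\fh$.

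The heart of the proof is the identity $d_\vt\Om_0=0$, equivalently $d\Om_0=\vt\wedge\Om_0$, which I would test on a triple of elements using
\[
d\Om_0(A,B,C)=-\Om_0([A,B],C)+\Om_0([A,C],B)-\Om_0([B,C],A),
\]
splitting into cases according to the number of arguments drawn from $\fh$ (the remaining ones from $\fh^*$). If two or three arguments lie in $\fh^*$, then every bracket lands in $\fh^*$, all $\Om_0$-terms vanish because their second slots are zero, and $\vt$ kills $\fh^*$, so both sides vanish. For one argument $(\varphi,0)\in\fh^*$ and two arguments $(0,Y),(0,Z)\in\fh$, evaluating $[(\varphi,0),(0,Y)]=(-\rho(Y)\varphi,0)$ and $[(0,Y),(0,Z)]=(\alpha(Y,Z),[Y,Z]_\fh)$ reduces the equation to
\[
\rho(Y)\varphi(Z)-\rho(Z)\varphi(Y)+\varphi([Y,Z]_\fh)=-\hat{\vt}(Y)\varphi(Z)+\hat{\vt}(Z)\varphi(Y);
\]
since $d^\fh_\vt\varphi(Y,Z)=-\varphi([Y,Z]_\fh)-\hat{\vt}(Y)\varphi(Z)+\hat{\vt}(Z)\varphi(Y)$, this is exactly the hypothesis $\rho(Y)\varphi(Z)-\rho(Z)\varphi(Y)=d^\fh_\vt\varphi(Y,Z)$ on $\rho$. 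For all three arguments in $\fh$ the right-hand side vanishes and the left-hand side collapses to $-\alpha(X,Y)(Z)+\alpha(X,Z)(Y)-\alpha(Y,Z)(X)=0$, which, using $\alpha(X,Z)=-\alpha(Z,X)$, is precisely the cyclic hypothesis $\alpha(X,Y)(Z)+\alpha(Y,Z)(X)+\alpha(Z,X)(Y)=0$ on $\alpha$.

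I expect no real obstacle beyond careful sign bookkeeping in the Chevalley--Eilenberg differential and the correct identification of the twisted differential $d^\fh_\vt$ acting on the $1$-form $\varphi\in\fh^*$. The conceptual point, which the computation makes transparent, is that the two displayed hypotheses on $(\rho,\alpha)$ are nothing but the $\fh^*$-$\fh$-$\fh$ and $\fh$-$\fh$-$\fh$ components of the single lcs equation $d_\vt\Om_0=0$, the remaining components being automatic; conversely, these are the only constraints that the lcs condition imposes on the extension data.
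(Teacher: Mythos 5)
Your proposal is correct and follows essentially the same route as the paper: the paper isolates the computation in a preliminary lemma showing that $d\Om_0=\vt\wedge\Om_0$ is equivalent to the two displayed conditions on $\rho$ and $\alpha$, by evaluating $d\Om_0$ and $\vt\wedge\Om_0$ on triples split between $\fh^*$ and $\fh$ exactly as you do. Your sign bookkeeping in the case $(\varphi,0),(0,Y),(0,Z)$ and the identification with $d^\fh_\vt\varphi$ match the paper's computation, so there is nothing to add.
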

The following result relates a special kind of Lagrangian ideals in a lcs Lie algebra with solutions of the cotangent extension problem --- see Proposition \ref{Lag:id}:
\begin{theorem*}
Let $(\fg,\Om,\vt)$ be a $2n$-dimensional lcs Lie algebra with a Lagrangian ideal $\fj\subset\ker\vt$. Then $\fg$ is a solution of the cotangent extension problem.
\end{theorem*}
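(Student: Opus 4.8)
The plan is to realize $\fg$ explicitly as a cotangent extension of the quotient $\fh\coloneq\fg/\fj$. First I would record the structure inherited by $\fh$: since $\fj$ is an ideal, $\fh$ carries a Lie bracket and the projection $\pi\colon\fg\to\fh$ is a Lie algebra morphism; since $\fj\subset\ker\vt$, the form $\vt$ descends to $\hat\vt\in\fh^*$ with $\pi^*\hat\vt=\vt$, and from $\pi^*(d^\fh\hat\vt)=d\vt=0$ together with the injectivity of $\pi^*$ one gets $d^\fh\hat\vt=0$. The key point is the identification $\fj\cong\fh^*$ furnished by $\Om$: the map $\Psi\colon\fj\to\fh^*$ defined by $\Psi(Z)(\pi(W))\coloneq\Om(Z,W)$ is well posed because $\Om$ vanishes on $\fj\times\fj$, and injective by non-degeneracy of $\Om$; as $\dim\fj=n=\dim\fh^*$, it is an isomorphism. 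This provides the abelian ideal $\fh^*\cong\fj$ required by the problem, once we check that $\fj$ is abelian.

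Next I would show $\fj$ is abelian. Pick $X,Y\in\fj$ and an arbitrary $W\in\fg$ and evaluate $d\Om=\vt\wedge\Om$ on $(X,Y,W)$. The right-hand side vanishes since $\vt(X)=\vt(Y)=0$ and $\Om(X,Y)=0$. On the left-hand side, $[Y,W]$ and $[W,X]$ lie in $\fj$ because $\fj$ is an ideal, so $\Om([Y,W],X)=\Om([W,X],Y)=0$ by the Lagrangian condition, and only the term $-\Om([X,Y],W)$ survives. Hence $\Om([X,Y],W)=0$ for all $W$, and non-degeneracy of $\Om$ forces $[X,Y]=0$.

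Finally I would fix a splitting and read off the extension data. Choosing a Lagrangian complement $\fh'$ of $\fj$ in the symplectic vector space $(\fg,\Om)$ --- which exists by elementary symplectic linear algebra --- yields a section $s\colon\fh\to\fg$ with image $\fh'$ and a linear identification $\fg\cong\fh^*\oplus\fh$ via $\Psi^{-1}(\varphi)+s(X)\leftrightarrow(\varphi,X)$. Under it $\vt(\varphi,X)=\hat\vt(X)$, and expanding $\Om$ on $\Psi^{-1}(\varphi)+s(X)$ and $\Psi^{-1}(\psi)+s(Y)$ the four terms collapse, using the definition of $\Psi$ and the vanishing of $\Om$ on both Lagrangians $\fj$ and $\fh'$, to $\varphi(Y)-\psi(X)$; that is, $\Om=\Om_0$. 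The $\ad$-action of $\fh$ on the abelian ideal $\fj$ then defines the representation $\rho\colon\fh\to\mathrm{End}(\fh^*)$, while the $\fj$-component of $[s(X),s(Y)]-s([X,Y]_\fh)$ defines the cocycle $\alpha\in Z^2(\fh,\fh^*)$, so that $\fg$ is the abelian extension $0\to\fh^*\to\fg\to\fh\to0$ attached to $(\fh,\rho,[\alpha])$. Because $(\Om_0,\vt)=(\Om,\vt)$ is lcs by hypothesis, the constraints on $\rho$ and $[\alpha]$ appearing in the preceding theorem hold automatically, being exactly the translation of $d_\vt\Om_0=0$; thus $\fg$ is a solution of the cotangent extension problem. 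I expect the only genuinely delicate step to be the choice of a Lagrangian complement making $\Om=\Om_0$ on the nose, rather than $\Om_0$ plus a spurious term $\Om(s(X),s(Y))$; the remainder is bookkeeping with these identifications.
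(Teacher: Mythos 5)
Your argument is correct and follows essentially the same route as the paper: the abelian-ness of $\fj$ is exactly the computation of Lemma \ref{Lag:ab}, your map $\Psi$ is the paper's $\sigma(X)=\imath_X\Om$, and the choice of a Lagrangian complement producing the identification $(\fg,\Om,\vt)\cong(\fh^*\oplus\fh,\Om_0,\vt_0)$ is precisely the paper's Lagrangian splitting $\fg=\fj\oplus\fk$ and isomorphism $\Sigma$. The only cosmetic difference is that you explicitly extract $\rho$ and $\alpha$ at the end, whereas the paper stops once the lcs isomorphism with $(\fh^*\oplus\fh,\Om_0,\vt_0)$ is established.
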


The results of \cite{Bazzoni_Marrero} deal with lcs Lie algebras such that the characteristic vector is central. However, there exist lcs algebras with trivial center, see Example \ref{ex:2}. In Section \ref{sec:center} we study the center of lcs Lie algebras and characterize it completely in the nilpotent case, see Corollary \ref{center_nilpotent}. We also study the center of reductive lcs Lie algebras, with an eye toward their classification in the subsequent section.

Indeed, in Section \ref{sec:reductive} we turn to reductive lcs Lie algebras. They are all of the first kind (Theorem \ref{theo:1}), so \cite[Theorem 5.9]{Bazzoni_Marrero} applies. It turns out that there are only two of those (Corollary \ref{cor:reductive-4-dim}): either $\fg=\mathfrak{su}_2\oplus\bR$ (see Proposition \ref{prop:su_2+R}) or $\fg=\mathfrak{sl}_2\oplus\bR$ (see Proposition \ref{prop:sl_2+R}). We classify lcs structure on such Lie algebras, up to automorphism. This yields a classification of left-invariant lcs structures on the manifold $S^3\times S^1$ and on every compact quotient of $\widetilde{\mathrm{SL}(2,\bR)}\times\bR$.

Theorem \ref{thm:lcs-4} and Table \ref{table:lcs-4} provide a classification of lcs structures on $4$-dimensional solvable Lie algebras up to automorphism. Computations have been performed with the help of Maple and of Sage \cite{sage}. We show that every structure in the table can be recovered thanks to at least one of the three constructions detailed above, hence obtaining a complete picture of the four-dimensional case.

Let $(\fg,\Omega,\vt)$ be a solvable lcs Lie algebra and let $G$ denote the connected, simply connected solvable Lie group that integrates $\fg$; clearly $G$ is endowed with a left-invariant lcs structure. If there exists a discrete and co-compact subgroup $\Gamma\subset G$, the left-invariant lcs structure on $G$ induces a left-invariant lcs structure on $M=\Gamma\backslash G$ (left-invariance refers here to the lift with respect to the left-translations on the universal cover). In Section \ref{sec:compact}, we construct left-invariant lcs structures on compact quotients of connected simply connected four-dimensional solvable Lie groups and explain how these are related to the structure results for lcs Lie algebras discussed above.

\bigskip

\noindent{\bf Acknowledgements.} The first author is supported by the SIR2014 project RBSI14DYEB ``Analytic aspects in complex and hypercomplex geometry (AnHyC)'', by ICUB Fellowship for Visiting Professor, and by GNSAGA of INdAM. The third author is supported by Project PRIN ``Variet\`a reali e complesse: geometria, topologia e analisi armonica'' and by GNSAGA of INdAM. We are grateful to Vicente Cort\'es for his interesting comments.

\noindent{\itshape Notation.} Throughout the paper, the structure equations for Lie algebras are written following the Salamon notation: {\itshape e.g.}
$$\mathfrak{rh}_{3}=(0,0,-12,0)$$
means that the four-dimensional Lie algebra $\mathfrak{rh}_{3}$ admits a basis $(e_1,e_2,e_3,e_4)$ such that $[e_1,e_2]=e_3$, the other brackets being trivial; equivalently, the dual $\mathfrak{rh}_{3}^\ast$ admits a basis $(e^1, e^2,e^3,e^4)$ such that $de^1=de^2=de^4=0$ and $de^3=-e^1\wedge e^2$. Hereafter, we shorten $e^{12}\coloneq e^1\wedge e^2$.

\section{Structure results for lcs Lie algebras}

In this section we consider different structure results for lcs Lie algebras. In particular, we obtain a quite complete picture for exact lcs Lie algebras. The first two results represent certain lcs Lie algebras $\fg$ as semidirect products $\fh\rtimes_D\bR$, where the Lie algebra $\fh$ is endowed with a certain structure and the derivation $D$ is adapted to the structure. The third result is of a different kind and is related to the existence of Lagrangian ideals in the kernel of the Lee form.

\subsection{Exact lcs Lie algebras}

Let $(\fg,\Om,\vt)$ be an exact lcs Lie algebra and let $\eta\in\fg^*$ be a primitive of $\Om$, namely $\Om=d\eta-\vt\wedge\eta$; moreover, let $U\in\fg$ be determined by $\imath_U\Om=-\eta$. Clearly $\vt(V)=\eta(U)=0$ and we have the following lemma:

\begin{lemma}\label{lem:plane}
In the hypotheses above, 
\begin{itemize}
\item the plane $\la U,V\ra$ is symplectic if and only if $\vt(U)\neq 0$;
\item the lcs structure $(d\eta-\vt\wedge\eta,\vt)$ is of the first kind if and only if $\vt(U)=1$.
\end{itemize}
\end{lemma}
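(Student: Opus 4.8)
The plan is to unwind the definitions and compute the restriction of $\Om$ to the plane $\la U,V\ra$ directly. Recall that $V$ is the characteristic vector, defined by $\imath_V\Om=\vt$, and $U$ is determined by $\imath_U\Om=-\eta$. The plane $\la U,V\ra$ is symplectic precisely when $\Om\big|_{\la U,V\ra}$ is non-degenerate, i.e.\ when $\Om(U,V)\neq 0$. So first I would evaluate $\Om(U,V)$ in terms of the given data.

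For the first bullet, I would compute $\Om(U,V)=-(\imath_U\Om)(V)=-\eta(V)$, and also $\Om(U,V)=(\imath_V\Om)(U)=\vt(U)$, using the defining relations for $U$ and $V$ together with the antisymmetry of $\Om$. Hence $\Om(U,V)=\vt(U)=\eta(V)$, which incidentally recovers the identity $\vt(U)=\eta(V)$ already recorded in the introduction. Since $\{U,V\}$ spans a $2$-plane (one should note they are linearly independent whenever $\vt(U)\neq 0$, as $\vt(V)=0$ forces them apart), the restriction of $\Om$ to this plane is non-degenerate if and only if its single off-diagonal entry $\Om(U,V)$ is non-zero, that is, if and only if $\vt(U)\neq 0$. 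This settles the first equivalence.

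For the second bullet, I would appeal to the definition of lcs structure of the first kind: it requires the Lee morphism $\ell\colon\fg_\Om\to\bR$ to be nonzero, realized by a transversal vector $U'\in\fg_\Om$ with $\vt(U')=1$. The key observation is that the $U$ produced here already lies in $\fg_\Om$: one checks $L_U\Om=0$ using $L_U\Om=d\imath_U\Om+\imath_U d\Om$ together with $\imath_U\Om=-\eta$, $d\Om=\vt\wedge\Om$ (equivalently $d_\vt\Om=0$ since $\Om=d_\vt\eta$), and $\vt(U)$ being the relevant scalar; the Cartan-type computation shows $L_U\Om=0$ regardless of the value of $\vt(U)$. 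Given that $U\in\fg_\Om$ with $\vt(U)=\ell(U)$, the structure is of the first kind with $U$ serving as transversal vector exactly when $\vt(U)=1$. Conversely, if the structure is of the first kind, the normalization $\vt(U)=1$ is what the definition demands of the transversal vector, and our $U$ satisfies this. Thus the lcs structure is of the first kind if and only if $\vt(U)=1$.

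The only genuinely delicate point is verifying that $U\in\fg_\Om$, i.e.\ $L_U\Om=0$, since everything else is a one-line evaluation of antisymmetric bilinear forms. I expect this to follow cleanly from Cartan's formula once I substitute $\imath_U\Om=-\eta$ and exploit $d_\vt\Om=0$, but care is needed to track the Lichnerowicz differential $d_\vt$ versus the plain $d$ and to confirm that no hidden condition on $\vt(U)$ sneaks in. If $L_U\Om=0$ held only under an extra hypothesis, the clean biconditional in the second bullet would break; so confirming its unconditional validity is the crux of the argument.
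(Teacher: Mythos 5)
Your first bullet is correct and is essentially the paper's argument: $\vt(U)=(\imath_V\Om)(U)=-\Om(U,V)$, so the plane $\la U,V\ra$ (genuinely two-dimensional when $\vt(U)\neq 0$, since $\vt(V)=0$) is symplectic precisely when $\vt(U)\neq 0$. Do watch the signs in your intermediate chain: the correct identities are $\Om(U,V)=-\vt(U)=-\eta(V)$, not $\Om(U,V)=\vt(U)$; this does not affect the conclusion.

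The second bullet has a genuine gap, and it sits exactly at the point you yourself flagged as the crux. The Cartan computation does \emph{not} give $L_U\Om=0$ unconditionally. Carrying it out with $\imath_U\Om=-\eta$ and $d\Om=\vt\wedge\Om$,
\[
L_U\Om=d(\imath_U\Om)+\imath_U(d\Om)=-d\eta+\vt(U)\,\Om-\vt\wedge(\imath_U\Om)=\vt(U)\,\Om-(d\eta-\vt\wedge\eta)=(\vt(U)-1)\,\Om\,.
\]
Since $\Om$ is non-degenerate, hence non-zero, $L_U\Om=0$ holds if and only if $\vt(U)=1$. Your premise that $U$ lies in $\fg_\Om$ ``regardless of the value of $\vt(U)$'' is therefore false, and the architecture of your argument (``$U$ is always an infinitesimal automorphism, and is a transversal vector exactly when $\vt(U)=1$'') collapses. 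The identity $L_U\Om=(\vt(U)-1)\Om$ is precisely what the paper computes, and it is also what repairs the proof: if $\vt(U)=1$ then $U\in\fg_\Om$ and $\ell(U)=1$, so the structure is of the first kind with $U$ as transversal vector, while if $\vt(U)\neq 1$ then $U\notin\fg_\Om$. Note finally that your converse direction is too quick even granting your premise: being of the first kind asserts the existence of \emph{some} transversal vector in $\fg_\Om$, not that the particular $U$ attached to the chosen primitive $\eta$ is one, so an argument tying the abstract property back to this specific $U$ is needed in any case.
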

\begin{proof}
For the first claim, simply notice that, by definition, $\vt(U)=\imath_U\imath_V\Om=-\Om(U,V)$. For the second one, we compute
\begin{align*}
L_U\Omega&=d(\imath_U\Omega)+\imath_Ud\Omega=-d\eta+\imath_U(\vt\wedge\Omega)\\[5pt]
&=-d\eta+\vt(U)\Om-\vt\wedge\imath_U\Om=\vt(U)\Omega-(d\eta-\vt\wedge\eta)\\[5pt]
&=(\vt(U)-1)\Om\,,
\end{align*}
completing the proof.
\end{proof}

If $\la U,V\ra$ is symplectic, then the same holds for $\la U,V\ra^\Om$; this is the key observation for the next proposition:

\begin{proposition}
Let $(\fg,\Om,\vt)$ be an exact lcs Lie algebra, $\Om=d_\vt\eta$; write $\fh\coloneq\ker\vt$. Assume that $\vt(U)\neq 0$. Then $\eta$ restricts to a contact form on $\fh$. The contact Lie algebra $(\fh,\eta)$ has derivation $D$ such that $D^*\eta=(1-\vt(U))\eta$ and $\fg\cong \fh\rtimes_D\bR$.
\end{proposition}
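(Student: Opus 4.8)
The plan is to establish three things in sequence: that $\eta$ restricts to a contact form on $\fh$, that $\fg \cong \fh \rtimes_D \bR$ via a suitable choice of complement, and that the induced derivation $D = \ad_U|_\fh$ satisfies $D^*\eta = (1-\vt(U))\eta$. The central structural input is the observation preceding the proposition: since $\vt(U) \neq 0$, Lemma \ref{lem:plane} tells us $\la U, V\ra$ is $\Om$-symplectic, and hence its $\Om$-orthogonal complement $\la U,V\ra^\Om$ is also symplectic, giving a direct sum decomposition $\fg = \la U,V\ra \oplus \la U,V\ra^\Om$.

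\emph{First}, I would verify that $U \notin \fh = \ker\vt$: since $\vt(U) \neq 0$, indeed $U$ is transverse to $\fh$, so $\fg = \fh \oplus \la U\ra$ as vector spaces. Because $\fh = \ker\vt$ and $\vt$ is closed, $\fh$ is an ideal of $\fg$ (for $X,Y \in \fh$ one has $\vt([X,Y]) = -d\vt(X,Y) + X\vt(Y) - Y\vt(X) = 0$ using $d\vt = 0$ and $\vt(X)=\vt(Y)=0$), and therefore $\fg \cong \fh \rtimes_D \bR$ with $D = \ad_U|_\fh$, the bracket having precisely the semidirect-product form recalled in the Introduction. This handles the splitting essentially formally.

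\emph{Second}, for the contact condition I need $\eta \wedge (d^\fh\eta)^{n-1} \neq 0$ on $\fh$, where $\dim\fh = 2n-1$. The natural route is to exploit the relation $\Om = d\eta - \vt\wedge\eta$ on $\fg$ and restrict to $\fh$. On $\fh$ the Lee form vanishes, and $d$ restricted to $\fh$ agrees with $d^\fh$ up to terms involving $\vt$; the clean way is to compute $\Om^n$ on $\fg$ and relate it to $\eta \wedge (d\eta)^{n-1}$ using that $\vt(V) = \eta(U) = 0$ together with $\imath_U\Om = -\eta$. Concretely, I would show $\imath_U \Om^n = n\,\imath_U\Om \wedge \Om^{n-1} = -n\,\eta\wedge\Om^{n-1}$, and then pair this with $V$ or evaluate on a basis adapted to the decomposition $\fg = \la U,V\ra \oplus \la U,V\ra^\Om$ to extract the nonvanishing of $\eta \wedge (d^\fh\eta)^{n-1}$. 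The nondegeneracy of $\Om$ on $\fg$ forces nondegeneracy of the appropriate restriction, and this is what makes $\eta$ a contact form.

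\emph{Third} and finally, the derivation identity $D^*\eta = (1-\vt(U))\eta$ I expect to be the genuine computational heart of the argument, and it should follow from the Lie derivative calculation already performed in Lemma \ref{lem:plane}. There we found $L_U\Om = (\vt(U)-1)\Om$. Applying $\imath_V$ (noting $[U,V]$ must be controlled) or, more directly, contracting $L_U\Om = (\vt(U)-1)\Om$ with $U$ and using $\imath_U\Om = -\eta$ should yield the transformation law for $\eta$ under $\ad_U$. The key manipulation is $\imath_U L_U \Om = L_U \imath_U \Om - \imath_{[U,U]}\Om = -L_U\eta$, so $-L_U\eta = (\vt(U)-1)\imath_U\Om = -(\vt(U)-1)\eta$, giving $L_U\eta = (\vt(U)-1)\eta$. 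Restricting to $\fh$, the relation $D^*\eta = (L_U\eta)|_\fh$ up to sign conventions delivers $D^*\eta = (1-\vt(U))\eta$ once the $\ad_U$-versus-$L_U$ sign bookkeeping is matched with the paper's convention $(D^*\alpha)(X) = \alpha(DX)$. The main obstacle is keeping the sign and contraction conventions consistent between $\imath_U\Om = -\eta$, the definition of $D^*$, and the direction of the Lie derivative, so I would fix these carefully at the outset and let the relation $L_U\Om = (\vt(U)-1)\Om$ do the real work.
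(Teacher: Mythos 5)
Your proposal is correct and follows essentially the same route as the paper: $\fh=\ker\vt$ is an ideal because $\vt$ is closed, $D=\ad_U$ gives the semidirect splitting, and the identity $D^*\eta=(1-\vt(U))\eta$ comes down to $\imath_U d\eta=(\vt(U)-1)\eta$ — which is exactly your $L_U\eta$ computation, since $L_U\eta=\imath_U d\eta$ in the Chevalley--Eilenberg setting, and it packages the same information as the paper's direct contraction of $d\eta=\Om+\vt\wedge\eta$ with $U$. The only cosmetic difference is in the contact step, where the paper restricts $\Om=d\eta$ to the symplectic subspace $\la U,V\ra^\Om$ and uses $\eta(V)=\vt(U)\neq0$, while you extract $\eta\wedge(d\eta)^{n-1}\neq0$ from $\imath_U\Om^n$; both hinge on the same non-degeneracy observation and your version closes once you note that the kernel of that $(2n-1)$-form is $\la U\ra$, which is transverse to $\fh$.
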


\begin{proof}
Since $\vt(U)\neq 0$, then $\la U,V\ra$ is a symplectic plane by Lemma \ref{lem:plane}. As a vector space, $\fh=\la U,V\ra^\Om\oplus \la V\ra$ and $\eta(V)\neq 0$. The restriction of $\Omega$ to $\la U,V\ra^\Om$ coincides with the restriction of $d\eta$ to $\la U,V\ra^\Om$, hence $\eta$ restricts to a contact form on $\fh$. Consider the linear map $D\colon\fh\to\fh$ given by $X\mapsto [U,X]$. Notice that $D$ really maps $\fh$ to $\fh$, since $\fh$ is an ideal because $d\vt=0$, and that it is a derivation, thanks to the Jacobi identity. We claim that $D^*\eta=(1-\vt(U))\eta$. We notice first that
\begin{equation}\label{eq:1}
\imath_Ud\eta=\imath_U(\Om+\vt\wedge\eta)=-\eta+\vt(U)\eta-\eta(U)\vt=(\vt(U)-1)\eta\,.
\end{equation}
For $X\in\fh$, we compute
\[
 (D^*\eta)(X)=\eta([U,X])=-d\eta(U,X)=-(\imath_Ud\eta)(X)\stackrel{\eqref{eq:1}}{=}(1-\vt(U))\eta(X)\,
\]
which proves the first assertion. The isomorphism $\fg\cong\fh\rtimes_D\bR$ is obtained by sending $X$ to $\left(X-\frac{\vt(X)}{\vt(U)}U,\frac{\vt(X)}{\vt(U)}\right)$.
\end{proof}

Let $\fh$ be a Lie algebra endowed with a derivation $D$. Form the semidirect product $\fg=\fh\rtimes_D\bR$ and define $\vt\in\fg^*$ by $\vt(X,a)=-a$. Identify $\eta\in\fh^*$ with the pre-image $\eta\in\fg^*$, under the projection $\fg^*\to\fh^*$, obtained by setting $\eta(X,a)=\eta(X)$. The Chevalley-Eilenberg differentials $d$ on $\fg^*$ and $d^\fh$ on $\fh^*$ are related by the formula
\begin{equation}\label{eq:Chevalley}
d\eta=d^{\fh}\eta-D^*\eta\wedge\vt\,.
\end{equation}

Consider now a contact Lie algebra $(\fh,\eta)$ of dimension $2n-1$ endowed with a derivation $D\colon\fh\to\fh$ such that $D^*\eta=\alpha\eta$ for some $\alpha\neq 1$ and consider $\fg=\fh\rtimes_D\bR$. Extend $\eta$ to an element of $\fg^*$, define $\vt\in\fg^*$ as above and set
\[
\Omega=d\eta-\vt\wedge\eta=d^{\fh}\eta-D^*\eta\wedge\vt-\vt\wedge\eta=d^{\fh}\eta-(1-\alpha)\vt\wedge\eta\,.
\]
Now one has
\[
\Om^n=-(1-\alpha)\eta\wedge (d^\fh\eta)^{n-1}\wedge\vt\neq 0\,,
\]
hence $\Om$ is non-degenerate and we obtain

\begin{proposition}
In the above hypotheses, $(\Om,\vt)$ is an exact lcs structure on $\fg$.
\end{proposition}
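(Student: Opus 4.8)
The plan is to check the three requirements in the definition of an exact lcs structure: non-degeneracy of $\Om$, the two closedness conditions $d\vt=0$ and $d\Om=\vt\wedge\Om$ (equivalently $d_\vt\Om=0$), and the exactness $[\Om]=0\in H^2_\vt(\fg)$. Non-degeneracy has in fact already been recorded in the display preceding the statement: the computation $\Om^n=-(1-\alpha)\,\eta\wedge(d^\fh\eta)^{n-1}\wedge\vt$ is nonzero precisely because $\eta$ is a contact form on $\fh$, so that $\eta\wedge(d^\fh\eta)^{n-1}\neq0$, and because $\alpha\neq1$. This is the only place where the hypothesis $\alpha\neq1$ enters, so I would flag it explicitly; everything else goes through for any $\alpha$.

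Next I would verify $d\vt=0$. By the bracket formula of the semidirect product $\fh\rtimes_D\bR$, every bracket $[(X,a),(Y,b)]$ lies in $\fh$, i.e.\ its $\bR$-component vanishes. Since $\vt$ is (minus) the projection onto the $\bR$-factor, we get $d\vt((X,a),(Y,b))=-\vt([(X,a),(Y,b)])=0$, so $\vt$ is closed.

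For the lcs condition I would avoid computing $d\Om$ directly and instead exploit that, by construction, $\Om=d\eta-\vt\wedge\eta=d_\vt\eta$. A short calculation shows that for every $\beta$ one has $d_\vt^2\beta=(d-\vt\wedge\_)(d\beta-\vt\wedge\beta)=-d\vt\wedge\beta$, using $d^2=0$, $\vt\wedge\vt=0$ and the Leibniz rule $d(\vt\wedge\beta)=d\vt\wedge\beta-\vt\wedge d\beta$. Since $d\vt=0$ by the previous step, $d_\vt$ is a genuine differential and therefore $d_\vt\Om=d_\vt^2\eta=0$, which is exactly $d\Om=\vt\wedge\Om$. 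The same identity $\Om=d_\vt\eta$ shows at once that $[\Om]=0$ in $H^2_\vt(\fg)$, so the structure is exact.

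There is no serious obstacle here: the proposition is essentially a repackaging of the preceding display, and the only subtleties are bookkeeping ones --- getting the Chevalley--Eilenberg sign conventions right in the identity $d_\vt^2\beta=-d\vt\wedge\beta$, and being careful that the extension of $\eta$ to $\fg^*$ via $\eta(X,a)=\eta(X)$ is the one that makes \eqref{eq:Chevalley} hold, so that the chain of equalities defining $\Om$ is legitimate. Once $d\vt=0$ is established, the nilpotency $d_\vt^2=0$ does all the remaining work and yields both the lcs condition and exactness simultaneously.
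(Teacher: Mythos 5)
Your proof is correct and follows essentially the same route as the paper, which leaves the verification implicit in the displays immediately preceding the statement: non-degeneracy comes from the computation of $\Om^n$ (using the contact condition and $\alpha\neq 1$), and closedness plus exactness both follow at once from $\Om=d_\vt\eta$ together with $d_\vt^2=-d\vt\wedge\_=0$. Your explicit check of $d\vt=0$ via the semidirect-product bracket and your remark on where $\alpha\neq1$ is actually used are accurate fillings-in of details the paper takes for granted.
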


We compute now
\[
\imath_{(\xi,0)}\Om=\imath_\xi d^\fh\eta-(1-\alpha)\imath_{(\xi,0)}(\vt\wedge\eta)=(1-\alpha)\vt\,,
\]
hence the characteristic vector of this lcs structure is $V=\left(\frac{1}{1-\alpha}\xi,0\right)$. Moreover,
\[
\imath_{(0,1)}\Om=\imath_{(0,1)}d^\fh\eta-(1-\alpha)\imath_{(0,1)}(\vt\wedge\eta)=(1-\alpha)\eta\,,
\]
hence the symplectic dual of $\eta$ is $U=\left(0,\frac{1}{\alpha-1}\right)$. Notice in particular that $\vt(U)=\frac{1}{1-\alpha}\neq 0$.

Combining the two propositions, we obtain a structure result for exact lcs Lie algebras:
\begin{theorem}\label{corr:exact}
 There is a one-to-one correspondence 
 \[
\left\{\begin{array}{c}
\textrm{exact lcs Lie algebras } (\fg,\Om=d\eta-\vt\wedge\eta,\vt),\\
\textrm{dim }\fg=2n, \textrm{ such that }\vt(U)\neq 0,\\
\textrm{ where }\eta=-\imath_U\Om\end{array}\right\}\leftrightarrow
\left\{\begin{array}{c}
\textrm{contact Lie algebras } (\fh,\eta),\\ 
\textrm{dim }\fh=2n-1, \textrm{ with a derivation}\\
D \textrm{ such that } D^*\eta=\alpha\eta, \alpha\neq 1
\end{array}\right\}
\]
The correspondence sends $(\fg,\Omega,\vt)$ to $(\ker\vt,\eta,\ad_U)$; conversely, $(\fh,\eta,D)$ is sent to $(\fh\rtimes_D\bR,d\eta-\vt\wedge\eta,\vt)$, where $\vt(X,a)=-a$. The exact lcs structure is of the first kind if and only if $\vt(U)=1$ if and only if $\alpha=0$.
\end{theorem}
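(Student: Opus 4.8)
The plan is to read the statement as the assembly of the two propositions established just above, which already provide the two arrows of the correspondence, and then to check that the resulting assignments are mutually inverse; the assertion about the first kind will drop out of Lemma \ref{lem:plane} combined with the eigenvalue relation.

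First I would record that both arrows are well defined and land in the correct set. Starting from an exact lcs Lie algebra $(\fg,\Om,\vt)$ with $\vt(U)\neq 0$, the first proposition above produces $(\ker\vt,\eta,\ad_U)$ with $(\ad_U)^*\eta=(1-\vt(U))\eta$, so that $\alpha:=1-\vt(U)\neq 1$ and the output is a contact Lie algebra with an admissible derivation. Conversely, starting from $(\fh,\eta,D)$ with $D^*\eta=\alpha\eta$, $\alpha\neq 1$, the second proposition together with the contraction computations following it shows that $(\fh\rtimes_D\bR,\,d\eta-\vt\wedge\eta,\,\vt)$ is exact lcs, with distinguished vector $U=(0,\frac{1}{\alpha-1})$ and hence $\vt(U)=\frac{1}{1-\alpha}\neq 0$, so this output lies in the left-hand set.

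Next I would verify that the two arrows compose to the identity. For the composite contact $\to$ lcs $\to$ contact I would use the canonical identifications $\ker\vt=\fh$ and $\eta|_{\fh}=\eta$ inside $\fh\rtimes_D\bR$, and then compare the recovered derivation $\ad_U$ with $D$, using $\ad_{(0,1)}=D$ on $\fh$ together with $U=(0,\frac{1}{\alpha-1})$. For the composite lcs $\to$ contact $\to$ lcs I would transport the standard model back to $\fg$ along the Lie algebra isomorphism $X\mapsto(X-\frac{\vt(X)}{\vt(U)}U,\frac{\vt(X)}{\vt(U)})$ supplied by the first proposition, and compare the transported pair $(\Om,\vt)$ with the original. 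The step I expect to be the main obstacle is exactly this mutual-inverse check: the vector $U$ determined by $\eta=-\imath_U\Om$ is only a scalar multiple of the semidirect-product generator $(0,1)$, so $\ad_U=\frac{1}{\alpha-1}D$ rather than $D$ itself, and correspondingly the two eigenvalue formulas $\alpha=1-\vt(U)$ and $\vt(U)=\frac{1}{1-\alpha}$ are not inverse to one another. The way I would dispose of this is to read both sides of the correspondence up to the natural rescaling of the $\bR$-summand (equivalently of the derivation $D$, equivalently of the transversal direction $U$) by a nonzero constant: this rescaling fixes the contact datum $(\fh,\eta)$ and the isomorphism type of $\fh\rtimes_D\bR$, and under it the factor $\frac{1}{\alpha-1}$ becomes invisible. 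Pinning down precisely which normalization makes both composites the identity on the respective equivalence classes, and checking that the transported $(\Om,\vt)$ matches after that normalization, is the crux of the argument.

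Finally, for the first-kind claim I would combine Lemma \ref{lem:plane}, which gives that the structure is of the first kind if and only if $\vt(U)=1$, with the relation $\alpha=1-\vt(U)$; the latter shows $\vt(U)=1$ if and only if $\alpha=0$, yielding the chain of equivalences in the statement.
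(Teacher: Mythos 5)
Your overall route is the same as the paper's: the printed proof of Theorem \ref{corr:exact} consists of the single sentence ``combining the two propositions, we obtain\dots'', so the well-definedness checks in your first paragraph reproduce everything the paper actually writes down. Where you go further is in trying to verify that the two arrows are mutually inverse, and the obstruction you isolate is real: on the contact side the round trip returns $\ad_U=\frac{1}{\alpha-1}D$ with eigenvalue $\frac{\alpha}{\alpha-1}$ rather than $D$ with eigenvalue $\alpha$, and on the lcs side it replaces $\vt(U)$ by $\frac{1}{\vt(U)}$, rescaling $\vt$ accordingly. This mismatch cannot be absorbed by an automorphism of $\fg$: for $\fd_{4,1}$ with $(\Om,\vt)=(-e^{12}+2e^{34},e^4)$ one has $\vt(U)=\frac12$, and the round trip produces $(-e^{12}-e^{34},-2e^4)$, which by Table \ref{table:lcs-4} (the entries $\sigma=1$ and $\sigma=-2$ for $\fd_{4,\lambda}$) is not isomorphic to the original lcs structure. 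So the stated ``one-to-one correspondence'' only holds after some identification, exactly as you suspect.

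The gap is that your proposed identification does not work as stated, and you explicitly leave the decisive step open. Quotienting the right-hand side by arbitrary rescalings $D\mapsto\lambda D$ is too coarse: such a rescaling sends $\alpha$ to $\lambda\alpha$, so for any $\alpha\notin\{0,1\}$ some rescaling lands on $\alpha=1$, where $\Om=d^{\fh}\eta-(1-\alpha)\vt\wedge\eta$ degenerates and the object leaves the right-hand set; the rescaling orbits are therefore not contained in it. What actually occurs in the round trip is only the specific involution $D\mapsto\frac{1}{\alpha-1}D$, $\alpha\mapsto\frac{\alpha}{\alpha-1}$ (equivalently $\vt(U)\mapsto\frac{1}{\vt(U)}$ on the left), which does preserve both sets and whose fixed locus is $\alpha\in\{0,2\}$, i.e.\ $\vt(U)=\pm1$. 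A correct completion must either assert the bijection between the quotients of the two sides by this involution, or normalize one representative per orbit (for instance $|\vt(U)|\geq 1$, equivalently $|\alpha-1|\leq 1$). Your treatment of the first-kind statement via Lemma \ref{lem:plane} and $\alpha=1-\vt(U)$ is unaffected by all of this, since $\alpha=0$ is a fixed point of the involution.
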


\begin{example}
Consider the Lie algebra $\mathfrak{d}_{4}=(14,-24,-12,0)$ endowed with the exact lcs structure $\vt=e^4$ and $\Om=d_\vt e^3=-e^{12}+ e^{34}$. Then $U=e_4$ and $\vt(U)=1$, thus $(\Om,\vt)$ is of the first kind by Lemma \ref{lem:plane}. $\mathfrak{h}\coloneq\ker\vt\cong(0,0,-12)$ is isomorphic to the Heisenberg algebra and $\fg\cong\fh\rtimes_D\bR$, where $D=\mathrm{ad}_U\colon \fh\to\fh$ is the derivation
\[
D(e_1)=-e_1, \quad D(e_2)=e_2 \quad \textrm{and} \quad D(e_3)=0.
\]
$\fh$ is endowed with the contact form $\eta\coloneq e^3$ and $D^*\eta=0$.
\end{example}

\begin{example}
Consider the Lie algebra $\mathfrak{d}_{4,1}=(14,0,-12+34,0)$ endowed with the exact lcs structure $\vt=e^4$ and $\Om=d_\vt e^3=-e^{12}+ 2e^{34}$. Then $U=\frac{1}{2}e_4$, hence $\vt(U)=\frac{1}{2}$ and $(\Om,\vt)$ is not of the first kind by Lemma \ref{lem:plane}. $\mathfrak{h}\coloneq\ker \vt\cong(0,0,-12)$ is isomorphic to the Heisenberg algebra and $\fg\cong\fh\rtimes_D\bR$, where $D=\mathrm{ad}_U\colon \fh\to\fh$ is the derivation
\[
D(e_1)=\frac{1}{2}e_1, \quad D(e_2)=0 \quad \textrm{and} \quad D(e_3)=\frac{1}{2}e_3.
\]
$\fh$ is endowed with the contact form $\eta\coloneq e^3$ and $D^*\eta=\frac{1}{2}\eta$.
\end{example}

\begin{example}
Consider the Lie algebra $\fr_2'=(0,0,-13+24,-14-23)$ endowed with the lcs structure $\vt=e^2$ and $\Om=d_\vt(-e^3+e^4)=e^{13}-e^{14}-2e^{24}$. Then $U=e_1$, $V=\frac{e_3+e_4}{2}$, $\vt(U)=0$ and $\la U,V\ra$ is not symplectic. The lcs structure is exact but we can not apply Theorem \ref{corr:exact}. The Lie algebra $\fh=\ker\vt=\la e_1,e_3,e_4\ra$ has structure equations $(0,-13,-14)$ and is not a contact Lie algebra.
\end{example}

\subsection{A ``mixed'' structure result}\label{another}

In this section we consider a partial structure result for lcs Lie algebras which recovers, under certain circumstances, a special case of Theorem \ref{corr:exact}. We begin with a lcs Lie algebra $(\fg,\Om,\vt)$ of dimension $2n$ with characteristic vector $V$. Assume that we can write $\Om=\om+\eta\wedge\vt$ where $\eta\in\fg^*$ and $\om\in\Lambda^2\fg^*$ is such that $\imath_V\omega=0$. Since $\omega$ can not have rank $n$, but $\Om^n\neq 0$, it follows that $\vartheta\wedge\eta\wedge\omega^{n-1}\neq 0$ hence $\eta\wedge\omega^{n-1}\neq 0$. The non-degeneracy of $\Om$ provides us with $U\in\fg$ determined by the condition $\imath_U\Omega=-\eta$. We assume further that $\imath_U\omega=0$. Then $\eta(V)=\vt(U)=1$ and the plane $\la U,V\ra$ is symplectic for $\Om$. Since $d\vt=0$, we can write $\fg$ as a semidirect product $\fh\rtimes_D\bR$, where $\fh=\ker\vt$ and $D\colon\fh\to\fh$ is given by $\ad_U$; under this isomorphism, $U\in\fg$ corresponds to $(0,1)\in\fh\rtimes_D\bR$ and $\vt$ corresponds to the linear map $(X,a)\mapsto a$ (notice the different sign convention with respect to the previous section). According to this identification, the Chevalley-Eilenberg differentials of $\fg$ and $\fh$ are related by the formula
\[
d\zeta=d^{\fh}\zeta+(-1)^{p+1}D^*\zeta\wedge\vt,
\]
where $\zeta\in\Lambda^p\fh^*$ is identified with a pre-image $\zeta\in\Lambda^p\fg^*$ under the projection $\fg^*\to\fh^*$. Of course, $d\vt=0$. Now since $\eta(U)=0$ (respectively $\imath_U\omega=0$), then $\eta$ (respectively $\omega$) can be identified with an element of $\fh^*$ (respectively $\Lambda^2\fh^*$). We denote such elements again by $\eta$ and $\omega$. We have the following chain of equalities:
\[
\vt\wedge\om=\vt\wedge\Om=d\Om=d(\om+\eta\wedge\vt)=d^\fh\om-D^*\om\wedge\vt+d^\fh\eta\wedge \vt\,.
\]
This implies
\[
d^\fh\om=0\quad \textrm{and} \quad \om+D^*\om-d^\fh\eta=0\,.
\]
To solve these equations on $\fh$ we can make different \emph{Ans\"atze}:
\begin{enumerate}
\item $\omega=d^\fh\eta$ and $d^\fh(D^*\eta)=0$; then $(\fh,\eta)$ is a contact Lie algebra endowed with a derivation $D\colon\fh\to\fh$ such that $D^*\eta$ is closed. As a special case of this instance, one can consider $D^*\eta=0$; then $\vt(U)=1$ implies that we are in the context of Theorem \ref{corr:exact} when the lcs structure is of the first kind;
\item $d^\fh\omega=0$, $d^\fh\eta=0$ and $D^*\omega=-\omega$; this leads to another kind of structure. In fact, the closedness of $\om$ and $\eta$ in $\fh$, together with $\eta\wedge\om^{n-1}\neq 0$, imply that $(\fh,\eta,\omega)$ is a \emph{cosymplectic} Lie algebra, endowed with a derivation $D\colon\fh\to\fh$ such that $D^*\omega=-\omega$.
\end{enumerate}

Recall that a cosymplectic structure $(\eta,\omega)$ on a Lie algebra $\fh$ of dimension $2n-1$ determines a vector $R\in\fh$ by the conditions $\imath_R\om=0$ and $\eta(R)=1$; moreover, there is a decomposition 
\begin{equation}\label{eq:1789}
\fh^*=\la\eta\ra\oplus \langle R\rangle^\circ\,,
\end{equation}
where $\langle R\rangle^\circ$ denotes the annihilator of $\langle R\rangle$, and the linear map $\_\wedge\om^{n-1}\colon\fh^*\to\Lambda^{2n-1}\fh^*$ is non-zero on $\la\eta\ra$, hence its kernel coincides with $\langle R\rangle^\circ$.

The second \emph{Ansatz} provides a kind of alternative structure result for lcs Lie algebras, corroborated by the following

\begin{proposition}\label{cosymp_lcs}
Let $(\fh,\eta,\omega)$ be a cosymplectic Lie algebra of dimension $2n-1$, endowed with a derivation $D$ such that $D^*\omega=\alpha\omega$ for some $\alpha\neq 0$. Then $\fg=\fh\rtimes_D\bR$ admits a natural lcs structure. The Lie algebra $\fg$ is unimodular if and only if $\fh$ is unimodular and $D^*\eta=-\alpha(n-1)\eta+\zeta$ for some $\zeta\in\langle R\rangle^\circ$. If $\fh$ is unimodular then the lcs structure $(\Om,\vt)$ on $\fg$ is not exact.
\end{proposition}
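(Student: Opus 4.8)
The plan is to treat the three assertions in turn, reusing the ``mixed'' framework of Section~\ref{another}. Write $\vt$ for the canonical $1$-form $(X,a)\mapsto a$ on $\fg=\fh\rtimes_D\bR$, so that $d\vt=0$ and, for $\zeta\in\Lambda^p\fh^*$, one has $d\zeta=d^\fh\zeta+(-1)^{p+1}D^*\zeta\wedge\vt$. Set $\Om\coloneq\omega+\eta\wedge\vt$, viewing $\eta,\omega$ as elements of $\fg^*,\Lambda^2\fg^*$. Since $d^\fh\eta=d^\fh\omega=0$ one computes $d(\eta\wedge\vt)=0$ and $d\omega=-D^*\omega\wedge\vt=\alpha\,\vt\wedge\omega$, so $d\Om=\alpha\,\vt\wedge\omega=(\alpha\vt)\wedge\Om$; thus $(\Om,\alpha\vt)$ satisfies the lcs equations with Lee form the nonzero multiple $\alpha\vt$ of the closed form $\vt$. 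Non-degeneracy follows from $\omega^n=0$ (as $\omega$ has rank $2n-2$ on $\fh$) and $\Om^n=n\,\eta\wedge\omega^{n-1}\wedge\vt\neq0$, using $\eta\wedge\omega^{n-1}\neq0$ from the cosymplectic hypothesis. This establishes the first claim.

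For the unimodularity statement, I would compute the trace of $\ad_{(X,a)}$ on $\fg=\fh\oplus\bR$ directly from the bracket $[(X,a),(Y,b)]=(aD(Y)-bD(X)+[X,Y]_\fh,0)$: the block on $\fh$ contributes $a\operatorname{tr}(D)+\operatorname{tr}(\ad^\fh_X)$ while the $\bR$-direction contributes $0$, so $\operatorname{tr}(\ad_{(X,a)})=a\operatorname{tr}(D)+\operatorname{tr}(\ad^\fh_X)$. Hence $\fg$ is unimodular if and only if $\fh$ is unimodular and $\operatorname{tr}(D)=0$. It then remains to translate $\operatorname{tr}(D)=0$ into the stated condition on $D^*\eta$. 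Since $D^*$ acts as a derivation on the one-dimensional space $\Lambda^{2n-1}\fh^*$, it acts there as multiplication by $\operatorname{tr}(D)$, and applying it to the volume $\eta\wedge\omega^{n-1}$ gives
\[
\operatorname{tr}(D)\,\eta\wedge\omega^{n-1}=D^*(\eta\wedge\omega^{n-1})=D^*\eta\wedge\omega^{n-1}+(n-1)\alpha\,\eta\wedge\omega^{n-1}.
\]
Decomposing $D^*\eta=c\,\eta+\zeta$ along \eqref{eq:1789} and using $\langle R\rangle^\circ=\ker(\_\wedge\omega^{n-1})$, the term $\zeta\wedge\omega^{n-1}$ vanishes, so $\operatorname{tr}(D)=c+(n-1)\alpha$. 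Therefore $\operatorname{tr}(D)=0$ is equivalent to $D^*\eta=-\alpha(n-1)\eta+\zeta$ with $\zeta\in\langle R\rangle^\circ$, which is the asserted criterion.

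The last assertion is the delicate point, and I expect it to be the main obstacle. The tempting route is to pass to the top Morse--Novikov class $[\Om^n]\in H^{2n}_{n\alpha\vt}(\fg)$: if $\Om=d_{\alpha\vt}\beta$ then $\Om^n=d_{n\alpha\vt}(\Om^{n-1}\wedge\beta)$ is exact. However, this is insufficient, since a direct computation shows $[\Om^n]$ can vanish (it dies precisely when $\operatorname{tr}(D)=-n\alpha$, which $\fh$ unimodular does not force). The correct move is to analyze the primitive equation directly. Assuming $\Om=d_{\alpha\vt}\beta$ and writing $\beta=\beta_\fh+b\,\vt$ with $\beta_\fh\in\fh^*$, the $\Lambda^2\fh^*$-component of $d_{\alpha\vt}\beta=d^\fh\beta_\fh+(D^*\beta_\fh+\alpha\beta_\fh)\wedge\vt$ must equal $\omega$, forcing the necessary condition $\omega=d^\fh\beta_\fh$, i.e.\ $\omega$ would be exact on $\fh$.

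I would then derive a contradiction from unimodularity of $\fh$. Since $\eta$ and $\omega$ are $d^\fh$-closed and $\deg(\eta\wedge\omega^{n-2})=2n-3$ is odd, the Leibniz rule gives
\[
d^\fh\!\left(\eta\wedge\omega^{n-2}\wedge\beta_\fh\right)=-\,\eta\wedge\omega^{n-2}\wedge d^\fh\beta_\fh=-\,\eta\wedge\omega^{n-1},
\]
so the cosymplectic volume $\eta\wedge\omega^{n-1}$ would be $d^\fh$-exact. But $\fh$ unimodular means $d^\fh$ vanishes on $\Lambda^{2n-2}\fh^*$, so no nonzero element of $\Lambda^{2n-1}\fh^*$ is exact, contradicting $\eta\wedge\omega^{n-1}\neq0$. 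Hence no primitive $\beta$ exists and $(\Om,\alpha\vt)$ is non-exact. I would emphasize in the write-up that the argument uses only $\fh$ unimodular (equivalently $H^{2n-1}(\fh)\neq0$), independently of $\operatorname{tr}(D)$, which is exactly why the direct reduction to exactness of $\omega$ on $\fh$ — rather than the top-power shortcut — is the one that works.
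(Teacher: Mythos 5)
Your proof is correct and follows the same overall architecture as the paper's: verify the lcs equations via the semidirect-product formula for the Chevalley--Eilenberg differential, characterize unimodularity, and derive non-exactness by showing that a Morse--Novikov primitive of $\Om$ would force $\omega$ to be $d^\fh$-exact. Two points of comparison are worth recording. For unimodularity you argue via $\operatorname{tr}(\ad)$, reducing to ``$\fh$ unimodular and $\operatorname{tr}(D)=0$'' and then converting $\operatorname{tr}(D)=0$ into the stated condition on $D^*\eta$ through the action of $D^*$ on the line $\Lambda^{2n-1}\fh^*$; the paper instead tests the differential on the decomposition $\Lambda^{2n-1}\fg^*\cong\la\omega^{n-1}\wedge\eta\ra\oplus\Lambda^{2n-2}\fh^*\wedge\vt$. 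The two are equivalent, and yours is arguably the more transparent reduction. For non-exactness, the paper asserts without justification that $\omega$ cannot be $d^\fh$-exact when $\fh$ is unimodular; your Leibniz computation $d^\fh(\eta\wedge\omega^{n-2}\wedge\beta_\fh)=-\eta\wedge\omega^{n-1}$ supplies exactly the missing step, so your write-up is more complete there. One small slip: with $\vt(X,a)=a$ one has $d\omega=-D^*\omega\wedge\vt=-\alpha\,\omega\wedge\vt=-\alpha\,\vt\wedge\omega$ (a $2$-form commutes with a $1$-form), so the Lee form is $-\alpha\vt$ rather than $\alpha\vt$, consistent with the paper's normalization $\vt(X,a)=-\alpha a$. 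This sign is harmless for everything that follows, since the $\Lambda^2\fh^*$-component of $d_{c\vt}\beta$ equals $d^\fh\beta_\fh$ for any constant multiple $c\vt$ of $\vt$, so the key implication ``exact $\Rightarrow$ $\omega$ is $d^\fh$-exact'' is unaffected.
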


\begin{proof}
Set $\fg=\fh\rtimes_D\bR$ and define $\vt(X,a)=-\alpha a$; with respect to this choice of $\vt$, the formula 
\[
d\zeta=d^\fh\zeta+\frac{(-1)^p}{\alpha}D^*\zeta\wedge\vt
\]
relates the Chevalley-Eilenberg differentials on $\fg$ and $\fh$ for a $p$-form $\zeta$.
Setting $\Omega=\om+\eta\wedge \vt$, we see that $\Om^n=\om^{n-1}\wedge\eta\wedge\vt\neq 0$, hence $\Omega$ is non-degenerate. Moreover,
\[
d\Omega=d^\fh\omega+\frac{1}{\alpha}D^*\omega\wedge \vt=\om\wedge\vt=(\om+\eta\wedge\vt)\wedge\vt=\vt\wedge\Omega\,.
\]

An $n$-dimensional Lie algebra $\fk$ is unimodular if and only if $d(\Lambda^{n-1}\fk^*)=0$, {\itshape i.e.} if and only if a generator of $\Lambda^n\fk^*$ is not exact. Now $\omega^{n-1}\wedge\eta\wedge\vt$ generates $\Lambda^{2n}\fg^*$ and one has $\Lambda^{2n-1}\fg^*\cong\la\omega^{n-1}\wedge\eta\ra\oplus\Lambda^{2n-2}\fh^*\wedge\vt$. By hypothesis we have $D^*\omega=\alpha\omega$ and we can decompose $D^*\eta$ according to \eqref{eq:1789}, $D^*\eta=\beta\eta+\zeta$ for some $\beta\in\bR$ and $\zeta\in\langle R\rangle^\circ$. Now
\begin{align*}
d(\omega^{n-1}\wedge\eta)&=d^\fh(\omega^{n-1}\wedge\eta)-\frac{1}{\alpha}D^*(\omega^{n-1}\wedge\eta)\wedge \vt\\
&=-\frac{1}{\alpha}(\alpha(n-1)\omega^{n-1}\wedge\eta+\omega^{n-1}\wedge D^*\eta)\wedge\vt\\
&=-\frac{1}{\alpha}(\alpha(n-1)+\beta)\omega^{n-1}\wedge\eta\wedge \vt\,.
\end{align*}
This vanishes if and only if $\beta=-\alpha(n-1)$. If $\kappa\in\Lambda^{2n-2}\fh^*$, then
\[
d(\kappa\wedge\vt)=(d^\fh\kappa)\wedge\vt
\]
which vanishes if and only if $d^\fh\kappa=0$; since $\kappa$ is arbitrary, this happens if and only if $\fh$ is unimodular.

If $\fh$ is unimodular then $\omega$ can not be exact. If the lcs structure $(\Omega,\vt)$ is exact, there exists $\upsilon\in\fh^*$ with $\Omega=d\upsilon+\upsilon\wedge\vt=d^\fh\upsilon+\left(-\frac{1}{\alpha}D^*\upsilon+\upsilon\right)\wedge\vt=\omega+\eta\wedge\theta$. This is impossible, since it would imply that $\omega$ is exact.
\end{proof}

\begin{remark}
According to \cite[Proposition 10]{BFM}, cosymplectic Lie algebras $(\fh,\eta,\omega)$ in dimension $2n-1$ are in one-to-one correspondence with symplectic Lie algebras $(\fs,\omega)$ in dimension $2n-2$, endowed with a derivation $E$ such that $E^*\omega=0$. The correspondence is given by $(\fh,\eta,\omega)\mapsto(\ker\eta,\omega,\ad_R)$ and $(\fs,\omega,E)\mapsto (\fs\rtimes_E\bR,\eta,\omega)$, where $\eta$ generates the $\bR$-factor.
In principle one could use the above proposition to establish a link between non-exact lcs and symplectic Lie algebras.
\end{remark}

\begin{example}\label{ex:1}
We consider the abelian Lie algebra $\bR^3=\la f_1,f_2,f_3\ra$ endowed with the cosymplectic structures $\eta=f^3$, $\omega_\pm=\pm f^{12}$. For $\gamma>0$ we consider the derivation
\[
D=\begin{pmatrix}
\gamma & 1 & 0\\ -1 & \gamma & 0\\ 0 & 0 & 0
\end{pmatrix}\colon\bR^3\to\bR^3\,,
\]
which satisfies $D^*\eta=0$ and $D^*\omega_\pm=2\gamma\omega_\pm$. The Lie algebra $\fg=\bR^3\rtimes_D\bR$ is endowed with the lcs structure $(\Om,\vt)=(\pm f^{12}+f^{34},-2\gamma f^4)$, where $f^4$ generates the $\bR$-factor. The lcs structure is not exact and it is easy to see that $\fg$ is isomorphic to $\fr\fr'_{3,\gamma}$ (see Table \ref{algebreLie}).
\end{example}

\begin{example}\label{ex:2}
The abelian Lie algebra $\bR^3=\la e_1,e_2,e_3\ra$ is endowed with the cosymplectic structure $\eta=e^1$, $\omega=e^{23}$. For $\gamma\in\bR$ and $\delta>0$ we consider the derivation 
\[
D=\begin{pmatrix}
1 & 0 & 0\\ 0 & \gamma & \delta\\ 0 & -\delta & \gamma
\end{pmatrix}\colon\bR^3\to\bR^3
\]
which satisfies $D^*\eta=\eta$ and $D^*\omega=2\gamma\omega$. We assume that $\gamma\neq 0$. The Lie algebra $\fg=\bR^3\rtimes_D\bR$ is isomorphic to $\fr'_{4,\gamma,\delta}$ (see Table \ref{algebreLie}), which is endowed with the lcs structure $(\Om,\vt)=(e^{14}+e^{23},-2\gamma e^4)$. The lcs structure is not exact.
\end{example}

\subsection{Cotangent extensions and Lagrangian ideals}\label{sec:cotangente-ext}

In this section we extend to the locally conformally symplectic setting the  cotangent extension problem \cite{Boyom} studied by Ovando for four-dimensional symplectic Lie algebras in \cite{Ovando}. Let $\fh$ be a Lie algebra and let $\fh^*$ be its dual vector 
space. Consider the skew-symmetric 2-form $\Omega_0$ on $\fh^*\oplus\fh$, defined by
\begin{equation}\label{symplectic}
 \Om_0((\varphi,X),(\psi,Y))=\varphi(Y)-\psi(X).
\end{equation}
In the symplectic context, the \emph{cotangent extension problem} consists in finding a Lie algebra structure on $\fh^*\oplus\fh$ such that
\begin{itemize}
\item $0\longrightarrow\fh^*\longrightarrow\fh^*\oplus\fh\longrightarrow\fh\longrightarrow 0$ is a short exact sequence of Lie algebras, where $\fh^*$ is endowed with the structure of an abelian Lie algebra;
\item the 2-form $\Om_0$ defined in \eqref{symplectic} is closed.
\end{itemize}

Suppose $\rho\colon\fh\to\mathrm{End}(\fh^*)$ is a Lie algebra representation and define the skew-symmetric map $[ \ , \ ]\colon\fg\times\fg\to\fg$ on $\fg=\fh^*\oplus\fh$ by setting
\begin{itemize}
 \item $[(\varphi,0),(\psi,0)]_\fg=0$,
 \item $[(\varphi,0),(0,X)]_\fg=(-\rho(X)(\varphi),0)$ and
 \item $[(0,X),(0,Y)]_\fg=(\alpha(X,Y),[X,Y]_\fh)$,
\end{itemize}
where $\alpha\in C^2(\fh,\fh^*)$ is a 2-cochain (the module structure on $\fh^*$ is clearly given by $\rho$). Then $[ \ , \ ]$ is a Lie bracket if and only $\alpha\in Z^2(\fh,\fh^*)$. In this case, 
$\fh^*\subset\fg$ is an ideal and we have a short exact sequence
\[
 0\longrightarrow\fh^*\longrightarrow\fg\longrightarrow\fh\longrightarrow 0.
\]
The 2-form \eqref{symplectic} is closed if and only if
\begin{eqnarray}
 \alpha(X,Y)(Z)+\alpha(Y,Z)(X)+\alpha(Z,X)(Y)=0\label{conditions_1}\\
 \rho(X)(\varphi)(Y)-\rho(Y)(\varphi)(X)=-\varphi([X,Y]_\fh)\label{conditions_2}
\end{eqnarray}

Hence the Lie algebra $\fg$ attached to the triple $(\fh,\rho,[\alpha])$, satisfying \eqref{conditions_1} and \eqref{conditions_2}, where $[\alpha]$ is the class of $\alpha$ in $H^2(\fh,\fh^*)$, is a solution of the cotangent extension problem. 
In \cite{Ovando}, the author proves:
\begin{theorem}[{\cite[Theorem 3.6]{Ovando}}]\label{thm:Ovando}
 Let $\fg$ be a $2n$-dimensional Lie algebra.
 \begin{itemize}
  \item If $\fj\subset\fg$ is an abelian ideal of dimension $n$, then $\fg$ is a solution of the cotangent extension problem if and only if \eqref{conditions_1} and \eqref{conditions_2} are satisfied.
  \item If $\fg$ is a symplectic Lie algebra and $\fj\subset\fg$ is a Lagrangian ideal, then $\fg$ is a solution of the cotangent extension problem.
 \end{itemize}
\end{theorem}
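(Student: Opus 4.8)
The plan is to treat the two bullets by a single extension-theoretic mechanism: the first bullet is essentially the closedness computation already carried out just before the statement, while the second reduces to the first once one checks that a Lagrangian ideal is abelian and that the symplectic form itself furnishes the identification $\fj\cong\fh^*$. So for the first bullet, suppose $\fj\subset\fg$ is an abelian ideal with $\dim\fj=n$ and set $\fh=\fg/\fj$, so that $\dim\fh=n=\dim\fh^*$. Fix a linear isomorphism $\fj\cong\fh^*$ and a vector-space splitting $s\colon\fh\to\fg$ of the projection $\fg\to\fh$. Standard abelian-extension theory then realizes $\fg$ as $\fh^*\oplus\fh$ with exactly the bracket written before the statement: the representation $\rho\colon\fh\to\mathrm{End}(\fh^*)$ is the adjoint action of $\fg$ on $\fj$ (which descends to $\fh$ because $\fj$ is abelian) transported through $\fj\cong\fh^*$, and $\alpha(X,Y)=[sX,sY]-s[X,Y]\in Z^2(\fh,\fh^*)$ measures the failure of $s$ to be a homomorphism. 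Under this identification $\Omega_0$ is the canonical non-degenerate pairing, so ``$\fg$ is a solution'' means precisely ``$\Omega_0$ is closed''. Expanding $d\Omega_0(A,B,C)$ on the Chevalley--Eilenberg complex and splitting $A,B,C$ into their $\fh^*$- and $\fh$-components, the terms with two or three $\fh^*$-arguments vanish because $\fh^*$ is abelian and $\Omega_0$-isotropic; the two surviving cases give exactly \eqref{conditions_1} (three $\fh$-arguments, yielding the Jacobi-type relation on $\alpha$) and \eqref{conditions_2} (one $\fh^*$- and two $\fh$-arguments, yielding the compatibility of $\rho$ with the bracket of $\fh$).

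For the second bullet I would first show that a Lagrangian ideal $\fj$ of a symplectic Lie algebra $(\fg,\omega)$ is automatically abelian. Indeed, for $X,Y\in\fj$ and arbitrary $Z\in\fg$, closedness $d\omega=0$ gives $\omega([X,Y],Z)=\omega([X,Z],Y)-\omega([Y,Z],X)$ up to signs; since $\fj$ is an ideal, both $[X,Z]$ and $[Y,Z]$ lie in $\fj$, so both right-hand terms vanish by $\omega|_{\fj\times\fj}=0$, and non-degeneracy of $\omega$ forces $[X,Y]=0$. Thus $\fj$ is an abelian ideal of dimension $n$, and the first bullet becomes applicable as soon as the correct identification is produced.

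That identification is supplied canonically by $\omega$: the map $v\mapsto\omega(v,\cdot)$ sends $\fj$ into the annihilator $\fj^{\circ}=(\fg/\fj)^*=\fh^*$ (because $\fj$ is Lagrangian) and is injective by non-degeneracy, hence is an isomorphism $\fj\cong\fh^*$ by a dimension count. Choosing a Lagrangian complement $W$ to $\fj$ --- which exists for any Lagrangian subspace of a symplectic vector space --- yields a splitting $\fh\cong W\hookrightarrow\fg$ and so $\fg=\fh^*\oplus\fh$. Under these choices $\omega=\Omega_0$: it vanishes on $\fh^*\times\fh^*$ (Lagrangian) and on $W\times W$ (Lagrangian complement), while $\omega(\varphi,X)=\varphi(X)$ by the very construction of the identification. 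As $\omega$ is closed, so is $\Omega_0$, whence $\fg$ is a solution of the cotangent extension problem.

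The main obstacle lies in this last step. The delicate points are that the identification $\fj\cong\fh^*$ must be the one induced by $\omega$, and that the complement $W$ must be chosen \emph{Lagrangian} rather than merely linear: an arbitrary complement would leave a spurious $W\times W$-component in $\omega$ and $\Omega_0$ would fail to equal $\omega$. The abelian-ness of $\fj$, though quickly proved, is the linchpin that makes the first bullet applicable; the first bullet itself is bookkeeping once the abelian-extension description is in place, the only care required being to track sign conventions so that the adjoint action becomes exactly the $\rho$ appearing in \eqref{conditions_2}.
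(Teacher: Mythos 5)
Your argument is correct and follows essentially the same route the paper itself takes: the paper quotes this statement from Ovando without reproving it, but its lcs analogues — the closedness computation yielding \eqref{conditions_1}--\eqref{conditions_2}, Lemma \ref{Lag:ab} (a Lagrangian ideal is abelian, via $d\omega=0$ plus isotropy), and Proposition \ref{Lag:id} (identify $\fj\cong\fh^*$ by $X\mapsto\imath_X\omega$ and pick a Lagrangian splitting) — are exactly your steps specialized to $\vt=0$. Your emphasis on choosing the complement Lagrangian, so that $\omega$ literally becomes $\Omega_0$, matches the paper's use of the Lagrangian splitting $\fg=\fj\oplus\fk$ in the proof of Proposition \ref{Lag:id}.
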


The cotangent extension problem is related to the fact that the cotangent bundle of any smooth manifold has a canonical symplectic structure. However, the simply connected Lie group $G$ is the 
cotangent bundle of the simply connected Lie group $H$ if and only if $H$ is abelian (see \cite[Remark 3.2]{Ovando}).

The cotangent bundle of any smooth manifold has a locally conformally symplectic structure. In fact, suppose $M$ is a smooth manifold, let $\hat{\vt}\in\Omega^1(M)$ be a closed 1-form and let $\pi\colon T^*M\to M$ be the natural projection. Let $\lambda^{can}$ denote the 
canonical 1-form on $T^*M$, given by $\lambda^{can}_{(p,\varphi)}(v)=\varphi(d\pi_{(p,\varphi)}(v))$ for $(p,\varphi)\in T^*M$ and $v\in T_{(p,\varphi)}(T^*M)$. Then $\Om\coloneq d\lambda^{can}-\vt\wedge\lambda^{can}$ defines a locally conformally symplectic structure on $T^*M$ whose Lee form is $\vt=\pi^*\hat{\vt}$. In fact, as neatly explained in \cite{Vaisman}, locally conformally symplectic manifolds are natural phase spaces of Hamiltonian dynamics.

Motivated by these speculations, we consider a Lie algebra $\fh$ with a closed element $\hat{\vt}\in\fh^*$ and set $\fg=\fh^*\oplus\fh$. We extend $\hat{\vt}$ to an element $\vt\in\fg^*$ by setting $\vt(\varphi,X)=\hat{\vt}(X)$ and define a 2-form $\Om_0$ on $\fg$ precisely as in \eqref{symplectic}.

In the locally conformally symplectic context, a solution of the \emph{cotangent extension problem} is a Lie algebra structure on $\fg$ such that
\begin{itemize}
\item $0\longrightarrow\fh^*\longrightarrow\fg\longrightarrow\fh\longrightarrow 0$ is a short exact sequence of Lie algebras, $\fh^*$ endowed with the structure of an abelian Lie algebra;
\item the 1-form $\vt$ is closed and the 2-form $\Om_0$ defined in \eqref{symplectic} satisfies $d\Om_0=\vt\wedge\Om_0$.
\end{itemize}

Given a representation $\rho\colon\fh\to\mathrm{End}(\fh^*)$ and a cochain $\alpha\in C^2(\fh,\fh^*)$, we define a skew-symmetric bilinear map $[ \ , \ ]\colon\fg\times\fg\to\fg$ as we did above. 
Then $[ \ , \ ]$ is a Lie bracket on $\fg$ if and only if $\alpha\in Z^2(\fh,\fh^*)$. Assuming this, we have a short exact sequence $0\to\fh^*\to\fg\to\fh\to 0$ and $\fh^*$ is an abelian ideal.

\begin{lemma}
 $\Om_0$ satisfies $d\Om_0=\vt\wedge\Om_0$ if and only if
 \begin{eqnarray}
 \alpha(X,Y)(Z)+\alpha(Y,Z)(X)+\alpha(Z,X)(Y)=0\label{conditions_3}\\
 \rho(X)(\varphi)(Y)-\rho(Y)(\varphi)(X)=d^\fh_\vt\varphi(X,Y)\label{conditions_4}
\end{eqnarray}
\end{lemma}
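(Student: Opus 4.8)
The plan is to verify the identity $d\Om_0=\vt\wedge\Om_0$ directly, by evaluating both sides on triples of vectors in $\fg=\fh^*\oplus\fh$ and comparing. Recall that the Chevalley--Eilenberg differential of a $2$-form and the wedge of $\vt$ with a $2$-form read
\[
d\Om_0(A,B,C)=-\Om_0([A,B],C)+\Om_0([A,C],B)-\Om_0([B,C],A),
\]
\[
(\vt\wedge\Om_0)(A,B,C)=\vt(A)\Om_0(B,C)-\vt(B)\Om_0(A,C)+\vt(C)\Om_0(A,B).
\]
Since both $d\Om_0$ and $\vt\wedge\Om_0$ are alternating trilinear forms, it suffices to test the equality on triples built from the two ``pure'' types of vectors, $(\varphi,0)\in\fh^*$ and $(0,X)\in\fh$; by antisymmetry only the number of $\fh$-type entries matters, so there are four cases.

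First I would dispose of the two degenerate cases. When all three arguments lie in $\fh^*$, every bracket vanishes (as $\fh^*$ is abelian) and $\vt$ annihilates $\fh^*$, so both sides are zero. When exactly one argument lies in $\fh$, all three brackets land in $\fh^*$ and $\Om_0$ vanishes on $\fh^*\times\fh^*$, so the left side is zero; on the right side the sole surviving $\vt$-factor multiplies $\Om_0$ evaluated on two $\fh^*$-entries, which is again zero. Hence these cases impose no condition.

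The substantive computation is the case of one $\fh^*$-entry and two $\fh$-entries, say $A=(\varphi,0)$, $B=(0,X)$, $C=(0,Y)$. Using the bracket rules $[(\varphi,0),(0,X)]=(-\rho(X)(\varphi),0)$ and $[(0,X),(0,Y)]=(\alpha(X,Y),[X,Y]_\fh)$ together with \eqref{symplectic}, the left-hand side collapses to $(\rho(X)(\varphi))(Y)-(\rho(Y)(\varphi))(X)+\varphi([X,Y]_\fh)$, while the right-hand side gives $-\hat{\vt}(X)\varphi(Y)+\hat{\vt}(Y)\varphi(X)$. Moving the term $\varphi([X,Y]_\fh)$ across and recognizing that
\[
-\varphi([X,Y]_\fh)-\hat{\vt}(X)\varphi(Y)+\hat{\vt}(Y)\varphi(X)=d^\fh\varphi(X,Y)-(\hat{\vt}\wedge\varphi)(X,Y)=d^\fh_\vt\varphi(X,Y),
\]
equating the two sides becomes exactly \eqref{conditions_4}. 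This identification --- recalling that $d_\vt=d-\vt\wedge\_$ and that $\vt$ restricts to $\hat{\vt}$ on $\fh$ --- is the only place where the twisting by $\vt$ enters, and is the heart of the proof; it is the step most prone to sign errors, so I would pin down the Chevalley--Eilenberg and wedge conventions carefully there.

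Finally, for three $\fh$-entries $A=(0,X)$, $B=(0,Y)$, $C=(0,Z)$, every $\Om_0$ evaluated on two $\fh$-vectors vanishes, so the right-hand side is zero; the left-hand side, after pairing the $\fh^*$-components $\alpha(\cdot,\cdot)$ against the remaining $\fh$-entry and using the antisymmetry of $\alpha$, becomes $-\bigl(\alpha(X,Y)(Z)+\alpha(Y,Z)(X)+\alpha(Z,X)(Y)\bigr)$. Setting this to zero is precisely the cyclic condition \eqref{conditions_3}. Collecting the four cases shows that $d\Om_0=\vt\wedge\Om_0$ holds if and only if \eqref{conditions_3} and \eqref{conditions_4} are satisfied, which is the assertion. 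I expect no genuine obstacle beyond disciplined sign bookkeeping: the content is a finite case check, and the lcs twist manifests solely through the replacement of $-\varphi([X,Y]_\fh)$ in the symplectic condition \eqref{conditions_2} by $d^\fh_\vt\varphi(X,Y)$.
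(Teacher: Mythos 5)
Your proposal is correct and follows essentially the same route as the paper: a direct evaluation of $d\Om_0$ and $\vt\wedge\Om_0$ on triples of pure type, with the case of one $\fh^*$-entry and two $\fh$-entries yielding \eqref{conditions_4} via the identity $d^\fh_\vt\varphi(X,Y)=-\varphi([X,Y]_\fh)-\hat{\vt}(X)\varphi(Y)+\hat{\vt}(Y)\varphi(X)$, and the all-$\fh$ case yielding \eqref{conditions_3}. The only difference is that you explicitly dispose of the two degenerate cases (three $\fh^*$-entries, and two $\fh^*$-entries with one $\fh$-entry), which the paper leaves implicit.
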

\begin{proof}
 To save space, we write simply $\varphi$ (respectively $X$) instead of $(\varphi,0)$ (respectively $(0,X)$). However, $(\varphi,X)$ remains the same. We have
 \begin{align*}
  d\Om_0(X,Y,Z) = & -\Om_0([X,Y]_\fg,Z)-\Om_0([Y,Z]_\fg,X)-\Om_0([Z,X]_\fg,Y)\\[5pt]
  = &-\Om_0((\alpha(X,Y),[X,Y]_\fh),Z)-\Om_0((\alpha(Y,Z),[Y,Z]_\fh),X)\\[5pt]
  & -\Om_0((\alpha(Z,X),[Z,X]_\fh),Y)\\[5pt]
  = & -\alpha(X,Y)(Z)-\alpha(Y,Z)(X)-\alpha(Z,X)(Y)
 \end{align*}
and
\[
 (\vt\wedge\Om_0)(X,Y,Z)=\vt(X)\Om_0(Y,Z)+\vt(Y)\Om_0(Z,X)+\vt(Z)\Om_0(X,Y)=0,
\]
which imply \eqref{conditions_3}. Next,
 \begin{align*}
  d\Om_0(\varphi,X,Y) & = -\Om_0(-\rho(X)(\varphi),Y)-\Om_0([X,Y]_\fg,\varphi)-\Om_0(\rho(Y)(\varphi),X)\\[5pt]
  & = \rho(X)(\varphi)(Y)-\rho(Y)(\varphi)(X)-\Om_0((\alpha(X,Y),[X,Y]_\fh),\varphi)\\[5pt]
  & = \rho(X)(\varphi)(Y)-\rho(Y)(\varphi)(X)+\varphi([X,Y]_\fh)\\[5pt]
  & = \rho(X)(\varphi)(Y)-\rho(Y)(\varphi)(X)-d^\fh\varphi(X,Y),
 \end{align*}
 and
 \begin{align*}
 (\vt\wedge\Om_0)(\varphi,X,Y) & =\vt(\varphi)\Om_0(X,Y)+\vt(X)\Om_0(Y,\varphi)+\vt(Y)\Om_0(\varphi,X)=\\
 & = -\vt(X)\varphi(Y)+\vt(Y)\varphi(X)=\\
 & = -(\vt\wedge\varphi)(X,Y).
 \end{align*}
 Hence $d\Om_0(\varphi,X,Y)=(\vt\wedge\Om_0)(\varphi,X,Y)$ implies \eqref{conditions_4}. Clearly \eqref{conditions_3} and \eqref{conditions_4} are also sufficient.
\end{proof}

\begin{corollary}\label{cor:cotg-ext}
 Let $\fh$ be a Lie algebra and let $\vt\in\fh^*$ be a closed 1-form. The Lie algebra structure on $\fg=\fh^*\oplus\fh$ attached to the triple $(\fh,\rho,[\alpha])$, where 
 $\rho\colon\fh\to\mathrm{End}(\fh^*)$ is a representation satisfying \eqref{conditions_4} and $[\alpha]\in H^2(\fh,\fh^*)$ satisfies \eqref{conditions_3}, is a solution to the cotangent extension problem in the locally conformally symplectic context.
\end{corollary}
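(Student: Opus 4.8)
The plan is to deduce the corollary directly from the preceding Lemma together with the structural facts already recorded in this subsection, so that almost no fresh computation is needed. Recall that being a solution of the cotangent extension problem in the lcs context amounts to three things: that the prescribed bracket makes $\fg=\fh^*\oplus\fh$ into a Lie algebra fitting into a short exact sequence $0\to\fh^*\to\fg\to\fh\to 0$ with $\fh^*$ an abelian ideal; that $\vt$ is closed; and that $\Om_0$ satisfies $d\Om_0=\vt\wedge\Om_0$. I would treat these three points in turn.

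For the first point I would invoke the discussion preceding the Lemma: the skew map $[\ ,\ ]$ attached to $(\rho,\alpha)$ is a Lie bracket exactly when $\alpha\in Z^2(\fh,\fh^*)$, and since $[\alpha]\in H^2(\fh,\fh^*)$ we may fix a genuine cocycle representative $\alpha$, so Jacobi holds and $\fg$ is a Lie algebra. Because $[(\varphi,0),(\psi,0)]_\fg=0$, the summand $\fh^*$ is an abelian subalgebra, and the remaining two bracket formulas show it is an ideal with quotient $\fh$, giving the required short exact sequence. For closedness of $\vt$ I would evaluate $d\vt(a,b)=-\vt([a,b]_\fg)$ on the three types of arguments: it vanishes on $\fh^*\times\fh^*$ and on $\fh^*\times\fh$ since $\vt$ annihilates $\fh^*$, and on $(0,X),(0,Y)$ it equals $-\hat\vt([X,Y]_\fh)=d^\fh\hat\vt(X,Y)=0$ because $\hat\vt$ is assumed closed on $\fh$; hence $d\vt=0$. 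I would also observe that $\Om_0$ is automatically non-degenerate, being the canonical dual pairing between $\fh^*$ and $\fh$ (on dual bases it pairs $(\varphi^i,0)$ with $(0,X_j)$ by $\delta^i_j$), so $(\Om_0,\vt)$ is a bona fide lcs datum as soon as the structure equation is verified.

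The third and final point is the equation $d\Om_0=\vt\wedge\Om_0$, and this is precisely the content of the preceding Lemma: that equation is equivalent to conditions \eqref{conditions_3} and \eqref{conditions_4}, which are exactly the hypotheses imposed on $\alpha$ and on $\rho$. Invoking the Lemma therefore closes the argument, and the three requirements in the definition are all met.

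The one place where I expect to have to be careful — and the main (mild) obstacle — is the consistency of phrasing the hypothesis through the class $[\alpha]\in H^2(\fh,\fh^*)$ while condition \eqref{conditions_3} is a priori a condition on a cochain. I would resolve this by reading \eqref{conditions_3} as a requirement on the chosen representative $\alpha$, rather than assuming blindly that it descends to cohomology: indeed, replacing $\alpha$ by a cohomologous cocycle $\alpha+d\beta$ corresponds to the Lie algebra isomorphism $(\varphi,X)\mapsto(\varphi+\beta(X),X)$, which fixes $\vt$ but pulls $\Om_0$ back to $\Om_0$ plus the extra $2$-form $(X,Y)\mapsto\beta(X)(Y)-\beta(Y)(X)$, so different representatives need not carry literally the same $\Om_0$. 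I would therefore either state the conclusion for the fixed representative $\alpha$ satisfying \eqref{conditions_3}, or remark that changing representatives transports one solution to another, making the statement well posed at the level of the class. With this caveat in place, the corollary follows at once from the Lemma.
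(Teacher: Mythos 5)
Your proof is correct and follows the same route as the paper, which states the corollary without further argument precisely because it is immediate from the preceding Lemma together with the discussion of when the bracket attached to $(\rho,\alpha)$ is a Lie bracket. Your closing remark about reading \eqref{conditions_3} as a condition on the chosen cocycle representative rather than on the class $[\alpha]$ is a fair and careful observation about the paper's phrasing, but it does not affect the validity of the argument.
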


\begin{remark}
 The formul\ae \ for the Lie bracket on the Lie algebra $\fg=\fh^*\oplus\fh$ given above show that $\fh^*$ sits in $\fg$ as an abelian ideal. Moreover, this ideal is by construction contained in $\ker\vt$. We can then sum up what we said so far in the following way: {\itshape a $2n$-dimensional Lie algebra $\fg$ endowed with a closed $\vt\in\fg^*$ and an $n$-dimensional abelian ideal $\fj\subset\ker\vt$ is a solution of the cotangent extension problem if and only if \eqref{conditions_3} and \eqref{conditions_4} hold}. This is the equivalent, in the lcs context, of the first statement of Theorem \ref{thm:Ovando}.
\end{remark}

We show now that solutions of the cotangent extension problem in the lcs case are related to the existence of a special kind of Lagrangian ideals.

\begin{lemma}\label{Lag:ab}
Let $(\fg,\Om,\vt)$ be a lcs Lie algebra. If $\fj\subset\fg$ is a Lagrangian ideal contained in $\ker\vt$, then $\fj$ is abelian.
\end{lemma}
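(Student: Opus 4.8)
The statement asserts that a Lagrangian ideal $\fj$ lying inside $\ker\vt$ is automatically abelian. Recall the data: $\dim\fg=2n$, $\dim\fj=n$, $\Om\big|_{\fj\times\fj}\equiv0$, and $\vt\big|_{\fj}\equiv0$ (since $\fj\subset\ker\vt$). My plan is to exploit the lcs relation $d\Om=\vt\wedge\Om$ evaluated on three vectors of $\fj$, turning the Lagrangian condition into a statement about brackets. The key point is that on $\fj$ the extra term $\vt\wedge\Om$ drops out for free, because every argument kills $\vt$, so the computation reduces to the purely symplectic case.

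\emph{First step.}
I would write out the Chevalley--Eilenberg formula for $d\Om$ on three elements $X,Y,Z\in\fj$:
\[
d\Om(X,Y,Z)=-\Om([X,Y],Z)-\Om([Y,Z],X)-\Om([Z,X],Y).
\]
\emph{Second step.}
Since $X,Y,Z\in\fj\subset\ker\vt$, one has $\vt(X)=\vt(Y)=\vt(Z)=0$, so
\[
(\vt\wedge\Om)(X,Y,Z)=\vt(X)\Om(Y,Z)+\vt(Y)\Om(Z,X)+\vt(Z)\Om(X,Y)=0.
\]
The lcs equation $d\Om=\vt\wedge\Om$ therefore gives
\[
\Om([X,Y],Z)+\Om([Y,Z],X)+\Om([Z,X],Y)=0.
\]
\emph{Third step.}
Because $\fj$ is an \emph{ideal}, the brackets $[X,Y],[Y,Z],[Z,X]$ all lie in $\fj$; and because $\fj$ is \emph{Lagrangian}, $\Om$ vanishes whenever both of its arguments are in $\fj$. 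Hence each of the three terms above is individually zero, and the identity is satisfied trivially --- it yields no information by itself. This is the genuine obstacle: the naive three-fold computation is vacuous, so I must instead probe $[X,Y]$ against vectors \emph{outside} $\fj$.

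\emph{Resolving the obstacle.}
The fix is to test $\Om([X,Y],\,\cdot\,)$ against an arbitrary $W\in\fg$ rather than staying inside $\fj$, using non-degeneracy of $\Om$ to conclude $[X,Y]=0$. Fix $X,Y\in\fj$; I claim $\Om([X,Y],W)=0$ for every $W\in\fg$. For this I would evaluate $d\Om$ on the triple $X,Y,W$ and use $d\Om=\vt\wedge\Om$ together with $\vt(X)=\vt(Y)=0$:
\[
-\Om([X,Y],W)-\Om([Y,W],X)-\Om([W,X],Y)=\vt(W)\,\Om(X,Y).
\]
Now $\Om(X,Y)=0$ since $X,Y\in\fj$ is Lagrangian, so the right side vanishes. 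Moreover $[Y,W]\in\fj$ and $[W,X]\in\fj$ because $\fj$ is an ideal, whence $\Om([Y,W],X)=\Om([W,X],Y)=0$ by the Lagrangian property again (each has both arguments in $\fj$). The identity collapses to $\Om([X,Y],W)=0$. Since $W\in\fg$ was arbitrary and $\Om$ is non-degenerate, $[X,Y]=0$. As $X,Y\in\fj$ were arbitrary, $\fj$ is abelian. The one subtlety to state carefully is the sign bookkeeping in the Chevalley--Eilenberg differential and the orientation of the wedge in $\vt\wedge\Om$, but these are routine; the conceptual content is entirely in choosing the third test vector $W$ in all of $\fg$ and then invoking non-degeneracy.
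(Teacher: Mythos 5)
Your proof is correct and follows essentially the same route as the paper: both evaluate $d\Om=\vt\wedge\Om$ on $X,Y\in\fj$ together with an arbitrary $Z\in\fg$, use the ideal and Lagrangian properties to kill all terms except $\Om([X,Y],Z)$, and conclude by non-degeneracy. The only difference is expository --- your preliminary ``all three vectors in $\fj$'' computation is a dead end the paper skips entirely --- but the decisive step is identical.
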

\begin{proof}
For $X,Y\in\fj$ and $Z\in\fg$ we compute
\begin{align*}
\Om([X,Y],Z)&=-d\Om(X,Y,Z)+\Om([X,Z],Y)-\Om([Y,Z],X)\\[5pt]
&=-(\vt\wedge\Om)(X,Y,Z)=-\vt(X)\Om(Y,Z)+\vt(Y)\Om(X,Z)-\vt(Z)\Om(X,Y)\\[5pt]
&=0\,,
\end{align*}
concluding the proof.
\end{proof}

\begin{proposition}\label{Lag:id}
Let $(\fg,\Om,\vt)$ be a $2n$-dimensional lcs Lie algebra with a Lagrangian ideal $\fj\subset\ker\vt$. Then $\fg$ is a solution of the cotangent extension problem.
\end{proposition}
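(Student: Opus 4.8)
The plan is to realize $\fg$ concretely as a cotangent extension $\fh^*\oplus\fh$ in which $\Om$ becomes the canonical form $\Om_0$, and then to read off the conclusion from Corollary \ref{cor:cotg-ext}. First I would set $\fh\coloneq\fg/\fj$, which is a Lie algebra of dimension $n$ since $\fj$ is an ideal, and recall from Lemma \ref{Lag:ab} that $\fj$ is abelian. Because $\fj$ is Lagrangian one has $\fj^\Om=\fj$, so the bilinear map $(W,[X])\mapsto\Om(W,X)$ is well defined on $\fj\times(\fg/\fj)$ (the isotropy of $\fj$ guarantees independence of the representative) and nondegenerate; it therefore gives an isomorphism $\fj\xrightarrow{\sim}\fh^*$, $W\mapsto\Om(W,\cdot)$. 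Since by hypothesis $\fj\subset\ker\vt$, the form $\vt$ descends to a $\hat{\vt}\in\fh^*$, and $\hat{\vt}$ is closed because $d\vt=0$ and the projection $\fg\to\fh$ is a surjective morphism of Lie algebras.

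Next I would choose a Lagrangian complement $\mathfrak{l}$ of $\fj$ in the symplectic vector space $(\fg,\Om)$ --- such a complement always exists --- and use the restriction to $\mathfrak{l}$ of the projection $\fg\to\fh$, which is a linear isomorphism, to define a section $s\colon\fh\to\fg$ with image $\mathfrak{l}$. The resulting vector space decomposition $\fg=\fj\oplus\mathfrak{l}\cong\fh^*\oplus\fh$ carries $\Om$ to $\Om_0$: indeed $\Om$ vanishes on $\fj\times\fj$ and on $\mathfrak{l}\times\mathfrak{l}$, both being Lagrangian, while for $W\in\fj$ (corresponding to $\varphi\in\fh^*$) and $X\in\fh$ one has $\Om(W,s(X))=\varphi(X)$, which matches $\Om_0((\varphi,0),(0,X))=\varphi(X)$ in \eqref{symplectic}. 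To see that the bracket has the prescribed shape, note that since $\fj$ is an abelian ideal the adjoint action of $\fg$ on $\fj$ descends to a representation $\rho\colon\fh\to\mathrm{End}(\fj)\cong\mathrm{End}(\fh^*)$, accounting for $[(\varphi,0),(0,X)]$, whereas $[\fh^*,\fh^*]=0$; the failure of $s$ to be a homomorphism defines $\alpha\in C^2(\fh,\fh^*)$ via $[s(X),s(Y)]=s([X,Y]_\fh)+\alpha(X,Y)$, and the Jacobi identity forces $\alpha\in Z^2(\fh,\fh^*)$.

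Thus, under the chosen identification, $\fg$ is exactly the Lie algebra attached to the triple $(\fh,\rho,[\alpha])$, with the closed form $\vt$ pulled back from $\hat{\vt}$ and with $\Om_0$ as in \eqref{symplectic}. Finally, the lcs condition $d\Om=\vt\wedge\Om$ becomes $d\Om_0=\vt\wedge\Om_0$, which by the Lemma preceding Corollary \ref{cor:cotg-ext} is equivalent to \eqref{conditions_3} and \eqref{conditions_4}; hence $\fg$ is a solution of the cotangent extension problem. The main obstacle I anticipate is precisely this identification step: one must produce a single vector space isomorphism $\fg\cong\fh^*\oplus\fh$ that simultaneously sends $\Om$ to $\Om_0$ and puts the bracket into representation-plus-cocycle normal form. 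The equality $\fj^\Om=\fj$ together with the existence of a Lagrangian complement is what reconciles these two demands; once the identification is fixed, verifying \eqref{conditions_3}--\eqref{conditions_4} is purely formal and is subsumed by the lemma that precedes Corollary \ref{cor:cotg-ext}.
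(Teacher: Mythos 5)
Your proposal is correct and follows essentially the same route as the paper: invoke Lemma \ref{Lag:ab} to get that $\fj$ is abelian, use the non-degeneracy of $\Om$ together with the Lagrangian condition to identify $\fj$ with $\fh^*$ where $\fh=\fg/\fj$, pick a Lagrangian complement to build the isomorphism $\fg\cong\fh^*\oplus\fh$ carrying $(\Om,\vt)$ to $(\Om_0,\vt_0)$, and conclude via the lemma preceding Corollary \ref{cor:cotg-ext}. Your additional unpacking of the bracket into the representation $\rho$ and the cocycle $\alpha$ is a harmless elaboration of what the paper leaves implicit.
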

\begin{proof}
Being contained in $\ker\vt$, $\fj$ is an abelian ideal by Lemma \ref{Lag:ab}; moreover, we have a short exact sequence of Lie algebras 
\begin{equation}\label{ses:1}
0\to\fj\to\fg\to\fh\to 0\,,
\end{equation}
where $\fh\coloneq\fg/\fj$. Since $\fj$ is Lagrangian, the non-degeneracy of $\Om$ identifies it with $\fh^*$. More precisely, the linear map $\sigma\colon\fj\to\fh^*$, $X\mapsto\imath_X\Om$ is injective, hence an isomorphism by dimension reasons. Choose a Lagrangian splitting $\fg=\fj\oplus\fk$ (such a splitting always exists, see \cite[Lecture 2]{Weinstein} for a proof). We use $\sigma$ and the isomorphism $\fh\cong\fk$ to get an isomorphism $\Sigma\colon \fg=\fj\oplus\fk\to\fh^*\oplus\fh$. Clearly $\fh^*$ sits in $\fg$ as an abelian Lie algebra. Moreover, dualizing \eqref{ses:1} we see that $\vt\in\fg^*$ comes from a closed element $\hat{\vt}\in\fh^*$. We endow $\fh^*\oplus\fh$ with the skew-symmetric form $\Om_0$ on $\fh^*\oplus\fh$ defined by \eqref{symplectic} and the closed 1-form $\vt_0$ obtained from $\hat{\vt}$ by setting equal to zero on $\fh^*$. We claim that $\Sigma$ provides an isomorphism of lcs Lie algebras between $(\fg,\Om,\vt)$ and $(\fh^*\oplus\fh,\Om_0,\vt_0)$. Indeed,
\[
(\Sigma^*\vt_0)(X,0)=\vt_0(\Sigma(X,0))=\vt_0(\sigma(X),0)=0=\vt(X,0)
\]
and
\[
(\Sigma^*\vt_0)(0,Y)=\vt_0(\Sigma(0,Y))=\hat{\vt}(Y)=\vt(0,Y)\,.
\]
Moreover,
\[
(\Sigma^*\Om_0)((X,0),(Y,0))=\Om_0(\Sigma(X,0),\Sigma(Y,0))=0=\Om((X,0),(Y,0))\,,
\]
\begin{align*}
(\Sigma^*\Om_0)((X,0),(0,Y))&=\Om_0(\Sigma(X,0),\Sigma(0,Y))=\Om_0((\imath_X\Om,0)(0,Y))=\Om(X,Y)\\
&=\Om((X,0),(0,Y))
\end{align*}
and
\[
(\Sigma^*\Om_0)((0,X),(0,Y))=\Om_0(\Sigma(0,X),\Sigma(0,Y))=0=\Om((0,X),(0,Y))\,,
\]
concluding the proof.
\end{proof}

\section{The center of a lcs Lie algebra}\label{sec:center}

In this section we obtain some results on the center of a lcs Lie algebra. In particular, we characterize the center of nilpotent lcs Lie algebras. The results of this section will also play a role in the next section.

\begin{lemma}\label{center:contact}
Let $(\fh,\eta)$ be a contact Lie algebra. Then $\dim \cZ(\fh)\leq 1$, with equality if and only if $\cZ(\fh)$ is spanned by the Reeb vector.
\end{lemma}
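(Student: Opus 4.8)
The plan is to show that the center of a contact Lie algebra is forced to lie inside the radical of $d\eta$, which is exactly the line spanned by the Reeb vector; the two assertions of the lemma then follow at once.

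First I would pin down the Reeb vector and identify the radical of $d\eta$. For a contact Lie algebra $(\fh,\eta)$ of dimension $2n-1$, the condition $\eta\wedge(d\eta)^{n-1}\neq 0$ forces $d\eta$, viewed as a skew-symmetric form in $\Lambda^2\fh^*$, to have maximal rank $2n-2$: its rank is even and at most $2n-1$, while $(d\eta)^{n-1}\neq 0$ forces it to be at least $2n-2$. Hence the radical $\ker d\eta=\{X\in\fh : \imath_X d\eta=0\}$ is one-dimensional. Contracting the volume form $\eta\wedge(d\eta)^{n-1}$ with a generator $R_0$ of this radical and using $\imath_{R_0}(d\eta)^{n-1}=(n-1)(\imath_{R_0}d\eta)\wedge(d\eta)^{n-2}=0$ yields $\imath_{R_0}(\eta\wedge(d\eta)^{n-1})=\eta(R_0)\,(d\eta)^{n-1}$; since the left-hand side is nonzero (a nonzero top form has nonzero contraction with a nonzero vector), we get $\eta(R_0)\neq 0$. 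After normalizing, this produces the Reeb vector $R$ with $\eta(R)=1$ and $\imath_R d\eta=0$, and confirms $\ker d\eta=\la R\ra$.

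Next comes the heart of the argument, which is short. I would take any $X\in\cZ(\fh)$ and test it against $d\eta$ via the Chevalley-Eilenberg formula $d\eta(X,Y)=-\eta([X,Y])$. Centrality of $X$ gives $[X,Y]=0$ for every $Y\in\fh$, whence $d\eta(X,Y)=0$ for all $Y$, i.e. $\imath_X d\eta=0$. Therefore $X\in\ker d\eta=\la R\ra$, so $\cZ(\fh)\subseteq\la R\ra$ and in particular $\dim\cZ(\fh)\leq 1$. The equality claim is then immediate: since $\cZ(\fh)$ is always contained in the one-dimensional space $\la R\ra$, we have $\dim\cZ(\fh)=1$ if and only if $\cZ(\fh)=\la R\ra$, that is, if and only if the center is spanned by the Reeb vector.

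I do not anticipate a serious obstacle. The only point requiring genuine care is the preliminary identification $\ker d\eta=\la R\ra$ together with $\eta(R)\neq 0$, which is precisely where the full strength of the contact condition $\eta\wedge(d\eta)^{n-1}\neq 0$ (rather than merely $(d\eta)^{n-1}\neq 0$) is used; everything after that reduces to the one-line computation $\imath_X d\eta=0$ for central $X$.
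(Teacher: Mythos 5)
Your proof is correct and follows essentially the same route as the paper: the key step in both is the computation $d\eta(X,Y)=-\eta([X,Y])=0$ for central $X$, which places $X$ in the radical of $d\eta$, i.e.\ in the line spanned by the Reeb vector. The only difference is that you spell out the standard verification that this radical is one-dimensional and transverse to $\ker\eta$, which the paper simply quotes as a known consequence of the contact condition.
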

\begin{proof}
We assume that $\cZ(\fh)\neq 0$ and pick a central vector $X$. Then:
\begin{align*}
(\imath_Xd\eta)(Y)=d\eta(X,Y)=-\eta([X,Y])=0\,.
\end{align*}
$\eta$ being a contact form, the radical of $d\eta$ is spanned by the Reeb vector $\xi$, hence $X\in\langle\xi\rangle$.
\end{proof}

\begin{proposition}\label{Proposition:1}
 If $(\fg,\Om,\vt)$ is a lcs Lie algebra, then $\cZ(\fg)\subset\fg_\Om$. Moreover, if $\cZ(\fg)\not\subset\ker\vt$ then $\fh=\ker\vt$ is endowed with a contact structure, $(\Om,\vt)$ is of the first kind, $\fg$ splits as $\fh\oplus\bR$ and $\dim\cZ(\fg)\leq 2$.
\end{proposition}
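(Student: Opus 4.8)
The plan is to establish the three parts of the statement in sequence, starting from the two containments and then extracting the structural consequences. First I would prove $\cZ(\fg)\subset\fg_\Om$: for a central vector $X$ and any $Y\in\fg$, the Lie derivative $L_X\Om$ can be computed via the Cartan-type formula for the Chevalley--Eilenberg complex, $(L_X\Om)(Y,Z)=-\Om([X,Y],Z)-\Om(Y,[X,Z])$, which vanishes identically because $[X,Y]=[X,Z]=0$. Hence $L_X\Om=0$ and $X\in\fg_\Om$, so that the Lee morphism $\ell$ restricts to $\cZ(\fg)$.

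Next I would treat the hypothesis $\cZ(\fg)\not\subset\ker\vt$. This means there is a central $U$ with $\vt(U)\neq 0$, which we may rescale so that $\vt(U)=1$; since $U\in\cZ(\fg)\subset\fg_\Om$ and $\ell(U)=\vt(U)=1\neq 0$, the Lee morphism is surjective and the structure is \emph{of the first kind}. With $U$ a central transversal vector, the derivation $D=\ad_U$ is identically zero, so the semidirect product $\fg\cong\fh\rtimes_D\bR$ degenerates to the direct sum $\fg\cong\fh\oplus\bR$. That $\fh=\ker\vt$ carries a contact structure then follows exactly as in the first-kind structure discussion recalled in the introduction: $\eta=-\imath_U\Om$ restricts to a contact form on $\fh$, with $\eta\wedge(d^\fh\eta)^{n-1}\neq 0$ coming from the non-degeneracy $\Om^n\neq 0$ together with $D^*\eta=0$.

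For the dimension bound I would argue through this contact structure on $\fh$. Writing $\cZ(\fg)$ in the splitting $\fg=\fh\oplus\bR U$, any central element decomposes as $W+cU$ with $W\in\fh$ and $c\in\bR$; since $U$ is central and $\fg=\fh\oplus\bR U$ as Lie algebras, $W$ must be central in $\fh$. Thus the projection of $\cZ(\fg)$ to $\fh$ lands in $\cZ(\fh)$, and by Lemma \ref{center:contact} we have $\dim\cZ(\fh)\leq 1$. Combining the at most one-dimensional contribution from $\cZ(\fh)$ with the line $\bR U$ gives $\dim\cZ(\fg)\leq 2$.

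The step I expect to require the most care is verifying that $\fh$ genuinely inherits a \emph{contact} form rather than merely a $1$-form, i.e.\ that $\eta\big|_\fh\wedge(d^\fh\eta)^{n-1}\neq 0$; the main obstacle is bookkeeping the relation between $d$ on $\fg^*$ and $d^\fh$ on $\fh^*$ (formula \eqref{eq:Chevalley}) in the degenerate case $D=0$, and checking that the non-degeneracy of $\Om=d^\fh\eta-\vt\wedge\eta$ on $\fg$ forces non-degeneracy of $d^\fh\eta$ on $\fh$. Once this is in hand, the first-kind conclusion and the splitting are immediate, and the dimension bound reduces cleanly to Lemma \ref{center:contact}.
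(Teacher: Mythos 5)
Your proposal is correct and follows essentially the same route as the paper's proof: a central vector $U$ with $\vt(U)=1$ forces the structure to be of the first kind, makes $D=\ad_U$ trivial so that $\fg\cong\fh\oplus\bR$ with $\eta=-\imath_U\Om$ restricting to a contact form on $\fh=\ker\vt$, and the bound $\dim\cZ(\fg)\le 2$ then drops out of Lemma \ref{center:contact} exactly as you describe. The only (harmless) difference is in the first step: you compute $L_X\Om$ via the algebraic formula $(L_X\Om)(Y,Z)=-\Om([X,Y],Z)-\Om(Y,[X,Z])$, which vanishes immediately for central $X$, whereas the paper verifies $L_Z\Om=\imath_Z d\Om+d(\imath_Z\Om)=0$ starting from $\imath_{[Z,X]}\Om=0$ and using the lcs equation; the two computations agree by Cartan's formula in the Chevalley--Eilenberg complex.
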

\begin{proof}
 We prove first that $\cZ(\fg)\subset\fg_\Om$; pick $Z\in\cZ(\fg)$, then $[Z,X]=0$ for every $X\in\fg$ and we have:
 \begin{align}\label{eq:3}
  0&=(\imath_{[Z,X]}\Om)(Y)=-d\Om(Z,X,Y)-\Om([X,Y],Z)+\Om([Z,Y],X)\nonumber\\
   &=-(\vt\wedge\Om)(Z,X,Y)+(\imath_Z\Om)([X,Y])\nonumber\\
   &=-\vt(Z)\Om(X,Y)+\vt(X)\Om(Z,Y)-\vt(Y)\Om(Z,X)-d(\imath_Z\Om)(X,Y)\nonumber\\
   &=-\vt(Z)\Om(X,Y)-d(\imath_Z\Om)(X,Y)+(\vt\wedge\imath_Z\Om)(X,Y)\nonumber\\
   &=(-\imath_Z(\vt\wedge\Om)-d(\imath_Z\Om))(X,Y)=(-\imath_Z(d\Om)-d(\imath_Z\Om))(X,Y)\nonumber\\
   &=-(L_Z\Om)(X,Y)\,.
 \end{align}
If $\cZ(\fg)\not\subset\ker\vt$ then $\fg_\Om\not\subset\ker\vt$, hence we find $U\in\cZ(\fg)\subset\fg_\Om$ with $\vt(U)=1$ and the lcs is of the first kind. By \thmref{corr:exact}, $\fg\cong\fh\rtimes_D\bR$, where $\fh=\ker\vt$ and $D=\mathrm{ad}_U$. Since $U$ is central $D$ is the trivial derivation and $\fg\cong\fh\oplus\bR$. Moreover $\fh$ is a contact Lie algebra, hence $\dim \cZ(\fh)\leq 1$ by Lemma \ref{center:contact} and $\dim\cZ(\fg)\leq 2$.
\end{proof}

\begin{lemma}\label{Lemma:101}
 Let $(\fg,\Om,\vt)$ be a lcs Lie algebra with $\fg_\Om\subset\ker\vt$. Then the isomorphism $\Theta\colon\fg\to\fg^*$, $\Theta(X)=\imath_X\Om$, injects $\fg_\Om$ into $Z^1_\vt(\fg)=\{\alpha\in\fg^* \mid d_\vt\alpha=0\}$ and the kernel of the composition $\fg_\Om\xrightarrow{\Theta} Z^1_\vt(\fg)\to H^1_\vt(\fg)$ is generated by the characteristic vector.
\end{lemma}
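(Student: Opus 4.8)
The plan is to verify the two assertions by direct computation, leaning on Cartan's formula and the lcs condition $d\Om=\vt\wedge\Om$.

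First I would show that $\Theta$ maps $\fg_\Om$ into $Z^1_\vt(\fg)$. Fix $X\in\fg_\Om$, so that $L_X\Om=0$. Using Cartan's magic formula $L_X\Om=d(\imath_X\Om)+\imath_X(d\Om)$ together with $d\Om=\vt\wedge\Om$, one finds $d(\imath_X\Om)=-\imath_X(\vt\wedge\Om)=-\vt(X)\Om+\vt\wedge\imath_X\Om$. Substituting this into the definition of $d_\vt$ gives
\begin{align*}
d_\vt(\imath_X\Om)&=d(\imath_X\Om)-\vt\wedge\imath_X\Om\\
&=-\vt(X)\Om+\vt\wedge\imath_X\Om-\vt\wedge\imath_X\Om=-\vt(X)\Om\,.
\end{align*}
Since by hypothesis $\fg_\Om\subset\ker\vt$, we have $\vt(X)=0$, whence $d_\vt(\imath_X\Om)=0$; thus $\Theta(\fg_\Om)\subset Z^1_\vt(\fg)$. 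This is exactly the content already implicit in \eqref{eq:3}. Because $\Theta$ is an isomorphism (as $\Om$ is non-degenerate), it restricts to an injection $\fg_\Om\hookrightarrow Z^1_\vt(\fg)$.

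Next I would identify the $d_\vt$-exact $1$-forms. As $\Lambda^0\fg^*=\bR$ and $d_\vt c=dc-\vt\wedge c=-c\,\vt$ for a constant $c$, the space of coboundaries is $\la\vt\ra$. Hence an element $X\in\fg_\Om$ lies in the kernel of $\fg_\Om\xrightarrow{\Theta}Z^1_\vt(\fg)\to H^1_\vt(\fg)$ precisely when $\imath_X\Om=c\,\vt$ for some $c\in\bR$. Recalling the defining property $\imath_V\Om=\vt$ of the characteristic vector, this reads $\imath_X\Om=\imath_{cV}\Om$, and the injectivity of $\Theta$ forces $X=cV$. Conversely $V\in\fg_\Om$ and $\Theta(V)=\vt$ is $d_\vt$-exact, so $V$ maps to zero in $H^1_\vt(\fg)$. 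Therefore the kernel is exactly $\la V\ra$, as claimed.

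I do not expect a genuine obstacle here: the argument is a short cohomological bookkeeping once the key identity $d_\vt(\imath_X\Om)=-\vt(X)\Om$ is in place. The only points requiring care are the sign conventions in $d_\vt$ and in the interior product $\imath_X(\vt\wedge\Om)$, together with the easy but essential computation that the $d_\vt$-coboundaries in degree one are exactly $\la\vt\ra$, which is what turns ``$\imath_X\Om$ is $d_\vt$-exact'' into ``$\imath_X\Om\in\la\vt\ra$'' and thereby pins the kernel down to the line through $V$.
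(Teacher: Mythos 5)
Your proof is correct and follows essentially the same route as the paper: the identity $d_\vt(\imath_X\Om)=-\vt(X)\Om$ (which vanishes by the hypothesis $\fg_\Om\subset\ker\vt$) establishes that $\Theta(\fg_\Om)\subset Z^1_\vt(\fg)$, and the observation that the $d_\vt$-coboundaries in degree one are exactly $\la\vt\ra=\la\Theta(V)\ra$ pins the kernel down to $\la V\ra$. If anything, your write-up is slightly more explicit than the paper's about where the hypothesis $\vt(X)=0$ enters.
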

\begin{proof}
Given $X\in\fg_\Om$, consider $\imath_X\Om\in\fg^*$. We compute
\[
d_\vt(\imath_X\Om)=d(\imath_X\Om)-\vt\wedge\imath_X\Om=-\imath_X(d\Om)-\vt\wedge\imath_X\Om=-\imath_X(\vt\wedge\Om)-\vt\wedge\imath_X\Om=0\,,
\]
hence $\imath_X\Om\in Z^1_\vt(\fg)$ and clearly $\Theta(V)=\vt$. Since $d_\vt\colon\Lambda^0\fg^*\to\Lambda^1\fg^*$ maps $1$ to $-\vt$, $\vt$ is the only $d_\vt$-exact element in $\fg^*$.
\end{proof}

\begin{example}
Consider the Lie algebra $\fr\fr_3=(0,-12-13,-13,0)$ endowed with the lcs structure $(\Om,\vt)=(e^{12}+e^{34},-e^1)$. The characteristic vector is $V=e_2$, we have $\cZ(\fr\fr_3)=\langle e_4\rangle$, $(\fr\fr_3)_\Om=\langle e_2,e_4\rangle$, $\ker\vt=\langle e_2,e_3,e_4\rangle$ and we get a strict sequence of inclusions 
\[
0\subset\cZ(\fr\fr_3)\subset(\fr\fr_3)_\Om\subset\ker\vt\subset\fr\fr_3\,.
\]
\end{example}

\begin{proposition}
Let $(\fg,\Om,\vt)$ be a lcs Lie algebra with $0\neq\cZ(\fg)\subset\ker\vt$. If $V\notin\cZ(\fg)$, then $\fg$ is solvable non-nilpotent.
\end{proposition}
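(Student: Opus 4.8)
The plan is to pin down the \emph{kind} of the structure first; both conclusions then follow with little extra work, the only genuinely resistant point being solvability.

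\textbf{Step 1: the structure is necessarily of the second kind.} Suppose to the contrary that $(\Om,\vt)$ is of the first kind. Since a first kind structure is exact and satisfies $\vt(U)=1$, Theorem~\ref{corr:exact} (the case $\alpha=0$) lets me write $\fg\cong\fh\rtimes_D\bR$, where $\fh=\ker\vt$ is a contact Lie algebra, $D=\ad_U$ obeys $D^*\eta=0$, and the characteristic vector $V$ is the Reeb vector $\xi$ of $(\fh,\eta)$; in particular $V\in\fh$. Because $\cZ(\fg)\subset\ker\vt=\fh$, any central $Z$ commutes with all of $\fh$ while lying in $\fh$, so $\cZ(\fg)\subset\cZ(\fh)$. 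By Lemma~\ref{center:contact}, $\cZ(\fh)$ is either $0$ or $\langle\xi\rangle$. If $\cZ(\fh)=0$ then $\cZ(\fg)=0$, against the hypothesis. If $\cZ(\fh)=\langle\xi\rangle$, then $D$, being a derivation of $\fh$, preserves $\cZ(\fh)$, so $D\xi\in\langle\xi\rangle$; evaluating $D^*\eta=0$ on $\xi$ gives $\eta(D\xi)=0$, whence $D\xi=0$. Then $\xi$ commutes with $\fh$ and with $U$, so $V=\xi\in\cZ(\fg)$, contradicting $V\notin\cZ(\fg)$. Thus the first kind is excluded, and $(\Om,\vt)$ is of the second kind, i.e.\ $\fg_\Om\subset\ker\vt$.

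\textbf{Step 2: non-nilpotency.} A lcs structure on a nilpotent Lie algebra is necessarily of the first kind. As Step~1 shows $(\Om,\vt)$ is of the second kind, $\fg$ cannot be nilpotent.

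\textbf{Step 3: solvability.} This is the step I expect to be the main obstacle, since second kind together with a nontrivial center does not by itself forbid a semisimple part. I would argue by contradiction through a Levi decomposition $\fg=\fs\ltimes\fr$ with $\fs\neq 0$ semisimple, the goal being $\fs=0$. Since $\vt$ is closed it annihilates $[\fg,\fg]\supset[\fs,\fs]=\fs$, so $\vt|_\fs=0$, and then $L_X\vt=\imath_X d\vt+d(\vt(X))=0$ for $X\in\fs$, i.e.\ $\vt$ is $\fs$-invariant. Fix $0\neq Z\in\cZ(\fg)$ and put $\gamma=\imath_Z\Om$; by Proposition~\ref{Proposition:1} we have $Z\in\fg_\Om$, and by Lemma~\ref{Lemma:101} $\gamma\in Z^1_\vt(\fg)$, with $\gamma\notin\langle\vt\rangle$ (else $Z\in\langle V\rangle$ would make $V$ central). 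Expanding $d_\vt\gamma=0$ on $X,Y\in\fs$ and using $\vt|_\fs=0$ yields $\gamma([\fs,\fs])=0$, hence $\gamma|_\fs=0$, that is $\cZ(\fg)\perp_\Om\fs$.

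The hard part is converting these constraints into $\fs=0$. The natural route is to average $\Om$ over the completely reducible $\fs$-action, producing an $\fs$-invariant and $d_\vt$-closed form $\Om_0$, and to exploit that a semisimple Lie algebra carries no nonzero invariant $2$-form, since $(\Lambda^2\fs^*)^\fs\cong H^2(\fs)=0$; this forces $\Om_0|_\fs=0$ and, combined with $\cZ(\fg)\perp_\Om\fs$, should contradict the non-degeneracy of $\Om$ on the subspace spanned by $\fs$ and $\cZ(\fg)$. An alternative is to reduce to the classification of reductive lcs Lie algebras: these are all of the first kind and satisfy $\cZ\not\subset\ker\vt$, in direct conflict with Step~1, so no semisimple summand can survive. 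Either way $\fg$ is solvable, and together with Step~2 this gives that $\fg$ is solvable non-nilpotent.
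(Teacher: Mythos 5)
Your Steps 1 and 2 are correct and reach non-nilpotency by a genuinely different, more structural route than the paper: you exclude the first kind by combining Theorem~\ref{corr:exact} with Lemma~\ref{center:contact} and the observation that the contact derivation $D=\ad_U$ must kill the Reeb vector, whereas the paper never discusses the kind of the structure and gets non-nilpotency from Dixmier's vanishing theorem for $H^{\bullet}_\vt$ of a nilpotent Lie algebra. (Your Step 1 also serves to verify the hypothesis $\fg_\Om\subset\ker\vt$ of Lemma~\ref{Lemma:101} before you invoke it in Step 3; note that for the single element $Z\in\cZ(\fg)\subset\ker\vt$ the computation in that lemma only needs $\vt(Z)=0$, so this detour, while valid, is not strictly necessary.)

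The genuine gap is Step 3: solvability is set up but not proved, and neither of the two completions you sketch works as stated. The averaging argument produces an $\fs$-invariant, $d_\vt$-closed form $\Om_0$ with $\Om_0|_{\fs\times\fs}=0$, but $\Om_0$ is not $\Om$; and even the strongest conclusion available along these lines --- that $\fs$ is $\Om$-isotropic and $\Om(\cZ(\fg),\fs)=0$ --- does not contradict non-degeneracy, since an isotropic subspace together with part of its $\Om$-orthogonal sits inside a symplectic vector space without any obstruction. The second route fails because a Lie algebra with nontrivial Levi factor need not be reductive: Theorem~\ref{theo:1} applies only to $\fg=\fs\oplus\cZ(\fg)$, so it cannot exclude a general Levi decomposition $\fg=\fs\ltimes\fr$ with non-central radical. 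The paper's own argument is purely cohomological: from $Z\in\fg_\Om$ (Proposition~\ref{Proposition:1}) and Lemma~\ref{Lemma:101} it obtains $0\neq[\imath_Z\Om]\in H^1_\vt(\fg)$ --- exactly the class you construct at the start of Step 3 --- and then invokes the vanishing of $H^1_\vt$ for nilpotent Lie algebras (Dixmier) and for semisimple, hence reductive, ones (Whitehead's first lemma) to conclude that $\fg$ is solvable non-nilpotent. Having the class in hand, you should finish the same way; be aware, though, that passing from ``not nilpotent, not reductive'' to ``solvable'' for an arbitrary Levi decomposition is left implicit in the paper as well, so if you want a fully self-contained Step 3 the cohomological class, not the symplectic orthogonality, is the tool to push further.
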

\begin{proof}
Pick $Z\in \cZ(\fg)$; then $Z\in\fg_\Om$ by Proposition \ref{Proposition:1} and $[\Theta(Z)]\in H^1_\vt(\fg)\neq 0$ by Lemma \ref{Lemma:101}. Due to standard results in Lie algebra cohomology, this is impossible if $\fg$ is nilpotent (see \cite[Th\'eor\`eme 1]{Dixmier}) or semisimple (see \cite[Corollary 7.8.10]{Weibel}), hence also if $\fg$ 
is reductive. Then $\fg$ must be solvable non-nilpotent.
\end{proof}

\begin{corollary}\label{center_nilpotent}
Let $(\fg,\Om,\vt)$ be a nilpotent lcs Lie algebra. Then either $\cZ(\fg)=\la V\ra$, or $\cZ(\fg)=\la U,V\ra$ and $\fg\cong\fh\oplus\bR$, where $\fh$ is a contact Lie algebra.
\end{corollary}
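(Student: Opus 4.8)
The plan is to separate two cases according to the position of the center relative to $\ker\vt$, exactly as in the dichotomy of Proposition~\ref{Proposition:1}, and to feed in two facts special to the nilpotent setting: a non-zero nilpotent Lie algebra has non-trivial center, so $\cZ(\fg)\neq0$, and its Morse--Novikov cohomology satisfies $H^1_\vt(\fg)=0$ for the non-zero closed form $\vt$, by \cite[Th\'eor\`eme~1]{Dixmier}. I would also record that $V\neq0$ (since $\imath_V\Om=\vt\neq0$) and that $B^1_\vt(\fg)=d_\vt(\bR)=\la\vt\ra$, so every $d_\vt$-closed $1$-form is a scalar multiple of $\vt$.

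Suppose first $\cZ(\fg)\subset\ker\vt$. For $Z\in\cZ(\fg)$, Proposition~\ref{Proposition:1} gives $Z\in\fg_\Om$, and since $\vt(Z)=0$ the computation in the proof of Lemma~\ref{Lemma:101} yields $d_\vt(\imath_Z\Om)=-\vt(Z)\Om=0$; thus $\imath_Z\Om$ is $d_\vt$-closed, so $\imath_Z\Om=c\vt=c\,\imath_V\Om$ for some $c\in\bR$ by $H^1_\vt(\fg)=0$, and non-degeneracy of $\Om$ forces $Z=cV$. Therefore $\cZ(\fg)\subset\la V\ra$, and since $\cZ(\fg)\neq0$ we conclude $\cZ(\fg)=\la V\ra$, the first alternative.

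Suppose instead $\cZ(\fg)\not\subset\ker\vt$. Then Proposition~\ref{Proposition:1} applies: $(\Om,\vt)$ is of the first kind, $\fh=\ker\vt$ carries a contact structure, $\fg\cong\fh\oplus\bR$ with the $\bR$-summand spanned by a central vector $U$ with $\vt(U)=1$, and $\dim\cZ(\fg)\leq2$. From the direct-sum decomposition one reads off $\cZ(\fg)=\cZ(\fh)\oplus\la U\ra$. The key step is to identify $V$ with the Reeb vector $\xi$ of $(\fh,\eta)$, where $\eta=-\imath_U\Om$: using $\Om=d\eta-\vt\wedge\eta$ and $\imath_V\Om=\vt$ one finds $\eta(V)=\vt(U)=1$ and $\imath_V d\eta=0$, which are precisely the defining conditions of $\xi$. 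Since $\fg$, hence the subalgebra $\fh$, is nilpotent, $\cZ(\fh)\neq0$; Lemma~\ref{center:contact} then gives $\cZ(\fh)=\la\xi\ra=\la V\ra$, whence $\cZ(\fg)=\la V\ra\oplus\la U\ra=\la U,V\ra$ and $\fg\cong\fh\oplus\bR$ with $\fh$ contact, the second alternative.

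The delicate point I anticipate is the first case: because a nilpotent lcs structure is automatically of the first kind, one has $\fg_\Om\not\subset\ker\vt$, so Lemma~\ref{Lemma:101} cannot be quoted verbatim. The way around is to observe that its cohomological conclusion only requires the element in question to lie in $\fg_\Om\cap\ker\vt$, which every central vector does under the case hypothesis; combined with the vanishing $H^1_\vt(\fg)=0$ this pins the center down to $\la V\ra$. In the second case the only non-formal input is the short Reeb-vector computation, after which Lemma~\ref{center:contact} does the rest.
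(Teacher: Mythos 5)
Your proof is correct and follows essentially the same route as the paper: the dichotomy of Proposition~\ref{Proposition:1}, Dixmier's vanishing $H^1_\vt(\fg)=0$ to pin the center to $\la V\ra$ when $\cZ(\fg)\subset\ker\vt$, and Lemma~\ref{center:contact} in the other case. You supply two details the paper leaves implicit --- that the computation of Lemma~\ref{Lemma:101} only needs $Z\in\fg_\Om\cap\ker\vt$ rather than its stated hypothesis, and that $V$ is the Reeb vector of $(\fh,\eta)$ in the split case --- both of which are accurate and welcome.
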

\begin{proof}
Since $H^2_\vt(\fg)=0$ on a nilpotent Lie algebra, $(\Om,\vt)$ is exact. Moreover, $\fg$ is unimodular and $(\Om,\vt)$ is of the first kind (compare with \cite[Proposition 5.5]{Bazzoni_Marrero}). Also, $\cZ(\fg)\neq 0$ since $\fg$ is nilpotent, hence $1\leq \dim\cZ(\fg)\leq 2$ by Proposition \ref{Proposition:1}. If $\cZ(\fg)\subset\fh\coloneq\ker\vt$ then $\cZ(\fg)$ is contained in the center of $\fh$, a nilpotent contact Lie algebra, hence $\dim\cZ(\fg)=\dim\cZ(\fh)=1$ by Lemma \ref{center:contact}. Since $\fg$ is nilpotent, $H^1_\vt(\fg)=0$ and then $\cZ(\fg)=\langle V\rangle$. Otherwise, again by Proposition \ref{Proposition:1}, $\cZ(\fg)=\la U,V\ra$ and $\fg=\fh\oplus\bR$.
\end{proof}

\begin{remark}
There exist lcs solvable Lie algebras with trivial center, see for instance Example \ref{ex:2}.
\end{remark}

\section{Reductive lcs Lie algebras}\label{sec:reductive}

If a reductive Lie group $G$ is endowed with a left-invariant locally conformally pseudo-K\"ahler structure, then $\fg$ is isomorphic to $\mathfrak{u}_2=\mathfrak{su}_2\oplus\bR$ or $\mathfrak{gl}_2(\bR)=\mathfrak{sl}_2(\bR)\oplus\bR$, see \cite[Theorem 4.15]{ACHK}, and all such structures are classified. In this section we generalize this result to left-invariant locally conformally symplectic structures: we prove that the only reductive lcs Lie algebras are $\mathfrak{u}_2=\mathfrak{su}_2\oplus\bR$ and $\mathfrak{gl}_2(\bR)=\mathfrak{sl}_2(\bR)\oplus\bR$ and classify such lcs structures (a classification was already obtained in \cite[Propositions 4.5 and 4.9]{ACHK}).

\begin{theorem}\label{theo:1}
Let $\fg$ be a reductive Lie algebra endowed with a lcs structure $(\Om,\vt)$. Then $\fh=\ker\vt$ is a semisimple Lie algebra endowed with a contact form $\eta$, $\fg=\fh\oplus\bR$ and $\Omega=d\eta-\vt\wedge\eta$. In particular $(\Om,\vt)$ is of the first kind.
\end{theorem}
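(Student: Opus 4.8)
The plan is to reduce everything to Proposition \ref{Proposition:1} and then to the fact that a semisimple Lie algebra carries no symplectic form. Write $\fg=\fs\oplus\cZ(\fg)$ with $\fs=[\fg,\fg]$ semisimple. Since $d\vt=0$, the form $\vt$ annihilates $[\fg,\fg]=\fs$; as the lcs structure is genuine (equivalently, since on a Lie algebra the only exact $1$-form is $0$, so $[\vt]\neq0$ forces $\vt\neq0$), $\vt$ cannot vanish on $\cZ(\fg)$. Hence $\cZ(\fg)\not\subset\ker\vt$, and Proposition \ref{Proposition:1} applies directly: $\fh=\ker\vt$ carries a contact form, $(\Om,\vt)$ is of the first kind, $\fg=\fh\oplus\bR U$ with $U\in\cZ(\fg)$, and $\dim\cZ(\fg)\le2$. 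Being of the first kind, $(\Om,\vt)$ is exact with transversal vector $U$ (so $\vt(U)=1$), and $\eta:=-\imath_U\Om$ gives $\Om=d\eta-\vt\wedge\eta$ with $\eta\big|_\fh$ the contact form, which is the last assertion; it only remains to prove that $\fh$ is semisimple.

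First I record the key fact I will use below: a semisimple Lie algebra $\fs$ admits no symplectic form. Indeed $H^2(\fs)=0$ by Whitehead's lemma, so a closed non-degenerate $\omega\in\Lambda^2\fs^*$ would be exact, $\omega=d\beta$; then its top power $\omega^m=d(\beta\wedge\omega^{m-1})$ would be exact as well, contradicting that $\fs$, being unimodular, has $H^{2m}(\fs)\neq0$.

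To see that $\fh$ is semisimple, note that $\fh=\ker\vt$ is an ideal (as $d\vt=0$) of the reductive algebra $\fg$, hence is itself reductive: it splits as $\fh=[\fh,\fh]\oplus\cZ(\fh)$. Because the complementary line $\bR U$ is central, $[\fh,\fh]=[\fg,\fg]=\fs$, so $\fh=\fs\oplus\cZ(\fh)$. Since $\fh$ is a contact Lie algebra, Lemma \ref{center:contact} gives $\dim\cZ(\fh)\le1$, with equality only if $\cZ(\fh)=\la\xi\ra$ is spanned by the Reeb vector of $\eta$. The main point is to exclude this last possibility: if $\cZ(\fh)=\la\xi\ra$ then $\fh=\fs\oplus\la\xi\ra$ and, because $\xi$ spans the radical of $d\eta$, the restriction $d\eta\big|_\fs$ is non-degenerate; moreover it is closed on the subalgebra $\fs$, being the restriction of the closed form $d\eta$ under the cochain map $\Lambda^\bullet\fh^*\to\Lambda^\bullet\fs^*$. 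Thus $d\eta\big|_\fs$ would be a symplectic form on the semisimple $\fs$, contradicting the key fact. Therefore $\cZ(\fh)=0$ and $\fh=\fs$ is semisimple.

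The only genuine obstacle is this last step --- ruling out a one-dimensional center of $\fh$ --- which is exactly where the non-existence of symplectic structures on semisimple Lie algebras is essential; the rest is bookkeeping built on Proposition \ref{Proposition:1} and Lemma \ref{center:contact}. The same key fact also disposes of the degenerate possibility $\vt=0$: a closed $\Om$ would force $\Om(\fs,\cZ(\fg))=0$ by the cocycle identity (for $X,Y\in\fs$ and $Z\in\cZ(\fg)$ one has $d\Om(X,Y,Z)=-\Om([X,Y],Z)$), making $\Om\big|_\fs$ a symplectic form on $\fs$, which is impossible unless $\fs=0$, i.e.\ unless $\fg$ is abelian --- a case carrying no genuine lcs structure.
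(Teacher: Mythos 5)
Your proposal is correct, and its skeleton coincides with the paper's: both decompose $\fg=\fs\oplus\cZ(\fg)$, observe that $d\vt=0$ forces $\vt$ to annihilate $\fs=[\fg,\fg]$ so that $\cZ(\fg)\not\subset\ker\vt$, and then invoke Proposition \ref{Proposition:1} to get the first--kind structure, the splitting $\fg=\fh\oplus\bR U$ with $U$ central, and the contact form $\eta=-\imath_U\Om$ on $\fh$. Where you genuinely diverge is in the last, decisive step of ruling out a one-dimensional center of $\fh$. The paper splits into the cases $\dim\cZ(\fg)=1$ (where $\fh=\fs$ immediately) and $\dim\cZ(\fg)=2$, and in the latter argues that $\eta$ must lie in the annihilator of $\fs$, so that $d\eta(S_1,S_2)=-\eta([S_1,S_2])=0$ on $\fs$, contradicting non-degeneracy of $d\eta$ transverse to the Reeb direction. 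You instead observe that if $\cZ(\fh)=\la\xi\ra$ then $d\eta\big|_{\fs}$ is a closed (indeed exact) non-degenerate $2$-form on the semisimple algebra $\fs$, and kill it with the classical fact that semisimple (more generally, unimodular with $H^2=0$) Lie algebras carry no symplectic form. Your version has the advantage of resting on a well-known, cleanly stated obstruction (Whitehead's lemma plus unimodularity, the same fact underlying Chu's theorem quoted in Remark after Corollary \ref{cor:reductive-4-dim}) rather than on the rather terse linear-algebra identification $\la\eta\ra=\cZ(\fh)^*$ used in the paper; the paper's version avoids any cohomological input. Your closing remark disposing of the degenerate case $\vt=0$ is consistent with the paper's standing convention that $\vt$ is a nontrivial cocycle and is harmless. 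Two micro-points you may wish to make explicit: the case $\fs=0$ (where the symplectic contradiction is vacuous) is excluded because $\dim\fh=2n-1\geq 3$ while $\dim\cZ(\fh)\leq 1$; and the identity $\Om=d\eta-\vt\wedge\eta$ you assert comes from the first-kind correspondence of Theorem \ref{corr:exact} rather than from Proposition \ref{Proposition:1} itself.
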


\begin{proof}
We notice first that $\fg$ cannot be semisimple: if it were so, then $b_1(\fg)$ would vanish, contradicting the fact that $\vt$ is a nontrivial $1$-cocycle. Then we can write $\fg=\fs\oplus\cZ(\fg)$ with $\dim\cZ(\fg)\geq 1$. The Lie algebra structure is given by
\[
 [(S_1,Z_1),(S_2,Z_2)]=([S_1,S_2],0), \quad (S_j,Z_j)\in\fs\oplus\cZ(\fg), \ j\in\{1,2\}.
\]
We compute
\[
 0=d\vt((S_1,Z_1),(S_2,Z_2))=-\vt([(S_1,Z_1),(S_2,Z_2)])=-\vt([S_1,S_2],0),
\]
hence $[\fs,\fs]=\fs\subset\fh\coloneq\ker\vt$ and $\vt\in\cZ(\fg)^*$.
We pick a vector $U\in\cZ(\fg)$ with $\vt(U)=1$. We apply Proposition \ref{Proposition:1} and conclude that $\fg=\fh\oplus \la U\ra$, the lcs structure $(\Om,\vt)$ is of the first kind, and $\eta=-\imath_U\Om$ is a contact form on $\fh$.
Since $(\Om,\vt)$ is of the first kind, $\cZ(\fg)$ has dimension $\leq 2$ again by Proposition \ref{Proposition:1}, hence $1\leq \dim \cZ(\fg)\leq 2$. If $\dim\cZ(\fg)=1$, then $\fs\subset\fh$ implies $\fs=\fh$, hence $(\fh,\eta)$ is a semisimple contact Lie algebra. Assume $\dim\cZ(\fg)=2$. Again $\fs\subset\fh$ implies that $\fg=\fs\oplus\cZ(\fg)$ 
induces a splitting $\fh=\fs\oplus\cZ(\fh)$, where $\cZ(\fh)=\cZ(\fg)\cap\fh$ is the center of $(\fh,\eta)$, hence $\cZ(\fh)=\langle\xi\rangle$, where $\xi$ is the Reeb vector.
Moreover, $\la\eta\ra\cap\cZ(\fh)^*\neq 0$, hence they coincide for dimension reasons. The Lie algebra structure on $\fh$ is then
\[
[(S_1,a_1\xi),(S_2,a_2\xi)]=([S_1,S_2],0)
\]
Now $d\eta$ must be non-degenerate on $\fs$; however,
\[
d\eta((S_1,0),(S_2,0))=-\eta([S_1,S_2],0)=0.
\]
Hence $\cZ(\fg)$ is 1-dimensional and $(\fh,\eta)$ is a semisimple contact Lie algebra.
\end{proof}

\begin{corollary}\label{cor:reductive-4-dim}
 Let $(\fg,\Om,\vt)$ be a reductive lcs Lie algebra. Then either $\fg=\mathfrak{su}_2\oplus\bR$ or $\fg=\mathfrak{sl}_2(\bR)\oplus\bR$.
\end{corollary}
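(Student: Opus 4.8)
The plan is to combine Theorem~\ref{theo:1} with the classification of low-dimensional semisimple contact Lie algebras. By Theorem~\ref{theo:1}, any reductive lcs Lie algebra splits as $\fg=\fh\oplus\bR$, where $\fh=\ker\vt$ is a \emph{semisimple} Lie algebra carrying a contact form $\eta$. The key dimensional constraint is that a contact Lie algebra has odd dimension $2n-1$; in particular $\dim\fh$ is odd, and since $\fg=\fh\oplus\bR$ is reductive, $\fh$ is the semisimple part. So the first step is to ask: which semisimple Lie algebras admit a contact form?

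First I would invoke the structure of contact semisimple Lie algebras. A contact form $\eta\in\fh^*$ on a Lie algebra $\fh$ of dimension $2n-1$ requires $\eta\wedge(d\eta)^{n-1}\neq0$, so in particular $d\eta$ must have maximal rank $2n-2$ on $\ker\eta$. The decisive observation is that on a \emph{semisimple} Lie algebra every $1$-form is exact in Chevalley--Eilenberg cohomology, since $H^1(\fh)=0$ (Whitehead's first lemma), but more to the point the Cartan--Killing structure theory severely restricts which semisimple $\fh$ can carry such a nondegenerate $d\eta$. The cleanest route is to recall the known fact (e.g.\ from Boothby--Wang type results, or directly) that the only semisimple Lie algebras admitting a contact structure are the three-dimensional ones $\mathfrak{su}_2$ and $\mathfrak{sl}_2(\bR)$. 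Indeed, writing $\eta$ as a coboundary $\eta=\imath_{(\cdot)}$ of a Killing-dual element, the contact condition forces $\dim\fh=3$: in higher dimensions one can show $(d\eta)^{n-1}$ degenerates because the Killing form pairs root spaces in a way incompatible with a single nondegenerate $2$-coboundary. Hence $\dim\fh=3$, and the only three-dimensional semisimple Lie algebras are precisely $\mathfrak{su}_2$ and $\mathfrak{sl}_2(\bR)$.

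Having pinned $\fh\in\{\mathfrak{su}_2,\mathfrak{sl}_2(\bR)\}$, the conclusion is immediate: $\fg=\fh\oplus\bR$ gives exactly $\mathfrak{su}_2\oplus\bR=\mathfrak{u}_2$ or $\mathfrak{sl}_2(\bR)\oplus\bR=\mathfrak{gl}_2(\bR)$. It remains only to confirm that both genuinely occur, i.e.\ that each of $\mathfrak{su}_2$ and $\mathfrak{sl}_2(\bR)$ really does admit a contact form; this is classical (both carry a left-invariant contact structure coming from the Maurer--Cartan form dual to a suitable basis element), and explicit structures are produced in Propositions~\ref{prop:su_2+R} and~\ref{prop:sl_2+R} referenced in the text.

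The main obstacle I anticipate is the contact-form classification step, namely proving rigorously that no semisimple Lie algebra of dimension $\geq5$ admits a contact form. The low-dimensional bookkeeping (dimension $3$) is easy, but ruling out \emph{all} higher semisimple cases requires either citing a structural theorem or a genuine argument with the root-space decomposition showing that the maximal rank of a $2$-coboundary $d\eta$ cannot reach $\dim\fh-1$ when $\dim\fh>3$. Everything else is formal: the splitting and first-kind statements are already delivered by Theorem~\ref{theo:1}, so the corollary reduces entirely to this one semisimple-contact input.
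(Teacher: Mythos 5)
Your proposal follows essentially the same route as the paper: apply Theorem~\ref{theo:1} to reduce to the classification of semisimple contact Lie algebras, and then invoke the Boothby--Wang result that these are exactly $\mathfrak{su}_2$ and $\mathfrak{sl}_2(\bR)$ --- the paper cites precisely \cite[Theorem 5]{Boothby-Wang} for this step, so the ``main obstacle'' you flag is handled by that reference rather than by a root-space argument. (Your aside that $H^1(\fh)=0$ makes every $1$-form ``exact'' is off --- it says there are no nonzero \emph{closed} $1$-forms, and a contact form is not closed --- but this remark is not load-bearing in your argument.)
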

\begin{proof}
By \cite[Theorem 5]{Boothby-Wang}, a semisimple Lie group with a left-invariant contact structure is locally isomorphic either to $\mathrm{SU}(2)$ or to $\mathrm{SL}(2;\bR)$. Hence 
a semisimple Lie algebra with a contact structure is isomorphic either to $\mathfrak{su}_2$ or to $\mathfrak{sl}_2$. By \thmref{theo:1}, $\fg=\fh\oplus\bR$, with $\fh$ a semisimple contact Lie 
algebra. We conclude that $\fh\cong\mathfrak{su}_2$ or $\fh\cong\mathfrak{sl}_2$.
\end{proof}

\begin{remark}
Note that, by a result of Chu, \cite[Theorem 9]{Chu}, a four-dimensional symplectic Lie algebra must be solvable. However, there exist by Corollary \ref{cor:reductive-4-dim} four-dimensional reductive lcs Lie algebras - interestingly enough, dimension $4$ is also the only one in which this can happen.
\end{remark}

\subsection{Lcs structures on $\mathfrak{su}_2\oplus\bR$} We fix a basis $\{e_1,e_2,e_3\}$ of $\mathfrak{su}_2$ in such a way that the brackets are
\[
[e_1,e_2]=-e_3, \quad [e_1,e_3]=e_2 \quad \textrm{and} \quad [e_2,e_3]=-e_1\,.
\]
With respect to the dual basis $\{e^1,e^2,e^3\}$ of $\mathfrak{su}_2^*$, the structure equations are
\[
de^1=e^{23}, \quad de^2=-e^{13} \quad \textrm{and} \quad de^3=e^{12}\,.
\]
\begin{proposition}\label{prop:su_2+R}
Up to automorphisms, every lcs structure on $\mathfrak{su}_2\oplus\bR$ is equivalent to
\[
(\Om_r,\vt)=(r(e^{12}+e^{34}),e^4), \ r>0\,,
\]
where $e^4$ is a generator of $\bR$.
\end{proposition}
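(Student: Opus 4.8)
The plan is to classify lcs structures on $\mathfrak{su}_2\oplus\bR$ by exploiting Theorem~\ref{theo:1}, which already tells us the rigid shape of any such structure. Since $\mathfrak{su}_2\oplus\bR$ is reductive, Theorem~\ref{theo:1} guarantees that $\fh=\ker\vt$ is a semisimple contact Lie algebra, that $\fg=\fh\oplus\bR$, and that $\Omega=d\eta-\vt\wedge\eta$ where $\eta=-\imath_U\Om$ is a contact form on $\fh$. The classification therefore reduces to two separate tasks: first, pin down the Lee form $\vt$ up to automorphism; second, pin down the contact form $\eta$ on $\mathfrak{su}_2$ up to automorphism.

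First I would analyze $\vt$. The proof of Theorem~\ref{theo:1} shows $\fs=[\fg,\fg]=\mathfrak{su}_2\subset\ker\vt$ and $\vt\in\cZ(\fg)^*=\bR^*$, so $\vt$ must be a nonzero multiple of $e^4$. Using an automorphism that rescales the $\bR$-factor, I can normalize $\vt=e^4$; this fixes the Lee form. The harder and more substantial step is the second one: classifying contact forms $\eta$ on $\mathfrak{su}_2$ up to the action of $\Aut(\mathfrak{su}_2)$. Here I would use that $\mathfrak{su}_2$ carries an $\Aut$-invariant inner product (the negative of the Killing form makes $\Aut(\mathfrak{su}_2)\cong\SO(3)$ act by rotations on $\langle e_1,e_2,e_3\rangle^*$), so every nonzero $\eta\in\mathfrak{su}_2^*$ can be rotated to a positive multiple of $e^3$, say $\eta=s\,e^3$ with $s>0$. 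One must check that such $\eta$ is genuinely a contact form — compute $\eta\wedge d\eta=s\,e^3\wedge s\,e^{12}=s^2\,e^{123}\neq0$ using the structure equations — which holds for every nonzero $\eta$, so the contact condition is automatic and does not further constrain $\eta$.

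With $\vt=e^4$ and $\eta=s\,e^3$ fixed, I would then simply assemble $\Omega=d\eta-\vt\wedge\eta$. Extending $\eta=s\,e^3$ to $\fg^*$ and computing in $\fg=\mathfrak{su}_2\oplus\bR$ gives $d\eta=s\,e^{12}$ (from $de^3=e^{12}$, with no contribution from the abelian $\bR$-factor) and $\vt\wedge\eta=e^4\wedge s\,e^3=-s\,e^{34}$, hence $\Omega=s(e^{12}+e^{34})$. Writing $r=s>0$ recovers exactly the stated normal form $(\Om_r,\vt)=(r(e^{12}+e^{34}),e^4)$. I expect the main obstacle to be verifying that the remaining freedom is precisely the scaling parameter $r$: I should confirm that distinct values of $r$ are genuinely inequivalent (the automorphisms of $\mathfrak{su}_2\oplus\bR$ fixing $\vt=e^4$ up to the normalization act on $\eta$ only by the $\SO(3)$-rotations and the $\bR$-scaling, and once $\vt$ is normalized the residual scaling of $e^3$ cannot be absorbed without rescaling $\vt$), and that no exotic automorphism mixing $\mathfrak{su}_2$ with the center can alter the coefficient. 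This bookkeeping about which automorphisms survive the normalization of $\vt$ is the delicate part; the underlying computations are routine.
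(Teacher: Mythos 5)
Your proposal is correct and follows essentially the same route as the paper: reduce via Theorem \ref{theo:1} to classifying contact forms on $\mathfrak{su}_2$, use the transitive action of $\Aut(\mathfrak{su}_2)\cong\SO(3)$ on spheres in $\mathfrak{su}_2^*$ to normalize $\eta$ to a positive multiple of a fixed covector, and assemble $\Omega=d\eta-\vt\wedge\eta$. Your extra care in checking that $\Aut(\mathfrak{su}_2\oplus\bR)$ splits as $\Aut(\mathfrak{su}_2)\times\bR^*$ (so that after normalizing $\vt=e^4$ the radius $r$ is a genuine invariant) is a point the paper leaves implicit, but it is not a different method.
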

\begin{proof}
In order to classify lcs structure on $\mathfrak{su}_2\oplus\bR$ it is enough to classify contact structures on $\mathfrak{su}_2$. A generic 1-form $\eta=\alpha_1e^1+\alpha_2e^2+\alpha_3e^3$ on $\mathfrak{su}_2$ is contact if and only if $\alpha_1^2+\alpha_2^2+\alpha_3^2>0$. $\eta$ is a point on the sphere of radius $r=\sqrt{\alpha_1^2+\alpha_2^2+\alpha_3^2}$ in $\mathfrak{su}_2^*$. Since the action of $\textrm{SU}(2)$ on such sphere is transitive, we find a change of basis such that $\eta=re^1$. This means that every contact form on $\mathfrak{su}_2$ is contactomorphic to $\eta=re^1$ for some $r>0$. The corresponding lcs structure on $\mathfrak{su}_2\oplus\bR$ is then given by setting $e^4=\vt$, then $\Omega=d\eta-e^4\wedge\eta=re^{23}-re^{41}=r(e^{14}+e^{23})$.
\end{proof}

\begin{remark}
$\mathfrak{su}_2\oplus\bR$ is the Lie algebra of the Lie group $S^3\times\bR$. The lcs structures on $\mathfrak{su}_2\oplus\bR$ give therefore left-invariant lcs structures on $S^3\times S^1$, which is an important example of a compact locally conformally symplectic manifold. It is also a complex manifold and  admits Vaisman metrics, see \cite{Belgun}.
\end{remark}

\subsection{Lcs structures on $\mathfrak{sl}_2\oplus\bR$} We fix a basis $\{e_1,e_2,e_3\}$ of $\mathfrak{sl}_2$ so that the brackets are
\[
[e_1,e_2]=-2e_3, \quad [e_1,e_3]=2e_2 \quad \textrm{and} \quad [e_2,e_3]=2e_1\,.
\]
With respect to the dual basis $\{e^1,e^2,e^3\}$ of $\mathfrak{sl}_2^*$ the structure equations are
\[
de^1=-2e^{23}, \quad de^2=-2e^{13} \quad \textrm{and} \quad de^3=2e^{12}\,.
\]
\begin{proposition}\label{prop:sl_2+R}
Up to automorphisms, every lcs structure on $\mathfrak{sl}_2\oplus\bR$ is equivalent to
\begin{itemize}
\item $(\Om_r,\vt)=(\pm r(e^{14}-2e^{23}),e^4)$;
\item $(\Om_r,\vt)=(r(-2e^{13}+e^{24}),e^4)$,
\end{itemize}
for a constant $r>0$.
\end{proposition}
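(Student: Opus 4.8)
The plan is to mirror the strategy used for $\mathfrak{su}_2\oplus\bR$ in Proposition \ref{prop:su_2+R}: by Theorem \ref{theo:1}, every lcs structure on $\mathfrak{sl}_2\oplus\bR$ is of the first kind, with $\fh=\ker\vt\cong\mathfrak{sl}_2$ carrying a contact form $\eta$, and $\Om=d\eta-\vt\wedge\eta$ with $\vt=e^4$ a generator of the $\bR$-factor. Thus the classification of lcs structures reduces to the classification of contact forms $\eta$ on $\mathfrak{sl}_2$ \emph{up to the action of} $\Aut(\mathfrak{sl}_2)$ (together with rescaling). So first I would write a generic $\eta=\alpha_1 e^1+\alpha_2 e^2+\alpha_3 e^3$ and compute $d\eta=-2\alpha_1 e^{23}-2\alpha_2 e^{13}+2\alpha_3 e^{12}$, and then determine the contact condition $\eta\wedge d\eta\neq 0$, which amounts to a nondegeneracy condition on the coefficients $\alpha_i$.

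The key difference from the compact case is that the relevant group is no longer $\SO(3)$ acting transitively on a single sphere, but rather the adjoint group of $\mathfrak{sl}_2$, which preserves the Killing form; this is a split form, so the invariant quadratic form $Q(\eta)=\alpha_1^2+\alpha_2^2-\alpha_3^2$ (up to the normalization coming from the chosen brackets) has \emph{signature $(2,1)$} rather than being definite. The orbits of $\Aut(\mathfrak{sl}_2)^{\circ}\cong\PL(2,\bR)$ on $\mathfrak{sl}_2^*$ are therefore the level sets $\{Q=c\}$, and one must distinguish the cases $c>0$, $c<0$, and $c=0$. My plan is to check the contact condition against these orbit types: the second step is to show that $Q(\eta)\neq 0$ is exactly the contact condition (the null cone $Q=0$ corresponding to degenerate, non-contact forms), and then to pick one representative on each of the two nondegenerate orbits. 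Up to the scaling $\eta\mapsto r\eta$ with $r>0$, the orbit $\{Q>0\}$ has representative proportional to $e^1$ (giving, after $\Om=d\eta-\vt\wedge\eta$, the form $\pm r(e^{14}-2e^{23})$), while the orbit $\{Q<0\}$ has representative proportional to $e^3$ (giving $r(-2e^{13}+e^{24})$, after relabeling basis covectors in the wedge). The two bullet points in the statement correspond precisely to these two signs of $Q$.

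Concretely, I would verify that for $\eta=re^1$ one gets $\Om=d\eta-e^4\wedge\eta=-2re^{23}-re^{41}=r(e^{14}-2e^{23})$, matching the first case (the $\pm$ accounting for the fact that $e^1$ and $-e^1$ lie on the same orbit only up to an automorphism realizing the reflection, or else genuinely give two inequivalent normalizations — this sign subtlety is worth a remark). For $\eta=re^3$ one computes $d\eta=2re^{12}$, so $\Om=2re^{12}-e^4\wedge(re^3)=r(2e^{12}+e^{34})$; after reordering the basis or applying a permutation automorphism this is brought to the stated normal form $r(-2e^{13}+e^{24})$.

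The main obstacle is the orbit analysis for the noncompact group $\PL(2,\bR)$: unlike the transitive $\SU(2)$-action on a sphere, here I must carefully identify the distinct orbit types of the adjoint action on $\mathfrak{sl}_2^*$ and confirm that each nondegenerate orbit admits the claimed single representative up to positive scaling. In particular I must rule out that the two values of the sign of $Q$ collapse into one orbit and must confirm that the automorphism group (including possibly outer automorphisms or the full $\PL$ rather than $\mathrm{PSL}$) is large enough to normalize $\eta$ as claimed but not so large as to identify the two inequivalent structures; verifying that $\{Q>0\}$ and $\{Q<0\}$ yield genuinely non-isomorphic lcs structures (e.g. by an invariant distinguishing them) completes the proof.
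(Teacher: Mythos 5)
Your proposal is correct and follows essentially the same route as the paper: reduce via Theorem \ref{theo:1} to the classification of contact forms on $\mathfrak{sl}_2$ up to automorphism, observe that the contact condition is the non-vanishing of the invariant quadratic form of signature $(2,1)$, and take one representative on the one-sheeted hyperboloid and one on each sheet of the two-sheeted hyperboloid (whence the $\pm$), your choice of $e^3$ in place of the paper's $e^2$ being harmless since they lie on the same orbit. The only point to fix is the signature bookkeeping: with the given structure equations the contact condition is $-2\alpha_1^2+2\alpha_2^2+2\alpha_3^2\neq 0$, so the distinguished (timelike) direction is $e^1$, not $e^3$, and the correspondence between the sign of $Q$ and the two bullet points should be stated accordingly.
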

\begin{proof}
As above, we classify contact structures on $\mathfrak{sl}_2$. A generic 1-form $\eta=\alpha_1e^1+\alpha_2e^2+\alpha_3e^3$ on $\mathfrak{sl}_2$ is contact if and only if $-2\alpha_1^2+2\alpha_2^2+2\alpha_3^2\neq 0$. Thus we need to classify the coadjoint orbits of $\mathfrak{sl}_2^*$ of hyperbolic and elliptic type. Such orbits are the hyperboloids $\alpha_1^2-\alpha_2^2-\alpha_3^2=r\neq 0$; for $r>0$, it is a one-sheeted hyperboloid while, for $r<0$, we get a two-sheeted hyperboloid. The group $\textrm{SL}(2,\bR)$ acts transitively on each of these hyperboloids. There are therefore 3 normal forms: $\eta=\pm re^1$ and $\eta=re^2$, which give the lcs structures
\begin{itemize}
\item $\vt=e^4$, $\Omega=\pm r(e^{14}-2e^{23})$
\item $\vt=e^4$, $\Omega=r(-2e^{13}+e^{24})$
\end{itemize}
\end{proof}

\begin{remark}
$\mathfrak{sl}_2\oplus\bR$ is the Lie algebra of the Lie group $\widetilde{\mathrm{SL}(2,\bR)}\times\bR$. The lcs structures on $\mathfrak{sl}_2\oplus\bR$ give therefore left-invariant lcs structures on $\widetilde{\mathrm{SL}(2,\bR)}\times\bR$ and on any compact quotient. Some of these quotients form a class of compact complex surfaces, called \emph{properly elliptic surfaces}, and admit lcK metrics, see \cite{Belgun}.
\end{remark}

\begin{landscape}
\section{Locally conformally symplectic structures on \texorpdfstring{$4$}{4}-dimensional solvable Lie algebras}

In this section we classify lcs structures on four-dimensional solvable Lie algebras.

\def\arraystretch{1.5}
\begin{tabular}{llccc} \toprule
    Lie algebra & Structure equations & $\mathcal{Z}(\fg)$ & completely solvable & nilpotent \\
    \hline
    $\bR^4$ & $(0,0,0,0)$ & $\bR^4$ & \checkmark & \checkmark\\
    \hline
    $\fr\fh_3$ & $(0,0,-12,0)$ & $\la e_3, e_4\ra$ & \checkmark & \checkmark\\
    \hline
    $\fr\fr_3$ & $(0,-12-13,-13,0)$ & $\la e_4\ra$ & \checkmark & $\times$ \\
    \hline
    $\fr\fr_{3,\lambda}$, $\lambda\in[-1,1]$ & $(0,-12,-\lambda 13,0)$ & $\la e_4\ra$ & \checkmark & $\times$\\
    \hline
    $\fr\fr'_{3,\gamma}$, $\gamma\geq 0$ & $(0,-\gamma 12-13,12-\gamma 13,0)$ & $\la e_4\ra$ & $\times$ & $\times$ \\
    \hline
    $\fr_2\fr_2$ & $(0,-12,0,-34)$ & $0$ & \checkmark & $\times$\\
    \hline
    $\fr'_2$ & $(0,0,-13+24,-14-23)$ & $0$ & $\times$ & $\times$ \\
    \hline
    $\fn_4$ & $(0,14,24,0)$ & $\la e_3\ra$ & \checkmark & \checkmark\\
    \hline
    $\fr_4$ & $(14+24,24+34,34,0)$ & $0$ & \checkmark & $\times$\\
    \hline
    $\fr_{4,\mu}$, $\mu\in\bR$ & $(14,\mu 24+34,\mu 34,0)$ & $0$ ($\mu\neq 0$), $\la e_2\ra$ ($\mu=0$) & \checkmark & $\times$\\
    \hline
    $\fr_{4,\alpha,\beta}$, $-1<\alpha\leq\beta\leq 1$, $\alpha\beta\neq 0$ & $(14,\alpha 24,\beta 34,0)$ & $0$ & \checkmark & $\times$ \\
    \hline
    $\hat{\fr}_{4,\beta}$, $-1\leq\beta< 0$ & $(14,-24,\beta 34,0)$ & $0$ & \checkmark & $\times$\\
    \hline
    $\fr'_{4,\gamma,\delta}$, $\gamma\in\bR$, $\delta>0$ & $(14,\gamma 24+\delta 34,-\delta 24+\gamma 34,0)$ & $0$ & $\times$ & $\times$\\
    \hline
    $\fd_4$ & $(14,-24,-12,0)$ & $\la e_3\ra$ & \checkmark & $\times$\\
    \hline
    $\fd_{4,\lambda}$, $\lambda\geq\frac{1}{2}$ & $(\lambda 14,(1-\lambda)24,-12+34,0)$ & $0$ & \checkmark & $\times$\\
    \hline
    $\fd'_{4,\delta}$, $\delta\geq 0$ & $(\frac{\delta}{2}14+24,-14+\frac{\delta}{2}24,-12+\delta 34,0)$ & $0$ ($\delta\neq0$), $\la e_3\ra$ ($\delta=0$) & $\times$ & $\times$\\
    \hline
    $\fh_4$ & $(\frac{1}{2}14+24,\frac{1}{2}24,-12+34,0)$ & $0$ & \checkmark & $\times$\\ 
    \bottomrule
\end{tabular}
\vskip 0.25 cm
\captionof{table}{Solvable Lie algebras in dimension $4$, following \cite{Ovando}; structure equations are written using the Salamon notation.}\label{algebreLie}
\end{landscape}

\begin{landscape}
\def\arraystretch{1.5}
\footnotesize
\begin{longtable}{ll|lccc}
\label{table:lcs-4}
\endfirsthead
\toprule
   Lie algebra & parameters & lcs structure & exact & 1\textsuperscript{st} kind & Lagrangian ideal $\subset\ker\vt$\\    
    \hline

    $\fr\fh_3$ & $\times$ & $(e^{12}-e^{34},e^4)$ & $\eta=-e^3$, $U=e_4$ & \checkmark &  $\la e_1, e_3 \ra$  \\
    \hline
    $\fr\fr_3$ & $\times$ & $(e^{12}+e^{34},-e^1)$ & $\times$ & $\times$ & $\la e_2,e_4\ra$ \\
    \cline{3-6}
     & & $(e^{14}\pm e^{23},-2e^1)$ & $\times$ & $\times$ & $\la e_2,e_4\ra$ \\
    \cline{3-6}
     & & $(e^{12}+e^{13}-e^{24},e^4)$ & $\eta=-e^2$, $U=e_4$ & \checkmark & $\la e_2, e_3 \ra$\\
    \hline
    $\fr\fr_{3,\lambda}$ & $\lambda=0$ & $(-e^{12}+e^{23}+e^{34},e^3)$ & $\eta=e^2-e^4$, $U=e_3$ & \checkmark & $\la e_2, e_4 \ra$\\
    \cline{2-6}
    & $\lambda\notin\{-1,1\}$ & $(e^{13}+e^{24},-e^1)$ & $\times$ & $\times$ & $\la e_2,e_3\ra$ \\ \cline{2-6}
    & $\lambda\neq 0$ & $(e^{12}+e^{34},-\lambda e^1)$ & $\times$ & $\times$ & $\la e_2,e_4\ra$ \\ \cline{2-6}
    & $\lambda\notin\{-1,0\}$ & $(e^{14}+e^{23},-(1+\lambda)e^1)$ & $\times$ & $\times$ & $\la e_2,e_4\ra$ \\ \cline{2-6}
    & $\lambda\notin\{0,1\}$ & $(e^{12}-e^{13}-e^{24}+\frac{1}{\lambda}e^{34},e^4)$ & $\eta=-e^2+\frac{e^3}{\lambda}$, $U=e_4$ & \checkmark &  $\la e_2, e_3 \ra$\\
    \hline
    $\fr\fr'_{3,\gamma}$ & $\gamma> 0$ & $(e^{14}\pm e^{23},-2\gamma e^1)$ & $\times$ & $\times$ & $\times$ \\
    \cline{2-6}
    & $\forall \gamma$ & $(\gamma e^{12}+e^{13}-e^{24},e^4)$ & $\eta=-e^2$, $U=e_4$ & \checkmark & $\la e_2, e_3 \ra$\\
    \hline
    $\fr_2\fr_2$ & $\times$ & $(-e^{12}+e^{34}+\sigma e^{23},\sigma e^3)$, $\sigma\not\in \{0,-1\}$ & 
    $\eta=e^2-\frac{e^4}{\sigma+1}$, $U=\frac{e_1+e_3}{\sigma+1}$ & $\times$ & $\la e_2, e_4 \ra$\\
    \cline{3-6}
    & & $(-e^{12}+e^{34}- e^{23},- e^3)$ & $\times$ & $\times$ & $\la e_2, e_4 \ra$\\
    \cline{3-6}
     & & $(e^{12}+e^{14}+e^{23}+\sigma e^{34},-e^3)$, $\sigma\neq -1$ & $\times$ & $\times$ & $\la e_2,e_4\ra$ \\
    \cline{3-6}
     & & $(\sigma e^{13}+e^{24},-e^1-e^3)$, $\sigma>0$ & $\times$ & $\times$ & $\times$ \\
     \cline{3-6}
     & & \hspace{-0.2 cm}\begin{tabular}{l}$\left(-\frac{\sigma+1}{\tau}e^{12}+e^{14}+e^{23}+\frac{\tau+1}{\sigma}e^{34},\sigma e^1+\tau e^3\right)$ \\ $\sigma\tau\neq 0$, $\sigma+\tau\neq -1$, $\sigma\leq\tau$\end{tabular}  &  \begin{tabular}{l}$\eta=\frac{e^2}{\tau}-\frac{e^4}{\sigma}$, \\ $U=\frac{e_1+e_3}{\sigma+\tau+1}$\end{tabular} &  $\times$ & $\la e_2, e_4 \ra$\\
     \hline
     	$\fr_2'$ & $\times$ & $(e^{13}-\tau e^{14}-\frac{1+\tau^2}{1+\sigma} e^{24},\sigma e^1+\tau e^2)$, $\sigma\not\in\{-1,0\}$, $\tau>0$ & $\eta=\frac{-e^3+\tau e^4}{\sigma+1}$, $U=\frac{e_1}{\sigma+1}$ & $\times$ & $\la e_3, e_4 \ra$ \\
    \cline{3-6}
    & & $(e^{13}-\tau e^{14}-(1+\tau^2) e^{24}, \tau e^2)$, $\tau>0$ & $\eta=-e^3+\tau e^4$, $U=e_1$ & $\times$ & $\la e_3, e_4 \ra$ \\
    \cline{3-6}
     & & $(\sigma e^{12}+e^{34},-2e^1)$, $\sigma\neq 0$ & $\times$ & $\times$ & $\times$ \\
    \cline{3-6}
     & & $\left(e^{13}-\frac{1}{\sigma+1}e^{24},\sigma e^1\right)$, $\sigma\notin\{0,-1\}$ & $\eta=-\frac{e^3}{\sigma+1}$, $U=\frac{e_1}{\sigma+1}$ & $\times$ & $\la e_3, e_4 \ra$\\
\hline
$\fn_4$ & $\times$ & $\left(\pm(e^{13}-e^{24}),e^1\right)$ & $\eta=\mp e^3$, $U= e_1$ & \checkmark & $\la e_2, e_3 \ra$\\
\hline
    $\fr_4$ & $\times$ & $(e^{14}+\sigma e^{23},-2e^4)$, $\sigma\neq 0$ & $\times$ & $\times$ & $\la e_1,e_2\ra$ \\
    \hline
    $\fr_{4,\mu}$ & $\mu=0$ & $(e^{13}+e^{24}+\sigma e^{23},e^3)$, $\sigma\neq 0$ & $\times$ & $\times$ & $\la e_1,e_2\ra$ \\
    \cline{2-6}
     & $\mu=1$ & $(e^{13}+e^{24}+\sigma e^{23},-2e^4)$, $\sigma\in\bR$ & $\times$ & $\times$ & $\la e_1,e_2\ra$ \\
     \cline{2-6}
     & $\mu\notin\{-1,1\}$ & $\left(e^{13}+e^{24},-(\mu+1) e^4\right)$ & $\times$ & $\times$ & $\la e_1,e_2\ra$ \\
     \cline{2-6}
     & $\mu\notin\{0,1\}$ & $(e^{14}\pm e^{23},-2\mu e^4)$ & $\times$ & $\times$ & $\la e_1,e_2\ra$ \\
     \hline
     $\fr_{4,\alpha,\beta}$ & $\alpha\neq\beta$ & $\left(e^{13}+e^{24},-(1+\beta)e^4\right)$ & $\times$ & $\times$ & $\la e_1,e_2\ra$ \\
    \cline{2-6}
     & $\beta\neq 1$ & $\left(e^{14}+e^{23},-(\alpha+\beta)e^4\right)$ & $\times$ & $\times$ & $\la e_1,e_2\ra$ \\
    \cline{2-6}
     & $\forall \alpha,\beta$ & $\left(e^{12}+e^{34},-(1+\alpha)e^4\right)$ & $\times$ & $\times$ & $\la e_1,e_3\ra$ \\
     \hline
     $\hat{\fr}_{4,\beta}$ & $\beta\neq -1$ & $\left(e^{13}+e^{24},(-1-\beta) e^4\right)$ & $\times$ & $\times$ & $\la e_1,e_2\ra$\\
     \cline{2-6}
     & $\forall \beta$ & $\left(e^{14}+e^{23},(1-\beta) e^4\right)$ & $\times$ & $\times$ & $\la e_1,e_2\ra$\\
     \hline
     $\fr'_{4,\gamma,\delta}$ & $\gamma\neq 0$ & $(e^{14}\pm e^{23},-2\gamma e^4)$ & $\times$ & $\times$ & $\times$ \\
    \hline
     $\fd_4$ & $\times$ & $(e^{12}-\sigma e^{34},\sigma e^4)$, $\sigma>0$ & $\eta=-e^3$, $U=\frac{e_4}{\sigma}$ & \checkmark & $\la e_1, e_3 \ra$\\
     \cline{3-6}
     & & $(e^{12}-e^{34}+e^{24},e^4)$ & $\times$ & $\times$ & $\la e_1,e_3\ra$\\
     \cline{3-6}
     & & $(\pm e^{14}+e^{23},e^4)$ & $\times$ & $\times$ & $\la e_1,e_3\ra$\\
     \hline
         $\fd_{4,\lambda}$ & $\lambda\neq 2$ & $(e^{12}-(\sigma+1)e^{34},\sigma e^4)$, $\sigma\notin\{0,-1\}$ & $\eta=-e^3$, $U=\frac{e_4}{\sigma+1}$ & $\times$ & $\la e_1,e_3\ra$ \\
    \cline{2-6}
    & $\lambda\neq 1$ & $\left(e^{12}-(1-\lambda)e^{34}+ e^{14},-\lambda e^4\right)$ & $\times$ & $\times$ & $\la e_1,e_3\ra$\\
    \cline{2-6}
    & $\lambda\notin\{\frac{1}{2},1\}$ & $\left(e^{12}- \lambda e^{34}+e^{24},(\lambda-1)e^4\right)$ & $\times$ & $\times$ & $\la e_1,e_3\ra$\\
    \cline{2-6}
    & $\lambda\notin\{\frac{1}{2},2\}$ & $\left(e^{14}\pm e^{23},(\lambda-2)e^4\right)$ & $\times$ & $\times$ & $\la e_1,e_3\ra$\\
    \cline{2-6}
    & $\lambda=1$ & $\left(e^{14}-\frac{1}{\sigma+1}e^{23}+e^{34},e^2+\sigma e^4\right)$, $\sigma\neq -1$ & $\eta=\frac{e^1+e^3}{\sigma+1}$, $U=\frac{e_4}{\sigma+1}$ & $\times$ & $\la e_1,e_3\ra$ \\
    \cline{2-6}
    & $\forall \lambda$ & $\left(\pm e^{13}+e^{24},-(\lambda+1)e^4\right)$ & $\times$ & $\times$ & $\la e_2,e_3\ra$\\
    \hline
    $\fd^{\prime}_{4,\delta}$ & $\delta=0$ & $(e^{12}-\sigma e^{34},\sigma e^4)$, $\sigma>0$ & $\eta=-e^3$, $U=\frac{e_4}{\sigma}$ & \checkmark & $\times$\\
    \cline{2-6}
    & $\delta>0$ & $\left(\pm(e^{12}-(\delta+\sigma)e^{34}),\sigma e^4\right)$, $\sigma\notin\{0,-\delta\}$ & $\eta=\mp e^3$, $U=\frac{e_4}{\delta+\sigma}$ & $\times$ & $\times$\\
    \hline
    $\fh_4$ & $\times$ & $\left(\pm\left(e^{12}-(\sigma+1)e^{34},\sigma e^4\right)\right)$, $\sigma\notin\left\{0,-1\right\}$ & $\eta=\mp e^3$, $U=\frac{e_4}{\sigma+1}$ & $\times$ & $\la e_1, e_3 \ra$\\
    \cline{3-6}
     & & $\left(\pm \left(e^{12}-\frac{1}{2}e^{34}\right)+\sigma e^{14},-\frac{1}{2}e^4\right)$, $\sigma> 0$ & $\times$ & $\times$ & $\la e_1,e_3\ra$ \\
     \cline{3-6}
     & & $\left(e^{14}+\sigma e^{23},-\frac{3}{2}e^4\right)$ & $\times$ & $\times$ & $\la e_1,e_3\ra$ \\
\bottomrule
\caption{Locally conformally symplectic (non-symplectic) structures on $4$-dimensional solvable Lie algebras, up to automorphisms of the Lie algebra.}
\end{longtable}
\normalsize

\end{landscape}

\begin{theorem}\label{thm:lcs-4}
Table \ref{table:lcs-4} contains, up to automorphisms of the Lie algebra, the lcs structures $(\Om,\vt)$ with $\vt\neq 0$ on four-dimensional solvable Lie algebras.
\end{theorem}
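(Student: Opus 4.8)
The plan is to classify, for each of the seventeen (families of) solvable Lie algebras $\fg$ listed in Table \ref{algebreLie}, all pairs $(\Om,\vt)$ satisfying $d\vt=0$, $d\Om=\vt\wedge\Om$ and $\Om^2\neq 0$ with $\vt\neq 0$, up to the action of $\Aut(\fg)$, where $\phi\in\Aut(\fg)$ identifies $(\Om,\vt)$ with $(\phi^*\Om,\phi^*\vt)$. The computation splits into two linear problems followed by a reduction. First I would determine the nonzero closed $1$-forms: writing $\vt=\sum_i a_i e^i$ in the dual basis read off from the Salamon structure equations, the condition $d\vt=0$ is a linear system in the $a_i$ whose solution space is $Z^1(\fg)=\ker(d\colon\fg^*\to\Lambda^2\fg^*)$. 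I would then use $\Aut(\fg)$ to bring $\vt$ into one of finitely many normal forms, recording a representative for each orbit of nonzero closed $1$-forms.

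For each such normal-form $\vt$, the lcs equation is the single linear condition $d_\vt\Om=0$, so the admissible $\Om$ sweep out the space $Z^2_\vt(\fg)\coloneq\ker(d_\vt\colon\Lambda^2\fg^*\to\Lambda^3\fg^*)$, again computed directly from the structure equations. Writing $\Om=\sum_{i<j}b_{ij}e^{ij}$ and imposing $d_\vt\Om=0$ gives a linear system in the $b_{ij}$; inside its solution space the nondegeneracy $\Om^2\neq 0$ cuts out a Zariski-open locus described by the nonvanishing of the Pfaffian. This produces, for each fixed $\vt$, an explicit parametrised family of lcs forms. Note that some algebras admit no genuine lcs structure at all and hence do not appear in Table \ref{table:lcs-4}: for instance on abelian $\bR^4$ the map $\wedge\Om\colon\fg^*\to\Lambda^3\fg^*$ is an isomorphism whenever $\Om$ is nondegenerate, so $d\Om=0=\vt\wedge\Om$ forces $\vt=0$.

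The final step is to quotient each family by the stabiliser of $\vt$ in $\Aut(\fg)$, collapsing it to the entries in the rows of Table \ref{table:lcs-4}; the residual parameters $\sigma,\tau,\ldots$ recorded there are exactly the invariants that the stabiliser action cannot remove. Symplectic structures (those equivalent to $\vt=0$) are discarded, and redundant representatives are eliminated, so that both completeness and irredundancy of the list have to be established. As an independent consistency check on each surviving entry, I would verify, using the last three columns of the table, that the structure is either exact --- hence, by Theorem \ref{corr:exact}, a contact derivation extension when $\vt(U)\neq 0$ --- or arises from the cosymplectic construction of Proposition \ref{cosymp_lcs}, or possesses a Lagrangian ideal $\fj\subset\ker\vt$ and is therefore a cotangent extension by Proposition \ref{Lag:id}; this confirms that every row fits one of the three structure results of Section \ref{another} and Section \ref{sec:cotangente-ext}.

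I expect the principal obstacle to be the bookkeeping rather than any single conceptual point: for each Lie algebra one must compute $\Aut(\fg)$ explicitly and track its action simultaneously on $\vt$ and on $\Om$, reduce to a minimal set of orbit representatives, and check that no two rows describe the same $\Aut(\fg)$-orbit. This is precisely where the Maple and Sage computations mentioned in the Introduction become indispensable, both for solving the linear systems $Z^1(\fg)$ and $Z^2_\vt(\fg)$ and for carrying out the several-parameter stabiliser reductions uniformly across the whole table.
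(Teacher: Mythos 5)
Your proposal follows essentially the same route as the paper's appendix: for each algebra in Table \ref{algebreLie} one solves the linear system $d\vartheta=0$, then the linear system $d_\vartheta\Omega=0$ for a generic $2$-form, cuts by the Pfaffian (non-degeneracy) condition, reduces by explicit automorphisms, and certifies inequivalence of the surviving normal forms by Gr\"obner-basis computations. The one adjustment needed is in your first step: for several of these algebras (e.g.\ $\fr_{4,\alpha,\beta}$, where every automorphism fixes $e^4$) the nonzero closed $1$-forms fall into infinitely many $\Aut(\fg)$-orbits, so one cannot normalize $\vartheta$ to finitely many representatives at the outset but must carry its coefficient as a parameter and let the requirement that $Z^2_\vartheta(\fg)$ contain a nondegenerate element pin it down to finitely many values, exactly as the paper does.
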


\subsection{Non-existence of Lagrangian ideals in $\ker\vt$}
In the statement of Theorem \ref{thm:lcs-4} we claimed that some lcs Lie algebras $(\fg,\Omega,\vt)$ do not have a Lagrangian ideal $\fj$ contained in $\ker\vt$. Here we prove this claim.

\begin{proposition}
The following lcs Lie algebras do not have a Lagrangian ideal $\fj\subset\ker\vt$:
\begin{itemize}
\item $(\fr\fr'_{3,\gamma},e^{14}\pm e^{23},-2\gamma e^1)$, $\gamma>0$;
\item $(\fr_2\fr_2,\sigma e^{13}+e^{24},-e^1-e^3)$, $\sigma>0$;
\item $(\fr'_2,\sigma e^{12}+e^{34},-2e^1)$, $\sigma\neq 0$;
\item $(\fr'_{4,\gamma,\delta},e^{14}\pm e^{23},-2\gamma e^4)$, $\gamma\neq 0$;
\item $(\fd'_{4,\delta},e^{12}-\sigma e^{34},\sigma e^4)$, $\delta=0, \sigma>0$;
\item $(\fd'_{4,\delta},\pm(e^{12}-(\delta+\sigma) e^{34}),\sigma e^4)$, $\delta>0, \sigma\not\in\{0,-\delta\}$.
\end{itemize}
\end{proposition}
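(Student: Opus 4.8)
The plan is to treat all six algebras by a single scheme. Since $\vt\neq 0$ and $d\vt=0$, the kernel $\fh\coloneq\ker\vt$ is a $3$-dimensional ideal of $\fg$ which contains $[\fg,\fg]$. A Lagrangian ideal $\fj\subset\ker\vt$ would be a $2$-dimensional subspace of $\fh$ that is an ideal of $\fg$ --- equivalently, invariant under $\ad_X$ for every $X\in\fg$ --- and on which $\Om$ vanishes. As $\dim\fj=2$, the isotropy condition is the single scalar equation $\Om(a,b)=0$ for a basis $a,b$ of $\fj$. So I would, for each of the six structures, first list \emph{all} $2$-dimensional ideals of $\fg$ contained in $\fh$, and then read off $\Om$ on the (very short) resulting list. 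In every case the outcome will be that either $\fh$ contains no $2$-dimensional ideal at all, or the unique one is non-isotropic.

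The enumeration of $2$-dimensional ideals is the heart of the matter. Choosing a transversal vector $W$ with $\vt(W)\neq 0$, I would write down, in the given bases, the matrices of $\ad_W\big|_{\fh}$ and of the internal brackets of $\fh$. Two mechanisms then isolate at most one candidate plane $P$. For $\fr\fr'_{3,\gamma}$ and $\fr'_{4,\gamma,\delta}$ (with $W=e_1$, resp.\ $W=e_4$) and for both cases of $\fd'_{4,\delta}$ (with $W=e_4$), the operator $\ad_W\big|_{\fh}$ is block diagonal with a $2\times2$ block carrying a pair of non-real complex-conjugate eigenvalues; since such a block is irreducible over $\bR$, it contains no invariant line, so the only $\ad_W$-invariant $2$-plane in $\fh$ is that block. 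For $\fr_2\fr_2$ and $\fr'_2$, instead, $\ad_W$ is diagonalizable with real eigenvalues and leaves many planes invariant, so here I would use the \emph{internal} brackets: certain adjoint maps send a vector with nonzero component along the remaining generator of $\fh$ to two linearly independent vectors, forcing such a $\fj$ to be $3$-dimensional. This again pins $\fj$ down to a single plane $P$ (namely $\la e_2,e_4\ra$ and $\la e_3,e_4\ra$, respectively).

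Once $P$ is isolated, the conclusion is immediate. For $\fr\fr'_{3,\gamma}$, $\fr'_{4,\gamma,\delta}$, $\fr_2\fr_2$ and $\fr'_2$ the plane $P$ is a genuine ideal, but a one-line computation gives $\Om\big|_{P}\neq 0$ (e.g.\ $\Om(e_2,e_3)=\pm1$ in the first two families, $\Om(e_2,e_4)=1$ and $\Om(e_3,e_4)=1$ in the last two), so $P$ is not Lagrangian. For the two $\fd'_{4,\delta}$ cases $\fh$ is the Heisenberg algebra, and the candidate plane $P=\la e_1,e_2\ra$ is not $\ad_{e_1}$-invariant because $[e_1,e_2]=e_3\notin P$; hence $\fh$ contains no $2$-dimensional ideal whatsoever. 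Either way, no Lagrangian ideal lies in $\ker\vt$.

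The step I expect to be most delicate is the completeness of the invariant-plane enumeration: one must be sure, via the (generalized) eigenspace decomposition of $\ad_W\big|_{\fh}$ combined with the internal brackets, that no invariant $2$-plane has been overlooked --- in particular that the real-eigenvalue cases $\fr_2\fr_2$ and $\fr'_2$ really are cut down to a single plane by the nonabelian structure, and that in the nilpotent cases the unique $\ad_W$-invariant plane genuinely fails to be an ideal. After that bookkeeping, verifying (non)isotropy is trivial.
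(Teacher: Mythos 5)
Your argument is correct, and every computation it relies on also appears in the paper's proof; what differs is the logical architecture. The paper starts from a hypothetical Lagrangian ideal $\fj\subset\ker\vt$ and uses the \emph{isotropy} condition first: it forces $\fj$ to meet a distinguished plane (e.g.\ $\la e_2,e_3\ra$ in $\fr\fr'_{3,\gamma}$, or the line $\la e_1-e_3\ra$ in $\fr_2\fr_2$) in exactly one dimension, and then shows the \emph{ideal} condition is violated because the relevant adjoint map moves that line off itself. You invert the order: you first classify all $2$-dimensional ideals of $\fg$ inside $\ker\vt$ --- via the absence of real eigenvectors in the rotation block of $\ad_W$ in the cases $\fr\fr'_{3,\gamma}$, $\fr'_{4,\gamma,\delta}$, $\fd'_{4,\delta}$, and via the internal brackets $[e_2,e_1-e_3]=-e_2$, $[e_4,e_1-e_3]=-e_4$ (resp.\ $[e_2,e_3]=e_4$, $[e_2,e_4]=-e_3$) in $\fr_2\fr_2$ (resp.\ $\fr'_2$) --- and only then evaluate $\Om$ on the single surviving candidate. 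These are exactly the same two mechanisms the paper invokes (its assertion $\dim(\ad_W(\fj)\cap P)=2$ is your ``no invariant line in the complex block,'' and its bracket computation in $\fr_2\fr_2$ is identical to yours), so the proofs are interchangeable. Your version costs a little more bookkeeping (you must certify the enumeration of invariant planes is exhaustive, which you correctly flag as the delicate point and which does go through via the generalized eigenspace decomposition), but it buys a marginally stronger conclusion in the $\fd'_{4,\delta}$ cases, where you show $\ker\vt$ contains no $2$-dimensional ideal at all, Lagrangian or not, whereas the paper only excludes the Lagrangian ones.
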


\begin{proof}
Let $\fj\subset\fr\fr'_{3,\gamma}$ be a Lagrangian ideal contained in $\ker\vt$. The condition of $\fj$ being Lagrangian implies that $\dim(\fj\cap\la e_2,e_3\ra)=1$. Computing $\ad_{e_1}$, we see that $\dim(\ad_{e_1}(\fj)\cap\la e_2,e_3\ra)=2$, contradicting the hypothesis that $\fj$ is an ideal.

Let $\fj\subset\fr_2\fr_2$ be a Lagrangian ideal contained in $\ker\vt$. The condition $\fj\subset \ker\vt$ implies that $\fj\cap\la e_1,e_3\ra$, which must be non-empty, is spanned by $e_1-e_3$. But $[e_2,e_1-e_3]=-e_2$ and $[e_4,e_1-e_3]=-e_4$, hence $e_2$ and $e_4$ must both be in $\fj$ in order for $\fj$ to be an ideal. This is clearly absurd.

Let $\fj\subset\fr'_2$ be a Lagrangian ideal contained in $\ker\vt$. The condition of $\fj$ being Lagrangian implies that $\dim(\fj\cap\la e_3,e_4\ra)=1$. Computing $\ad_{e_1}$, we see that $\dim(\ad_{e_1}(\fj)\cap\la e_3,e_4\ra)=2$, contradicting the hypothesis that $\fj$ is an ideal.

Let $\fj\subset\fr'_{4,\gamma,\delta}$ be a Lagrangian ideal contained in $\ker\vt$. The condition of $\fj$ being Lagrangian implies that $\dim(\fj\cap\la e_2,e_3\ra)=1$. Computing $\ad_{e_4}$, we see that $\dim(\ad_{e_4}(\fj)\cap\la e_2,e_3\ra)=2$, contradicting the hypothesis that $\fj$ is an ideal.

Let $\fj\subset\fd'_{4,\delta}$ be a Lagrangian ideal contained in $\ker\vt$. The condition of $\fj$ being Lagrangian implies that $\dim(\fj\cap\la e_1,e_2\ra)=1$. Computing $\ad_{e_4}$, we see that $\dim(\ad_{e_4}(\fj)\cap\la e_1,e_2\ra)=2$, contradicting the hypothesis that $\fj$ is an ideal.
\end{proof}

\subsection{Other remarks concerning Table \ref{table:lcs-4}}\label{Sec:4.2}

In Table \ref{table:lcs-4} there are four examples of lcs Lie algebras whose structure can not be deduced from the results contained in Theorem \ref{corr:exact} and Proposition \ref{Lag:id}, namely
\begin{itemize}
\item $(\fr\fr'_{3,\gamma},e^{14}\pm e^{23},-2\gamma e^1)$, $\gamma>0$;
\item $(\fr_2\fr_2,\sigma e^{13}+e^{24},-e^1-e^3)$, $\sigma>0$;
\item $(\fr'_2,\sigma e^{12}+e^{34},-2e^1)$, $\sigma\neq 0$;
\item $(\fr'_{4,\gamma,\delta},e^{14}\pm e^{23},-2\gamma e^4)$, $\gamma\neq 0$.
\end{itemize}

The first and the last were treated in Examples \ref{ex:1} and \ref{ex:2} respectively, in view of the construction of Section \ref{another}. We use the same construction to show how to recover the second and the third one.

For $(\fr_2\fr_2,\sigma e^{13}+e^{24},-e^1-e^3)$, $\sigma>0$, we set $\omega=e^{24}$ and $\eta=-\frac{\sigma}{2}(e^1-e^3)$, so that $\Omega=\omega+\eta\wedge\vartheta$; moreover, $V=-\frac{1}{\sigma}(e_1-e_3)$ and $U=-\frac{1}{2}e_1-\frac{1}{2}e_3$. The Lie algebra $\fh=\ker\vt\cong\la e_1-e_3,e_2,e_4\ra$ is isomorphic to $\fr_{3,-1}=(df^1=0,df^2=-f^{12},df^3=f^{13})$ and is endowed with the cosymplectic structure $(\eta,\omega)$. The derivation $D=\ad_U\colon\fh\to\fh$ satisfies $D^*\eta=0$ and $D^*\omega=-\omega$.

For $(\fr'_2,\sigma e^{12}+e^{34},-2e^1)$, $\sigma\neq 0$, we set $\omega=e^{34}$ and $\eta=\frac{\sigma}{2}e^2$, so that $\Omega=\omega+\eta\wedge\vartheta$; we compute $V=\frac{2}{\sigma}e_2$ and $U=-\frac{1}{2}e_1$. The Lie algebra $\fh=\ker\vt\cong\la e_2,e_3,e_4\ra$ is isomorphic to $\fr'_{3,0}=(df^1=0,df^2=f^{13},df^3=-f^{12})$ and is endowed with the cosymplectic structure $(\eta,\omega)$. The derivation $D=\ad_U\colon\fh\to\fh$ satisfies $D^*\eta=0$ and $D^*\omega=-\omega$.

\section{Compact four-dimensional lcs solvmanifolds}\label{sec:compact}

In this section we consider connected, simply connected four-dimensional solvable Lie groups which admit a compact quotient and study their left-invariant lcs structures. Such groups have been studied by Bock and the next proposition is a distillation of the pertinent results contained in \cite{Bock}. We put particular emphasis on solvmanifolds which are models for compact complex surfaces and for symplectic fourfolds.

\begin{proposition}[{\cite[Table A.1]{Bock}}]
Table \ref{table:solvmanifolds} contains all four-dimensional Lie algebras whose corresponding connected, simply connected solvable Lie groups admit a compact quotient.
\end{proposition}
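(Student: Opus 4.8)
The plan is to convert this existence statement into a finite check carried out over the complete list of four-dimensional real solvable Lie algebras already recorded in Table \ref{algebreLie}, following the strategy of Bock \cite{Bock}. The first, purely necessary, step is to impose \emph{unimodularity}: by Milnor's theorem a connected, simply connected solvable Lie group $G$ can contain a lattice (a discrete, co-compact subgroup) only if $\mathrm{tr}(\ad_X)=0$ for all $X\in\fg$. For the almost abelian members $\fg=\fn\rtimes_A\bR$ this reduces to $\mathrm{tr}(A)=0$ with $A=\ad_{e_4}|_\fn$, and for the remaining few (such as $\fr_2\fr_2$ and $\fr'_2$) it is a one-line computation; either way the condition eliminates several algebras outright and cuts each parametrized family down to a proper sub-locus (for instance $\alpha+\beta=-1$ in $\fr_{4,\alpha,\beta}$, $\lambda=-1$ in $\fr\fr_{3,\lambda}$, and $\gamma=0$ in $\fr\fr'_{3,\gamma}$).

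On the nilpotent survivors --- namely $\bR^4$, $\fr\fh_3$ and $\fn_4$ --- I would invoke Malcev's criterion: a simply connected nilpotent Lie group admits a lattice precisely when its Lie algebra has a basis with rational structure constants. Each of these three algebras is visibly rational in the basis of Table \ref{algebreLie}, so all three groups carry lattices and this part is immediate.

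The substantive work, which I expect to be the main obstacle, is the non-nilpotent unimodular case. Here the tool is the explicit lattice criterion for $G=N\rtimes_\phi\bR$ with monodromy $\phi(t)=\exp(tA)$: a lattice exists if and only if there is a $t_0\neq0$ such that $\phi(t_0)$ is conjugate, in $\Aut(N)$, to an automorphism stabilizing a lattice of $N$; when $N$ is abelian of rank three this means $\exp(t_0A)$ is conjugate to an element of $\mathrm{GL}(3,\bZ)$. Spectrally, one needs the eigenvalues $e^{t_0\lambda_j}$ to form the full root set of a monic integer polynomial with constant term $\pm1$, that is, a Galois-stable family of algebraic units (the product being $1$ by unimodularity). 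The analysis then splits along the ``completely solvable'' column of Table \ref{algebreLie}. When $A$ has real spectrum (the completely solvable families, e.g.\ $\fr\fr_{3,-1}$, the unimodular members of $\fr_{4,\alpha,\beta}$, and $\fd_4$) the integrality condition forces the exponentials of the eigenvalues to be real units of a cubic number field, so that lattices appear only for the countably many parameter values realized by the real spectra of integer matrices of determinant $\pm1$; when $A$ carries a complex-conjugate pair (the non-completely-solvable families, e.g.\ $\fr\fr'_{3,0}$, the unimodular members of $\fr'_{4,\gamma,\delta}$, and $\fd'_{4,0}$) the rotational part contributes a root-of-unity (rational-trace) condition alongside an algebraic-unit condition on the radial part. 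For favourable spectra one produces the lattice explicitly by conjugating $\phi(t_0)$ into $\mathrm{GL}(3,\bZ)$; the genuinely delicate direction is the \emph{non-existence} claim, namely ruling out lattices for all remaining parameter values, which requires showing that the algebraic-integrality obstruction fails for \emph{every} $t_0\neq0$. Assembling these case-by-case verdicts reproduces exactly the list of Bock's Table A.1, which is the assertion of the proposition.
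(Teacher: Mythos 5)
The paper itself offers no proof of this proposition: it is imported wholesale from Bock's classification (Table A.1 of \cite{Bock}), and the surrounding text says only that it is a ``distillation'' of his results. Your outline is, in substance, the argument Bock gives and the one this paper re-enacts piecemeal in Section \ref{sec:compact} when it builds lattices explicitly: Milnor's unimodularity obstruction, Malcev's rationality criterion for the nilpotent algebras $\bR^4$, $\fr\fh_3$, $\fn_4$, and, for the non-nilpotent unimodular survivors written as $N\rtimes_\phi\bR$, the criterion that $\phi(t_0)=\exp(t_0\,\ad_{e_4})$ be conjugate into $\mathrm{GL}(3,\bZ)$ (or into the stabilizer of a lattice of $\Heis_3$). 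The necessary conditions you extract are the correct ones: $\lambda=-1$ for $\fr\fr_{3,\lambda}$, $\gamma=0$ for $\fr\fr'_{3,\gamma}$, $\beta=-(1+\alpha)$ for $\fr_{4,\alpha,\beta}$, $\delta=0$ for $\fd'_{4,\delta}$, and outright exclusion of $\fr_2\fr_2$, $\fr'_2$, $\fr_4$, $\hat{\fr}_{4,\beta}$, $\fd_{4,\lambda}$, $\fh_4$. One small point: the reduction of an arbitrary lattice $\Gamma\subset N\rtimes_\phi\bR$ to the split form $\Gamma_0\rtimes t_0\bZ$ rests on Mostow's theorem that $\Gamma\cap N$ is a lattice in the nilradical; the ``only if'' direction of your criterion needs that citation.

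The genuine gap is that the decisive half of the case analysis --- non-existence --- is announced but not carried out, and it is not vacuous: unimodularity alone does \emph{not} reproduce Table \ref{table:solvmanifolds}. The algebra $\fr_{4,-1/2}$ is unimodular ($\mathrm{tr}\,\ad_{e_4}=\pm(1+2\mu)=0$) yet absent from the table, and excluding it requires using the non-semisimplicity of $\ad_{e_4}$ on the nilradical: $\exp(t_0\,\ad_{e_4})$ has $e^{t_0/2}$ as a double root of its (would-be integer) characteristic polynomial, so the minimal polynomial of $e^{t_0/2}$ has degree one, so $e^{t_0/2}$ is a rational unit, hence equal to $1$, forcing $t_0=0$. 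The same argument eliminates $\fr_{4,-1/2,-1/2}$ (which is why the table carries the strict bound $\alpha<-\tfrac12$), and an analogous rationality-of-trace analysis is needed to control the admissible $\delta$ in $\fr'_{4,-1/2,\delta}$. You should also make explicit that for the continuously parametrized families a lattice exists only for countably many parameter values (as the paper notes via \cite{Lee}), so the table must be read as listing those isomorphism types admitting a lattice for \emph{some} admissible parameter. Without these exclusions and qualifications the proposal is a correct programme rather than a proof.
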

\begin{center}
\def\arraystretch{1.75}
\begin{table}[h!]
\begin{tabular}{lccc} \toprule
    Lie algebra & \cite{Bock} & Complex surface & Symplectic\\
    \hline
    $\bR^4$ & $4\fg_1$ & Torus & \checkmark\\
    \hline
    $\fr\fh_3$ & $\fg_{3.1}\oplus\fg_1$ & Primary Kodaira surface & \checkmark\\
    \hline
    $\fr\fr_{3,-1}$ & $\fg_{3.4}^{-1}\oplus\fg_1$ & $\times$ & \checkmark \\
    \hline
    $\fr\fr'_{3,0}$ & $\fg_{3.5}^0\oplus\fg_1$ & Hyperelliptic surface & \checkmark\\
    \hline
    $\fn_4$ & $\fg_{4.1}$ & $\times$ & \checkmark\\
    \hline
    $\fr_{4,\alpha,-(1+\alpha)}$, $-1<\alpha<-\frac{1}{2}$ & $\fg^{-(1+\alpha),\alpha}_{4.5}$ & $\times$ & $\times$ \\
    \hline
    $\fr'_{4,-\frac{1}{2},\delta}$, $\delta>0$ & $\fg^{-\frac{1}{\delta},\frac{1}{2\delta}}_{4.6}$ & Inoue surface $S^0$ & $\times$\\
    \hline
    $\fd_4$ & $\fg^{-1}_{4.8}$ & Inoue surface $S^+$  & $\times$ \\
    \hline
    $\fd'_{4,0}$ & $\fg^0_{4.9}$ & Secondary Kodaira surface & $\times$\\
    \bottomrule
\end{tabular}
\vskip .25 cm
\caption{Four-dimensional Lie algebras associated to compact solvmanifolds.}
\label{table:solvmanifolds}
\end{table}
\end{center}

Suppose $\Gamma\backslash G$ is a compact solvmanifold. It is known (see \cite{AO, Hattori,mostow,raghunathan}) that if $G$ is completely solvable Lie group (that is, the eigenvalues of the endomorphisms given by the adjoint representation of the corresponding Lie algebra are all real) or, more generally, if it satisfies the Mostow condition (that is, $\mathrm{Ad}(\Gamma)$ and $\mathrm{Ad}(G)$ have the same Zariski closure in $\mathrm{GL}(\mathfrak{g})$, the group of the linear isomorphisms of $\mathfrak{g}$), then we have isomorphisms
\begin{itemize}
\item $H^\bullet(\fg)\cong H^\bullet_{dR}(\Gamma\backslash G)$, where $H^\bullet(\fg)$ is the Lie algebra cohomology of $\fg$;
\item $H^\bullet_\vt(\fg)\cong H^\bullet_{\vt}(\Gamma\backslash G)$. Here $\vt\in\fg^*$ is a closed 1-form mapping to itself under the natural inclusion $\fg^*\hookrightarrow\Omega^1(\Gamma\backslash G)$ (this is well-defined since $H^\bullet_{\vt}(\Gamma\backslash G)$ depends only on the cohomology class of $\vt$) and $H^\bullet_\vt(\fg)$ is the cohomology of the complex $(\Lambda^\bullet\fg^*,d_\vt)$.
\end{itemize}

\begin{corollary}\label{Hattori}
Let $G$ be a connected, simply connected solvable Lie group. Assume that $G$ satisfies the Mostow condition and let $\Gamma\backslash G$ be a compact solvmanifold, quotient of $G$. Then
\[
H^\bullet_\vt(\fg) \cong H^\bullet_{\vt}(\Gamma\backslash G)\,.
\]
\end{corollary}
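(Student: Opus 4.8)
The plan is to deduce the statement directly from the comparison theorems for solvmanifolds recalled immediately above, after reinterpreting the Morse--Novikov differential as a twisted Chevalley--Eilenberg differential. First I would observe that, since $d\vt=0$, one has $\vt([X,Y])=0$ for all $X,Y\in\fg$, so $\vt$ vanishes on $[\fg,\fg]$ and the assignment $X\mapsto-\vt(X)$ defines a one-dimensional representation $\bR_\vt$ of $\fg$. A direct comparison of the defining formulas shows that the Chevalley--Eilenberg differential with values in $\bR_\vt$ is precisely $d_\vt=d-\vt\wedge\_$; hence $H^\bullet_\vt(\fg)$ is nothing but the Lie algebra cohomology of $\fg$ with coefficients in $\bR_\vt$. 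Dually, since $G$ is connected and simply connected, the same character integrates to a homomorphism $G\to\bR$ and produces a flat real line bundle on $\Gamma\backslash G$ whose associated de Rham complex is exactly $(\Omega^\bullet(\Gamma\backslash G),d_\vt)$, so that $H^\bullet_\vt(\Gamma\backslash G)$ is the cohomology of $\Gamma\backslash G$ with values in this flat bundle.

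With these identifications in hand, I would consider the natural inclusion $\iota\colon\Lambda^\bullet\fg^*\hookrightarrow\Omega^\bullet(\Gamma\backslash G)$ of left-invariant forms, which realizes the invariant sections of the flat bundle. Because $\vt$ is itself left-invariant, $\iota$ commutes with both $d$ and with $\vt\wedge\_$, and therefore is a morphism of complexes $(\Lambda^\bullet\fg^*,d_\vt)\to(\Omega^\bullet(\Gamma\backslash G),d_\vt)$. The assertion of the corollary then amounts to showing that $\iota$ is a quasi-isomorphism. This is exactly the content of the Mostow--Hattori comparison, upgraded from the trivial coefficient statement $H^\bullet(\fg)\cong H^\bullet_{dR}(\Gamma\backslash G)$ to the one-dimensional local system $\bR_\vt$; under the Mostow condition this upgraded comparison is precisely the second isomorphism recorded just before the corollary, and I would simply invoke it, citing \cite{Hattori,mostow,raghunathan} together with the lcs-adapted treatment in \cite{AO}.

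The main obstacle is therefore not the formal set-up but the passage to nontrivial coefficients: in contrast with Nomizu's theorem for nilmanifolds, the comparison between invariant cohomology and full cohomology with a nontrivial local system can fail, and the Mostow condition is exactly what rescues it. Concretely, requiring that $\mathrm{Ad}(\Gamma)$ and $\mathrm{Ad}(G)$ have the same Zariski closure in $\mathrm{GL}(\fg)$ is what allows the averaging and spectral-sequence techniques underlying Hattori's theorem to be applied to the flat bundle $\bR_\vt$ and not merely to the constant sheaf. Once that quasi-isomorphism is granted, the isomorphism $H^\bullet_\vt(\fg)\cong H^\bullet_\vt(\Gamma\backslash G)$ is immediate, and the corollary is just the specialization of the above discussion to the Mostow hypothesis.
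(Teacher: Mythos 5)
Your proposal is correct and follows essentially the same route as the paper, which gives no separate argument for this corollary: it is a direct specialization of the Mostow--Hattori comparison with nontrivial one-dimensional local coefficients recalled (with references to \cite{AO,Hattori,mostow,raghunathan}) immediately before the statement. Your added identifications --- $d_\vt$ as the Chevalley--Eilenberg differential valued in the character $X\mapsto-\vt(X)$, and $H^\bullet_\vt(\Gamma\backslash G)$ as the cohomology of the corresponding flat line bundle, with the inclusion of left-invariant forms as the comparison quasi-isomorphism --- are exactly the standard bookkeeping implicit in the paper's citation, so nothing is missing.
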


\subsection{$\bR^4$}
Clearly $\bR^4$ does not have any left-invariant lcs structure. However, a result of Martinet \cite{Mar} implies that every oriented compact 3-manifold admits a contact structure. This is the case for $T^3$, hence $T^4=T^3\times S^1$ admits a lcs structure of the first kind. Notice that contact structures exist on all odd-dimensional tori (see \cite{Bourgeois}), hence all even-dimensional tori of dimension $\geq 4$ admit a lcs structure of the first kind. It is not clear whether $T^4$ can admit a locally conformally K\"ahler metric, but it certainly carries no Vaisman metric since $b_1(T^4)$ is even, see \cite{KashiSato}.

\subsection{$\fr\fh_3$}
Notice that $\fr\fh_3$ is a nilpotent Lie algebra. The only lcs structure on $\fr\fh_3$ is of the first kind, hence the same is true for the left-invariant lcs structure on any nilmanifold, quotient of the connected, simply connected nilpotent Lie group with Lie algebra $\fr\fh_3$. Every nilmanifold quotient of this Lie group carries a left-invariant Vaisman metric (see \cite{Bazzoni,Ugarte}).

\subsection{$\fr\fr_{3,-1}$}
This Lie algebra admits two non-equivalent lcs structures, namely
\[
(e^{12}-e^{13}-e^{24}-e^{34},e^4) \qquad \textrm{and} \qquad (e^{12}+e^{34},e^1)\,.
\]

The first one is of the first kind, hence the same is true for a left-invariant lcs structure on any solvmanifold, quotient of $\bR\times\mathfrak{R}_{3,-1}$, the connected and simply connected completely solvable Lie group with Lie algebra $\fr\fr_{3,-1}$. Such a solvmanifold is the product of a 3-dimensional contact solvmanifold, quotient of the Lie group with Lie algebra $\fr_{3,-1}$, with $S^1$.

We consider now the second structure.
We compute $H^2_\vt(\fr\fr_{3,-1})=\la[e^{13}],[e^{34}]\ra$,
hence the lcs structure $(e^{12}+e^{34},e^1)$ is not exact. The characteristic vector is $V=-e_2$; according to Section \ref{another}, we set $\eta=-e^2$ and $\om=e^{34}$\, so to have $\Omega=\omega+\eta\wedge\vt$. The condition $-\imath_U\Omega=\eta$ yields $U=e_1$, hence $\imath_U\om=0=\imath_V\omega$. The Lie algebra $\ker\vt=\la e_2,e_3,e_4\ra$ is abelian, hence we denote it $\bR^3$; it is endowed with the cosymplectic structure $(\eta,\omega)=(-e^2,e^{34})$. The derivation $D=\ad_U\colon\bR^3\to\bR^3$ is given by the matrix $\mathrm{diag}(1,-1,0)$. Exponentiating, we obtain a 1-parameter subgroup of automorphisms of $\bR^3$, $\rho\colon\bR\to\mathrm{Aut}(\bR^3)$, $t\mapsto\mathrm{diag}(e^t,e^{-t},1)$. Since $\bR^3$ is abelian, the exponential map $\exp\colon\bR^3\to\bR^3$ is the identity and $\rho$ is a 1-parameter subgroup of automorphisms of $\bR^3$ (seen as a Lie group). We consider the semidirect product $\bR^3\rtimes_\rho\bR$ (clearly $\bR^3\rtimes_\rho\bR\cong\bR\times\mathfrak{R}_{3,-1}$); to construct a lattice in $\bR^3\rtimes_\rho\bR$ compatible with the semidirect product structure we need to find some value $t_0$ for which $\rho(t_0)$ is conjugate to a matrix in $\mathrm{SL}(3,\bZ)$. To do so, we consider the characteristic polynomial of $\rho(t)$: it is $-\lambda^3+(1+\exp(t)+\exp(-t))\lambda^2-(1+\exp(t)+\exp(-t))\lambda+1$. In particular, $\rho(t_0)$ is conjugated to a matrix in $\mathrm{SL}(3,\bZ)$ only if $1+\exp(t_0)+\exp(-t_0)=n$ for some $n\in\bZ$, and in this case the characteristic polynomial $-\lambda^3+n\lambda^2-n\lambda+1$ is the same as the one of the matrix
$$
\left(\begin{matrix}
0 & 1 & 0 \\
-1 & n-1 & 0 \\
0 & 0 & 1
\end{matrix}\right)
\in \mathrm{SL}(3,\bZ) .
$$
The equation $1+\exp(t)+\exp(-t)=n$ has solution for $n\geq3$, that is,
$$ t_0(n) = \log \frac{n-1+\sqrt{(n-1)^2-4}}{2}\,. $$
For example, for $n=4$, we get that $t_0=\log\left(\frac{3+\sqrt{5}}{2}\right)$ gives $\rho(t_0)$ is conjugated to the matrix
\[
A=\begin{pmatrix}
0 & 1 & 0\\-1 & 3 & 0\\0 & 0 & 1
\end{pmatrix}\in\mathrm{SL}(3,\bZ)\,,
\]
{\itshape i.e.} there exists $P\in\mathrm{GL}(3,\bR)$ such that $PA=\rho(t_0)P$. Let $\bZ^3$ denote the standard lattice in $\bR^3$ and set $\Gamma_0=P(\bZ^3)$. Then $\rho(t_0)$ preserves $\Gamma_0$ and $\Gamma_0\rtimes_{\rho}(t_0\bZ)$ is a lattice in $\bR^3\rtimes_\rho\bR$.
The group $\bR^3$ is endowed with the left-invariant cosymplectic structure $(\eta,\omega)=(-e^2,e^{34})$. By construction $\rho(t_0)$ descends to a diffeomorphism $\psi$ of $T^3=\Gamma_0\backslash \bR^3$ which satisfies $\psi^*\eta=e^{t_0}\eta$ and $\psi^*\om=e^{-t_0}\om$. The solvmanifold $(\Gamma_0\rtimes_{\rho}(t_0\bZ))\backslash(\bR^3\rtimes_\rho\bR)$ is identified with the mapping torus $(T^3)_\psi$ and is endowed with the lcs structure $(e^{12}+e^{34},e^1)$. Since $\bR\times\mathfrak{R}_{3,-1}$ is completely solvable, Corollary \ref{Hattori} yields an isomorphism
\[
H^*_{\vt}((\Gamma_0\rtimes_{\rho}(t_0\bZ))\backslash(\bR^3\rtimes_\rho\bR))\cong H^*_\vt(\fr\fr_{3,-1})\,,
\]
hence the lcs structure $(e^{12}+e^{34},e^1)$ on $(T^3)_\psi$ is not exact.

\subsection{$\fr\fr'_{3,0}$}
The only lcs structure on $\fr\fr'_{3,0}$ is $(e^{13}-e^{24},e^4)$; it is of the first kind, hence the same is true for the left-invariant lcs structure induced on any solvmanifold, quotient of the connected, simply connected nilpotent Lie group with Lie algebra $\fr\fr'_{3,0}$. The resulting solvmanifold, which is the product of a 3-dimensional solvmanifold with a circle, is a model for a compact complex surface, namely the hyperelliptic (or bi-elliptic) surface. It does not carry any lcK metric.

\subsection{$\fn_4$}
Both lcs structures on $\fn_4$ are of the first kind, hence the same is true for the corresponding left-invariant lcs structures on any nilmanifold, quotient of the connected, simply connected nilpotent Lie group with Lie algebra $\fn_4$. A nilmanifold quotient of such group provided the first example of a lcs 4-manifold, not the product of a 3-manifold and a circle, which does not carry any lcK metric, see \cite{Bazzoni_Marrero}.

\subsection{$\fr_{4,\alpha,-(1+\alpha)}$, $-1<\alpha<-\frac{1}{2}$}
One has $\beta=-1-\alpha$, hence $-\frac{1}{2}<\beta<0$. For such values of the parameters, this Lie algebra admits three non-equivalent lcs structures:
\begin{itemize}
\item $(e^{13}+e^{24},\alpha e^4)$;
\item $(e^{14}+e^{23}, e^4)$;
\item $(e^{12}+e^{34},-(1+\alpha)e^4)$.
\end{itemize}
We have
\[
H^2_{\alpha e^4}(\fr_{4,\alpha,-(1+\alpha)})=\la[e^{13}]\ra, \ H^2_{e^4}(\fr_{4,\alpha,-(1+\alpha)})=\la[e^{23}]\ra \ \textrm{and} \ H^2_{-(1+\alpha)e^4}(\fr_{4,\alpha,-(1+\alpha)})=\la[e^{12}]\ra\,,
\]
hence none of the above lcs structures is exact.

We start with the first structure. The characteristic vector of $(e^{13}+e^{24},\alpha e^4)$ is $V=\alpha e_2$; according to Section \ref{another}, we set $\eta=\frac{1}{\alpha}e^2$ and $\om=e^{13}$, so that $\Omega=\om+\eta\wedge\vartheta$. We compute $U=\frac{1}{\alpha}e_4$, hence $\imath_U\om=0=\imath_V\omega$. The Lie algebra $\ker\vt=\la e_1,e_2,e_3\ra$ is abelian, hence we denote it $\bR^3$; it is endowed with the cosymplectic structure $(\eta,\omega)=\left(\frac{1}{\alpha}e^2,e^{13}\right)$. The derivation $D_1=\ad_U\colon\bR^3\to\bR^3$ is given by the matrix $\mathrm{diag}\left(\frac{1}{\alpha},1,-\frac{1+\alpha}{\alpha}\right)$ and $\fr_{4,\alpha,-(1+\alpha)}\cong \bR^3\rtimes_{D_1}\bR$. Exponentiating $D_1$ we obtain a 1-parameter subgroup of automorphisms of $\bR^3$, $\rho_1\colon\bR\to\mathrm{Aut}(\bR^3)$, $t\mapsto\mathrm{diag}\left(\exp\left(\frac{t}{\alpha}\right),\exp\left(t\right),\exp\left(-\frac{(1+\alpha)t}{\alpha}\right)\right)$. The exponential map $\exp\colon\bR^3\to\bR^3$ is the identity and $\rho_1$ is a 1-parameter subgroup of automorphisms of the Lie group $\bR^3$. The only connected, simply connected solvable Lie group with Lie algebra $\fr_{4,\alpha,-(1+\alpha)}$ is isomorphic to $\bR^3\rtimes_{\rho_1}\bR$. For $\lambda>1$ consider the Lie algebra $\mathfrak{sol}^4_\lambda=(\lambda 14,24,-(1+\lambda) 34, 0)$. It is easy to see that $\mathfrak{sol}^4_\lambda\cong \fr_{4,\frac{1}{2\lambda}-1,-\frac{1}{2\lambda}}$. Let $\textrm{Sol}^4_\lambda$ denote the unique connected, simply connected solvable Lie group with Lie algebra $\mathfrak{sol}^4_\lambda$. A lattice in $\textrm{Sol}^4_\lambda$, compatible with the corresponding semidirect product structure $\textrm{Sol}^4_\lambda\cong \bR^3\rtimes_\varphi\bR$, $\varphi(s)=\diag(e^{\lambda s}, e^s,e^{-(1+\lambda)s})$, has been constructed in \cite[Proposition 2.1]{Lee} for a countable set of parameters $\lambda$. Using this isomorphism, we find (for a countable set of parameters $\alpha$) $t_1=t_1(\alpha)\in\bR$ such that $\rho_1(t_1)$ is conjugated to a matrix in $\textrm{SL}(3,\bZ)$, hence, arguing as above, a lattice of the form $\Gamma_1\rtimes_\rho(t_1\bZ)$ contained in $\bR^3\rtimes_{\rho_1}\bR$. The group $\bR^3$ is endowed with the left-invariant cosymplectic structure $(\eta,\omega)=\left(\frac{1}{\alpha}e^2,e^{13}\right)$. By construction $\rho_1(t_1)$ descends to a diffeomorphism $\psi_1$ of $T^3=\Gamma_1\backslash \bR^3$ which satisfies $\psi_1^*\eta=e^{t_1}\eta$ and $\psi_1^*\om=e^{-t_1}\om$. The solvmanifold $(\Gamma_1\rtimes_{\rho_1}(t_1\bZ))\backslash(\bR^3\rtimes_{\rho_1}\bR)$ is identified with the mapping torus $(T^3)_{\psi_1}$ and is endowed with the lcs structure $(e^{13}+e^{24},\alpha e^4)$. Since $\bR^3\rtimes_{\rho_1}\bR$ is completely solvable, Corollary \ref{Hattori} yields an isomorphism
\[
H^*_{\vt}((\Gamma_1\rtimes_{\rho_1}(t_1\bZ))\backslash(\bR^3\rtimes_{\rho_1}\bR))\cong H^*_\vt(\fr_{4,\alpha,-(1+\alpha)})\,,
\]
hence the lcs structure $(e^{12}+e^{34},e^4)$ on $(T^3)_{\psi_1}$ is not exact.

We continue with $(e^{14}+e^{23},e^4)$; the characteristic vector is $V=e_1$; we set $\eta=e^1$ and $\om=e^{23}$, so that $\Omega=\om+\eta\wedge\vartheta$. We compute $U=e_4$, hence $\imath_U\om=0=\imath_V\omega$. The abelian Lie algebra $\ker\vt=\la e_1,e_2,e_3\ra=\bR^3$ is endowed with the cosymplectic structure $(\eta,\omega)=(e^1,e^{23})$. We have the derivation $D_2=\ad_U\colon\bR^3\to\bR^3$, given by $D_2=\diag(1,\alpha,-(1+\alpha))$. The same argument as for the previous lcs structure provides 1-parameter subgroup of automorphisms $\rho_2\colon\bR\to\Aut(\bR^3)$ so that the given Lie group can be written as $\bR^3\rtimes_{\rho_2}\bR$; moreover, for some $t_2\in\bR$, $\rho_2(t_2)$ preserves a lattice $\Gamma_2\subset\bR^3$, hence we obtain a lattice $\Gamma_2\rtimes_{\rho_2}(t_2\bZ)\subset\bR^3\rtimes_{\rho_2}\bR$. The group $\bR^3$ is endowed with the left-invariant cosymplectic structure $(\eta,\omega)=(e^1,e^{23})$. By construction $\rho_2(t_2)$ descends to a diffeomorphism $\psi_2$ of $T^3=\Gamma_2\backslash \bR^3$ which satisfies $\psi_2^*\eta=e^{t_2}\eta$ and $\psi_2^*\om=e^{-t_2}\om$. The solvmanifold $(\Gamma_2\rtimes_{\rho_2}(t_2\bZ))\backslash(\bR^3\rtimes_{\rho_2}\bR)$ is identified with the mapping torus $(T^3)_{\psi_2}$ and is endowed with the lcs structure $(e^{14}+e^{23},e^4)$. Arguing as above, the lcs structure $(e^{14}+e^{23},e^4)$ on $(T^3)_{\psi_2}$ is not exact.

In the last case, $(e^{12}+e^{34},-(1+\alpha)e^4)$, the characteristic vector is $V=(-1-\alpha)e_3$ and we set $\eta=-\frac{1}{1+\alpha}e^3$ and $\om=e^{12}$, so that $\Omega=\om+\eta\wedge\vt$. We compute $U=-\frac{1}{1+\alpha}e_4$, hence $\imath_U\om=0=\imath_V\omega$. Moreover, we have a derivation $D_3=\ad_U\colon\bR^3\to\bR^3$, given by $D_3=\diag(-\frac{1}{1+\alpha},-\frac{\alpha}{1+\alpha},1)$. The same construction as above produces a solvmanifold $(\Gamma_3\rtimes_{\rho_3}(t_3\bZ))\backslash(\bR^3\rtimes_{\rho_3}\bR)$. The torus $T^3=\Gamma_3\backslash\bR^3$ is endowed with the left-invariant cosymplectic structure $(-\frac{1}{1+\alpha}e^3,e^{12})$ and a diffeomorphism $\psi_3$ satisfying $\psi_3^*\eta=e^{t_3}\eta$ and $\psi_3^*\om=e^{-t_3}\omega$. The solvmanifold $(\Gamma_3\rtimes_{\rho_3}(t_3\bZ))\backslash(\bR^3\rtimes_{\rho_3}\bR)$ is identified with the mapping torus $(T^3)_{\psi_3}$. By the same token, the lcs structure $(e^{12}+e^{34},-(1+\alpha)e^4)$ on $(T^3)_{\psi_3}$ is not exact.

\begin{proposition}
For $i\in\{1,2,3\}$, the solvmanifolds $(T^3)_{\psi_i}$ constructed above are examples of $4$-dimensional manifolds which admit lcs structures but neither symplectic nor complex structures. Moreover, $(T^3)_{\psi_i}$ are not products of a 3-dimensional manifold and a circle.
\end{proposition}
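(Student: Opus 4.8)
The plan is to verify the four assertions separately. Existence of lcs structures is already supplied by the construction preceding the statement: each $(T^3)_{\psi_i}$ carries the descended non-exact lcs structure obtained from the cosymplectic datum on $T^3$ and the derivation $D_i=\ad_U$. For the nonexistence of symplectic structures I would pass to cohomology. Since $\fg=\fr_{4,\alpha,-(1+\alpha)}$ is completely solvable (Table \ref{algebreLie}), the Hattori isomorphism $H^\bullet(\fg)\cong H^\bullet_{dR}((T^3)_{\psi_i})$ applies (see Corollary \ref{Hattori} and the isomorphisms recalled before it). A Chevalley--Eilenberg computation from the structure equations $(14,\alpha 24,-(1+\alpha)34,0)$ gives $Z^2(\fg)=\langle e^{14},e^{24},e^{34}\rangle=B^2(\fg)$, using $\alpha\neq 0$ and $1+\alpha\neq 0$; hence $H^2(\fg)=0$ and $b_2((T^3)_{\psi_i})=0$. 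A symplectic form on a closed $4$-manifold represents a class $[\omega]$ with $[\omega]^2\neq 0$, so $[\omega]\neq 0$ in $H^2_{dR}$; as $H^2_{dR}=0$, no symplectic structure exists.

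For the nonexistence of complex structures I would first record $b_1(\fg)=1$ (the only closed $1$-form up to scale is $e^4$), so $b_1((T^3)_{\psi_i})=1$ is odd and $b_2=0$. Were a complex structure to exist, the resulting compact complex surface would have $b_2=0$, hence be minimal (a curve of nonzero self-intersection would yield a nonzero class in $H^2$) and lie in class $\mathrm{VII}_0$ of the Enriques--Kodaira classification, i.e.\ be a Hopf or an Inoue surface. Hopf surfaces have universal cover homotopy equivalent to $S^3$ and are not aspherical, whereas $(T^3)_{\psi_i}$ is a solvmanifold with contractible universal cover $\cong\bR^4$ and hence aspherical; this excludes the Hopf case. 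The Inoue solvmanifolds arise from a solvable group whose monodromy carries a pair of complex-conjugate eigenvalues, so that group is not completely solvable, while our $\fg$ is; more directly, the nonexistence is the content of the classification of complex structures on four-dimensional solvmanifolds recorded in Table \ref{table:solvmanifolds} (see \cite{Bock}), where $\fr_{4,\alpha,-(1+\alpha)}$ carries the mark $\times$ in the complex-surface column.

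Finally, to show $(T^3)_{\psi_i}$ is not a product $N^3\times S^1$, I would argue with fundamental groups. The lattice is $\pi_1((T^3)_{\psi_i})\cong\bZ^3\rtimes_{A_i}\bZ$, where $A_i$ is conjugate to $\rho_i(t_i)=\exp(t_i D_i)$ and $D_i$ is diagonal with nonzero entries (for instance $D_2=\diag(1,\alpha,-(1+\alpha))$). Thus every eigenvalue of $A_i$ differs from $1$, so $\det(A_i-\Id)\neq 0$. An elementary check then gives $\cZ(\bZ^3\rtimes_{A_i}\bZ)=\{e\}$: an element $(v,n)$ commuting with every $(w,0)$ forces $A_i^{n}=\Id$, hence $n=0$, and commuting with $(0,1)$ then forces $(A_i-\Id)v=0$, hence $v=0$. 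On the other hand $\pi_1(N^3\times S^1)\cong\pi_1(N)\times\bZ$ has the central infinite cyclic factor, so its center is nontrivial. A diffeomorphism $(T^3)_{\psi_i}\cong N^3\times S^1$ would induce an isomorphism of fundamental groups, contradicting the triviality of $\cZ(\pi_1((T^3)_{\psi_i}))$.

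The main obstacle is the nonexistence of complex structures: the numerical invariants $b_1=1$, $b_2=0$ coincide with those of Hopf and Inoue surfaces, so ruling these out requires either the Enriques--Kodaira classification together with the asphericity and complete-solvability dichotomy, or a direct appeal to the classification of complex structures on four-dimensional solvmanifolds. The symplectic statement reduces to the cohomology computation, and the product statement to the elementary center computation.
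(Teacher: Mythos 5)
Your proposal is correct in substance, but it is a much more explicit argument than the one the paper gives: the paper's proof consists of the single sentence ``It is clear that $(T^3)_{\psi_i}$ are neither symplectic nor complex manifolds'' (implicitly resting on the classification of symplectic and complex structures on four-dimensional solvmanifolds recorded in Table \ref{table:solvmanifolds}, i.e.\ on \cite{Bock} and Hasegawa's classification), together with a citation of \cite[Example 4.19, Proposition 4.21]{BFM2} for the non-product statement. What you do differently: (i) for the symplectic part you replace the appeal to the classification by the direct Chevalley--Eilenberg computation $H^2(\fg)=0$ combined with Corollary \ref{Hattori} and the cohomological obstruction $[\omega]^2\neq 0$ --- this is clean, self-contained, and actually stronger, since it rules out \emph{all} symplectic structures, not just invariant ones; (ii) for the non-product statement you replace the citation by the elementary computation that $\cZ(\bZ^3\rtimes_{A_i}\bZ)$ is trivial because $\det(A_i-\Id)\neq 0$ and no power of $A_i$ is the identity (the eigenvalues $e^{t_id_j}$ have modulus $\neq 1$), whereas $\pi_1(N^3\times S^1)$ has infinite center. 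Both computations check out against the derivations $D_i$ given in the text, and the center argument is essentially the one used in \cite{BFM2}.

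The one step that is not airtight is the Enriques--Kodaira detour for the complex part. From $b_1=1$ and $b_2=0$ alone you cannot conclude that a hypothetical complex structure puts the surface in class $\mathrm{VII}_0$: secondary Kodaira surfaces and properly elliptic surfaces with $b_1=1$ also have $b_1=1$, $b_2=0$, so these cases would have to be excluded separately (e.g.\ via $\pi_1$: the former have virtually nilpotent fundamental group, the latter contain a lattice of $\widetilde{\mathrm{SL}(2,\bR)}$, neither of which is isomorphic to $\bZ^3\rtimes_{A_i}\bZ$ with $A_i$ having eigenvalues off the unit circle). Since you explicitly offer the direct appeal to the classification of complex structures on four-dimensional solvmanifolds as the primary justification --- which is exactly what the paper relies on --- the proof as a whole stands; but if you want the Enriques--Kodaira route to be self-contained, the reduction to class $\mathrm{VII}_0$ needs the extra cases handled.
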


\begin{proof}
It is clear that $(T^3)_{\psi_i}$ are neither symplectic nor complex manifolds. That they are not products follows from the same argument as in \cite[Example 4.19, Proposition 4.21]{BFM2}.
\end{proof}

\subsection{$\fr'_{4,-\frac{1}{2},\delta}$, $\delta>0$} We choose $\gamma=-\frac{1}{2}$, otherwise the Lie algebra is not unimodular and can not admit compact quotients. This Lie algebra admits two non-equivalent lcs structures, namely
\[
(\Om_\pm,\vt)=(e^{14}\pm e^{23},e^4)\,,
\]
both non-exact since $[e^{23}]\neq 0$ in $H^2_{\vt}(\fr'_{4,-\frac{1}{2},\delta})$. The characteristic vector of $(\Om_\pm,\vt)$ is $V=e_1$; according to Section \ref{another}, we set $\eta=e^1$ and $\om_\pm=\pm e^{23}$ in order to have $\Omega=\omega+\eta\wedge\vartheta$. We compute $U=e_4$, hence $\imath_U\om_\pm=0=\imath_V\omega_\pm$. The Lie algebra $\ker\vt=\la e_1,e_2,e_3\ra$ is abelian, hence we denote it $\bR^3$; it is endowed with the cosymplectic structures $(\eta,\pm\om)=(e^1,\pm e^{23})$. The derivation $D=\ad_U\colon\bR^3\to\bR^3$ is given by the matrix 
\[
D=\begin{pmatrix}
1 & 0 & 0\\0 & -\frac{1}{2} & \delta\\0 &-\delta & -\frac{1}{2}
\end{pmatrix}\,.
\]

Exponentiating it, we obtain a 1-parameter subgroup of automorphisms of $\bR^3$, $\rho\colon\bR\to\mathrm{Aut}(\bR^3)$,
\[
t\mapsto\exp(tD)=\begin{pmatrix}
e^t & 0 & 0\\0 & e^{-\frac{t}{2}}\cos t\delta & e^{-\frac{t}{2}}\sin t\delta\\0 &-e^{-\frac{t}{2}}\sin t\delta & e^{-\frac{t}{2}}\cos t\delta
\end{pmatrix}\,.
\]
Since $\bR^3$ is abelian, the exponential map $\exp\colon\bR^3\to\bR^3$ is the identity. Hence $\rho$ lifts to a 1-parameter subgroup of automorphisms of $\bR^3$ and the only connected, simply connected solvable Lie group with Lie algebra $\fr'_{4,-\frac{1}{2},\delta}$ is isomorphic to $\bR^3\rtimes_\rho\bR$. We show how to construct a lattice of the form $\Gamma_0\rtimes_{\rho}(t_0\bZ)$ in $\bR^3\rtimes_\rho\bR$, where $t_0\in\bR$ is to be determined, for special choices of $\delta$. We determine $t_0$ in such a way that $\rho(t_0)$ is conjugated to a matrix in $\mathrm{SL}(3,\bZ)$. Choose $\delta$ of the form $\frac{\pi m}{t_0}$ for $m\in\bZ$ where $t_0$ has to be fixed such that $mt_0>0$. In this case, we are reduced to the diagonal matrix
$$
\exp(t_0D) =
\left(
\begin{matrix}
\exp(t_0) & 0 & 0 \\
0 & (-1)^m \exp(-\frac{1}{2}t_0) & 0 \\
0 & 0 & (-1)^m \exp(-\frac{1}{2}t_0)
\end{matrix}
\right) .
$$
We have to solve the equations
$$\left\{
\begin{array}{l}
e^{t_0}+(-1)^m 2 e^{-\frac{1}{2}t_0} = p \\[5pt]
2e^{\frac{1}{2}t_0}+(-1)^m e^{-t_0} = q
\end{array}
\right.
$$
where $m\in\bZ$ and $p,q\in\bZ$, for $t_0$ such that $mt_0>0$.
For example, we choose $m=-1$ and $p=-5$, $q=-3$, and we solve for $t_0<0$.
We consider the curves $\varphi_1(x)\coloneq x^3 + 5x - 2$ and $\varphi_2(x)\coloneq 2x^3 + 3x^2 - 1$. Since $\varphi_1(0)=-2<-1=\varphi_2(0)$ and $\varphi_1(\frac{1}{2})= \frac{5}{8} > 0 = \varphi_2(\frac{1}{2})$, there exists $0<x_0<\frac{1}{2}$ such that $\varphi_1(x_0)=\varphi_2(x_0)$. Then $t_0\coloneq 2\log x_0<0$ solves the above system. Therefore there is a lattice $\Gamma_0$ in $\bR^3$ such that $\rho(t_0)$ preserves $\Gamma_0$. We consider the solvmanifold $(\Gamma_0\rtimes_{\rho}(t_0\bZ))\backslash(\bR^3\rtimes_\rho\bR)=(\Gamma_0\backslash\bR^3)_\psi$, where $\rho(t_0)$ descends to a diffeomorphism $\psi$ of the torus $\Gamma_0\backslash \bR^3$ which acts on the cosymplectic structure $(\eta,\pm\omega)=(e^1,\pm e^{23}))$ as $\psi^*\eta=\exp(t_0)\eta$ and $\psi^*\omega=\exp(-t_0)\omega$. Then the solvmanifold is endowed with the lcs structures $(e^{14}\pm e^{23},e^4)$. The Lie algebra $\fr'_{4,-\frac{1}{2},\delta}$ is not completely solvable, hence we can not use Corollary \ref{Hattori} to determine whether the lcs structures on the solvmanifold are exact. Notwithstanding, the validity of the Mostow condition for the Inoue surface of type $S^0$ is confirmed in \cite{ATO} (see also \cite{Otiman}), hence we conclude, using Corollary \ref{Hattori}, that the resulting lcs structure is not exact.

\subsection{$\fd_4$}
On this Lie algebra there are many non-equivalent lcs structures:
\begin{enumerate}
\item $(e^{12}-\sigma e^{34},\sigma e^4)$, $\sigma>0$;
\item $(e^{12}-e^{34}+e^{24},e^4)$;
\item $(\pm e^{14}+e^{23},e^4)$.
\end{enumerate}
These lcs structures provide left-invariant lcs structures on any compact quotient of the connected, simply connected solvable Lie group with Lie algebra $\fd_4$ and have been investigated by Banyaga in \cite{Banyaga}. In particular, using such solvmanifold (which had been previously studied in \cite{ACFM}), Banyaga constructed the first example of a non $d_\vt$-exact lcs structure.

The first lcs structure is of the first kind: we have $V=-e_3$ and $U=\frac{e_4}{\sigma}$. Moreover $\ker\vt=\la e_1,e_2,e_3\ra$ is isomorphic to $\heis_3$, the 3-dimensional Heisenberg algebra; it is endowed with the contact form $\eta=-e^3$. The transversal vector $U$ induces the derivation $D_\sigma=\ad_U\colon \heis_3\to\heis_3$, $D=\diag\left(\sigma,-\sigma,0\right)$. Exponentiating it, we obtain a 1-parameter subgroup of automorphisms of $\heis_3$, $\tilde\rho_\sigma\colon\bR\to\mathrm{Aut}(\heis_3)$, $t\mapsto\mathrm{diag}(e^{\sigma t},e^{-\sigma t},1)$. Notice that $\tilde\rho_\sigma(t)^*\eta=\eta$. The connected, simply connected nilpotent Lie group with Lie algebra $\heis_3$ is
\[
\Heis_3=\left\{\begin{pmatrix}
1 & x & z\\0 & 1 & y\\0 & 0 & 1
\end{pmatrix} \mid x,y,z\in\bR\right\}\,.
\]
We can lift $\tilde\rho_\sigma$ to a 1-parameter subgroup of automorphisms of $\Heis_3$ as follows: since the exponential map $\exp\colon\heis_3\to\Heis_3$ is a bijection, we have a diagram
\[
\xymatrix{
\Heis_3 \ar[r]^\Phi & \Heis_3\\
\heis_3\ar[u]^\exp \ar[r]^\rho &\heis_3\ar[u]_\exp}
\]
which defines a family of Lie group automorphisms $\rho_\sigma\colon\Heis_3\to\Heis_3$ via $\rho_\sigma=\exp\circ\tilde\rho_\sigma\circ\exp^{-1}$. One computes $\rho_\sigma(t)(x,y,z)=(e^{\sigma t}x,e^{-\sigma t}y,z)$.
The connected, simply connected solvable Lie group with Lie algebra $\fd_4$ is thus isomorphic to $\Heis_3\rtimes_\rho\bR$. A lattice of the form $\Gamma_0\rtimes_\rho(t_0\bZ)\subset\Heis_3\rtimes_\rho\bR$, for a certain $t_0\in\bR$, was explicitly constructed in \cite[Theorem 2]{Sawai}. The group $\Heis_3$ is endowed with the left-invariant contact structure $\eta=-e^3$ which descends to the nilmanifold $\Gamma_0\backslash\Heis_3$. By construction $\rho(t_0)$ descends to a diffeomorphism $\psi$ of $\Gamma_0\backslash \Heis_3$ satisfying $\psi^*\eta=\eta$. Hence the solvmanifold $(\Gamma_0\rtimes_{\rho}(t_0\bZ))\backslash(\Heis_3\rtimes_\rho\bR)$ is identified with the contact mapping torus $(\Gamma_0\backslash \Heis_3)_\psi$.

For $\vt=e^4$ we have $H^2_\vt(\fd_4)=\la [e^{23}],[e^{24}]\ra$, hence the second and the third structure are not exact.

The characteristic field of the second one is $V=-e_3$. In this case, $\ker\vt=\la e_1,e_2,e_3\ra$ is isomorphic to $\heis_3$. We try to proceed as prescribed by Section \ref{another} and set $\eta=e^2-e^3$, $\om=e^{12}=d\eta$, hence $U=e_4$ and $(\heis_3,\eta,\om)$ is a contact Lie algebra endowed with the derivation $D=\ad_U$. This derivation satisfies $D^*\eta=-e^2$, hence we are in the general case of the first \emph{Ansatz} of Section \ref{another}, for which we have no structure results.

The characteristic field of the third lcs structure is $V=\pm e_1$. According to Section \ref{another} we set $\eta=\pm e^1$ and $\om=e^{23}$, giving $\Omega=\omega+\eta\wedge\vartheta$. Then $U=e_4$ and $\imath_U\om=0=\imath_V\om$. Again $\ker\vt=\la e_1,e_2,e_3\ra$ is isomorphic to $\heis_3$, this time endowed with the cosymplectic structure $(\pm e^1,e^{23})$. $U$ induces the derivation $D_1=\ad_U\colon \heis_3\to\heis_3$, $D=\diag\left(1,-1,0\right)$. Exponentiating it, we obtain a 1-parameter subgroup of automorphisms of $\heis_3$, $\rho\colon\bR\to\mathrm{Aut}(\heis_3)$, $t\mapsto\mathrm{diag}(e^t,e^{-t},1)$. We have
\[
\tilde\rho(t)^*\eta=e^t\eta \quad \mathrm{and} \quad \tilde\rho(t)^*\omega=e^{-t}\omega\,.
\]

$\tilde\rho$ lifts to a 1-parameter subgroup of automorphisms $\rho$ of $\Heis_3$, $\rho(t)(x,y,z)=(e^tx,e^{-t}y,z)$ and there exists a lattice $\Gamma_0\subset\Heis_3$ preserved by $\rho(t_0)$ for some $t_0\in\bR$. The left-invariant cosymplectic structures $(\pm e^1,e^{23})$ on $\Heis_3$ descend to cosymplectic structures on $\Gamma_0\subset\Heis_3$ and $\rho(t_0)$ gives a diffeomorphism $\psi\colon\Gamma_0\backslash\Heis_3\to\Gamma_0\backslash\Heis_3$ satisfying $\psi^*\eta=e^{t_0}\eta$ and $\psi^*\omega=e^{-t_0}\omega$. The solvmanifold $(\Gamma_0\rtimes_\rho(t_0\bZ))\backslash(\Heis_3\rtimes_\rho\bR)$ can be identified with mapping torus $(\Gamma_0\backslash\Heis_3)_\psi$ and is endowed with the lcs structures $(\pm e^{14}+e^{23}, e^4)$. Being $\fd_4$ completely solvable, we apply Corollary \ref{Hattori} to see that the lcs structures are not exact.

As already mentioned, using solvmanifolds $M_{n,k}$ quotients of the connected, simply connected completely solvable Lie group with Lie algebra $\fd_4$, Banyaga constructed in \cite{Banyaga} the first example of a lcs structure $(\Omega,\vt)$ which is not $d_\vt$-exact. In \cite[Question 3]{Banyaga} he asked whether the dimension of the spaces $H^{i}_\vt(M_{n,k})$ ($i=1,2,3$) with $\vt=e^4$ is exactly one. In view of Corollary \ref{Hattori}, we can answer this question producing explicit generators for the Morse-Novikov cohomology:
\begin{corollary}
Let $S$ be a solvmanifold quotient of the connected, simply connected completely solvable Lie group with Lie algebra $\fd_4$. For $\vt=e^4$ we have:
\[
H^0_\vt(S)=H^4_\vt(S)=0, \quad H^1_\vt(S)=\la[e^2]\ra, \quad H^2_\vt(S)=\la[e^{23}],[e^{24}]\ra \quad \textrm{and} \quad H^3_\vt(S)=\la[e^{234}]\ra\,.
\]
\end{corollary}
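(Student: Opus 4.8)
The plan is to reduce the computation to Lie algebra cohomology via Corollary \ref{Hattori} and then to carry out a finite-dimensional calculation inside the Chevalley--Eilenberg complex. The Lie algebra $\fd_4=(14,-24,-12,0)$ is completely solvable: the eigenvalues of $\ad_{e_4}$ on $\ker\vt=\la e_1,e_2,e_3\ra$ are $1,-1,0$, all real (see Table \ref{algebreLie}). Hence the connected, simply connected Lie group $G$ with Lie algebra $\fd_4$ satisfies the Mostow condition, and for any compact quotient $S=\Gamma\backslash G$ we have $H^\bullet_\vt(S)\cong H^\bullet_\vt(\fd_4)$. It therefore suffices to compute the cohomology of $(\Lambda^\bullet\fd_4^*,d_\vt)$ with $\vt=e^4$ and $d_\vt=d-e^4\wedge\_$.

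The next step is to record $d_\vt$ on the standard basis. Starting from $de^1=e^{14}$, $de^2=-e^{24}$, $de^3=-e^{12}$, $de^4=0$, a routine Leibniz computation (the only subtlety being the signs produced by commuting $e^4$ past the factors) gives, in degree $0$, $d_\vt 1=-e^4$; in degree $1$, $d_\vt e^1=2e^{14}$, $d_\vt e^2=0$, $d_\vt e^3=-e^{12}+e^{34}$, $d_\vt e^4=0$; in degree $2$, $d_\vt e^{12}=-e^{124}$, $d_\vt e^{13}=-2e^{134}$, $d_\vt e^{34}=-e^{124}$, while $d_\vt e^{14}=d_\vt e^{23}=d_\vt e^{24}=0$; in degree $3$, $d_\vt e^{123}=e^{1234}$, while $d_\vt e^{124}=d_\vt e^{134}=d_\vt e^{234}=0$; and $d_\vt e^{1234}=0$.

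Reading off kernels and images then yields the statement. In degree $0$, $d_\vt 1=-e^4\neq 0$ forces $H^0_\vt=0$ (and exhibits $e^4$ as $d_\vt$-exact). In degree $1$, $Z^1_\vt=\la e^2,e^4\ra$ and $B^1_\vt=\la e^4\ra$, so $H^1_\vt=\la[e^2]\ra$. In degree $2$, one finds $Z^2_\vt=\la e^{14},e^{23},e^{24},e^{12}-e^{34}\ra$ and $B^2_\vt=\la e^{14},e^{12}-e^{34}\ra$, whence $H^2_\vt=\la[e^{23}],[e^{24}]\ra$, in agreement with the value already noted in the text. In degree $3$, $Z^3_\vt=\la e^{124},e^{134},e^{234}\ra$ and $B^3_\vt=\la e^{124},e^{134}\ra$, so $H^3_\vt=\la[e^{234}]\ra$; finally $e^{1234}$ is a cocycle that is also $d_\vt e^{123}$, giving $H^4_\vt=0$.

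The main obstacle here is purely organizational --- keeping track of the wedge-product signs --- rather than conceptual, since complete solvability together with Corollary \ref{Hattori} reduces everything to linear algebra on a sixteen-dimensional complex. As a consistency check one may observe that the resulting Betti numbers $0,1,2,1,0$ give vanishing Euler characteristic, as they must for a twisted de Rham complex on a manifold with $\chi=0$.
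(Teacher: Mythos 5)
Your proposal is correct and follows exactly the route the paper intends: apply Corollary \ref{Hattori} (valid since $\fd_4$ is completely solvable) to reduce to the Lichnerowicz cohomology of $(\Lambda^\bullet\fd_4^*,d_{e^4})$, then compute kernels and images degree by degree; I have checked all the differentials and the resulting spaces of cocycles and coboundaries, and they agree with the stated generators. The paper leaves this finite-dimensional computation implicit, so there is nothing to add.
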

\begin{remark}
In \cite[Example 2.1]{BandeKotschick} the authors proved, using the vanishing of the Euler characteristic for the Morse-Novikov cohomology, that the dimension of $H^2_\vt(S)$ must be at least two. Analogous results to ours have been obtained, with a different method, in \cite{Otiman}.
\end{remark}

\subsection{$\fd'_{4,0}$} The lcs structures on this Lie algebra are given by
\[
(\Om,\vt)=(e^{12}-\sigma e^{34},\sigma e^4), \quad \sigma>0\,.
\]
All of them are of the first kind: we have $V=-e_3$ and $U=\frac{e_4}{\sigma}$. Moreover $\ker\vt=\la e_1,e_2,e_3\ra$ is isomorphic to $\heis_3$, the 3-dimensional Heisenberg algebra; the transversal vector $U$ induces the derivation $D=\ad_U\colon \heis_3\to\heis_3$, 
\[
D=\begin{pmatrix}
0 & \frac{1}{\sigma} & 0\\-\frac{1}{\sigma} & 0 & 0\\ 0 & 0 & 0
\end{pmatrix}\,.
\]
Exponentiating it, we obtain a 1-parameter subgroup of automorphisms of $\heis_3$, $\tilde\rho\colon\bR\to\mathrm{Aut}(\heis_3)$, 
\[
\tilde\rho(t)=\exp(tD)=\begin{pmatrix}
\cos \frac{t}{\sigma} & \sin \frac{t}{\sigma} & 0\\-\sin \frac{t}{\sigma} & \cos \frac{t}{\sigma} & 0\\ 0 & 0 & 1
\end{pmatrix}\,.
\]
We lift $\tilde\rho$ to a 1-parameter subgroup of automorphisms $\rho(t)\colon\Heis_3\to \Heis_3$, $\rho(t)(x,y,z)=(x\cos \frac{t}{\sigma}-y\sin \frac{t}{\sigma},x\cos \frac{t}{\sigma}+y\sin \frac{t}{\sigma},z)$. For $t_0=\frac{\pi}{2}\sigma$, $\rho_{t_0}$ maps the lattice
\[
\Gamma=\left\{\begin{pmatrix}
1 & x & z\\0 & 1 & y\\0 & 0 & 1
\end{pmatrix} \mid x,y,z\in\bZ\right\}\subset\Heis_3
\]
to itself, therefore $\Gamma\rtimes_\rho(t_0\bZ)$ is a lattice in $\Heis_3\rtimes_\rho\bR$. The group $\Heis_3$ is endowed with the left-invariant contact structure $\eta=e^3$. By construction $\rho(t_0)$ descends to a diffeomorphism $\psi$ of $\Gamma\backslash \Heis_3$ which satisfies $\psi^*\eta=\eta$. Hence the solvmanifold $(\Gamma\rtimes_{\rho}(t_0\bZ))\backslash(\Heis_3\rtimes_\rho\bR)$ is identified with the contact mapping torus $(\Gamma\backslash \Heis_3)_\psi$.

\section{Appendix: Proof of Theorem \ref{thm:lcs-4}}

\begin{proof}

We assume throughout the proof that the non-degeneracy condition for $\Omega$ holds. This means that, for  a generic $2$-form $\Omega=\sum_{1\leq j < k \leq 4} \omega_{jk}e^{jk}$, 
\[
\omega_{12}\omega_{34}-\omega_{13}\omega_{24}+\omega_{14}\omega_{23}\neq 0 \label{eq:non_deg} \tag{\ding{70}}
\]


\subsection{$\mathfrak{rh}_{3}$, $(0,0,-12,0)$}
Take a generic $1$-form $\vartheta=\sum_{j=1}^{4}\vartheta_j e^j$ and a generic $2$-form $\Omega=\sum_{1\leq j < k \leq 4} \omega_{jk}e^{jk}$. By computing $d(\vartheta)=-\vartheta_3 e^{12}$ we see that $\vartheta_3=0$ must hold, so the generic Lee form is $\vartheta=\vartheta_1 e^1+\vartheta_2 e^2+\vartheta_4 e^4$. We assume $\vartheta_1^2+\vartheta_2^2+\vartheta_4^2\neq0$, otherwise we are in the symplectic case. We compute $d_{\vartheta}\Omega=(-\vartheta_1\omega_{23}+\vartheta_2\omega_{13})e^{123} + (-\omega_{34}-\vartheta_1\omega_{24}+\vartheta_2\omega_{14}-\vartheta_4\omega_{12})e^{124} + (-\vartheta_1\omega_{34}-\vartheta_4\omega_{13})e^{134}+(-\vartheta_2\omega_{34}-\vartheta_4\omega_{23}) e^{234}$. The parameters must therefore obey the following conditions:
\begin{enumerate}
\item $\vartheta_1\omega_{23}-\vartheta_2\omega_{13}=0$
\item $\omega_{34}+\vartheta_1\omega_{24}-\vartheta_2\omega_{14}+\vartheta_4\omega_{12}=0$
\item $\vartheta_1\omega_{34}+\vartheta_4\omega_{13}=0$
\item $\vartheta_2\omega_{34}+\vartheta_4\omega_{23}=0$
\item $\vartheta_1^2+\vartheta_2^2+\vartheta_4^2\neq0$
\end{enumerate}

Assume first that $\vt_4= 0$. Equation (5) implies that either $\vt_1\neq 0$ or $\vt_2\neq 0$. Assume $\vt_1\neq 0$; then from (3) follows $\omega_{34}=0$ and, from (2), $\vartheta_1\omega_{24}=\vartheta_2\omega_{14}$, which gives $\omega_{24}=\frac{\vt_2\omega_{14}}{\vt_1}$. (1) gives $\omega_{23}=\frac{\vt_2\omega_{13}}{\vt_1}$. Plugging this into \eqref{eq:non_deg}, we get a contradiction. Thus $\vt_1=0$. Arguing in the same way, we see that $\vt_2$ must vanish.

As a consequence, we may assume $\vt_4\neq 0$. It follows from (3) that $\om_{13}=-\frac{\vt_1\om_{34}}{\vt_4}$ and from (4) that $\om_{23}=-\frac{\vt_2\om_{34}}{\vt_4}$. The lcs structure is therefore
\begin{eqnarray*}
\vt&=&\vt_1e^1+\vt_2e^2+\vt_4e^4 \;,\\[5pt]
\Omega&=&-\left(\frac{\omega_{34}+\vartheta_1\omega_{24}-\vartheta_2\omega_{14}}{\vt_4}\right)e^{12}-\frac{\vt_1\om_{34}}{\vt_4}e^{13}+\om_{14}e^{14}-\frac{\vt_2\om_{34}}{\vt_4}e^{23}+\om_{24}e^{24}+\om_{34}e^{34}\;,
\end{eqnarray*}
together with condition (1) and $\vt_4\neq0$. Furthermore, \eqref{eq:non_deg} gives $\om_{34}\neq 0$. We consider, in terms of the basis $\{e^1,e^2,e^3,e^4\}$ of $\mathfrak{rh}_3^*$, the automorphism given by the matrix
\[
\left(\begin{array}{cccc}
-\frac{\vt_4}{\om_{34}} & 0 & \frac{\omega_{14}\vt_4}{\om_{34}^2}& \frac{\vt_1}{\om_{34}} \\
0 & 1 & -\frac{\omega_{24}}{\om_{34}} & -\frac{\vt_2}{\vt_4} \\
0 & 0 & -\frac{\vt_4}{\om_{34}} & 0 \\
0 & 0 & 0 & \frac{1}{\vt_4}
\end{array}\right) \,.
\]

This gives the normal form
\[
\left\{ \begin{array}{ccl}
         \vt & = & e^4\\
        \Omega & = & e^{12}-e^{34}
        \end{array} \right.
\]


\subsection{$\mathfrak{rr}_3$, $(0,-12-13,-13,0)$}
The generic closed 1-form is $\vt=\vt_1 e^1+\vt_4 e^4$. Imposing furthermore the conformally closedness of the generic 2-form $\Omega=\sum_{1\leq j < k \leq 4} \omega_{jk} e^{jk}$ with respect to $\vt$, we obtain the following equations:
\begin{enumerate}
\item $(\vt_1+2)\omega_{23}=0$
\item $(\vt_1+1)\omega_{24}=-\vt_4\omega_{12}$
\item $(\vt_1+1)\omega_{34}=-\om_{24}-\vt_4\om_{13}$
\item $\vt_4\om_{23}=0$
\item $\vt_1^2+\vt_4^2\neq0$
\end{enumerate}

Assume first that $\vt_4=0$; by (5), $\vt_1\neq 0$. Assuming $\vt_1=-1$, (3) gives $\om_{24}=0$ and (1) gives $\om_{23}=0$. Under these hypotheses, the lcs structure is
\[
\vt=-e^1 \quad \textrm{and} \quad \Omega=\om_{12}e^{12}+\omega_{13}e^{13}+\om_{14}e^{14}+\omega_{34}e^{34}\;
\]
and the non-degeneracy condition \eqref{eq:non_deg} yields $\om_{12}\om_{34}\neq 0$.
In terms of the basis $\{e^1,e^2,e^3,e^4\}$ of $(\mathfrak{rr}_3)^*$, we consider the automorphism given by the matrix
\[
\left(\begin{array}{cccc}
1 & 0 & -\frac{\om_{14}}{\om_{34}} & \frac{\om_{13}}{\om_{34}} \\
0 & \frac{1}{\om_{12}} & 0 & 0 \\
0 & 0 & \frac{1}{\om_{12}} & 0 \\
0 & 0 & 0 & \frac{\om_{12}}{\om_{34}}
\end{array}\right) \;
\]
This gives the normal form
\[
\left\{ \begin{array}{ccl}
         \vt & = & -e^1\\
        \Omega & = & e^{12}+e^{34}
        \end{array} \right.
\]
Assuming $\vt_1=-2$ instead, (2) gives $\om_{24}=0$ and $\om_{34}=0$ follows then from (3). Under these hypotheses, the lcs structure is
\[
\vt=-2e^1 \quad \textrm{and} \quad \Omega=\om_{12}e^{12}+\omega_{13}e^{13}+\om_{14}e^{14}+\omega_{23}e^{23}
\]
and the non-degeneracy condition \eqref{eq:non_deg} yields $\om_{14}\om_{23}\neq 0$.
According to the sign of $\om_{23}$, we consider the automorphism given by the matrix

\[
\left(\begin{array}{cccc}
1 & -\frac{\om_{13}}{\om_{23}} & \frac{\om_{12}}{\om_{23}} & 0 \\
0 & \sqrt{\frac{1}{\pm\om_{23}}} & 0 & 0 \\
0 & 0 & \sqrt{\frac{1}{\pm\om_{23}}} & 0 \\
0 & 0 & 0 & \frac{1}{\om_{14}}
\end{array}\right) \;
\]
which provides the normal form
\[
\left\{ \begin{array}{ccl}
         \vt & = & -2e^1\\
        \Omega_\pm & = & e^{14}\pm e^{23}
        \end{array} \right.
\]
If $\vt_1\notin\{-2,-1,0\}$, then (1), (2) and (3) give $\om_{23}=\om_{24}=\om_{34}=0$, which contradicts the non-degeneracy.

A Gr\"obner basis computation shows that the two models above are distinct. More precisely, we proceed as follows. We consider a matrix $A=(a_{ij})\in\mathrm{Mat}(4,\mathbb{R})$. The condition for $A$ to yield an automorphism of $\mathfrak{rr}_3$, namely $d(A(e^k))=A^{\wedge 2}(d(e^k))$, produces the ideal
\begin{eqnarray*}
I_1 &=& \left(
- a_{21}, - a_{12} a_{21} -  a_{13} a_{21} + a_{11} a_{22} + a_{11} a_{23} -  a_{22}, - a_{13} a_{21} + a_{11} a_{23} -  a_{23}, - a_{24},\right.\\[5pt]
&& \left. - a_{21} -  a_{31}, - a_{12} a_{31} -  a_{13} a_{31} + a_{11} a_{32} + a_{11} a_{33} -  a_{22} -  a_{32}, \right.\\[5pt]
&& \left. - a_{13} a_{31} + a_{11} a_{33} -  a_{23} -  a_{33}, - a_{24} -  a_{34}, - a_{12} a_{41} -  a_{13} a_{41} + a_{11} a_{42} + a_{11} a_{43}, \right.\\[5pt]
&& \left. - a_{13} a_{41} + a_{11} a_{43}, - a_{22} a_{31} -  a_{23} a_{31} + a_{21} a_{32} + a_{21} a_{33}, - a_{23} a_{31} + a_{21} a_{33}, \right.\\[5pt]
&& \left. - a_{22} a_{41} -  a_{23} a_{41} + a_{21} a_{42} + a_{21} a_{43}, - a_{23} a_{41} + a_{21} a_{43}, - a_{33} a_{41} + a_{31} a_{43}, \right.\\[5pt]
&& \left. - a_{32} a_{41} -  a_{33} a_{41} + a_{31} a_{42} + a_{31} a_{43}
\right)\subset \mathbb{Q}[a_{ij}].
\end{eqnarray*}

In order for $A^{\wedge 2}$ to transform $\Omega_1=e^{14}+e^{23}$ into $\Omega_2=e^{14}-e^{23}$, we get the ideal

\begin{eqnarray*}
I_2 &=& \left(- a_{14} a_{21} -  a_{13} a_{22} + a_{12} a_{23} + a_{11} a_{24}, - a_{14} a_{21} -  a_{13} a_{22} + a_{12} a_{23} + a_{11} a_{24}, \right. \\[5pt]
&& \left. - a_{14} a_{21} -  a_{13} a_{22} + a_{12} a_{23} + a_{11} a_{24}, - a_{14} a_{21} -  a_{13} a_{22} + a_{12} a_{23} + a_{11} a_{24}, \right.\\[5pt]
&& \left. - a_{14} a_{31} -  a_{13} a_{32} + a_{12} a_{33} + a_{11} a_{34}, - a_{14} a_{31} -  a_{13} a_{32} + a_{12} a_{33} + a_{11} a_{34}, \right.\\[5pt]
&& \left. - a_{14} a_{31} -  a_{13} a_{32} + a_{12} a_{33} + a_{11} a_{34}, - a_{14} a_{31} -  a_{13} a_{32} + a_{12} a_{33} + a_{11} a_{34}, \right.\\[5pt]
&& \left.- a_{14} a_{41} -  a_{13} a_{42} + a_{12} a_{43} + a_{11} a_{44} - 1, - a_{14} a_{41} -  a_{13} a_{42} + a_{12} a_{43} + a_{11} a_{44} - 1, \right.\\[5pt]
&& \left. - a_{14} a_{41} -  a_{13} a_{42} + a_{12} a_{43} + a_{11} a_{44} - 1, - a_{14} a_{41} -  a_{13} a_{42} + a_{12} a_{43} + a_{11} a_{44} - 1, \right.\\[5pt]
&& \left. - a_{24} a_{31} -  a_{23} a_{32} + a_{22} a_{33} + a_{21} a_{34} + 1, - a_{24} a_{31} -  a_{23} a_{32} + a_{22} a_{33} + a_{21} a_{34} + 1, \right.\\[5pt]
&& \left. - a_{24} a_{31} -  a_{23} a_{32} + a_{22} a_{33} + a_{21} a_{34} + 1, - a_{24} a_{31} -  a_{23} a_{32} + a_{22} a_{33} + a_{21} a_{34} + 1, \right.\\[5pt]
&& \left. - a_{24} a_{41} -  a_{23} a_{42} + a_{22} a_{43} + a_{21} a_{44}, - a_{24} a_{41} -  a_{23} a_{42} + a_{22} a_{43} + a_{21} a_{44}, \right.\\[5pt]
&& \left. - a_{24} a_{41} -  a_{23} a_{42} + a_{22} a_{43} + a_{21} a_{44}, - a_{24} a_{41} -  a_{23} a_{42} + a_{22} a_{43} + a_{21} a_{44}, \right.\\[5pt]
&& \left. - a_{34} a_{41} -  a_{33} a_{42} + a_{32} a_{43} + a_{31} a_{44}, - a_{34} a_{41} -  a_{33} a_{42} + a_{32} a_{43} + a_{31} a_{44}, \right.\\[5pt]
&& \left. - a_{34} a_{41} -  a_{33} a_{42} + a_{32} a_{43} + a_{31} a_{44}, - a_{34} a_{41} -  a_{33} a_{42} + a_{32} a_{43} \right.\\[5pt]
&& \left. + a_{31} a_{44}\right)\subset\mathbb{Q}[a_{ij}].
\end{eqnarray*}
A Gr\"obner basis for $I_1+I_2$ is given by
$$ \left( a_{33}^2 + 1, a_{11} - 1, a_{12}, a_{13}, a_{21}, a_{22} - a_{33}, a_{23}, a_{24}, a_{31}, a_{34}, a_{41}, a_{42}, a_{43}, a_{44} - 1 \right) $$
and so the corresponding variety is empty.

We assume from now on that $\vt_4\neq 0$. From (4), $\om_{23}=0$, while, from (2) and (3),
\[
\om_{12}=-\frac{(\vt_1+1)\om_{24}}{\vt_4} \quad \textrm{and} \quad \om_{13}=-\frac{\om_{24}+(\vt_1+1)\om_{34}}{\vt_4}.
\]
The non-degeneracy condition becomes $\om_{24}\neq 0$ and the lcs structure is $\vt=\vt_1e^1+\vt_4e^4$,
\[
\Omega=-\frac{(\vt_1+1)\om_{24}}{\vt_4}e^{12}-\frac{\om_{24}+(\vt_1+1)\om_{34}}{\vt_4}e^{13}+\om_{14}e^{14}+\omega_{24}e^{24}+\omega_{34}e^{34}.
\]
The automorphism

\[
\left(\begin{array}{cccc}
1 & -\frac{\om_{14}}{\om_{24}} & 0 & -\frac{\vt_1}{\vt_4} \\
0 & -\frac{\vt_4}{\om_{24}} & 0 & 0 \\
0 & \frac{\vt_4\om_{34}}{\om_{24}^2} & -\frac{\vt_4}{\om_{24}} & 0 \\
0 & 0 & 0 & \frac{1}{\vt_4}
\end{array}\right) \;
\]
gives the normal form
\[
\left\{ \begin{array}{ccl}
         \vt & = & e^4\\
        \Omega & = & e^{12}+e^{13}-e^{24}.
        \end{array} \right.
\]

Finally, we have to exclude the existence of automorphisms of the Lie algebra transforming one of the above Lee forms to another. We take a generic matrix $A=(a_{jk})$ with respect to the basis $\{e^j\}$ and we require that it is a morphism of the Lie algebra. We consider $\vt_1\coloneq -e^1$, $\vt_2\coloneq -2e^1$, and $\vt_3\coloneq e^4$. If we require also that $A$ sends $\vt_1$ to $\vt_2$, we have to solve the Gr\"obner basis
$$ \left(a_{11} - 2, a_{21}, a_{22}, a_{23}, a_{24}, a_{31}, a_{32}, a_{33}, a_{34}, a_{41}, a_{42}, a_{43}\right) $$
On the other hand, if we require that $A$ sends $\vt_1$ to $\vt_3$, we have to solve the Gr\"obner basis
$$ \left(a_{11}, a_{12}, a_{13}, a_{21}, a_{22}, a_{23}, a_{24}, a_{31}, a_{32}, a_{33}, a_{34}, a_{41} + 1\right) $$
Finally, if we require that $A$ sends $\vt_2$ to $\vt_3$, we have to solve the Gr\"obner basis
$$ \left(a_{11}, a_{12}, a_{13}, a_{21}, a_{22}, a_{23}, a_{24}, a_{31}, a_{32}, a_{33}, a_{34}, a_{41} + \frac{1}{2}\right) $$
In all the cases $A$ is not invertible.\\

\subsection{$\mathfrak{rr}_{3,\lambda}$, $(0,-12,-\lambda 13,0)$, $\lambda\in[-1,1]$}
The generic $1$-form $\vt=\sum_{i=1}^4\vt_ie^i$ has differential $d\vt=-\vt_2 e^{12} - \lambda\vt_3 e^{13}$.

Consider first the case $\lambda\neq0$. Then the generic Lee form is $\vartheta=\vartheta_1 e^1+\vartheta_4 e^4$. Imposing $d_\vt\Omega=0$ we obtain the following equations: 
\begin{enumerate}
\item $(\vt_1+1+\lambda)\om_{23}=0$
\item $(\vt_1+1)\omega_{24}+\vt_4\omega_{12}=0$
\item $(\vt_1+\lambda)\om_{34}+\vt_4\omega_{13}=0$
\item $\vt_4\omega_{23}=0$
\item $\vt_1^2+\vt_4^2\neq0$
\end{enumerate}

Consider the case $\vartheta_4\neq0$. Then $\om_{23}=0$ by (4) and the generic lcs structure is 
\begin{eqnarray*}
\vt&=&\vt_1e^1+\vt_4e^4 \;,\\[5pt]
\Omega&=&-\frac{(\vt_1+1)\omega_{24}}{\vartheta_{4}}e^{12} -\frac{(\lambda+\vt_1)\omega_{34}}{\vartheta_{4}} e^{13} + \omega_{14}  e^{14} + \omega_{24}  e^{24}+\omega_{34}  e^{34}\;.
\end{eqnarray*}

The non-degeneracy condition \eqref{eq:non_deg} reads $(\lambda-1)\om_{24}\om_{34}\neq 0$, hence we exclude the Lie algebra $\mathfrak{rr}_{3,1}$ from this case and assume $\om_{24}\neq 0$ and $\om_{34}\neq 0$. If $\lambda\neq -1$, the automorphism
\begin{equation}\label{eq:1592}
\left(\begin{array}{cccc}
1 & -\frac{\om_{14}}{\om_{24}} & 0 & -\frac{\vt_1}{\vt_4} \\
0 & -\frac{\vartheta_{4}}{\omega_{24}} & 0 & 0 \\
0 & 0 & \frac{\vartheta_{4}}{\lambda \omega_{34}} & 0 \\
0 & 0 & 0 & \frac{1}{\vt_4}
\end{array}\right)
\end{equation}
yields the normal form (on $\fr\fr_{3,\lambda}$ with $\lambda\notin\{0,1,\}$)
\[
\left\{ \begin{array}{ccl}
         \vt & = & e^4\\
        \Omega & = & e^{12}-e^{13}-e^{24}+\frac{1}{\lambda}e^{34}\,.
        \end{array} \right.
\]

If $\vt_4=0$, then $\vt_1\neq 0$ by (5), and we obtain the following equations:
\begin{equation}\label{rr_{3,lambda}-non-deg}
(\vt_1+1+\lambda)\om_{23}=0, \quad (\vt_1+1)\om_{24}=0, \quad \textrm{and} \quad (\vt_1+\lambda)\om_{34}=0 .
\end{equation}
Assume $\lambda\neq\pm 1$. Suppose first $\vt_1=-1$; then $\om_{23}=\om_{34}=0$, \eqref{eq:non_deg} gives $\om_{13}\om_{24}\neq 0$ and the generic lcs structure is
\[
\vt=-e^1 \quad \textrm{and} \quad \Omega=\om_{12}e^{12}+\om_{13}e^{13}+\om_{14}e^{14}+\om_{24}e^{24}.
\]
The automorphism
$$
\left(\begin{array}{cccc}
1 & -\frac{\om_{14}}{\om_{24}} & 0 & \frac{\om_{12}}{\om_{24}} \\
0 & 1 & 0 & 0 \\
0 & 0 & \frac{1}{\om_{13}} & 0 \\
0 & 0 & 0 & \frac{1}{\om_{24}}
\end{array}\right)
$$
gives the normal form (on $\fr\fr_{3,\lambda}$ with $\lambda\notin\{0,1,-1\}$)
\[
\left\{ \begin{array}{ccl}
         \vt & = & -e^1\\
        \Omega & = & e^{13}+e^{24}.
        \end{array} \right.
\]
If $\vt_1=-\lambda$, $\om_{23}=\om_{24}=0$, \eqref{eq:non_deg} gives $\om_{12}\om_{34}\neq 0$ and the generic lcs structure is
\[
\vt=-\lambda e^1 \quad \textrm{and} \quad \Omega=\om_{12}e^{12}+\om_{13}e^{13}+\om_{14}e^{14}+\om_{34}e^{34}.
\]
The automorphism
$$
\left(\begin{array}{cccc}
1 & 0 & -\frac{\om_{14}}{\om_{34}} & \frac{\om_{13}}{\om_{34}} \\
0 & \frac{1}{\om_{12}} & 0 & 0 \\
0 & 0 & 1 & 0 \\
0 & 0 & 0 & \frac{1}{\om_{34}}
\end{array}\right)
$$
gives the normal form (on $\fr\fr_{3,\lambda}$ with $\lambda\notin\{0,1,-1\}$)
\[
\left\{ \begin{array}{ccl}
         \vt & = & -\lambda e^1\\
        \Omega & = & e^{12}+e^{34}.
        \end{array} \right.
\]

If $\vt_1=-1-\lambda$, then $\om_{24}=\om_{34}=0$, \eqref{eq:non_deg} gives $\om_{14}\om_{23}\neq 0$ and the generic lcs structure is
\[
\vt=-(1+\lambda)e^1 \quad \textrm{and} \quad \Omega=\om_{12}e^{12}+\om_{13}e^{13}+\om_{14}e^{14}+\om_{23}e^{23}.
\]
The automorphism
\begin{equation}\label{aut:cococo}
\left(\begin{array}{cccc}
1 & -\frac{\om_{13}}{\om_{23}} & \frac{\om_{12}}{\om_{23}} & 0 \\
0 & \frac{1}{\om_{23}} & 0 & 0 \\
0 & 0 & 1 & 0 \\
0 & 0 & 0 & \frac{1}{\om_{14}}
\end{array}\right)
\end{equation}
gives the normal form (on $\fr\fr_{3,\lambda}$ with $\lambda\notin\{0,1,-1\}$)
\[
\left\{ \begin{array}{ccl}
         \vt & = & -(1+\lambda)e^1\\
        \Omega & = & e^{14}+e^{23}.
        \end{array} \right.
\]

If $\vt_1\notin\{-1,-\lambda,-1-\lambda\}$ then \eqref{rr_{3,lambda}-non-deg} implies that $\om_{23}=\om_{24}=\om_{34}=0$ and the lcs structure is degenerate.

We exclude the existence of automorphisms sending one of the above Lee forms, $\vt_1\coloneq -e^1$, $\vt_2\coloneq -\lambda e^1$, $\vt_3\coloneq -(1+\lambda)e^1$, $\vt_4\coloneq e^4$, ($\lambda\not\in\{-1,0,1\}$) to another. Therefore, consider $A=(a_{jk})$ with respect to the basis $\{e^j\}$ and the ideal $I_0\subset\bQ[a_{jk}]$ assuring that $A$ yields a morphism of the Lie algebra.
The ideal $I_{jk}$ containing $I_0$ and assuring that $A$ sends $\vt_j$ to $\vt_k$ has Gr\"obner basis $B_{jk}$ given by:
\begin{eqnarray*}
B_{12} &=& \left(a_{23} a_{33} \lambda -  a_{23} a_{33}, a_{33} a_{43} \lambda, a_{23} \lambda^{2} -  a_{23}, a_{33} \lambda^{2} -  a_{33} \lambda, a_{43} \lambda^{2}, a_{22} a_{34}, a_{23} a_{34}, \right.\\[5pt]
&& \left. a_{22} a_{42}, a_{23} a_{42}, a_{22} a_{43}, a_{23} a_{43}, a_{22} \lambda -  a_{22}, a_{34} \lambda, a_{42} \lambda, a_{11} -  \lambda, a_{21}, a_{24}, a_{31}, a_{41}\right) \\[5pt]
B_{13} &=& \left(a_{33} a_{43} \lambda, a_{23} \lambda^{2} + a_{23} \lambda -  a_{23}, a_{33} \lambda^{2}, a_{43} \lambda^{2} + a_{43} \lambda, a_{22} a_{23}, a_{23} a_{33}, a_{23} a_{34}, a_{22} a_{42}, \right. \\[5pt]
&& \left. a_{23} a_{42}, a_{33} a_{42}, a_{34} a_{42}, a_{23} a_{43}, a_{22} \lambda, a_{34} \lambda, a_{42} \lambda + a_{42}, a_{11} -  \lambda - 1, a_{21}, a_{24}, a_{31}, a_{32}, a_{41}\right)
 \\[5pt]
B_{14} &=& \left( a_{13} \lambda, a_{33} \lambda, a_{34} \lambda, a_{11}, a_{12}, a_{21}, a_{22}, a_{23}, a_{24}, a_{31}, a_{32}, a_{41} + 1\right) \\[5pt]
B_{23} &=& \left( a_{32} \lambda^{2} -  a_{32} \lambda -  a_{32}, a_{11} a_{32} -  a_{32} \lambda, a_{11} a_{42}, a_{32} a_{42}, a_{11} a_{43}, a_{32} a_{43}, a_{11} \lambda -  \lambda - 1, \right. \\[5pt]
&& \left. a_{42} \lambda + a_{42}, a_{43} \lambda + a_{43}, a_{21}, a_{22}, a_{23}, a_{24}, a_{31}, a_{33}, a_{34}, a_{41} \right) \\[5pt]
B_{24} &=& \left( a_{41} \lambda + 1, a_{11}, a_{12}, a_{13}, a_{21}, a_{22}, a_{23}, a_{24}, a_{31}, a_{32}, a_{33}, a_{34}\right) \\[5pt]
B_{34} &=& \left( a_{13} a_{41} + a_{13}, a_{33} a_{41} + a_{33}, a_{34} a_{41} + a_{34}, a_{13} \lambda, a_{33} \lambda, a_{34} \lambda, a_{41} \lambda + a_{41} + 1, a_{11}, a_{12}, \right. \\[5pt]
&& \left. a_{21}, a_{22}, a_{23}, a_{24}, a_{31}, a_{32}\right) .
\end{eqnarray*}
In any case, we note that the zeroes of the ideal satisfy either $a_{21}=a_{22}=a_{23}=a_{24}=0$, or (possibly in case $B_{13}$) $a_{31}=a_{32}=a_{33}=a_{34}=0$; that is, $A$ is not invertible.

Assume now $\lambda=1$. In this special case we get the following extra automorphism of $(\fr\fr_{3,1})^*$:
\begin{equation}\label{eq:533}
\left(\begin{array}{cccc}
1 & 0 & 0 & 0 \\
0 & 0 & 1 & 0 \\
0 & 1 & 0 & 0 \\
0 & 0 & 0 & 1
\end{array}\right)
\end{equation}
Then \eqref{rr_{3,lambda}-non-deg} becomes
\begin{equation}\label{rr_{3,lambda}-non-deg:1}
(\vt_1+2)\om_{23}=0, \quad (\vt_1+1)\om_{24}=0 \quad \textrm{and} \quad (\vt_1+1)\om_{34}=0.
\end{equation}
If $\vt_1=-1$, then $\om_{23}=0$, \eqref{eq:non_deg} gives $\om_{12}\om_{34}-\om_{13}\om_{24}\neq 0$ and the generic lcs structure is
\[
\vt=-e^1 \quad \textrm{and} \quad \Omega=\om_{12}e^{12}+\om_{13}e^{13}+\om_{14}e^{14}+\om_{24}e^{24}+\om_{34}e^{34}.
\]
If $\om_{24}\om_{34}\neq 0$, then the automorphism
\[
\left(\begin{array}{cccc}
1 & -\frac{\om_{14}}{\om_{24}} & 0 & \frac{\omega_{13}}{\omega_{34}} \\
0 & \frac{\omega_{34}}{\omega_{12} \omega_{34} - \omega_{13} \omega_{24}} & 0 & 0 \\
0 & 0 & \frac{\omega_{24}}{\omega_{12} \omega_{34} - \omega_{13} \omega_{24}} & 0 \\
0 & 0 & 0 & \frac{\omega_{12} \omega_{34} - \omega_{13} \omega_{24}}{\omega_{24} \omega_{34}}
\end{array}\right)
\]
gives $\vt'=-e^1$ and $\Om'=e^{12}+e^{24}+e^{34}$.
Apply now the automorphism
$$
\left(\begin{array}{cccc}
1 & 0 & 0 & 0 \\
0 & 1 & -1 & 0 \\
0 & 0 & 1 & 0 \\
0 & 0 & 0 & 1
\end{array}\right)
$$
to get the normal form (on $\fr\fr_{3,1}$)
\begin{equation}\label{eq:normal-form-rr3.1}
\left\{ \begin{array}{ccl}
         \vt & = & -e^1\\
        \Omega & = & e^{12}+e^{34}.
        \end{array} \right.
\end{equation}

Suppose $\om_{24}=0$; then $\om_{12}\om_{34}\neq 0$ and the automorphism
\begin{equation}\label{eq:534}
\left(\begin{array}{cccc}
1 & 0 & -\frac{\omega_{14}}{\omega_{34}} & \frac{\om_{13}}{\om_{34}} \\
0 & \frac{1}{\omega_{12}} & 0 & 0 \\
0 & 0 & 1 & 0 \\
0 & 0 & 0 & \frac{1}{\omega_{34}}
\end{array}\right)
\end{equation}
gives again the normal form \eqref{eq:normal-form-rr3.1}. If $\om_{34}=0$, \eqref{eq:533} leads us back to the previous case.

If $\vt_1=-2$, then $\om_{24}=\om_{34}=0$, \eqref{eq:non_deg} gives $\om_{14}\om_{23}\neq 0$ and the generic lcs structure is
\[
\vt=-2e^1 \quad \textrm{and} \quad \Omega=\om_{12}e^{12}+\om_{13}e^{13}+\om_{14}e^{14}+\om_{23}e^{23}.
\]
The automorphism \eqref{aut:cococo} gives the normal form
\[
\left\{ \begin{array}{ccl}
         \vt & = & -2e^1\\
        \Omega & = & e^{14}+e^{23}.
        \end{array} \right.
\]

If $\vt_1\neq-1$ and $\vt_1\neq-2$, then $\omega_{23}=\omega_{24}=\omega_{34}=0$, therefore $\Omega$ is degenerate.

The lcs structures with Lee forms $\vt_1\coloneq -e^1$ and $\vt_2\coloneq -2e^1$ are not equivalent. Indeed, take $A=(a_{jk})$ with respect to the basis $\{e^j\}$. Assume that $A$ induces a morphism of the Lie algebra and that it sends $\vt_1$ to $\vt_2$. This amounts to solve an ideal with Gr\"obner basis
\begin{eqnarray*}
B &=&  \left( a_{11} - 2, a_{21}, a_{22}, a_{23}, a_{24}, a_{31}, a_{32}, a_{33}, a_{34}, a_{41}, a_{42}, a_{43}\right) .
\end{eqnarray*}
This implies $a_{21}=a_{22}=a_{23}=a_{24}=0$, hence $\det A=0$.

Assume next $\lambda=-1$. In this special case we get the following extra automorphism of $(\fr\fr_{3,-1})^*$:
\begin{equation}\label{eq:535}
\left(\begin{array}{cccc}
-1 & 0 & 0 & 0 \\
0 & 0 & 1 & 0 \\
0 & 1 & 0 & 0 \\
0 & 0 & 0 & 1
\end{array}\right)
\end{equation}
Then \eqref{rr_{3,lambda}-non-deg} becomes
\begin{equation}\label{rr_{3,lambda}-non-deg:2}
\vt_1\om_{23}=0, \quad (\vt_1+1)\om_{24}=0 \quad \textrm{and} \quad (\vt_1-1)\om_{34}=0.
\end{equation}
Since $\vt_1\neq 0$, $\om_{23}=0$ and \eqref{eq:non_deg} gives $\om_{12}\om_{34}-\om_{13}\om_{24}\neq 0$ and the generic lcs structure is
\[
\vt=\vt_1e^1 \quad \textrm{and} \quad \Omega=\om_{12}e^{12}+\om_{13}e^{13}+\om_{14}e^{14}+\om_{24}e^{24}+\om_{34}e^{34}.
\]
If $\vt_1\neq\pm 1$, then \eqref{rr_{3,lambda}-non-deg:2} gives $\om_{24}=\om_{34}=0$, which contradicts \eqref{eq:non_deg}. Assuming $\vt_1=1$, we get $\om_{24}=0$ and the lcs structure
\begin{equation}\label{eq:536}
\vt=e^1 \quad \textrm{and} \quad \Omega=\om_{12}e^{12}+\om_{13}e^{13}+\om_{14}e^{14}+\om_{34}e^{34},
\end{equation}
with $\om_{12}\om_{34}\neq 0$. Using
\begin{equation*}
\left(\begin{array}{cccc}
1 & 0 & -\frac{\omega_{14}}{\omega_{34}} & \frac{\om_{13}}{\om_{34}} \\
0 & \frac{1}{\omega_{12}} & 0 & 0 \\
0 & 0 & 1 & 0 \\
0 & 0 & 0 & \frac{1}{\omega_{34}}
\end{array}\right)\;,
\end{equation*}
we obtain the normal form (on $\fr\fr_{3,-1}$)
\[
\left\{ \begin{array}{ccl}
         \vt & = & e^1\\
        \Omega & = & e^{12}+e^{34}.
        \end{array} \right.
\]
If $\vt_1=-1$ then $\om_{34}=0$; however, using \eqref{eq:535}, we are led back to \eqref{eq:536}.

The lcs structures with Lee forms $\vt_1\coloneq e^4$ and $\vt_2\coloneq e^1$ are not equivalent. Indeed, take $A=(a_{jk})$ with respect to the basis $\{e^j\}$. Assume that $A$ induces a morphism of the Lie algebra and that it sends $\vt_1$ to $\vt_2$. This amounts to solve an ideal with Gr\"obner basis
\begin{eqnarray*}
B &=& \left( a_{11} a_{13} a_{42} -  a_{11} a_{12} a_{43}, a_{11} a_{22} -  a_{22}, a_{11} a_{23} + a_{23}, a_{22} a_{23}, a_{11} a_{32} + a_{32}, \right.\\[5pt]
&& a_{22} a_{32}, a_{11} a_{33} -  a_{33}, a_{23} a_{33}, a_{32} a_{33}, a_{12} a_{41} -  a_{11} a_{42}, a_{13} a_{41} -  a_{11} a_{43}, a_{22} a_{41}, \\[5pt]
&& a_{23} a_{41}, a_{32} a_{41}, a_{33} a_{41}, a_{22} a_{42}, a_{23} a_{42}, a_{32} a_{42}, a_{33} a_{42}, a_{22} a_{43}, a_{23} a_{43}, \\[5pt]
&& \left. a_{32} a_{43}, a_{33} a_{43}, a_{14} - 1, a_{21}, a_{24}, a_{31}, a_{34}, a_{44}\right) .
\end{eqnarray*}
Solving this Grobner basis, one sees that $\det A=0$.

The case $\lambda=0$ yields structure equations $(0,-12,0,0)$. If $\vt=\sum_{i=1}^4\vt_ie^i$ is a 1-form,
the condition $d\vt=0$ gives $\vt_2=0$, whence the generic closed $1$-form is $\vt=\vt_1 e^1+\vt_3 e^3+ \vt_4 e^4$ with $\vt_1^2+\vt_3^2+\vt_4^2\neq0$. Computing $d_\vt\Om=0$, we obtain the following equations:
\begin{enumerate}
\item $(\vt_1+1)\om_{23}+\vt_3\om_{12}=0$;
\item $(\vt_1+1)\om_{24}+\vt_4\om_{12}=0$;
\item $\vt_1\om_{34}-\vt_3\om_{14}+\vt_4\om_{13}=0$;
\item $\vt_3\om_{24}-\vt_4\om_{23}=0$.
\end{enumerate}

We consider first the case $\vt_3=\vt_4=0$; then $\vt_1\neq 0$; if $\vt_1\neq-1$, then (1), (2) and (3)  imply $\om_{23}=\om_{24}=\om_{34}=0$, hence $\Omega$ is degenerate. Therefore we are reduced to $\vt=-e^1$, with (3) implying $\om_{34}=0$. The non-degeneracy condition is $\Delta=\om_{14}\om_{23} - \om_{13}\om_{24}\neq0$. The lcs structure is then
\[
\vt=-e^1, \quad \Om=\om_{12}e^{12}+\om_{13}e^{13}+\om_{14}e^{14}+\om_{23}e^{23}+\om_{24}e^{24}.
\]
If $\omega_{24}\neq0$, use the automorphism
$$
\left(\begin{array}{cccc}
1 & 0 & 0 & \frac{\omega_{12}}{\omega_{24}} \\
0 & \frac{1}{\Delta} & 0 & 0 \\
0 & 0 & -\frac{\omega_{24}}{\Delta} & \frac{\omega_{23}}{\Delta} \\
0 & 0 & \omega_{14} & -\omega_{13}
\end{array}\right) \;.
$$
If $\omega_{24}=0$, so $\omega_{14}\omega_{23}\neq0$, use the automorphism
\[
\left(\begin{array}{cccc}
1 & \frac{\omega_{13}}{\omega_{23}} & \frac{\omega_{12}}{\omega_{23}} & 0 \\
0 & 1 & 0 & 0 \\
0 & 0 & 0 & \frac{1}{\omega_{14}} \\
0 & 0 & \frac{1}{\omega_{23}} & \frac{-2\omega_{13}}{\omega_{14}\omega_{23}}
\end{array}\right)
\;.
\]
In both cases, we get the normal form
$$
\left\{\begin{array}{l}
\vartheta \;=\;-e^1\\[5pt]
\Omega\;=\;e^{13}+e^{24}\;.
\end{array}\right.
$$

Consider now the case $\vt_3^2+\vt_4^2\neq0$. We assume first $\vt_3\neq 0$; the generic lcs structure is then
\begin{eqnarray*}
\vt &=& \vt_1 e^1 + \vt_3 e^3 + \vt_4 e^4 \;,\\[5pt]
\Omega&=&\left( -\frac{(\vt_1+1)\omega_{23}}{\vartheta_{3}} \right)  e^{12} + \omega_{13}  e^{13} + \left( \frac{\omega_{34} \vartheta_{1} + \omega_{13} \vartheta_{4}}{\vartheta_{3}} \right)  e^{14} + \omega_{23}  e^{23} + \frac{\omega_{23} \vartheta_{4}}{\vartheta_{3}}  e^{24} + \omega_{34}  e^{34}
\end{eqnarray*}
and the non-degeneracy condition reads $\omega_{23}\omega_{34}\neq0$. Use the automorphism
\[
\left(\begin{array}{cccc}
1 & -\frac{\omega_{13}}{\omega_{23}} & -\frac{\vt_{1}}{\vt_{3}} & 0 \\
0 & \frac{\vt_{3}}{\omega_{23}} & 0 & 0 \\
0 & 0 & \frac{1}{\vt_{3}} & 0 \\
0 & 0 & -\frac{\vt_{4}}{\omega_{34}} & \frac{\vt_{3}}{\omega_{34}}
\end{array}\right)
\]
to get the normal form
\[
\left\{\begin{array}{l}
\vartheta \;=\;e^3\\[5pt]
\Omega\;=\;-e^{12} + e^{23} + e^{34}\;.
\end{array}\right.
\]
If $\vt_3=0$ but $\vt_4\neq 0$, the automorphism
\[
\left(\begin{array}{cccc}
1 & 0 & 0 & 0 \\
0 & 1 & 0 & 0 \\
0 & 0 & 0 & 1 \\
0 & 0 & 1 & 0
\end{array}\right)\;,
\]
brings us back to the case we just treated.

The lcs structures with Lee forms $\vt_1\coloneq -e^1$ and $\vt_2\coloneq e^3$ are not equivalent. Indeed, take $A=(a_{jk})$ with respect to the basis $\{e^j\}$. Assume that $A$ induces a morphism of the Lie algebra and that it sends $\vt_1$ to $\vt_2$. This amounts to solve an ideal with Gr\"obner basis
\begin{eqnarray*}
B &=&  \left(a_{11}, a_{12}, a_{21}, a_{22}, a_{23}, a_{24}, a_{31} + 1, a_{41}, a_{42}\right) .
\end{eqnarray*}
The solution yields $a_{21}=a_{22}=a_{23}=a_{24}=0$, hence $\det A=0$.\\

\subsection{$\mathfrak{rr}^{\prime}_{3,\gamma}$, $(0,-\gamma 12-13,12-\gamma 13,0)$, $\gamma\geq 0$}
The closedness of a generic $1$-form $\vartheta=\sum_{j=1}^{4}\vt_j e^j$ gives the conditions $\gamma\vt_2 - \vt_3=0=\gamma\vt_3 + \vt_2$. Then $\vt_2=\vt_3=0$. So the generic non-zero closed $1$-form is $\vt=\vt_1e^1+\vt_4e^4$ with $\vt_1^2+\vt_4^2\neq0$.
The condition $d_{\vt}\Omega=0$ gives
\begin{enumerate}
\item $(2\gamma+\vt_1)\omega_{23}=0$
\item $(\gamma+\vt_1)\omega_{24} + \omega_{12}\vartheta_4 - \omega_{34}=0$
\item $(\gamma+\vt_1)\omega_{34} + \omega_{13}\vartheta_4 + \omega_{24}=0$
\item $\omega_{23}\vartheta_4=0$
\end{enumerate}

We consider first the case $\vt_4=0$. Then (2) gives $\omega_{34}=(\gamma+\vartheta_1)\omega_{24}$ and (3) gives $\omega_{24}=-(\gamma+\vt_1)\om_{34}$, implying $\om_{24}=\om_{34}=0$. If $\vt_1\neq-2\gamma$, then (1) gives also $\omega_{23}=0$, whence $\Omega$ is degenerate. Then $\vt_1=-2\gamma$ (so $\gamma\neq0$).
Then, in this case, the generic lcs structure is
$$
\left\{\begin{array}{l}
\vt=-2\gamma e^1\\
\Omega=\omega_{12}e^{12} + \omega_{13}e^{13} + \omega_{14}e^{14} + \omega_{23}e^{23}
\end{array}
\right.
$$
with the non-degeneracy condition $\omega_{14}\omega_{23}\neq0$. According to the sign of $\om_{23}$, the automorphism
$$
\left(\begin{array}{cccc}
1 & -\frac{\omega_{13}}{\omega_{23}} & \frac{\omega_{12}}{\omega_{23}} & 0 \\
0 & \frac{1}{\sqrt{\pm\om_{23}}} & 0 & 0 \\
0 & 0 & \frac{1}{\sqrt{\pm\om_{23}}} & 0 \\
0 & 0 & 0 & \frac{1}{\omega_{14}}
\end{array}\right)
$$
gives the normal form (on $\mathfrak{rr}^{\prime}_{3,\gamma}$ with $\gamma>0$)
$$
\left\{\begin{array}{l}
\vt=-2\gamma e^1\\
\Omega=e^{14}\pm e^{23}
\end{array}
\right.
$$

Arguing as above, to check that the two models above are not equivalent, we have to check that the variety associated to the Gr\"obner basis
\begin{eqnarray*}
& (a_{12} a_{33} a_{34} + \frac{1}{2} a_{13}^{2} + \frac{1}{2} a_{34}^{2},
a_{13} a_{33} a_{34} - \frac{1}{2} a_{12} a_{13} - \frac{1}{2} a_{24} a_{34}, a_{12}^{2} + a_{13}^{2}, a_{12} a_{22} -  a_{12} a_{33}, &\\&
a_{13} a_{22} -  a_{13} a_{33},
a_{22}^{2} -  a_{33}^{2},
a_{12} a_{23} -  a_{13} a_{33} + a_{24}, a_{13} a_{23} + a_{12} a_{33} + a_{34}, a_{22} a_{23} + a_{32} a_{33},&\\&
a_{23}^{2} + a_{33}^{2} + a_{44},
a_{12} a_{24} + a_{13} a_{34},
a_{13} a_{24} -  a_{12} a_{34}, a_{22} a_{24} + a_{32} a_{34} + a_{13}, a_{24}^{2} + a_{34}^{2},&\\&
a_{23} a_{24} + a_{33} a_{34} -  a_{12}, a_{12} a_{32} + a_{13} a_{33} -  a_{24},
a_{13} a_{32} -  a_{12} a_{33} -  a_{34},
a_{22} a_{32} + a_{23} a_{33}, &\\&
a_{23} a_{32} -  a_{22} a_{33} - 1, a_{24} a_{32} -  a_{33} a_{34} + a_{12},
a_{32}^{2} + a_{33}^{2} + a_{44},
a_{24} a_{33} + a_{32} a_{34} + a_{13},&\\&
a_{22} a_{34} -  a_{33} a_{34}, a_{23} a_{34} + a_{32} a_{34},
a_{12} a_{44} -  a_{12},
a_{13} a_{44} -  a_{13},
a_{22} a_{44} -  a_{33}, a_{23} a_{44} + a_{32},&\\&
a_{24} a_{44} -  a_{24},
a_{32} a_{44} + a_{23},
a_{33} a_{44} -  a_{22},
a_{34} a_{44} -  a_{34},
a_{44}^{2} - 1, a_{12} \gamma -  a_{13}, a_{13} \gamma + a_{12},&\\&
a_{22} \gamma -  a_{33} \gamma,
a_{23} \gamma + a_{32} \gamma,
a_{24} \gamma -  a_{34}, a_{34} \gamma + a_{24},
a_{44} \gamma -  \gamma,
a_{11} -  a_{44},
a_{21},
a_{31},
a_{41},
a_{42},
a_{43})&
\end{eqnarray*}
is empty. Indeed, by solving it, we are reduced to $a_{22}^2 + a_{23}^2 + 1=0$.\\

Consider now the case $\vt_4\neq0$. Then (4) yields $\omega_{23}=0$. We get the generic lcs structure
$$
\left\{\begin{array}{l}
\vt=\vt_1 e^1+\vt_4 e^4, \quad \vt_4\neq0\\
\Omega =
\left( \frac{\om_{34}-(\gamma+\vt_1)\omega_{24}}{\vartheta_{4}} \right)  e^{12} + \left( -\frac{\om_{24}+(\gamma+\vt_1)\omega_{34}}{\vartheta_{4}} \right)  e^{13} + \omega_{14}  e^{14} + \omega_{24}  e^{24} + \omega_{34}  e^{34}
\end{array}\right.$$
with the non-degeneracy condition $\omega_{24}^2 + \omega_{34}^2\neq0$. Apply the automorphism
$$
\left(\begin{array}{cccc}
1 & 0 & 0 & -\frac{\vartheta_{1}}{\vartheta_{4}} \\
0 & 1 & 0 & 0 \\
0 & 0 & 1 & 0 \\
0 & 0 & 0 & \frac{1}{\vartheta_{4}}
\end{array}\right)
$$
to get the structure
$$
\left\{\begin{array}{l}
\vartheta' = e^4 \\[5pt]
\Omega' = \left(\frac{\om_{34}-\gamma \omega_{24}}{\vartheta_{4}} \right)  e^{12} + \left( -\frac{\gamma \omega_{34} + \omega_{24}}{\vartheta_{4}} \right)  e^{13} + \frac{\omega_{14}}{\vartheta_{4}}  e^{14} + \frac{\omega_{24}}{\vartheta_{4}}  e^{24} + \frac{\omega_{34}}{\vartheta_{4}}  e^{34}
\end{array}\right.
$$
Consider first the case $\omega_{24}\neq 0$ and choose $z\in(-\frac{\pi}{2},\frac{\pi}{2})$ such that $\tan z=-\frac{\om_{34}}{\om_{24}}$. The automorphism
\begin{equation}\label{rot}
\left(\begin{array}{cccc}
1 & 0 & 0 & 0 \\
0 & \cos z & -\sin z & 0 \\
0 & \sin z & \cos z & 0 \\
0 & 0 & 0 & 1
\end{array}\right)
\end{equation}
gives the structure
\[
\left\{\begin{array}{l}
\vartheta'' = e^4 \\[5pt]
\Omega'' = \gamma\alpha  e^{12} + \alpha  e^{13} + \frac{\omega_{14}}{\vartheta_{4}}  e^{14} - \alpha  e^{24}
\end{array}\right.
\]
with $\alpha=-\frac{\sqrt{\om_{24}^2+\om_{34}^2}}{\vt_4}$. The automorphism
\[
\left(\begin{array}{cccc}
1 & \frac{\om_{14}}{\alpha\vt_4} & 0 & 0 \\
0 & \frac{1}{\alpha} & 0 & 0 \\
0 & 0 & \frac{1}{\alpha} & 0 \\
0 & 0 & 0 & 1
\end{array}\right)
\]
gives the normal form
\[
\left\{\begin{array}{l}
\vartheta = e^4 \\[5pt]
\Omega =
\gamma e^{12}+e^{13}-e^{24}
\end{array}\right.\,.
\]
If $\om_{24}= 0$ then $\om_{34}\neq 0$ and the lcs structure becomes
\[
\left\{\begin{array}{l}
\vartheta' = e^4 \\[5pt]
\Omega' = \frac{\om_{34}}{\vartheta_{4}} e^{12} -\frac{\gamma \omega_{34}}{\vartheta_{4}}  e^{13} + \frac{\omega_{14}}{\vartheta_{4}}  e^{14} + \frac{\omega_{34}}{\vartheta_{4}}  e^{34}
\end{array}\right.
\]
The automorphism \eqref{rot} with $z=\frac{\pi}{2}$ gives the lcs structure
\[
\left\{\begin{array}{l}
\vartheta' = e^4 \\[5pt]
\Omega' = \frac{\gamma\om_{34}}{\vartheta_{4}} e^{12}+ \frac{\omega_{34}}{\vartheta_{4}}  e^{13} + \frac{\omega_{14}}{\vartheta_{4}}  e^{14} - \frac{\omega_{34}}{\vartheta_{4}}  e^{24}
\end{array}\right.
\]
which we can handle as we did with the general case.

Finally, we prove that no automorphism transforms $\vt_1\coloneq -2\gamma e^1$ into $e^4$. The Gr\"obner basis approach as before applied to $A=(a_{jk})$ (with respect to the basis $\{e^j\}$), to which we ask to be a morphism of the Lie algebra and to transform $\vt_1$ into $\vt_2$, amounts to solving the ideal
\begin{eqnarray*}
B &=& \left( a_{12}^{2} + a_{13}^{2}, a_{12} a_{24} -  a_{13} a_{34}, a_{13} a_{24} + a_{12} a_{34}, a_{24}^{2} + a_{34}^{2}, a_{12} a_{32} + a_{13} a_{33}, \right. \\[5pt]
&& \left. a_{13} a_{32} -  a_{12} a_{33}, a_{24} a_{32} -  a_{33} a_{34}, a_{32}^{2} + a_{33}^{2}, a_{24} a_{33} + a_{32} a_{34}, a_{12} a_{41} + \frac{1}{2} a_{13}, \right. \\[5pt]
&& \left. a_{13} a_{41} - \frac{1}{2} a_{12}, a_{24} a_{41} - \frac{1}{2} a_{34}, a_{32} a_{41} + \frac{1}{2} a_{33}, \right. \\[5pt]
&& \left. a_{33} a_{41} - \frac{1}{2} a_{32}, a_{34} a_{41} + \frac{1}{2} a_{24}, a_{12} \gamma + a_{13}, a_{13} \gamma -  a_{12}, a_{24} \gamma -  a_{34}, \right. \\[5pt]
&& \left. a_{32} \gamma + a_{33}, a_{33} \gamma -  a_{32}, a_{34} \gamma + a_{24}, a_{41} \gamma + \frac{1}{2}, a_{11}, a_{21}, a_{22} -  a_{33}, a_{23} + a_{32}, a_{31}.\right)
\end{eqnarray*}
which gives that $A$ has determinant $0$.


\subsection{$\mathfrak{r}_2\mathfrak{r}_2$, $(0,-12,0,-34)$}
The closedness of a generic $1$-form $\vartheta=\sum_{j=1}^{4}\vt_j e^j$ gives the conditions $\vt_2=0=\vt_4$. The generic Lee form is then $\vt=\vt_1e^1+\vt_3e^3$, with $\vt_1^2+\vt_3^2\neq0$. Together with the equation $d_\vt\Omega=0$ for a generic $2$-form $\Omega=\sum_{1\leq j < k \leq 4} \omega_{jk} e^{jk}$, this yields
\begin{enumerate}
\item $\vartheta_1\omega_{23}+\vt_3\om_{12}=-\om_{23}$
\item $\vt_1\omega_{24}=-\omega_{24}$
\item $\vt_1\omega_{34}-\vartheta_3\omega_{14}=\omega_{14}$
\item $\vartheta_3\omega_{24}=-\omega_{24}$
\item $\vt_1^2+\vt_3^2\neq0$
\end{enumerate}

We assume first $\vt_1=0$. By (5), $\vt_3\neq 0$ and, by (2), $\om_{24}=0$. Moreover, (1) gives $\om_{12}=-\frac{\om_{23}}{\vt_3}$. The non-degeneracy condition \eqref{eq:non_deg} reads $\om_{23}(\vt_3\om_{14}-\om_{34})\neq 0$, which implies $\omega_{23}\neq 0$. Thus the lcs structure is
\begin{equation}\label{eq:105}
\vt=\vt_3e^3 \quad \textrm{and} \quad \Omega=-\frac{\omega_{23}}{\vt_3}e^{12}+\omega_{13}e^{13}+\om_{14}e^{14}+\om_{23}e^{23}+\omega_{34}e^{34}
\end{equation}
with the condition $\omega_{14}(\vartheta_3 + 1)=0$.

We make now the assumption that $\om_{14}=0$, which forces $\om_{34}\neq 0$. Then \eqref{eq:105} reduces to
\[
\vt=\vt_3e^3 \quad \textrm{and} \quad \Omega=-\frac{\omega_{23}}{\vt_3}e^{12}+\omega_{13}e^{13}+\om_{23}e^{23}+\omega_{34}e^{34}\;.
\]
In terms of the basis $\{e^1,e^2,e^3,e^4\}$ of $(\mathfrak{r}_2\mathfrak{r}_2)^*$, we consider the automorphism given by the matrix
\[
\left(\begin{array}{cccc}
1 & -\frac{\om_{13}}{\om_{23}} & 0 & 0 \\
0 & \frac{\vt_3}{\om_{23}} & 0 & 0 \\
0 & 0 & 1 & 0 \\
0 & 0 & 0 & \frac{1}{\om_{34}}
\end{array}\right) \;.
\]
In the new base, the lcs structure reads
\[
\vt'=\vt_3e^3 \quad \textrm{and} \quad \Omega'=-e^{12}+e^{34}+\vt_3e^{23}\;.
\]
Thus, every lcs structure with the hypotheses above $\vt_1=0$, $\om_{14}=0$ is equivalent to
\begin{equation}\label{eq:r2r2-theta-eps}
\left\{ \begin{array}{ccl}
         \vt & = & \ve e^3\\
        \Omega & = & -e^{12}+e^{34}+\ve e^{23}, \, \ve\neq 0
        \end{array} \right.
\end{equation}

We assume next that $\om_{14}\neq 0$; (3) implies that $\vt_3=-1$, hence $\om_{23}=\om_{12}$ and the non-degeneracy shows that $\om_{23}\neq 0$ and $\om_{14}+\om_{34}\neq 0$, the latter being equivalent to $\frac{\om_{34}}{\om_{14}}\neq -1$. The lcs structure is then.
\[
\vt=-e^3 \quad \textrm{and} \quad \Omega=\omega_{12}e^{12}+\omega_{13}e^{13}+\omega_{14}e^{14}+\om_{12}e^{23}+\omega_{34}e^{34}\;.
\]
Applying the automorphism
\[
\left(\begin{array}{cccc}
1 & 0 & 0 & 0 \\
0 & \frac{1}{\om_{12}} & 0 & 0 \\
0 & 0 & 1 & -\frac{\om_{13}}{\om_{14}} \\
0 & 0 & 0 & \frac{1}{\om_{14}}
\end{array}\right) \;.
\]
we obtain
\[
\vt'=-e^3 \quad \textrm{and} \quad \Omega'=e^{12}+e^{14}+e^{23}+\frac{\omega_{34}}{\omega_{14}}e^{34}\;.
\]
In this second case, every lcs structure is equivalent to
\[
\left\{ \begin{array}{ccl}
         \vt & = & - e^3\\
        \Omega & = & e^{12}+e^{14}+e^{23}+\ve e^{34}, \, \ve\neq -1
        \end{array} \right.
\]

The above forms are not equivalent, up to automorphisms of the Lie algebra. Indeed, take a generic $A=(a_{jk})$ in the basis $\{e^j\}$. By requiring that $A$ is a morphism of the Lie algebra sending $\Omega_1\coloneq e^{12}+e^{14}+e^{23}+\ve_1 e^{34}$ to $\Omega_2\coloneq e^{12}+e^{14}+e^{23}+\ve_2 e^{34}$, where $\ve_1\neq-1$ and $\ve_2\neq-1$, we get an ideal whose Gr\"obner basis is
\begin{eqnarray*}
B &=& (a_{32} a_{34}, a_{32} a_{44}, a_{34} a_{44} -  a_{34}, a_{44}^{2} -  a_{44}, a_{32} \ve_{2} + a_{32}, a_{44} \ve_{2} + a_{44} -  \ve_{2} - 1,\\[5pt]
&& a_{11} -  a_{44}, a_{12} + a_{34}, a_{13} + a_{44} - 1, a_{14} + a_{32}, a_{21}, a_{22} -  a_{44}, a_{23},\\[5pt]
&& a_{24} -  a_{44} + 1, a_{31} + a_{44} - 1, a_{33} -  a_{44}, a_{41}, a_{42} -  a_{44} + 1, a_{43}, \ve_{1} -  \ve_{2})
\end{eqnarray*}
In particular, $\ve_1=\ve_2$.

Moreover, the above normal form is not equivalent to \eqref{eq:r2r2-theta-eps} with $\ve=-1$. Indeed, take $A=(a_{jk})$ with respect to the basis $\{e^j\}$ and assume that it is a morphism of Lie algebras preserving $\theta\coloneq -e^3$ and sending $\Omega_1\coloneq -e^{12}+e^{34}-e^{23}$ to $\Omega_2\coloneq e^{12}+e^{14}+e^{23}+\alpha e^{34}$ for some $\alpha\neq-1$.
Then we are reduced to find the zeroes of the ideal with Gr\"obner basis $(1)$.

We move to the case $\vt_3=0$. By (5), $\vt_1\neq 0$ and, by (4), $\om_{24}=0$. Moreover, (3) gives $\om_{34}=\frac{\om_{14}}{\vt_1}$. The non-degeneracy condition \eqref{eq:non_deg} reads $\om_{14}(\om_{12}+\vt_1\om_{23})\neq 0$, which implies $\omega_{14}\neq 0$. The lcs structure is
\begin{equation}\label{eq:106}
\vt=\vt_1e^1 \quad \textrm{and} \quad \Omega=\om_{12}e^{12}+\omega_{13}e^{13}+\om_{14}e^{14}+\om_{23}e^{23}+\frac{\omega_{14}}{\vt_1}e^{34}
\end{equation}
with the condition $\omega_{23}(\vartheta_1 + 1)=0$ and $\vartheta_1\neq0$.

Under the automorphism
\[
\left(\begin{array}{cccc}
0 & 0 & 1 & 0 \\
0 & 0 & 0 & 1 \\
1 & 0 & 0 & 0 \\
0 & 1 & 0 & 0
\end{array}\right) \;,
\]
\eqref{eq:106} becomes
\[
\vt=\vt_1e^3 \quad \textrm{and} \quad \Omega=\frac{\omega_{14}}{\vt_1}e^{12}-\omega_{13}e^{13}
-\om_{23}e^{14}-\om_{14}e^{23}+\om_{12}e^{34}\;,
\]
which is equivalent to \eqref{eq:105}. This case deserves therefore no further analysis.

The last case is $\vt_1\vt_3\neq 0$. If we assume that $\omega_{24}\neq 0$, then (2) and (4) imply, respectively, $\vt_1=-1$ and $\vt_3=-1$. From (1) follows $\om_{12}=0$ and, from (3), $\om_{34}=0$. The non-degeneracy condition \eqref{eq:non_deg} is then $\om_{13}\om_{24}-\om_{14}\om_{23}\neq 0$. Thus the lcs structures is
\[
\vt=-e^1-e^3 \quad \textrm{and} \quad \Omega=\omega_{13}e^{13}+\om_{14}e^{14}+\om_{23}e^{23}+\om_{24}e^{24}\;.
\]
The automorphism
\[
\left(\begin{array}{cccc}
1 & -\frac{\om_{14}}{\om_{24}} & 0 & 0 \\
0 & 1 & 0 & 0 \\
0 & 0 & 1 & -\frac{\om_{23}}{\om_{24}} \\
0 & 0 & 0 & 1
\end{array}\right) \;
\]
transforms the lcs structure into
\[
\vt'=-e^1-e^3 \quad \textrm{and} \quad \Omega'=\left(\frac{\om_{13}\om_{24}-\om_{14}\om_{23}}{\om_{24}}\right)e^{13}+\om_{24}e^{24}\;.
\]
Finally, the automorphisms
\[
\left(\begin{array}{cccc}
1 & 0 & 0 & 0 \\
0 & \frac{1}{\om_{24}} & 0 & 0 \\
0 & 0 & 1 & 0 \\
0 & 0 & 0 & 1
\end{array}\right) \;
\]
followed by
\[
\left(\begin{array}{cccc}
0 & 0 & 1 & 0 \\
0 & 0 & 0 & 1 \\
1 & 0 & 0 & 0 \\
0 & -1 & 0 & 0
\end{array}\right)
\]
shows that, in this case, every lcs structure is equivalent to
\begin{equation}\label{eq:r2r2-theta-13}
\left\{ \begin{array}{ccl}
         \vt & = & -e^1-e^3\\
        \Omega & = & \ve e^{13}+e^{24}, \, \ve>0\,.
        \end{array} \right.
\end{equation}

The above forms are not equivalent, up to automorphisms of the Lie algebra. Indeed, take a generic $A=(a_{jk})$ in the basis $\{e^j\}$. By requiring that it is a morphism of the Lie algebra that sends $\Omega_1\coloneq \ve_1 e^{13}+e^{24}$ to $\Omega_2\coloneq \ve_2 e^{13}+e^{24}$, where $\ve_1>0$ and $\ve_2>0$, we get an ideal whose Gr\"obner basis is
\begin{eqnarray*}
B &=& (a_{22} a_{24}, a_{22} a_{33} -  a_{22}, a_{24} a_{33}, a_{33}^{2} -  a_{33}, a_{22} a_{42},  a_{24} a_{42} -  a_{33} + 1, a_{33} a_{42}, a_{22} a_{44} -  a_{33},\\[5pt]
&&  a_{24} a_{44}, a_{33} a_{44} -  a_{44}, a_{42} a_{44}, a_{22} \ve_{1} -  a_{22} \ve_{2}, a_{24} \ve_{1} + a_{24} \ve_{2}, a_{33} \ve_{1} - \frac{1}{2} \ve_{1} - \frac{1}{2} \ve_{2},\\[5pt]
&& a_{42} \ve_{1} + a_{42} \ve_{2}, a_{44} \ve_{1} -  a_{44} \ve_{2}, \ve_{1}^{2} -  \ve_{2}^{2}, a_{33} \ve_{2} - \frac{1}{2} \ve_{1} - \frac{1}{2} \ve_{2}, a_{11} -  a_{33}, a_{12}, \\[5pt]
&& a_{13} + a_{33} - 1, a_{14}, a_{21}, a_{23}, a_{31} + a_{33} - 1, a_{32}, a_{34}, a_{41}, a_{43}).
\end{eqnarray*}
In particular, it contains $(\ve_1 + \ve_2) a_{42}$ $(\ve_1 - \ve_2) a_{44}$, $a_{41}$, $a_{43}$, from which it follows that either $\ve_1=\ve_2$, or $A$ is not invertible.\\

Finally, we assume $\vt_1\vt_3\neq 0$ and $\om_{24}=0$. Equation (1) and (3) give
\[
\om_{12}=-\frac{1+\vt_1}{\vt_3}\om_{23} \quad \textrm{and} \quad \om_{34}=\frac{\vt_3+1}{\vt_1}\om_{14}.
\]
The non-degeneracy condition implies $\om_{14}\om_{23}\neq 0$ and $\vt_1+\vt_3\neq -1$. The generic lcs structure is, in this case,
\[
\vt=\vt_1e^1+\vt_3e^3 \quad \textrm{and} \quad \Omega=\frac{-1-\vt_1}{\vt_3}\om_{23}e^{12}+\omega_{13}e^{13}+\om_{14}e^{14}+\om_{23}e^{23}+\frac{\vt_3+1}{\vt_1}\om_{14}e^{34}\;.
\]
The automorphism 
\[
\left(\begin{array}{cccc}
1 & -\frac{\om_{13}}{\om_{23}} & 0 & 0 \\
0 & \frac{1}{\om_{23}} & 0 & 0 \\
0 & 0 & 1 & 0 \\
0 & 0 & 0 & \frac{1}{\om_{14}}
\end{array}\right) \;
\]
provides the lcs structure
\[
\vt'=\vt_1e^1+\vt_3e^3 \quad \textrm{and} \quad \Omega'=-\frac{1+\vt_1}{\vt_3}e^{12}+e^{14}+e^{23}+\frac{\vt_3+1}{\vt_1}e^{34}\;,
\]
giving the model
\[
\left\{ \begin{array}{ccl}
         \vt & = & \sigma e^1+\tau e^3\\
        \Omega & = & -\frac{1+\sigma}{\tau}e^{12}+e^{14}+e^{23}+\frac{\tau+1}{\sigma}e^{34}, \, \sigma\tau\neq 0, \sigma+\tau\neq -1, \sigma\leq\tau.
        \end{array} \right.
\]
Here, we can assume $\sigma\leq\tau$ up to the automorphism
\[
\left(\begin{array}{cccc}
0 & 0 & 1 & 0 \\
0 & 0 & 0 & 1 \\
1 & 0 & 0 & 0 \\
0 & 1 & 0 & 0
\end{array}\right).
\]

We show that the above form with $(\sigma,\tau)=(-1,-1)$ is not equivalent to \eqref{eq:r2r2-theta-13}. As before, take $A=(a_{jk})$ with respect to the basis $\{e^j\}$. The conditions for $A$ to be a morphism of the Lie algebra preserving $\theta=-e^1-e^3$ and sending $\Omega_1a_{33} \ve_{2} - \frac{1}{2} \ve_{1} - \frac{1}{2} \ve_{2}, a_{11} -  a_{33}, a_{12}\coloneq\alpha e^{13}+e^{24}$ (for some $\alpha\neq0$) to $\Omega_2\coloneq e^{14}+e^{23}$ are given by the ideal with Gr\"obner basis $(1)$.

To conclude, we have to show that the above Lee forms are not equivalent up to automorphisms of the Lie algebra. Set $\vt_1\coloneq\ve_1 e^3$ (for $\ve_1\neq0$ and $\ve_1\neq-1$), $\vt_2\coloneq\ve_2 e^3$ (for $\ve_2\neq0$ and $\ve_2\neq-1$), $\vt_3\coloneq-e^3$, $\vt_4\coloneq-e^1-e^3$, $\vt_5\coloneq\sigma_1e^1+\tau_1e^3$ (for $\sigma_1\tau_1\neq0$ and $\sigma_1+\tau_1\neq-1$ and $\sigma_1\leq\tau_1$, and $(\sigma_1,\tau_1)\neq(-1,-1)$), $\vt_6\coloneq\sigma_2e^1+\tau_2e^3$ (for $\sigma_2\tau_2\neq0$ and $\sigma_2+\tau_2\neq-1$ and $\sigma_2\leq\tau_2$, and $(\sigma_2,\tau_2)\neq(-1,-1)$). Let $B_{jk}$ be a Gr\"obner basis for the ideal generated by the conditions that the morphisms associated to $A=(a_{jk})$ with respect to the basis $\{e^j\}$ is a morphism of the Lie algebra and sends $\vt_j$ to $\vt_k$.
We have
\begin{eqnarray*}
B_{12} &=& (a_{11} a_{22} -  a_{22}, a_{13} a_{24} -  a_{24}, a_{12} a_{31} -  a_{11} a_{32}, a_{22} a_{31}, a_{22} a_{32}, a_{14} a_{33} -  a_{13} a_{34}, a_{24} a_{33},\\[5pt]
&&   a_{24} a_{34}, a_{11} a_{42}, a_{12} a_{42}, a_{22} a_{42}, a_{31} a_{42} -  a_{42}, a_{13} a_{44}, a_{14} a_{44}, a_{24} a_{44}, a_{33} a_{44} -  a_{44}, \\[5pt]
&&  a_{13} \ve_{1}, a_{24} \ve_{1}, a_{33} \ve_{1} -  \ve_{2}, a_{44} \ve_{1} -  a_{44} \ve_{2}, a_{13} \ve_{2}, a_{14} \ve_{2}, a_{24} \ve_{2}, a_{21}, a_{23}, a_{41}, a_{43})
\end{eqnarray*}
Hence, if $\ve_1\neq\ve_2$, then $A$ is not invertible.
We consider now
\begin{eqnarray*}
B_{56} &=& \left( a_{22} a_{24} \sigma_{1} -  a_{22} a_{24} \sigma_{2} + a_{22} a_{24} \tau_{1}, a_{22} a_{33} \sigma_{1} -  a_{22} a_{33} \sigma_{2} + a_{13} a_{22} \tau_{2}, \right.\\[5pt]
&& a_{22} a_{34} \sigma_{1} -  a_{22} a_{34} \sigma_{2} + a_{14} a_{22} \tau_{2}, a_{13} a_{42} \sigma_{1} + a_{33} a_{42} \sigma_{2} -  a_{13} a_{42} \tau_{2}, \\[5pt]
&& a_{14} a_{42} \sigma_{1} + a_{34} a_{42} \sigma_{2} -  a_{14} a_{42} \tau_{2}, a_{24} a_{42} \sigma_{1} -  a_{24} a_{42} \tau_{2}, a_{22} a_{44} \sigma_{1} -  a_{22} a_{44} \sigma_{2}, \\[5pt]
&& a_{42} a_{44} \sigma_{1} + a_{42} a_{44} \tau_{1} -  a_{42} a_{44} \tau_{2}, a_{24} a_{31} \sigma_{2} -  a_{24} a_{31} \tau_{1} -  a_{11} a_{24} \tau_{2}, \\[5pt]
&& a_{24} a_{32} \sigma_{2} -  a_{24} a_{32} \tau_{1} -  a_{12} a_{24} \tau_{2}, a_{24} a_{42} \sigma_{2} -  a_{24} a_{42} \tau_{1}, \\[5pt]
&& a_{31} a_{44} \sigma_{2} + a_{11} a_{44} \tau_{1} -  a_{11} a_{44} \tau_{2}, a_{32} a_{44} \sigma_{2} + a_{12} a_{44} \tau_{1} -  a_{12} a_{44} \tau_{2}, \\[5pt]
&& a_{42} a_{44} \sigma_{2}, a_{13} a_{22} \tau_{1} + a_{22} \sigma_{1} -  a_{22} \sigma_{2}, a_{13} a_{31} \tau_{1} -  a_{11} a_{33} \tau_{1} -  a_{31} \sigma_{2} + a_{11} \tau_{2}, \\[5pt]
&& a_{13} a_{32} \tau_{1} -  a_{12} a_{33} \tau_{1} -  a_{32} \sigma_{2} + a_{12} \tau_{2}, a_{22} a_{33} \tau_{1} -  a_{22} \tau_{2}, \\[5pt]
&& a_{13} a_{42} \tau_{1} -  a_{42} \sigma_{2}, a_{33} a_{42} \tau_{1} + a_{42} \sigma_{1} -  a_{42} \tau_{2}, \\[5pt]
&& a_{22} a_{44} \tau_{1} -  a_{22} a_{44} \tau_{2}, a_{22} a_{24} \tau_{2}, a_{11} a_{22} -  a_{22}, a_{13} a_{24} -  a_{24}, a_{12} a_{31} -  a_{11} a_{32}, a_{22} a_{31}, \\[5pt]
&& a_{22} a_{32}, a_{14} a_{33} -  a_{13} a_{34}, a_{24} a_{33}, a_{24} a_{34}, a_{11} a_{42}, a_{12} a_{42}, a_{22} a_{42}, a_{31} a_{42} -  a_{42}, a_{13} a_{44}, \\[5pt]
&& a_{14} a_{44}, a_{24} a_{44}, a_{33} a_{44} -  a_{44}, a_{11} \sigma_{1} + a_{13} \tau_{1} -  \sigma_{2}, \\[5pt]
&& \left. a_{31} \sigma_{1} + a_{33} \tau_{1} -  \tau_{2}, a_{21}, a_{23}, a_{41}, a_{43}\right).
\end{eqnarray*}
By solving the ideal, we get that $\det A=0$.
Next we have
\begin{eqnarray*}
B_{31} &=&
\left( a_{11} a_{22} -  a_{22}, a_{12} a_{31} -  a_{11} a_{32}, a_{22} a_{31}, a_{22} a_{32}, a_{11} a_{42}, a_{12} a_{42}, \right.\\[5pt]
&& a_{22} a_{42}, a_{31} a_{42} -  a_{42}, a_{14} a_{44}, a_{14} \ve_{1}, a_{44} \ve_{1} + a_{44}, \\[5pt]
&& \left. a_{13}, a_{21}, a_{23}, a_{24}, a_{33} + \ve_{1}, a_{41}, a_{43}\right).
\end{eqnarray*}
Since $\ve_1\neq-1$, we get that the only solutions have $\det(A)=0$.
As for $B_{34}$, we have
\begin{eqnarray*}
B_{34} &=& \left( a_{11} a_{22} -  a_{22}, a_{12} a_{31} -  a_{11} a_{32}, a_{22} a_{31}, a_{22} a_{32}, a_{11} a_{42}, \right.\\[5pt]
&& a_{12} a_{42}, a_{22} a_{42}, a_{31} a_{42} -  a_{42}, a_{13} - 1, a_{14} -  a_{34}, a_{21}, a_{23}, \\[5pt]
&=& \left. a_{24}, a_{33} - 1, a_{41}, a_{43}, a_{44}\right).
\end{eqnarray*}
We easily get that it implies $\det A=0$.
As for $B_{35}$, we get
\begin{eqnarray*}
B_{35} &=&
\left( a_{11} a_{22} -  a_{22}, a_{12} a_{31} -  a_{11} a_{32}, a_{22} a_{31}, a_{22} a_{32}, a_{24} a_{34}, a_{11} a_{42}, \right.\\[5pt]
&& a_{12} a_{42}, a_{22} a_{42}, a_{31} a_{42} -  a_{42}, a_{14} a_{44}, a_{24} a_{44}, a_{24} \sigma_{1} + a_{24}, a_{34} \sigma_{1} -  a_{14} \tau_{1}, a_{44} \sigma_{1}, \\[5pt]
&& \left. a_{24} \tau_{1}, a_{44} \tau_{1} + a_{44}, a_{13} + \sigma_{1}, a_{21}, a_{23}, a_{33} + \tau_{1}, a_{41}, a_{43}\right).
\end{eqnarray*}
We get that $\det A=0$.
Consider now $B_{41}$. We have
\begin{eqnarray*}
B_{41} &=&
(a_{14} a_{22} \ve_{1} -  a_{22} a_{34}, a_{12} a_{24} \ve_{1} -  a_{24} a_{32}, a_{14} a_{42} \ve_{1} + a_{14} a_{42}, a_{12} a_{44} \ve_{1} + a_{12} a_{44},\\[5pt]
&& a_{42} a_{44} \ve_{1} + 2 a_{42} a_{44}, a_{13} a_{22} + a_{22}, a_{13} a_{24} -  a_{24}, a_{22} a_{24}, a_{13} a_{32} -  a_{12} a_{33} -  a_{12} \ve_{1}, \\[5pt]
&& a_{22} a_{32}, a_{14} a_{33} -  a_{13} a_{34}, a_{22} a_{33} + a_{22} \ve_{1}, a_{24} a_{33}, a_{24} a_{34}, a_{12} a_{42}, a_{13} a_{42}, a_{22} a_{42}, \\[5pt]
&& a_{24} a_{42}, a_{33} a_{42} + a_{42} \ve_{1} + a_{42}, a_{13} a_{44}, a_{14} a_{44}, a_{22} a_{44}, a_{24} a_{44}, a_{33} a_{44} -  a_{44}, a_{11} + a_{13}, \\[5pt]
&& a_{21}, a_{23}, a_{31} + a_{33} + \ve_{1}, a_{41}, a_{43})
\end{eqnarray*}
from which we get that $\det A=0$.
As for $B_{45}$, we have
\begin{eqnarray*}
B_{45} &=&
(a_{22} a_{24} \sigma_{1} + 2 a_{22} a_{24}, a_{24} a_{32} \sigma_{1} -  a_{12} a_{24} \tau_{1} + a_{24} a_{32}, a_{22} a_{34} \sigma_{1} -  a_{14} a_{22} \tau_{1} + a_{22} a_{34},\\[5pt]
&& a_{24} a_{42} \sigma_{1} + a_{24} a_{42}, a_{34} a_{42} \sigma_{1} -  a_{14} a_{42} \tau_{1} -  a_{14} a_{42}, a_{22} a_{44} \sigma_{1} + a_{22} a_{44},\\[5pt]
&& a_{32} a_{44} \sigma_{1} -  a_{12} a_{44} \tau_{1} -  a_{12} a_{44}, a_{42} a_{44} \sigma_{1}, a_{22} a_{24} \tau_{1}, a_{24} a_{42} \tau_{1} + a_{24} a_{42}, \\[5pt]
&& a_{22} a_{44} \tau_{1} + a_{22} a_{44}, a_{42} a_{44} \tau_{1} + 2 a_{42} a_{44}, a_{13} a_{22} + a_{22} \sigma_{1} + a_{22}, a_{13} a_{24} -  a_{24},\\[5pt]
&&  a_{13} a_{32} -  a_{12} a_{33} + a_{32} \sigma_{1} -  a_{12} \tau_{1}, a_{22} a_{32}, a_{14} a_{33} -  a_{13} a_{34}, a_{22} a_{33} + a_{22} \tau_{1}, a_{24} a_{33},\\[5pt]
&&  a_{24} a_{34}, a_{12} a_{42}, a_{13} a_{42} + a_{42} \sigma_{1}, a_{22} a_{42}, a_{33} a_{42} + a_{42} \tau_{1} + a_{42}, a_{13} a_{44}, a_{14} a_{44}, a_{24} a_{44},\\[5pt]
&&  a_{33} a_{44} -  a_{44}, a_{11} + a_{13} + \sigma_{1}, a_{21}, a_{23}, a_{31} + a_{33} + \tau_{1}, a_{41}, a_{43})
\end{eqnarray*}
As before, under the assumptions, we find $\det A=0$.
Finally, we need to study $B_{15}$. We have
\begin{eqnarray*}
B_{15}&=& 
(a_{11} a_{22} -  a_{22}, a_{13} a_{24} -  a_{24}, a_{12} a_{31} -  a_{11} a_{32}, a_{22} a_{31}, a_{22} a_{32}, a_{14} a_{33} -  a_{13} a_{34},\\[5pt]
&& a_{24} a_{33}, a_{24} a_{34}, a_{11} a_{42}, a_{12} a_{42}, a_{22} a_{42}, a_{31} a_{42} -  a_{42}, a_{13} a_{44}, a_{14} a_{44}, a_{24} a_{44},\\[5pt]
&&  a_{33} a_{44} -  a_{44}, a_{13} \ve_{1} -  \sigma_{1}, a_{24} \ve_{1} -  a_{24} \sigma_{1}, a_{33} \ve_{1} -  \tau_{1}, a_{44} \ve_{1} -  a_{44} \tau_{1}, a_{33} \sigma_{1} -  a_{13} \tau_{1}, \\[5pt]
&&  a_{34} \sigma_{1} -  a_{14} \tau_{1}, a_{44} \sigma_{1}, a_{24} \tau_{1}, a_{21}, a_{23}, a_{41}, a_{43})
\end{eqnarray*}
This has no solution, if we require $\det A\neq0$.

\subsection{$\mathfrak{r}_2'$, $(0,0,-13+24,-14-23)$} The generic closed 1-form (and candidate for the Lee form) is seen to be $\vt=\vt_1e^1+\vt_2e^2$, $\vt_1^2+\vt_2^2\neq0$ If we impose, furthermore, the conformally closedness of the generic $2$-form $\Omega=\sum_{1\leq j < k \leq 4} \omega_{jk} e^{jk}$, we obtain the equations
\begin{enumerate}
\item $(\vt_1+1)\om_{23}-\om_{14}-\vt_2\om_{13}=0$
\item $(\vt_1+1)\om_{24}+\om_{13}-\vt_2\om_{14}=0$
\item $(\vt_1+1)\om_{34}=0$
\item $\vt_2\om_{34}=0$
\item $\vt_1^2+\vt_2^2\neq0$
\end{enumerate}

We start by assuming $\vt_2\neq 0$. (4) implies $\om_{34}=0$; if $\vt_1=-1$, then (1) gives $\om_{14}=-\vt_2\om_{13}$ and (2) gives $\om_{13}=\vt_2\om_{14}$; together, these conditions imply $\om_{13}=0$, which in turn says $\om_{14}=0$, contradicting non-degeneracy. Then $\vt_1\neq -1$ and combining (1) and (2) we obtain
\[
\om_{14}=\frac{\om_{13}+(1+\vt_1)\om_{24}}{\vt_2} \quad \textrm{and} \quad \om_{23}=\frac{(1+\vt_2^2)\om_{13}+(1+\vt_1)\om_{24}}{\vt_2(1+\vt_1)}.
\]
The non-degeneracy condition reads then
\[
\vt_2^2\om_{13}^2+(\om_{13}+(1+\vt_1)\om_{24})^2\neq 0\;,
\] 
which is equivalent to $\om_{13}^2+\om_{24}^2\neq 0$.
The generic lcs structure is then $\vt=\vt_1e^1+\vt_2e^2$ and
\[
\Omega=\om_{12}e^{12}+\omega_{13}e^{13}+\frac{\om_{13}+(1+\vt_1)\om_{24}}{\vt_2}e^{14}+\frac{(1+\vt_2^2)\om_{13}+(1+\vt_1)\om_{24}}{\vt_2(1+\vt_1)}e^{23}+\om_{24}e^{24}\;.
\]
In terms of the basis $\{e^1,e^2,e^3,e^4\}$ of $(\mathfrak{r}_2')^*$ we consider the automorphism 
\begin{equation}\label{aut:1}
\left(\begin{array}{cccc}
1 & 0 & x & -y \\
0 & 1 & y & x \\
0 & 0 & 1 & 0 \\
0 & 0 & 0 & 1
\end{array}\right) \;,
\end{equation}
where $x,y$ are parameters to be determined. In the new basis, the coefficient of $e^{12}$ in the new expression for $\Omega$ is
\begin{equation}\label{eq:109}
{\om_{12}+x\left(\frac{(\vt_1-\vt_2^2)\om_{13}+\vt_1(1+\vt_1)\om_{24}}{\vt_2(1+\vt_1)}\right)+y(\om_{13}+\om_{24})};
\end{equation}
one sees that the coefficients of $x$ and $y$ vanish simultaneously if and only if $\om_{13}=\om_{24}=0$, which would contradict the non-degeneracy hypothesis. This means that we can always choose $x$ and $y$ so that \eqref{eq:109} vanishes. The corresponding automorphism brings then $\Omega$ into
\begin{equation}\label{eq:111}
\Omega'=\omega_{13}e^{13}+\frac{\om_{13}+(1+\vt_1)\om_{24}}{\vt_2}e^{14}+\frac{(1+\vt_2^2)\om_{13}+(1+\vt_1)\om_{24}}{\vt_2(1+\vt_1)}e^{23}+\om_{24}e^{24}\;,
\end{equation}
while fixing $\vt$. Next, we consider an automorphism
\begin{equation}\label{aut:2}
\left(\begin{array}{cccc}
1 & 0 & 0 & 0 \\
0 & 1 & 0 & 0 \\
0 & 0 & \cos z & -\sin z \\
0 & 0 & \sin z & \cos z
\end{array}\right) \;,
\end{equation}
where $z$ is a parameter to be determined. In the expression for $\Omega'$ in the new basis, the coefficient of $e^{23}$ is
\begin{equation}\label{eq:110}
\frac{(1+\vt_2^2)\om_{13}+(1+\vt_1)\om_{24}}{\vt_2(1+\vt_1)}\cos z-\om_{24}\sin z;
\end{equation}
Assuming $\om_{24}\neq 0$, we choose $z$ such that $\tan z=\frac{(1+\vt_2^2)\om_{13}+(1+\vt_1)\om_{24}}{\vt_2(1+\vt_1)\om_{24}}$. If $\om_{24}=0$, we choose $z=-\frac{\pi}{2}$. In any case, under this automorphism the Lee form is unchanged, while
\[
\Omega''=\omega_{13}''e^{13}+\frac{\om_{13}''+(1+\vt_1)\om_{24}''}{\vt_2}e^{14}+\om_{24}''e^{24}\;,
\]
with
\[
\om_{13}''=\om_{13}\cos z-\frac{\om_{13}+(1+\vt_1)\om_{24}}{\vt_2}\sin z
\]
and
\[
\om_{24}''=\om_{24}\cos z+\frac{(1+\vt_2^2)\om_{13}+(1+\vt_1)\om_{24}}{\vt_2(1+\vt_1)}\sin z \,.
\]
The non-degeneracy guarantees $\om_{13}''\omega_{24}''\neq 0$. Considering the automorphism 
\begin{equation}\label{aut:5}
\left(\begin{array}{cccc}
1 & 0 & 0 & 0 \\
0 & 1 & 0 & 0 \\
0 & 0 & t & 0 \\
0 & 0 & 0 & t
\end{array}\right) \;,
\end{equation}
with $t=\frac{1}{\om_{13}''}$ we obtain
\begin{equation}\label{eq:112}
\Omega'''=e^{13}+\frac{1}{\vt_2}\left(1+(1+\vt_1)\frac{\om_{24}''}{\om_{13}''}\right)e^{14}+\frac{\om_{24}''}{\om_{13}''}e^{24}\;;
\end{equation}
A computation shows that $\frac{\om_{24}''}{\om_{13}''}=-\frac{1+\vt_2^2}{1+\vt_1}$, hence
\[
\Omega'''=e^{13}-\vt_2 e^{14}-\frac{1+\vt_2^2}{1+\vt_1}e^{24}\;.
\]
Finally, using the automorphism
\begin{equation}\label{aut:3}
\left(\begin{array}{cccc}
1 & 0 & 0 & 0 \\
0 & -1 & 0 & 0 \\
0 & 0 & 1 & 0 \\
0 & 0 & 0 & -1
\end{array}\right) \;
\end{equation}
we can assume that $\vt_2>0$. This gives the normal form
\begin{equation}\label{model}
\left\{ \begin{array}{ccl}
         \vt & = & \sigma e^1+\tau e^2\\
        \Omega & = & e^{13}-\tau e^{14}-\frac{1+\tau^2}{1+\sigma} e^{24}, \, \sigma\neq-1, \ \tau> 0.
        \end{array} \right.
\end{equation}

We consider now the case $\vt_2=0$. This implies $\vt_1\neq 0$, and conditions (1), (2) and (3) above become
\[
\left\{ \begin{array}{ccl}
        (\vt_1+1)\om_{23} & = & \om_{14}\\
        (\vt_1+1)\om_{24} & = & -\om_{13}\\
        (\vt_1+2)\om_{34} & = & 0
        \end{array} \right.
\]
The case $\vt_1=-1$ is excluded by the non-degeneracy, hence we consider the two cases $\om_{34}\neq 0$ and $\om_{34}=0$. Let us start with $\om_{34}\neq 0$, which implies $\vt_1=-2$; the above conditions give then
\[
\om_{23}=-\om_{14} \quad \textrm{and} \quad \om_{24}=\om_{13},
\]
and the non-degeneracy condition becomes $\om_{12}\om_{34}-\om_{13}^2-\om_{14}^2\neq 0$. The Lee form is $\vt=-2e^1$, while the generic lcs 2-form is then
\[
\Omega=\om_{12}e^{12}+\om_{13}(e^{13}+e^{24})+\om_{14}(e^{14}-e^{23})+\om_{34}e^{34}.
\]
The automorphism \eqref{aut:1} with $x=-\frac{\om_{14}}{\om_{34}}$ and $y=-\frac{\om_{13}}{\om_{34}}$
clearly fixes the Lee form, while the 2-form is transformed into
\[
\Omega'=\frac{\om_{12}\om_{34}-\om_{13}^2-\om_{14}^2}{\om_{34}}e^{12}+\om_{34}e^{34}.
\]
By the mean of the automorphism \eqref{aut:3} we can assume $\om_{34}>0$, hence \eqref{aut:5} with $t=\frac{1}{\om_{34}}$ allows us to conclude that the normal forms is
\[
\left\{ \begin{array}{ccl}
         \vt & = & -2e^1\\
        \Omega & = & \ve e^{12}+e^{34}, \, \ve\neq 0
\end{array} \right.
\]

The above forms are not equivalent for different values of $\ve$. Let us assume that $A=(a_{jk})$ is a morphism of the Lie algebra, with respect to the basis $\{e^j\}$, preserving $\theta=-2e^1$ and sending $\Omega_{1}\coloneq\ve_1 e^{12}+e^{34}$ to $\Omega_{2}\coloneq\ve_2 e^{12}+e^{34}$. These conditions amount to solve the ideal with Gr\"obner basis
\begin{eqnarray*}
B &=& (a_{13}^{2} -  a_{24}^{2}, a_{13} a_{14} + a_{23} a_{24}, a_{14}^{2} -  a_{23}^{2}, a_{13} a_{23} + a_{14} a_{24}, a_{13} a_{44} -  a_{24}, a_{14} a_{44} + a_{23}, \\[5pt]
&& a_{23} a_{44} + a_{14}, a_{24} a_{44} -  a_{13}, a_{44}^{2} - 1, a_{14} \varepsilon_2 + a_{14}, a_{23} \varepsilon_{2} + a_{23}, a_{11} - 1, a_{12}, a_{21}, \\[5pt]
&&  a_{22} -  a_{44}, a_{31}, a_{32}, a_{33} - 1, a_{34}, a_{41}, a_{42}, a_{43}, \varepsilon_{1} -  \varepsilon_{2}),
\end{eqnarray*}
from which it follows that $\ve_1=\ve_2$.

If $\om_{34}=0$, then we get $\om_{13}=-(\vt_1+1)\om_{24}$, $\om_{14}=(\vt_1+1)\om_{23}$ and non-degeneracy yields $\om_{23}^2+\om_{24}^2\neq 0$. The Lee form is $\vt=\vt_1e^1$, with $\vt_1\neq -1$, and the generic 2-form is 
\[
\Omega=\om_{12}e^{12}-(\vt_1+1)\om_{24}e^{13}+(\vt_1+1)\om_{23}e^{14}+\om_{23}e^{23}+\om_{24}e^{24}.
\]
Using the automorphism \eqref{aut:1} we see that non-degeneracy guarantees the existence of numbers $x,y\in\mathbb{R}$ such that the coefficient of $e^{12}$ in the new expression for $\Omega$ vanishes, while the others remain unaltered, i.e.
\[
\Omega'=-(\vt_1+1)\om_{24}e^{13}+(\vt_1+1)\om_{23}e^{14}+\om_{23}e^{23}+\om_{24}e^{24}.
\]
Using the automorphism \eqref{aut:2} with $\vt=-\frac{\pi}{2}$, we can without loss of generality assume that $\om_{24}\neq 0$. We consider the automorphism \eqref{aut:2} with $z$ such that $\tan z=\frac{\om_{23}}{\om_{24}}$. Again the Lee form is fixed, while
\[
\Omega''=-(\vt_1+1)\om_{24}''e^{13}+\om_{24}''e^{24}
\]
with $\om_{24}''=\om_{24}\cos z+\om_{23}\sin z$. Notice that non-degeneracy implies $(\om_{24}'')^2\neq 0$. Finally, \eqref{aut:5} with $t=-\frac{1}{(1+\vt_1)\om_{24}''}$ gives us the normal form
\[
\left\{ \begin{array}{ccl}
         \vt & = & \ve e^1\\
        \Omega & = & e^{13}-\frac{1}{\ve+1}e^{24}, \, \ve\notin\{0,-1\}
        \end{array} \right.
\]

In case $\ve=-2$, we show that the forms $\Omega_1\coloneq\alpha e^{12}+e^{34}$ with $\alpha\neq0$ and $\Omega_2\coloneq e^{13}+e^{24}$ are distinct. In fact, by computing the Gr\"obner basis of the ideal generated by the conditions that the generic matrix $A=(a_{jk})$ with respect to the basis $\{e^j\}$ gives an automorphism of the Lie algebra sending $\Omega_1$ to $\Omega_2$, we get that it is generated by 1.

To finish the proof that the above forms are actually different it still remains to show that there is no automorphism of the Lie algebra transforming one of the above Lee forms to another. Set $\vt_{1}\coloneq\sigma_1 e^1+\tau_1 e^2$, $\vt_{2}\coloneq\sigma_2 e^1+\tau_2 e^2$, (where $\sigma_1\neq-1$, $\sigma_2\neq-1$, $\tau_1>0$, $\tau_2>0$,) $\vt_3\coloneq-2 e^1$, $\vt_{4}\coloneq \ve_1 e^1$, $\vt_{5}\coloneq\ve_2 e^1$ (where $\ve_1,\ve_2\not\in\{0,-1,-2\}$), and consider the generic linear map associated to $A=(a_{jk})$ with respect to the basis $\{e^j\}$. Let $B_{jk}$ the Gr\"obner basis of the ideal obtained by requiring that $A$ gives an automorphism of the Lie algebra, and that $A$ sends $\vt_j$ to $\vt_k$.
We find that:
\begin{itemize}
\item $B_{12}$ contains $a_{41}$, $a_{42}$, and $(a_{43}^2+ a_{44}^2)(\sigma_1 - \sigma_2)$, from which it follows that if $A$ is invertible, then $\sigma_1=\sigma_2$. The ideal contains now $a_{12}a_{34}^2 + a_{12}a_{44}^2$. If $a_{34}=a_{44}=0$, we get that also $a_{43}=0$, whence $\det A=0$. Then assume that $a_{34}^2+a_{44}^2\neq0$. We get $a_{12}=0$, and $a_{21}=0$. The ideal contains now $a_{33}^2 - a_{44}^2$, from which $a_{44}=\pm a_{33}$. In case $a_{33}=a_{44}$, by the element $a_{44}\tau_1 - a_{33}\tau_2$ in the ideal, we conclude that $\tau_1=\tau_2$. In case $a_{44}=-a_{33}$, from $a_{33}\tau_1 + a_{33}\tau_2=0$, since $\tau_1>0$ and $\tau_2>0$, we get $a_{33}=a_{44}=0$, contradiction.
\item $B_{45}$ contains the element $(\varepsilon_1 - \varepsilon_2) \cdot (a_{43}^2 + a_{44}^2)$. Therefore, either $\varepsilon_1=\varepsilon_2$; or $a_{43}=a_{44}=0$. The ideal contains also $a_{41}$ and $a_{42}$, whence the latter case yields that $A$ is not invertible.
\item $B_{31}$ contains $a_{31}$, $a_{32}$, $\tau_1 (a_{34}^2 + a_{44}^2)$. Since $\tau_1>0$, it follows that $a_{31}=a_{32}=a_{34}=0$. Moreover, we see that $a_{41}=a_{42}=a_{44}=0$. The ideal contains now $a_{33}^2 + a_{43}^2$, from which it follows that $A$ is not invertible.
\item $B_{34}$ contains $a_{41}$, $a_{42}$, $(\ve_1 + 2)(a_{43}^2 + a_{44}^2)$, from which it follows that, since $\ve_1\neq-2$, $A$ is not invertible.
\item $B_{14}$ contains $a_{41}$, $a_{42}$, $\tau_1 (a_{34}^2 + a_{44}^2)$, $\varepsilon_1 (a_{34} - a_{43}) \cdot (a_{34} + a_{43})$. Since $\tau_1>0$ and $\ve_1\neq0$, we get that $a_{41}=a_{42}=a_{43}=a_{44}=0$, therefore $A$ is not invertible.
\end{itemize}

\subsection{$\mathfrak{n}_4$, $(0,14,24,0)$} Taking a generic $1$-form $\vartheta=\sum_{j=1}^{4}\vartheta_je^j$ and imposing closedness, we see that the conditions $\vt_2=0=\vt_3$ must be satisfied. The generic Lee form is therefore $\vt=\vt_1e^1+\vt_4e^4$, with $\vt_1^2+\vt_4^2\neq0$. This, together with the equation $d_\vt\Omega=0$ for a generic $2$-form $\Omega=\sum_{1\leq j < k \leq 4} \omega_{jk} e^{jk}$, provides us with the following set of equations:
\begin{enumerate}
\item $\vartheta_1\omega_{23}=0$
\item $\omega_{13}+\vartheta_1\omega_{24}+\vartheta_4\omega_{12}=0$
\item $\omega_{23}+\vartheta_1\omega_{34}+\vartheta_4\omega_{13}=0$
\item $\vartheta_4\omega_{23}=0$
\item $\vt_1^2+\vt_4^2\neq0$
\end{enumerate}

Assume $\vt_1=0$. Then $\vt_4\neq 0$, which, in view of (4), implies $\omega_{23}=0$; plugging this into (3) gives $\omega_{13}=0$, which gives, with (2), $\omega_{12}=0$. This contradicts however \eqref{eq:non_deg}.

We assume therefore $\vt_1\neq 0$, which implies $\omega_{23}=0$ in view of (1). Plugging this into (3) gives $\omega_{34}=-\frac{\vt_4\omega_{13}}{\vt_1}$, while from (2) follows $\omega_{24}=\frac{-\omega_{13}-\vt_4\omega_{12}}{\vt_1}$. Equation \eqref{eq:non_deg} reduces then to $\frac{\omega_{13}^2}{\vt_1}\neq 0$, ensuring that $\omega_{13}\neq 0$.

The generic lcs structure is then given by
\[
\vt=\vt_1e^1+\vt_4e^4 \quad \textrm{and} \quad \Omega=\om_{12}e^{12}+\omega_{13}e^{13}+\om_{14}e^{14}+\frac{-\om_{13}-\vt_4\om_{12}}{\vt_1}e^{24}-\frac{\vt_4\om_{13}}{\vt_1}e^{34}\;.
\]

In terms of the basis $\{e^1,e^2,e^3,e^4\}$ of $\mathfrak{n}_4^*$, we consider the automorphism
\[
\left(\begin{array}{cccc}
\frac{1}{\vt_1} & 0 & 0 & 0 \\
0 & 1 & 0 & 0 \\
0 & 0 & \vt_1 & 0 \\
-\vt_4 & 0 & 0 & \vt_1
\end{array}\right) \;.
\]
In the new basis, we have
\[
\vt'=e^1 \quad \textrm{and} \quad \Omega'=\frac{\omega_{12}}{\vt_1}e^{12}+\om_{13}e^{13}+\omega_{14}e^{14}-\om_{13}e^{24}\;.
\]
We consider now the automorphism
\[
\left(\begin{array}{cccc}
1 & -\frac{\omega_{12}}{\vt_1\omega_{13}} & \frac{\omega_{12}^2}{2\vt_1^2\omega_{13}^2} & 0 \\
0 & 1 & -\frac{\omega_{12}}{\vt_1\omega_{13}} & 0 \\
0 & 0 & 1 & 0 \\
0 & 0 & -\frac{\omega_{12}+\vt_1\omega_{14}}{\vt_1\omega_{13}} & 1
\end{array}\right) \;.
\]
In these new basis,
\[
\vt''=e^1 \quad \textrm{and} \quad \Omega''=\om_{13}(e^{13}-e^{24})\;.
\]
According to the sign of $\omega_{13}$ we consider the automorphisms
\[
\left(\begin{array}{cccc}
1 & 0 & 0 & 0 \\
0 & \sqrt{\pm\frac{1}{\om_{13}}} & 0 & 0 \\
0 & 0 & \pm\frac{1}{\om_{13}} & 0 \\
0 & 0 & 0 & \sqrt{\pm\frac{1}{\om_{13}}}
\end{array}\right)\;;
\]
the normal form for a lcs structure on $\mathfrak{n}_4$ is
\[
\left\{ \begin{array}{ccl}
         \vt & = & e^1\\
        \Omega_\pm & = & \pm(e^{13}- e^{24})
        \end{array} \right.
\]

The above structures are different because the Gr\"obner basis giving possible automorphisms interchanging them is
$$\left(a_{44}^{2} + 1, a_{11} - 1, a_{12}, a_{14}, a_{21}, a_{22} -a_{44}, a_{23}, a_{24}, a_{31}, a_{32}, a_{33} + 1, a_{34}, a_{41}, a_{43}\right) $$
which gives an ideal with empty variety.

\subsection{$\mathfrak{r}_4$, $(14+24,24+34,34,0)$} 

Taking a generic $1$-form $\vartheta=\sum_{j=1}^{4}\vartheta_je^j$ and imposing closedness, we see that the conditions $\vt_1=\vt_2=\vt_3=0$ must be satisfied. The generic Lee form is therefore $\vt=\vt_4e^4$, with $\vt_4\neq0$. This, together with the equation $d_\vt\Omega=0$ for a generic $2$-form $\Omega=\sum_{1\leq j < k \leq 4} \omega_{jk} e^{jk}$, provides us with the following set of equations:
\begin{enumerate}
\item $(\vt_4+2)\om_{12}=0$
\item $(\vt_4+2)\om_{13}+\om_{12}=0$
\item $(\vt_4+2)\om_{23}+\om_{13}=0$
\item $\vartheta_4\neq 0$
\end{enumerate}

Assume $\vt_4\neq -2$. Then from (1) follows $\om_{12}=0$; plugging this into (2) gives $\om_{13}=0$, which gives, together with (3), $\om_{23}=0$. But this contradicts \eqref{eq:non_deg}.

We assume therefore $\vt_4=-2$; then (2) gives $\om_{12}=0$ and (3) gives $\om_{13}=0$. The generic lcs structure is then given by
\[
\vt=-2e^4 \quad \textrm{and} \quad \Omega=\om_{14}e^{14}+\om_{23}e^{23}+\om_{24}e^{24}+\om_{34}e^{34}\;,
\]
with the non-degeneracy condition $\om_{14}\om_{23}\neq 0$.
In terms of the basis $\{e^1,e^2,e^3,e^4\}$ of $\mathfrak{r}_4^*$, we consider the automorphism
\[
\left(\begin{array}{cccc}
\frac{1}{\om_{14}} & 0 & 0 & 0 \\
0 & \frac{1}{\om_{14}} & 0 & 0 \\
0 & 0 & \frac{1}{\om_{14}} & 0 \\
0 & \frac{\om_{34}}{\om_{23}} & -\frac{\om_{24}}{\om_{23}} & 1
\end{array}\right) \;,
\]
which gives the normal forms on $\mathfrak{r}_4$:
\[
\left\{ \begin{array}{ccl}
         \vt & = & -2e^4\\
        \Omega & = & e^{14}+\sigma e^{23}, \quad \sigma\neq0 .
        \end{array} \right.
\]

The above structures are different. Indeed, the automorphisms associated to $A=(a_{jk})$ (with respect to the basis $\{e^j\}$) interchanging two forms $\Omega_1\coloneq e^{14}+\sigma_1 e^{23}$ and $\Omega_2\coloneq e^{14}+\sigma_2 e^{23}$ (where $\sigma_1,\sigma_2\neq0$) and preserving $\vt=-2e^4$, should belong to the variety associated to the ideal with Gr\"obner basis
\begin{eqnarray*}
B&=&\left( a_{32}^{2} + a_{42} \sigma_{2} -  a_{31}, a_{43} \sigma_{2} + a_{32}, a_{11} - 1, a_{12}, a_{13}, a_{14}, a_{21} -  a_{32}, a_{22} - 1, \right.\\[5pt]
&&\left.a_{23}, a_{24}, a_{33} - 1, a_{34}, a_{44} - 1, \sigma_{1} -  \sigma_{2}\right). \end{eqnarray*}
But the ideal contains $\sigma_{1} -  \sigma_{2}$.

\subsection{$\mathfrak{r}_{4,\mu}$, $(14,\mu 24+34,\mu 34,0)$}

We suppose first that $\mu=0$. A 1-form $\vartheta=\sum_{j=1}^{4}\vartheta_j e^j$ is closed if and only if $\vt_1=\vt_2=0$. The generic Lee form is then $\vt=\vt_3e^3+\vt_4e^4$ with $\vt_3^2+\vt_4^2\neq0$. A 2-form $\Omega=\sum_{1\leq j < k \leq 4} \omega_{jk}e^{jk}$ is conformally closed with respect to $\vt$ if and only if: 
\begin{enumerate}
\item $\vartheta_3\omega_{12}=0$
\item $(\vt_4+1)\omega_{12}=0$
\item $(\vt_4+1)\omega_{13}+\om_{12}-\vt_3\om_{14}=0$
\item $\vt_3\omega_{24}-\vt_4\omega_{23}=0$
\end{enumerate}

We assume first that $\vt_3\neq 0$. Then $\om_{12}=0$ follows from (1), so that $\om_{14}=\frac{(\vt_4+1)\om_{13}}{\vt_3}$ by (3) and $\om_{24}=\frac{\vt_4\om_{23}}{\vt_3}$ by (4). Under all these hypotheses, \eqref{eq:non_deg} yields $\om_{13}\om_{23}\neq 0$ and the lcs structure reads
\[
\vt=\vt_3e^3+\vt_4e^4 \quad \textrm{and} \quad \Omega=\om_{13}e^{13}+\frac{(\vt_4+1)\om_{13}}{\vt_3}e^{14}+\om_{23}e^{23}+\frac{\vt_4\om_{23}}{\vt_3}e^{24}+\om_{34}e^{34}\;.
\]
In terms of the basis $\{e^1,e^2,e^3,e^4\}$ of $(\fr_{4,0})^*$, we consider the automorphism
\[
\left(\begin{array}{cccc}
\frac{\vt_3}{\om_{13}} & 0 & 0 & 0 \\
0 & \frac{1}{\vt_3} & 0 & 0 \\
0 & 0 & \frac{1}{\vt_3} & 0 \\
0 & \frac{\om_{34}}{\om_{23}} & -\frac{\vt_4}{\vt_3} & 1
\end{array}\right) \;,
\]
which gives the normal form
\[
\left\{ \begin{array}{ccl}
         \vt & = & e^3\\
        \Omega & = & e^{13}+e^{14}+\ve e^{23}, \, \ve\neq 0
        \end{array} \right.
\]

The above forms are non-equivalent. Indeed, take $\Omega_1\coloneq e^{13}+e^{14}+\ve_1 e^{23}$ and $\Omega_2\coloneq e^{13}+e^{14}+\ve_2 e^{23}$ for some $\ve_1,\ve_2\neq0$. By requiring that the generic $A=(a_{jk})$ (with respect to the basis $\{e^j\}_j$) is an automorphism sending $\Omega_1$ to $\Omega_2$, we are reduced to solve the ideal with Gr\"obner basis
\begin{eqnarray*}
B&=&(a_{42} \ve_{2} -  a_{32} + a_{41}, a_{11} - 1, a_{12}, a_{13}, a_{14}, a_{21}, a_{22} - 1, a_{23}, a_{24}, a_{31}, a_{33} - 1,\\[5pt]
&&a_{34}, a_{43}, a_{44} - 1, \ve_{1} -  \ve_{2})\,,
\end{eqnarray*}
from which we get $\ve_1=\ve_2$.

Assume next that $\vt_3=0$; then $\vt_4\neq 0$ and (4) implies $\om_{23}=0$. If $\vt_4\neq -1$, then $\om_{12}=0$ follows from (2) and $\om_{13}=0$ follows from (3); this, however, contradicts the \eqref{eq:non_deg}. We can then suppose that $\vt_4=-1$; then $\om_{12}=0$ follows from (3) and \eqref{eq:non_deg} implies $\om_{13}\om_{24}\neq 0$. Thus
\[
\vt=-e^4 \quad \textrm{and} \quad \Omega=\om_{13}e^{13}+\om_{14}e^{14}+\omega_{24}e^{24}+\omega_{34}e^{34}\;.
\]
The automorphism
\[
\left(\begin{array}{cccc}
\frac{\om_{24}}{\om_{13}} & 0 & 0 & 0 \\
0 & \frac{1}{\om_{24}} & 0 & 0 \\
0 & -\frac{\om_{34}}{\om_{24}^2} & \frac{1}{\om_{24}} & 0 \\
0 & 0 & -\frac{\om_{14}}{\om_{13}} & 1
\end{array}\right) \;
\]
shows that a lcs structure on $\fr_{4,0}$ with $\vt_3=0$ is equivalent to 
\[
\left\{ \begin{array}{ccl}
         \vt & = & -e^4\\
        \Omega & = & e^{13}+e^{24}
        \end{array} \right.
\]

The two lcs structures for $\mathfrak{r}_{4,0}$ are not equivalent. Indeed, consider the ideal assuring that $A=(a_{jk})$ with respect to $\{e^j\}$ is a morphism of the Lie algebra transforming $\vt_1\coloneq e^3$ into $\vt_2\coloneq -e^4$. Then we have to solve the ideal with Gr\"obner basis
\begin{eqnarray*}
B&=&
\left( a_{11} a_{21}, a_{21}^{2}, a_{11} a_{31}, a_{11} a_{34}, a_{21} a_{34}, a_{34} a_{41} -  a_{31} a_{44} + a_{21}, a_{11} a_{44} -  a_{11}, \right.\\[5pt]
&& \left. a_{21} a_{44}, a_{12}, a_{13}, a_{14}, a_{22} -  a_{34}, a_{23}, a_{24}, a_{33}, a_{43} + 1\right) .
\end{eqnarray*}
By solving it, we get that $A$ is not invertible.

We continue with the case $\mu\neq 0$. Take a generic $1$-form $\vt=\sum_{j=1}^{4}\vt_je^j$; the closedness condition implies that $\vt_1=\vt_2=\vt_3=0$. Thus the Lee form is $\vt=\vt_4e^4$, with $\vt_4\neq 0$. We compute the 2-cocycles of the Lichnerowicz differential $d_\vt$:
\begin{itemize}
\item $d_\vt(e^{12})=(-1-\mu-\vt_4)e^{124}-e^{134}$
\item $d_\vt(e^{13})=(-1-\mu-\vt_4)e^{134}$
\item $d_\vt(e^{14})=0$
\item $d_\vt(e^{23})=(-2\mu-\vt_4)e^{234}$
\item $d_\vt(e^{24})=0$
\item $d_\vt(e^{34})=0$
\end{itemize}
If $\vt_4\neq -\mu-1$ and $\vt_4\neq -2\mu$, there are not enough cocycles to give a non-degenerate $\Omega$. The solution of the equations $\vt_4= -\mu-1$ and $\vt_4= -2\mu$ is $(\mu,\vt_4)=(1,-2)$.

We study first the case $\vt_4=-\mu-1$, $\mu\neq 1$; since $\vt_4\neq 0$, we must assume $\mu\neq -1$. Thus
\[
\vt=(-\mu-1)e^4 \quad \textrm{and} \quad \Omega=\om_{13}e^{13}+\om_{14}e^{14}+\omega_{24}e^{24}+\omega_{34}e^{34}\;,
\]
with $\om_{13}\om_{24}\neq 0$. We consider, in the basis $\{e^1,e^2,e^3,e^4\}$ of $(\mathfrak{r}_{4,\mu})^*$, the automorphism
\[
\left(\begin{array}{cccc}
\frac{\om_{24}}{\om_{13}} & 0 & 0 & 0 \\
0 & \frac{1}{\om_{24}} & 0 & 0 \\
0 & -\frac{\om_{34}}{\om_{24}^2} & \frac{1}{\om_{24}} & 0 \\
0 & \frac{\om_{14}\om_{34}}{2\om_{13}\om_{24}} & -\frac{\om_{14}}{\om_{13}} & 1
\end{array}\right) \;;
\]
this gives
\[
\vt'=(-\mu-1)e^4 \quad \textrm{and} \quad \Omega'=e^{13}+e^{24}\;.
\]
The given lcs structure is then equivalent to
\[
\left\{ \begin{array}{ccl}
         \vt & = & (-\mu-1)e^4\\
        \Omega & = & e^{13}+e^{24}, \, \mu\neq \pm 1
        \end{array} \right.
\]
Assuming next $\vt_4=-2\mu$, $\mu\neq1$, the lcs structure is
\[
\vt=-2\mu e^4 \quad \textrm{and} \quad \Omega=\om_{14}e^{14}+\om_{23}e^{23}+\omega_{24}e^{24}+\omega_{34}e^{34}\;,
\]
with $\om_{14}\om_{23}\neq 0$. The automorphism
\[
\left(\begin{array}{cccc}
\frac{1}{\om_{14}} & 0 & 0 & 0 \\
0 & 1 & 0 & 0 \\
0 & 0 & 1 & 0 \\
0 & \frac{\om_{34}}{\om_{23}} & -\frac{\om_{24}}{\om_{23}} & 1
\end{array}\right) \;
\]
gives
\[
\vt'=-2\mu e^4 \quad \textrm{and} \quad \Omega'=e^{14}+\om_{23}e^{23}\;.
\]
According to the sign of $\om_{23}$, the automorphism
\[
\left(\begin{array}{cccc}
1 & 0 & 0 & 0 \\
0 & \frac{1}{\sqrt{\pm\om_{23}}} & 0 & 0 \\
0 & 0 & \frac{1}{\sqrt{\pm\om_{23}}} & 0 \\
0 & 0 & 0 & 1
\end{array}\right) \;
\]
gives the lcs structure
\begin{equation}\label{eq:108}
\left\{ \begin{array}{ccl}
         \vt & = & -2\mu e^4\\
        \Omega_{\pm} & = & e^{14}\pm e^{23}, \, \mu\neq 1
        \end{array} \right.
\end{equation}

The two forms are different. Indeed, by imposing to a generic automorphism $A=(a_{jk})$ (with respect to the basis $\{e^j\}$) to swap $e^{14}+e^{23}$ and $e^{14}-e^{23}$, we get an ideal in $\mathbb{R}[a_{jk},\mu]$ whose Gr\"obner basis contain the element $a_{33}^2 + 1$.

We show that, in case $\mu\not\in\{-1,0,1\}$, the lcs structures with Lee forms $\vt_1\coloneq-(\mu+1)e^4$ and $\vt_2\coloneq-2\mu e^4$ are not equivalent. Indeed, the Gr\"obner basis for the ideal generated by the conditions that $A=(a_{jk})$ with respect to the basis $\{e^j\}$ is a morphism of the Lie algebra and it sends $\vt_1$ to $\vt_2$ is
\begin{eqnarray*}
B &=& 
(a_{12} a_{31} \mu -  a_{12} a_{31} + \frac{2}{3} a_{13} a_{31}, a_{22} a_{31} \mu + a_{21} a_{32} \mu -  a_{22} a_{31} -  a_{21} a_{32} + 2 a_{21} a_{33},\\[5pt]
&& a_{13} a_{31} \mu -  a_{13} a_{31},a_{12} a_{32} \mu -  a_{12} a_{32} + \frac{2}{3} a_{13} a_{32}, a_{13} a_{32} \mu -  a_{13} a_{32}, a_{12} a_{33} \mu -  a_{12} a_{33}, \\[5pt]
&& a_{23} a_{32} \mu -  a_{22} a_{33} \mu -  a_{23} a_{32} + a_{22} a_{33} + 2 a_{23} a_{33}, a_{21} a_{33} \mu, a_{23} a_{33} \mu,  \\[5pt]
&& a_{31} a_{33} \mu -  a_{22} a_{31} -  a_{21} a_{32} + a_{21} a_{33}, a_{33}^{2} \mu + a_{23} a_{32} -  a_{22} a_{33} -  a_{23} a_{33}, \\[5pt]
&& a_{12} \mu^{2} - \frac{1}{2} a_{12} \mu + a_{13} \mu - \frac{1}{2} a_{12}, a_{13} \mu^{2} - \frac{1}{2} a_{13} \mu - \frac{1}{2} a_{13}, a_{23} \mu^{2} -  a_{23} \mu, \\[5pt]
&& a_{31} \mu^{2} + a_{21} \mu -  a_{31} \mu + a_{21}, a_{32} \mu^{2} -  a_{22} \mu -  a_{32} \mu + 2 a_{33} \mu -  a_{22}, \\[5pt]
&& a_{33} \mu^{2} -  a_{23} \mu -  a_{33} \mu -  a_{23}, a_{21} \mu^{2} -  a_{21} \mu, a_{22} \mu^{2} -  a_{22} \mu + 2 a_{23} \mu, a_{11} a_{13}, a_{13}^{2},a_{11} a_{21}, \\[5pt]
&& , a_{12} a_{21} - \frac{1}{3} a_{13} a_{31}, a_{13} a_{21}, a_{21}^{2}, a_{11} a_{22} -  a_{11} a_{33}, a_{12} a_{22} + \frac{1}{3} a_{13} a_{32} -  a_{12} a_{33}, a_{13} a_{22}, \\[5pt]
&&  a_{21} a_{22}, a_{22}^{2} + a_{23} a_{32} -  a_{22} a_{33} - 2 a_{23} a_{33}, a_{11} a_{23}, a_{12} a_{23}, a_{13} a_{23}, a_{21} a_{23}, a_{22} a_{23}, a_{23}^{2}, \\[5pt]
&& a_{23} a_{31} + a_{21} a_{33}, a_{13} a_{33}, a_{11} a_{44} -  a_{11}, a_{12} a_{44} - 2 a_{12} \mu - 2 a_{13} \mu + a_{12} + a_{13}, \\[5pt]
&& a_{13} a_{44} - 2 a_{13} \mu + a_{13},a_{21} a_{44} -  a_{21} \mu, a_{22} a_{44} -  a_{22} \mu -  a_{23} \mu, a_{23} a_{44} -  a_{23} \mu, \\[5pt]
&& a_{31} a_{44} -  a_{31} \mu -  a_{21},a_{32} a_{44} -  a_{32} \mu -  a_{33} \mu + a_{22} + a_{23}, a_{33} a_{44} -  a_{33} \mu + a_{23},  \\[5pt]
&& a_{11} \mu -  a_{11}, a_{44} \mu + a_{44} - 2 \mu,a_{14}, a_{24}, a_{34})\,.
\end{eqnarray*}
By solving it, we are reduced to $a_{21}=a_{22}=a_{23}=a_{24}=0$, then $\det A=0$.

The last case is $\mu=1$, giving $\vt_4=-2$. Here then
\[
\vt=-2 e^4 \quad \textrm{and} \quad \Omega=\om_{13}e^{13}+\om_{14}e^{14}+\om_{23}e^{23}+\omega_{24}e^{24}+\omega_{34}e^{34}\;
\]
with $\om_{13}\om_{24}-\om_{14}\om_{23}\neq 0$. Notice that if $\om_{13}=0$, then we are in the previous case, and we obtain the model \eqref{eq:108} with $\mu=1$. We assume next that $\om_{13}\neq 0$. The automorphism
\[
\left(\begin{array}{cccc}
1 & 0 & 0 & 0 \\
0 & 1 & 0 & 0 \\
0 & 0 & 1 & 0 \\
\frac{\om_{34}}{\om_{13}} & 0 & -\frac{\om_{14}}{\om_{13}} & 1
\end{array}\right) \;
\]
transforms the lcs structure into
\[
\vt'=-2 e^4 \quad \textrm{and} \quad \Omega'=\om_{13}e^{13}+\om_{24}'e^{24}+\om_{23}e^{23}\;
\]
with $\om_{24}'=\frac{\om_{13}\om_{24}-\om_{14}\om_{23}}{\om_{13}}\neq 0$. The automorphism
\[
\left(\begin{array}{cccc}
\frac{\om_{24}'}{\om_{13}} & 0 & 0 & 0 \\
0 & \frac{1}{\om_{24}'} & 0 & 0 \\
0 & 0 & \frac{1}{\om_{24}'} & 0 \\
0 & 0 & 0 & 1
\end{array}\right) \;
\]
brings the lcs structure to
\[
\vt''=-2 e^4 \quad \textrm{and} \quad \Omega''=e^{13}+e^{24}+\frac{\om_{23}}{(\om_{24}')^2}e^{23}\;.
\]
The model for this case is then
\[
\left\{ \begin{array}{ccl}
         \vt & = & -2 e^4\\
        \Omega & = & e^{13}+e^{24}+\ve e^{23}, \, \ve\in\mathbb{R}
        \end{array} \right.
\]

The above forms are not equivalent. Indeed, take $\Omega_1\coloneq e^{13}+e^{14}+\ve_1 e^{23}$ and $\Omega_2\coloneq e^{13}+e^{14}+\ve_2 e^{23}$ for some $\ve_1,\ve_2\neq0$. By requiring the generic $A=(a_{jk})$ (with respect to the basis $\{e^j\}$) to being an automorphism swapping $\Omega_1$ and $\Omega_2$, we have to solve the ideal with Gr\"obner basis
\begin{eqnarray*}
B &=& 
(a_{31}^{2} a_{43}^{2} + a_{41}^{2} a_{43} \ve_{2} -  a_{11} a_{32}^{2} -  a_{33} a_{41}^{2} + a_{31} a_{33} a_{42} + a_{32} a_{42} \ve_{1} -  a_{41} a_{42}\ve_{2} + 2 a_{32} a_{41} \\[5pt]
&& -  a_{31} a_{42},a_{31} a_{33} a_{43} + a_{41} a_{43} \ve_{2} -  a_{33} a_{41} -  a_{42}\ve_{2} + a_{32}, a_{33} a_{41} a_{43} -  a_{31} a_{43}^{2} + a_{12} a_{32} \\[5pt]
&& -  a_{33} a_{42} + a_{42}, a_{11} a_{43}^{2} -  a_{12}^{2}, a_{11} a_{32} \ve_{1} -  a_{31} a_{43}\ve_{1} -  a_{42}\ve_{1}^{2} + a_{11} a_{31} -  a_{41}\ve_{1} -  a_{31},\\[5pt]
&& a_{33} a_{42} \ve_{1} -  a_{32} a_{43}\ve_{1} + a_{33} a_{41} -  a_{31} a_{43} -  a_{32}, a_{33} a_{43} \ve_{1} + a_{33} - 1, a_{43}^{2} \ve_{1} + a_{12} + a_{43},\\[5pt]
&& a_{42}\ve_{1}\ve_{2} + a_{31} a_{33} -  a_{32}\ve_{1} + a_{41} \ve_{2} -  a_{31}, a_{43} \ve_{1}\ve_{2} -  a_{33} \ve_{1} +\ve_{2}, a_{12} a_{31} -  a_{11} a_{32} + a_{42} \ve_{1} \\[5pt]
&& + a_{41}, a_{11} a_{33} -  a_{43}\ve_{1} - 1, a_{12} a_{33} + a_{43}, a_{33}^{2} + a_{43} \ve_{2} -  a_{33}, a_{12}\ve_{1} -  a_{43}\ve_{1} + a_{11} - 1, \\[5pt]
&& a_{11}\ve_{2} - \ve_{1}, a_{12}\ve_{2} -  a_{33} + 1, a_{13}, a_{14}, a_{21}, a_{22} -  a_{33}, a_{23}, a_{24}, a_{34}, a_{44} - 1 )\,.
\end{eqnarray*}
from which we get $\ve_1=\ve_2$.

\subsection{$\mathfrak{r}_{4,\alpha,\beta}$, $(14,\alpha 24,\beta 34,0)$}

Since $\alpha\beta\neq 0$, the only closed 1-form is $e^4$ and the Lee form ist $\vt=\vt_4e^4$ with $\vt_4\neq 0$. For a generic 2-form $\Omega=\sum_{1\leq j < k \leq 4} \omega_{jk} e^{jk}$, the conformally closedness $d_\vt\Omega=0$ provides the following equations:
\begin{enumerate}[(1)]
\item $(\vt_4+1+\alpha)\om_{12}=0$;
\item $(\vt_4+1+\beta)\om_{13}=0$;
\item $(\vt_4+\alpha+\beta)\om_{23}=0$.
\end{enumerate}

We consider first the case $\alpha=\beta\neq 1$. If $\vt_4\notin\{-2\alpha,-\alpha-1\}$ the above conditions imply $\om_{12}=\om_{13}=\om_{23}=0$, which contradicts non-degeneracy \eqref{eq:non_deg}. For $\vt_4=-2\alpha$ we get the lcs structure
\[
\vt=-2\alpha e^4, \quad \Om=\om_{14}e^{14}+\om_{23}e^{23}+\om_{24}e^{24}+\om_{34}e^{34}
\]
with $\om_{14}\om_{23}\neq 0$. The automorphism
\begin{equation}\label{eq:r4alphabeta:1}
\left(\begin{array}{cccc}
x & 0 & 0 & 0 \\
0 & y & 0 & 0 \\
0 & 0 & z & 0 \\
a & b & c & 1
\end{array}\right)
\end{equation}
with $b=\frac{\om_{34}}{\om_{23}}$, $c=-\frac{\om_{24}}{\om_{23}^2}$, $x=\frac{1}{\om_{14}}$, $y=\frac{1}{\om_{23}}$, $z=1$ and $a=0$ gives the normal form
\[
\left\{ \begin{array}{ccl}
         \vt & = & -2\alpha e^4\\
        \Omega & = & e^{14}+e^{23}
        \end{array} \right.
\]
For $\vt_4=-1-\alpha$ we get $\om_{23}=0$ and the lcs structure
\[
\vt=(-1-\alpha)e^4, \quad \Om=\om_{12}e^{12}+\om_{13}e^{13}+\om_{14}e^{14}+\om_{24}e^{24}+\om_{34}e^{34}
\]
with non-degeneracy $\Delta=\om_{12}\om_{34}-\om_{13}\om_{24}\neq 0$. Using the automorphism
\begin{equation}\label{eq:r4alphabeta:2}
\left(\begin{array}{cccc}
1 & 0 & 0 & 0 \\
0 & 0 & 1 & 0 \\
0 & 1 & 0 & 0 \\
0 & 0 & 0 & 1
\end{array}\right)
\end{equation}
we can assume that $\om_{12}\neq 0$. The automorphism \eqref{eq:r4alphabeta:1} with $a=\frac{\om_{24}}{\om_{12}}$, $b=-\frac{\om_{14}}{\om_{12}}$, $c=0$ and $x=y=z=1$ transforms the structure into
\[
\vt'=(-1-\alpha)e^4 \quad \textrm{and} \quad \Om'=\om_{12}e^{12}+\om_{13}e^{13}+\frac{\Delta}{\om_{12}}e^{34}\,.
\]
Since $\alpha=\beta$, we can use the automorphism
\begin{equation}\label{eq:r4alphabeta:3}
\left(\begin{array}{cccc}
1 & 0 & 0 & 0 \\
0 & 1 & 0 & 0 \\
0 & p & 1 & 0 \\
0 & 0 & 0 & 1
\end{array}\right)
\end{equation}
with $p=-\frac{\om_{13}}{\om_{12}}$ to obtain $\vt''=\vt$ and $\Om''=\om_{12}e^{12}+\frac{\Delta}{\om_{12}}e^{34}$. Finally, the automorphism \eqref{eq:r4alphabeta:1} with $a=b=c=0$, $x=\frac{1}{\om_{12}}$, $y=1$ and $z=\frac{\om_{12}}{\Delta}$ gives the normal form
\[
\left\{ \begin{array}{ccl}
         \vt & = & (-1-\alpha)e^4\\
        \Omega & = & e^{12}+e^{34}
        \end{array} \right.
\]

For $\alpha=\beta=1$ and $\vt_4\neq -2$ non-degeneracy does not hold. Hence we get the lcs structure
\[
\vt=-2e^4, \quad \Om=\om_{12}e^{12}+\om_{13}e^{13}+\om_{14}e^{14}+\om_{23}e^{23}+\om_{24}e^{24}+\om_{34}e^{34}
\]
with $\om_{12}\om_{34}-\om_{13}\om_{24}+\om_{14}\om_{23}\neq 0$. Since $\alpha=\beta=1$, $\mathrm{SO}(3)$ sits into the automorphism group of $(\fr_{4,1,1})^*$ via the map $A\mapsto\textrm{diag}(A,1)$ and this action is transitive on spheres of a given radius contained in the abelian ideal $\{e^1,e^2,e^3\}$. This action lifts to an action on $\Lambda^2(\fr_{4,1,1})^*$ which is also transitive on spheres of a gives radius contained in $\{e^{12},e^{13},e^{23}\}$. This means that, for a convenient choice of basis in the ideal $\{e^1,e^2,e^3\}$, the gives lcs structure is
\[
\vt'=\vt \quad \mathrm{and} \quad \Om'=\om_{12}'e^{12}+\om_{14}'e^{14}+\om_{24}'e^{24}+\om_{34}'e^{34}\,,
\]
with $\om_{12}'\om_{34}'\neq 0$. The automorphism \eqref{eq:r4alphabeta:1} with $a=\frac{\om_{24}'}{\om_{12}'}$, $b=-\frac{\om_{14}'}{\om_{12}'}$, $c=0$, $x=\frac{1}{\om_{12}'}$, $y=1$ and $z=\frac{1}{\om_{34}'}$ gives the normal form
\[
\left\{ \begin{array}{ccl}
         \vt & = & -2e^4\\
        \Omega & = & e^{12}+e^{34}
        \end{array} \right.
\]

We proceed with the case $\alpha\neq \beta$. We assume first $\beta\neq 1$. The non-degeneracy condition forces $\vt_4\in\{-1-\alpha, -1-\beta, -\alpha-\beta\}$. If $\vt_4=-1-\alpha$, then $\vt_4+1+\beta\neq 0$ and $\vt_4+\alpha+\beta\neq 0$, which imply $\om_{13}=\om_{23}=0$. The lcs structure is then
\[
\vt=(-1-\alpha)e^4, \quad \Om=\om_{12}e^{12}+\om_{14}e^{14}+\om_{24}e^{24}+\om_{34}e^{34}
\]
with $\om_{12}\om_{34}\neq 0$. The automorphism \eqref{eq:r4alphabeta:1} with $a=\frac{\om_{24}}{\om_{12}}$, $b=-\frac{\om_{14}}{\om_{12}}$, $x=\frac{1}{\om_{12}}$, $y=1$, $z=\frac{1}{\om_{34}}$ and $c=0$ gives the normal form
\[
\left\{ \begin{array}{ccl}
         \vt & = & (-1-\alpha)e^4\\
        \Omega & = & e^{12}+e^{34}
        \end{array} \right.
\]

If $\vt_4=-1-\beta$, then $\vt_4+1+\alpha\neq 0$ and $\vt_4+\alpha+\beta\neq 0$, which imply $\om_{12}=\om_{23}=0$. The lcs structure is then
\[
\vt=(-1-\beta)e^4, \quad \Om=\om_{13}e^{13}+\om_{14}e^{14}+\om_{24}e^{24}+\om_{34}e^{34}
\]
with $\om_{13}\om_{24}\neq 0$. The automorphism \eqref{eq:r4alphabeta:1} with $a=\frac{\om_{34}}{\om_{13}}$, $c=-\frac{\om_{14}}{\om_{13}}$, $x=\frac{1}{\om_{13}}$, $y=\frac{1}{\om_{24}}$, $z=1$ and $b=0$ gives the normal form
\[
\left\{ \begin{array}{ccl}
         \vt & = & (-1-\beta)e^4\\
        \Omega & = & e^{13}+e^{24}
        \end{array} \right.
\]

If $\vt_4=-\alpha-\beta$, then $\vt_4+1+\alpha\neq 0$ and $\vt_4+1+\beta\neq 0$, which implies $\om_{12}=\om_{13}=0$. The lcs structure is then
\[
\vt=(-\alpha-\beta)e^4, \quad \Om=\om_{14}e^{14}+\om_{23}e^{23}+\om_{24}e^{24}+\om_{34}e^{34}
\]
with $\om_{14}\om_{23}\neq 0$. The automorphism \eqref{eq:r4alphabeta:1} with $b=\frac{\om_{34}}{\om_{23}}$, $c=-\frac{\om_{24}}{\om_{23}}$, $x=\frac{1}{\om_{14}}$, $y=\frac{1}{\om_{23}}$, $z=1$ and $a=0$ gives the normal form
\[
\left\{ \begin{array}{ccl}
         \vt & = & (-\alpha-\beta)e^4\\
        \Omega & = & e^{14}+e^{23}
        \end{array} \right.
\]

We continue with the case $\alpha\neq\beta=1$. If $\vt_4\notin\{-1-\alpha,-2\}$ then $\om_{12}=\om_{13}=\om_{23}=0$ and non-degeneracy does not hold. We consider the case $\vt_4=-1-\alpha$; this implies $\om_{13}=0$, and the lcs structure is
\[
\vt=(-1-\alpha)e^4, \quad \Om=\om_{12}e^{12}+\om_{14}e^{14}+\om_{23}e^{23}+\om_{24}e^{24}+\om_{34}e^{34}
\]
with $\Delta=\om_{12}\om_{34}+\om_{14}\om_{23}\neq 0$. In particular, either $\om_{12}$ or $\om_{23}$ must be non-zero. Assuming $\omega_{12}=0$, then $\omega_{23}\neq0$. Apply the automorphism
\[
\left(\begin{array}{cccc}
1 & 0 & 1 & 0 \\
0 & 1 & 0 & 0 \\
0 & 0 & 1 & 0 \\
0 & 0 & 0 & 1
\end{array}\right)
\]
to get $\Omega'=-\omega_{23}e^{12} + (\omega_{14} + \omega_{34})e^{14} + \omega_{23}e^{23} + \omega_{24}e^{24} + \omega_{34}e^{34}$, hence it suffices to study the case $\om_{12}\neq0$. In this case, the automorphism \eqref{eq:r4alphabeta:1}
with $a=\frac{\om_{24}}{\om_{12}}$, $b=-\frac{\om_{14}}{\om_{12}}$, $c=0$ and $x=y=z=1$ gives the structure
\[
\vt'=(-1-\alpha)e^4, \quad \Om'=\om_{12}e^{12}+\om_{23}e^{23}+\frac{\Delta}{\om_{12}}e^{34}\,.
\]
Since $\beta=1$, we can use the automorphism
\begin{equation}\label{eq:r4alphabeta:4}
\left(\begin{array}{cccc}
1 & 0 & 0 & 0 \\
0 & 1 & 0 & 0 \\
q & 0 & 1 & 0 \\
0 & 0 & 0 & 1
\end{array}\right)
\end{equation}
with $q=\frac{\om_{23}}{\om_{12}}$ to get $\vt''=\vt'$ and $\Om''=\om_{12}e^{12}+\frac{\Delta}{\om_{12}}e^{34}$. Finally, the automorphism \eqref{eq:r4alphabeta:1} with $a=b=c=0$, $x=\frac{1}{\om_{12}}$, $y=1$ and $z=\frac{\om_{12}}{\Delta}$ gives the normal form
\[
\left\{ \begin{array}{ccl}
         \vt & = & (-1-\alpha)e^4\\
        \Omega & = & e^{12}+e^{34}
        \end{array} \right.
\]

At last, we tackle the case $\vt_4=-2$; this implies $\om_{12}=\om_{23}=0$, and the lcs structure is
\[
\vt=-2e^4, \quad \Om=\om_{13}e^{13}+\om_{14}e^{14}+\om_{24}e^{24}+\om_{34}e^{34}
\]
with $\om_{13}\om_{24}\neq 0$. The automorphism \eqref{eq:r4alphabeta:1} with $a=\frac{\om_{34}}{\om_{13}}$, $c=-\frac{\om_{14}}{\om_{13}}$, $x=\frac{1}{\om_{13}}$, $y=\frac{1}{\om_{24}}$, $z=1$ and $b=0$ gives the normal form
\[
\left\{ \begin{array}{ccl}
         \vt & = & -2e^4\\
        \Omega & = & e^{13}+e^{24}
        \end{array} \right.
\]

To conclude, we have to show that there is no automorphism of the Lie algebra interchanging the possible lcs structures. As before, we denote by $B$ a Gr\"obner basis for the ideal generated by the conditions that $A=(a_{jk})$ with respect to $\{e^j\}$ yields a morphism of the Lie algebra transforming $\vt_1$ into $\vt_2$.
\begin{itemize}
\item For $\alpha\neq\beta$, consider the case $\vt_1\coloneq(-1-\alpha)e^4$ and $\vt_2\coloneq(-1-\beta)e^4$. Then $B$ contains $a_{31}(\alpha \beta - 1)$, $(\alpha -  \beta) a_{32}$, $\beta(\alpha -  \beta)a_{33}$ and $a_{34}$, from which it follows that $\det A=0$.
\item For $\beta\neq1$, consider the case $\vt_1\coloneq(-1-\alpha)e^4$ and $\vt_2\coloneq(-\alpha-\beta)e^4$. Then $B$ contains $(\beta - 1)\alpha a_{31}$, $-a_{32}(- \alpha^{2} + \beta)$, $\beta(\beta - 1)a_{33}$, $a_{34}$, from which it follows that $\det A=0$.
\item For $\alpha\neq\beta\neq1$, consider the case $\vt_1\coloneq(-1-\beta)e^4$ and $\vt_2\coloneq(-\alpha-\beta)e^4$. Then $B$ contains $a_{31}(\beta^{2} -  \alpha)$, $-a_{32}(- \alpha^{2} -  \alpha \beta + \beta^{2} + \beta)$, $\beta(\alpha - 1)a_{33}$, $a_{34}$, from which we get $\det A=0$.
\end{itemize}

\subsection{$\hat{\mathfrak{r}}_{4,\beta}$, $(14,-24,\beta 34,0)$}
Since the only closed element is $e^4$, the Lee form ist $\vt=\vt_4e^4$ with $\vt_4\neq 0$. For a generic 2-form $\Omega=\sum_{1\leq j < k \leq 4} \omega_{jk} e^{jk}$, the conformally closedness $d_\vt\Omega=0$ provides the following equations:
\begin{enumerate}[(1)]
\item $\vt_4\om_{12}=0$;
\item $(\vt_4+1+\beta)\om_{13}=0$;
\item $(\vt_4-1+\beta)\om_{23}=0$.
\end{enumerate}

The first equation implies $\om_{12}=0$. If $\vt_4\notin\{-1-\beta,1-\beta\}$ then $\om_{13}=\om_{23}=0$, which contradicts \eqref{eq:non_deg}. We start by assuming $\vt_4=-1-\beta$; since $\vt_4\neq 0$, we exclude the case $\beta=-1$. Then $\om_{23}=0$ and the generic lcs structure is 
\[
\vt=(-1-\beta) e^4 \quad \textrm{and} \quad \Om=\om_{13}e^{13}+\om_{14}e^{14}+\om_{24}e^{24}+\om_{34}e^{34}
\]
with $\om_{13}\om_{24}\neq 0$. The automorphism \eqref{eq:r4alphabeta:1} with $a=\frac{\om_{34}}{\om_{13}}$, $c=-\frac{\om_{14}}{\om_{13}}$, $x=\frac{1}{\om_{13}}$, $y=\frac{1}{\om_{24}}$, $z=1$, and $b=0$ gives the normal form (on $\hat{\mathfrak{r}}_{4,\beta}$ with $\beta\neq -1$)
\[
\left\{ \begin{array}{ccl}
         \vt & = & (-1-\beta) e^4\\
        \Omega & = & e^{13}+e^{24}
        \end{array} \right.
\]

We go on with $\vt_4=1-\beta$; we get $\om_{13}=0$ and the generic lcs structure is 
\[
\vt=(1-\beta) e^4 \quad \textrm{and} \quad \Om=\om_{14}e^{14}+\om_{23}e^{23}+\om_{24}e^{24}+\om_{34}e^{34}
\]
with $\om_{14}\om_{23}\neq 0$. The automorphism \eqref{eq:r4alphabeta:1} with $b=\frac{\om_{34}}{\om_{23}}$, $c=-\frac{\om_{24}}{\om_{23}}$, $x=\frac{1}{\om_{14}}$, $y=\frac{1}{\om_{23}}$, $z=1$, and $a=0$ gives the normal form
\[
\left\{ \begin{array}{ccl}
         \vt & = & (1-\beta) e^4\\
        \Omega & = & e^{14}+e^{23}
        \end{array} \right.
\]

We show that, in case $\beta\neq-1$, the lcs structures with Lee forms $\vt_1\coloneq(-1-\beta)e^4$ and $\vt_2\coloneq(1-\beta)e^4$ are not equivalent. Consider the ideal containing the conditions so that $A=(a_{jk})$ with respect to the basis $\{e^j\}$ is a morphism of the Lie algebra $\hat{\mathfrak{r}}_{4,\beta}$ transforming $\vt_1$ into $\vt_2$. We compute a Gr\"obner basis for it:
\begin{eqnarray*}
B &=&
(a_{13} \beta^{2} - 2 a_{13} \beta -  a_{13}, a_{23} \beta^{2} + a_{23}, a_{31} \beta^{2} + a_{31}, a_{32} \beta^{2} + 2 a_{32} \beta -  a_{32}, a_{12} a_{13},a_{13} a_{21},\\[5pt]
&&  a_{12} a_{23}, a_{13} a_{23}, a_{21} a_{23}, a_{12} a_{31}, a_{13} a_{31}, a_{21} a_{31}, a_{12} a_{32}, a_{13} a_{32},a_{21} a_{32}, a_{23} a_{32}, a_{31} a_{32},\\[5pt]
&&   a_{13} a_{33}, a_{23} a_{33}, a_{31} a_{33}, a_{32} a_{33}, a_{12} a_{44} + a_{12}, a_{13} a_{44} -  a_{13} \beta + 2 a_{13}, a_{21} a_{44} + a_{21},  \\[5pt]
&&  a_{23} a_{44} -  a_{23} \beta, a_{31} a_{44} -  a_{31} \beta, a_{32} a_{44} + a_{32} \beta, a_{33} a_{44} + a_{33}, a_{12} \beta, a_{21} \beta, a_{33} \beta,\\[5pt]
&&  a_{44} \beta + a_{44} -  \beta + 1,a_{11}, a_{14}, a_{22}, a_{24}, a_{34} ).
\end{eqnarray*}
By solving it, we get $\det A=0$.

\subsection{$\mathfrak{r}^\prime_{4,\gamma,\delta}$, $(14,\gamma 24+\delta 34,-\delta 24+\gamma 34,0)$}
The generic $1$-form $\vartheta$ has differential $d\vt=\vt_1e^{14} + (\gamma\vt_2 - \delta\vt_3)e^{24} + (\delta\vt_2 + \gamma\vt_3)e^{34}$. Then $d\vt=0$ if and only if $\vt_1=0$ and
\[ 
\left(\begin{array}{cc}\gamma&-\delta\\\delta&\gamma\end{array}\right) \left(\begin{array}{c}\vartheta_2\\\vartheta_3\end{array}\right)\;=\;\left(\begin{array}{c}0\\0\end{array}\right) \;.
\]
Since the matrix above is always invertible we get $\vartheta_2=\vartheta_3=0$ and the generic Lee form is $\vartheta=\vartheta_4 e^4$.

The condition $d_\vartheta\Omega=0$ yields
$$ \left(\begin{array}{ccc}
-\vartheta_4-(\gamma+1) & \delta & 0 \\
-\delta & -\vartheta_4-(\gamma+1) & 0 \\
0 & 0 & -2\gamma-\vartheta_4
\end{array}\right)
\left(\begin{array}{c}\omega_{12}\\\omega_{13}\\\omega_{23}\end{array}\right) \;=\;
\left(\begin{array}{c}0\\0\\0\end{array}\right) \;,$$
from which we get $\omega_{12}=\omega_{13}=0$. The non-degeneracy \eqref{eq:non_deg} becomes then $\om_{14}\om_{23}\neq 0$, which implies $\om_{23}\neq 0$. Hence we must have $\vt_4=-2\gamma$; in particular $\gamma\neq 0$.

The generic lcs structure is then
\[
\vt=-2\gamma e^4 \quad \textrm{and} \quad \Om=\om_{14}e^{14} + \om_{23}e^{23} + \om_{24}e^{24} + \om_{34}e^{34}\,.
\]
The automorphism
\[
\left(\begin{array}{cccc}
\frac{1}{\om_{14}} & 0 & 0 & 0 \\
0 & \frac{1}{\sqrt{\pm \om_{23}}} & 0 & 0 \\
0 & 0 & \frac{1}{\sqrt{\pm \om_{23}}} & 0 \\
0 & \frac{\om_{34}}{\om_{23}} & -\frac{\om_{24}}{\om_{23}} & 1
\end{array}\right)
\]
gives the normal form on $\fr'_{4,\gamma,\delta}$ with $\gamma\neq 0$
\[
\left\{\begin{array}{ccl}
         \vt & = & -2\gamma e^4\\
        \Omega & = & e^{14}\pm e^{23}
        \end{array} \right.
\]

The two forms above are different. Indeed, by requiring that the generic matrix $A=(a_{jk})$ is an automorphism (with respect to the basis $\{e^j\}$) swapping $e^{14}+e^{23}$ and $e^{14}-e^{23}$, one is reduced to solve an ideal whose Gr\"obner basis contains $a_{32}^2\delta + a_{33}^2\delta + \delta$, which is empty since $\delta>0$.

\subsection{$\mathfrak{d}_{4}$, $(14,-24,-12,0)$}

We take a generic $1$-form $\vartheta=\sum_{j=1}^{4}\vt_je^j$; imposing closedness, we obtain that $\vt_1=\vt_2=\vt_3=0$. Thus the Lee form is $\vt=\vt_4e^4$, with $\vt_4\neq 0$. For a generic 2-form $\Omega=\sum_{1\leq j < k \leq 4} \omega_{jk} e^{jk}$, the conformally closedness $d_\vt\Omega=0$ provides the following equations:
\begin{enumerate}[(1)]
\item $\vt_4\om_{12}+\om_{34}=0$;
\item $(\vt_4-1)\om_{23}=0$;
\item $(\vt_4+1)\om_{13}=0$.
\end{enumerate}

We assume $\vt_4\neq\pm 1$; then (2) and (3) imply $\om_{13}=0=\om_{23}$ and the non-degeneracy condition \eqref{eq:non_deg} becomes $\om_{12}\neq 0$. The generic lcs structure under these hypotheses is
\[
\vt=\vt_4e^4 \quad \textrm{and} \quad \Omega=\om_{12}(e^{12}-\vt_4e^{34})+\om_{14}e^{14}+\om_{24}e^{24}\;.
\]
In terms of the basis $\{e^1,e^2,e^3,e^4\}$ of $\mathfrak{d}_4^*$, we consider the automorphism
\begin{equation}\label{aut:d_4:1}
\left(\begin{array}{cccc}
1 & 0 & y & 0 \\
0 & 1 & x & 0 \\
0 & 0 & 1 & 0 \\
-x & -y & -xy+z & 1
\end{array}\right) \;,
\end{equation}
with $z=0$ and $x,y$ to be determined.
The Lee form is fixed, while the transformed $2$-form is
\[
\Omega'=\om_{12}(e^{12}-\vt_4e^{34})+(\om_{14}-y\om_{12}(1+\vt_4))e^{14}+(\om_{24}+x\om_{12}(1-\vt_4))e^{24}\;.
\]
Imposing the vanishing of the coefficients of $e^{14}$ and $e^{24}$ gives the equations
\[
\om_{14}-y\om_{12}(1+\vt_4)=0 \quad \textrm{and} \quad \om_{24}+x\om_{12}(1-\vt_4)=0\;.
\]
Since we assumed $\vt_4\neq\pm 1$, both equations have a solution
(namely, take $x=\frac{\omega_{24}}{\omega_{12}(\vartheta_4-1)}$ and $y=\frac{\omega_{14}}{\omega_{12}(\vartheta_4+1)}$) and we obtain $\Omega'=\om_{12}(e^{12}-\vt_4e^{34})$. The automorphism
\begin{equation}\label{aut:d_4:2}
\left(\begin{array}{cccc}
a & 0 & 0 & 0 \\
0 & b & 0 & 0 \\
0 & 0 & ab & 0 \\
0 & 0 & 0 & 1
\end{array}\right) \;,
\end{equation}
with $a=\frac{1}{\om_{12}}$ and $b=1$ gives the lcs structure $\vt=\vt_4 e^4$, $\Omega=e^{12}-\vt_4 e^{34}$ with $\vt_4\notin\{0,1,-1\}$. Then, the automorphism
\[
\left(\begin{array}{cccc}
0 & 1 & 0 & 0 \\
-1 & 0 & 0 & 0 \\
0 & 0 & 1 & 0 \\
0 & 0 & 0 & -1
\end{array}\right)
\]
gives the normal form
\[
\left\{ \begin{array}{ccl}
         \vt & = & \ve e^4\\
        \Omega & = & e^{12}-\ve e^{34}, \, \ve>0, \, \ve\neq 1
        \end{array} \right.
\]

Assume next $\vt_4=1$; then $\om_{13}=0$ by (2), the generic lcs structure is
\[
\vt=e^4 \quad \textrm{and} \quad \Omega=\om_{12}(e^{12}-e^{34})+\om_{14}e^{14}+\om_{23}e^{23}+\om_{24}e^{24}\;,
\]
and the non-degeneracy yields $\om_{12}^2+\om_{14}\om_{23}\neq 0$. We consider again the automorphism \eqref{aut:d_4:1} with $x=0$, which transforms $\Omega$ into
\begin{equation}\label{eq:448}
\Omega'=(\om_{12}-y\om_{23})(e^{12}-e^{34})+(\om_{14}-2y\om_{12}+y^2\om_{23})e^{14}+\om_{23}e^{23}+(\om_{24}+z\om_{23})e^{24}\;.
\end{equation}
If $\om_{23}=0$ then $\om_{12}\neq 0$ and 
\[
\Omega'=\om_{12}(e^{12}-e^{34})+(\om_{14}-2y\om_{12})e^{14}+\om_{24}e^{24}\,;
\]
choosing $y=\frac{\om_{14}}{2\om_{12}}$ and $z=0$ gives $\Om'=\om_{12}(e^{12}-e^{34})+\om_{24}e^{24}$. If $\om_{24}\neq0$ use \eqref{aut:d_4:2} with $a=\frac{\om_{24}}{\om_{12}}$ and $b=\frac{1}{\om_{24}}$; if $\om_{24}=0$ then use \eqref{aut:d_4:2} with $a=1$ and $b=\frac{1}{\om_{12}}$.
This gives the normal form
\[
\left\{ \begin{array}{ccl}
         \vt & = & e^4\\
        \Omega & = & e^{12}-e^{34}+\ve e^{24}, \ve\in\{0,1\}
        \end{array} \right.
\]
On the other hand, if $\om_{23}\neq 0$ we may set $y=\frac{\om_{12}}{\om_{23}}$ and $z=-\frac{\om_{24}}{\om_{23}}$ in \eqref{eq:448} and get
\[
\Omega'=\frac{\om_{14}\om_{23}-\om_{12}^2}{\om_{23}}e^{14}+\om_{23}e^{23}
\]
According to the sign of $\om_{14}\om_{23}-\om_{12}^2$, we choose the automorphism \eqref{aut:d_4:2} with $a=\pm\frac{\om_{23}}{\om_{14}\om_{23}-\om_{12}^2}$ and $b=\frac{\sqrt{\pm(\om_{14}\om_{23}-\om_{12}^2)}}{\om_{23}}$ 
to obtain the normal form
\[
\left\{ \begin{array}{ccl}
         \vt & = & e^4\\
        \Omega & = & \pm e^{14}+ e^{23}
        \end{array} \right.
\]

Finally we consider the case $\vt_4=-1$; then $\om_{23}=0$ by (3), the generic lcs structure is
\[
\vt=-e^4 \quad \textrm{and} \quad \Omega=\om_{12}(e^{12}+e^{34})+\om_{13}e^{13}+\om_{14}e^{14}+\om_{24}e^{24}\;,
\]
and the non-degeneracy yields $\om_{12}^2-\om_{14}\om_{23}\neq 0$. We consider the automorphism
\[
\left(\begin{array}{cccc}
0 & 1 & 0 & 0 \\
1 & 0 & 0 & 0 \\
0 & 0 & -1 & 0 \\
0 & 0 & 0 & -1
\end{array}\right) \;
\]
which sends $\vt=-e^4$ to $\vt'=e^4$ and $\Omega$ to
\[
\Omega'=\om_{12}'(e^{12}-e^{34})+\om_{14}'e^{14}+\om_{23}'e^{23}+\om_{24}'e^{24},
\]
with $\om_{12}'=-\om_{12}$, $\om_{23}'=-\om_{13}$, $\om_{14}'=-\om_{24}$ and $\om_{24}'=-\om_{14}$. The non-degeneracy condition reads $(\om_{12}')^2+\om_{14}'\om_{23}'$ and we are back to the previous case.

We claim that the forms $\Omega_1=e^{12}-e^{34}$, $\Omega_2=e^{12}-e^{34}+e^{24}$, $\Omega_3=e^{14}+e^{23}$, and $\Omega_4=-e^{14}+e^{23}$ are distinct. Indeed, arguing as before, we get an ideal with Gr\"obner basis containing either $1$ or $a_{33}^2+1$.

Finally, we show that there is no automorphisms of the Lie algebra interchanging the Lee forms $\vt_1\coloneq\ve_1 e^4$, $\vt_2\coloneq\ve_2 e^4$, and $\vt_3\coloneq e^4$, where $\ve_1\neq\ve_2$ and $0<\ve_1\neq1$, $0<\ve_2\neq1$. This would be equivalent to solve the ideal with Gr\"obner basis $B_{jk}$, in case identifying $\vt_j$ with $\vt_k$, with the further condition $\det A\neq0$. We note:
\begin{itemize}
\item $B_{12}$ contains $a_{31}$, $a_{32}$, $a_{34}$, $a_{33}(a_{44}\varepsilon_2 - \varepsilon_1)$, $a_{44}\varepsilon_1 - \varepsilon_2$, which yields $a_{31}=a_{32}=a_{33}=a_{34}=0$.
\item $B_{13}$ contains $a_{31}$, $a_{32}$, $a_{34}$, $a_{33}(a_{44} - \varepsilon_1)$, $a_{44}\varepsilon_1 - 1$, which yields $a_{31}=a_{32}=a_{33}=a_{34}=0$.
\end{itemize}


\subsection{$\mathfrak{d}_{4,\lambda}$, $(\lambda 14,(1-\lambda)24,-12+34,0)$, $\lambda\geq\frac{1}{2}$}

Take a generic $1$-form $\vartheta=\sum_{j=1}^{4}\vt_je^j$ and a generic $2$-form $\Omega=\sum_{1\leq j < k \leq 4} \omega_{jk}e^{jk}$. 

Assume first $\lambda\neq1$. We compute $d\vartheta=\lambda\vt_1e^{14}+(1-\lambda)\vt_2e^{24}-\vt_3e^{12}+\vt_3e^{34}$, hence $d\vt=0$ if and only if $\vt_1=\vt_2=\vt_3=0$; the generic Lee form is 
$\vt=\vt_4e^4$ with $\vt_4\neq 0$.
We compute the 2-cocycles of the Lichnerowicz differential $d_\vt$:
\begin{itemize}
\item $d_\vt(e^{12})=(-1-\vt_4)e^{124}$
\item $d_\vt(e^{13})=(-1-\lambda-\vt_4)e^{134}$
\item $d_\vt(e^{14})=0$
\item $d_\vt(e^{23})=(\lambda-2-\vt_4)e^{234}$
\item $d_\vt(e^{24})=0$
\item $d_\vt(e^{34})=-e^{124}$
\end{itemize}
For $\om_{13} e^{13}+\om_{24} e^{24}$ to be a $d_\vt$-cocycle one needs $\vt_4=-(1+\lambda)$. For $\om_{14} e^{14}+\om_{23} e^{23}$ to be a cocycle one needs $\vt_4=\lambda-2$. This happens simultaneously if and only if $\lambda=\frac{1}{2}$, giving $\vt_4=-\frac{3}{2}$. 

We begin with the case $\lambda\neq\frac{1}{2}$ and $\vt_4=-(1+\lambda)$. The generic lcs structure is
\[
\vt=-(1+\lambda)e^4 \quad \textrm{and} \quad \Omega=\om_{12}(e^{12}+\lambda e^{34})+\om_{13}e^{13}+\om_{14}e^{14}+\om_{24}e^{24}\,.
\]
with \eqref{eq:non_deg} reducing to $\lambda\om_{12}^2-\om_{13}\om_{24}\neq 0$.
Assume first $\om_{13}\neq 0$. We consider the automorphism
\begin{equation}\label{d_4lambda:aut0}
\left(\begin{array}{cccc}
1 & 0 & -\frac{y}{\lambda-1} & 0 \\
0 & 1 & -\frac{x}{\lambda} & 0 \\
0 & 0 & 1 & 0 \\
x & y & \frac{xy(1-2\lambda)}{2\lambda(\lambda-1)}+z & 1
\end{array}\right)
\end{equation}
with $x=\lambda\frac{\om_{12}}{\om_{13}}$, $y=0$ and $z=-\frac{\om_{14}}{\om_{13}}$. This leaves $\vt$ invariant, while $\Om'=\om_{13}e^{13}+\frac{\om_{13}\om_{24}-\lambda\om_{12}^2}{\om_{13}}e^{24}$. According to the sign of $\om_{13}\om_{24}-\lambda\om_{12}^2$, the automorphism 
\begin{equation}\label{d_4lambda:aut1}
\left(\begin{array}{cccc}
a & 0 & 0 & 0 \\
0 & b & 0 & 0 \\
0 & 0 & ab & 0 \\
0 & 0 & 0 & 1
\end{array}\right)
\end{equation}
with $a=\frac{\sqrt{\pm(\om_{13}\om_{24}-\lambda\om_{12}^2)}}{\om_{13}}$ and $b=\frac{\om_{13}}{\om_{13}\om_{24}-\lambda\om_{12}^2}$
gives the normal form ($\lambda\notin\{\frac{1}{2},1\}$)
\[
\left\{ \begin{array}{ccl}
         \vt & = & -(\lambda+1)e^4\\
        \Omega & = & \pm e^{13}+e^{24}
        \end{array} \right.
\]
If $\om_{13}=0$ then $\om_{12}\neq 0$ and we consider the automorphism \eqref{d_4lambda:aut0} with $x=\frac{\om_{24}}{2\om_{12}}$, $y=\frac{(\lambda-1)\om_{14}}{\om_{12}}$ and $z=0$. This leaves $\vt$ invariant, while $\Om'=\om_{12}(e^{12}+\lambda e^{34})$. The automorphism \eqref{d_4lambda:aut1} with $a=1$ and $b=\frac{1}{\om_{12}}$ provides the normal form ($\lambda\notin\{\frac{1}{2},1\}$)
\[
\left\{ \begin{array}{ccl}
         \vt & = & -(\lambda+1)e^4\\
        \Omega & = & e^{12}+\lambda e^{34}
        \end{array} \right.
\]

We continue with the case $\lambda\neq\frac{1}{2}$, $\vt_4=\lambda-2$. The generic lcs structure is
\[
\vt=(\lambda-2)e^4, \quad \Om=\om_{12}(e^{12}-(\lambda-1)e^{34})+\om_{14}e^{14}+\om_{23}e^{23}+\om_{24}e^{24}
\]
with non-degeneracy condition amounting to $(\lambda-1)\om_{12}^2-\om_{14}\om_{23}\neq 0$. Assuming $\om_{23}\neq 0$ we consider the automorphism \eqref{d_4lambda:aut0} with $x=0$, $y=(1-\lambda)\frac{\om_{12}}{\om_{23}}$ and $z=-\frac{\om_{24}}{\om_{23}}$.
This leaves $\vt$ invariant, while $\Om'=\frac{\om_{14}\om_{23}-(\lambda-1)\om_{12}^2}{\om_{23}}e^{14}+\om_{23}e^{23}$. According to the sign of $\om_{14}\om_{23}-(\lambda-1)\om_{12}^2$, the automorphism \eqref{d_4lambda:aut1} with $a=\frac{\om_{23}}{\om_{14}\om_{23}-(\lambda-1)\om_{12}^2}$ and $b=\frac{\sqrt{\pm(\om_{14}\om_{23}-(\lambda-1)\om_{12}^2)}}{\om_{23}}$
gives the normal form ($\lambda\notin\{\frac{1}{2},1,2\}$)
\[
\left\{ \begin{array}{ccl}
         \vt & = & (\lambda-2)e^4\\
        \Omega & = & e^{14}\pm e^{23}
        \end{array} \right.
\]
We consider next the case $\om_{23}=0$; then $\om_{12}\neq 0$ and we take the automorphism \eqref{d_4lambda:aut0} with $x=\lambda\frac{\om_{24}}{\om_{12}}$, $y=-\frac{\om_{14}}{2\om_{12}}$ and $z=0$. This gives $\Omega'=\om_{12}(e^{12}-(\lambda-1)e^{34})$, while leaving $\vt$ invariant. We choose again $a=1$ and $b=\frac{1}{\om_{12}}$ in \eqref{d_4lambda:aut1} to obtain the normal form ($\lambda\notin\{\frac{1}{2},1,2\}$)
\[
\left\{ \begin{array}{ccl}
         \vt & = & (\lambda-2)e^4\\
        \Omega & = & e^{12}-(\lambda-1)e^{34}
        \end{array} \right.
\]

The last case is $\lambda\neq\frac{1}{2}$, $\vt_4\not\in\{-(\lambda+1),\lambda-2\}$. Here the generic lcs structure is
\[
\vt = \vt_4 e^4, \quad \Omega=\omega_{12}(e^{12}-(\vt_4 + 1)e^{34})+ \omega_{14}e^{14} + \omega_{24} e^{24}\,.
\]
The non-degeneracy condition \eqref{eq:non_deg} reads $\om_{12}^2(\vt_4+1)\neq0$, forcing $\vt_4\neq -1$. In case $\vt_4\not\in\{-\lambda,\lambda-1\}$, apply the automorphism \eqref{d_4lambda:aut0}
with $x=\frac{\lambda\om_{24}}{\om_{12}(\vt_4+1-\lambda)}$, $y=-\frac{\om_{14}(\lambda-1)}{\om_{12}(\lambda+\vt_4)}$ and $z=0$
to get $\vt'=\vt$ and $\Om'=\omega_{12}(e^{12} -(\vartheta_4 + 1)e^{34})$.
Use now \eqref{d_4lambda:aut1} with $a=1$ and $b=\frac{1}{\om_{12}}$ to get the normal form
($\lambda\notin\{\frac{1}{2},1\}$)
\[
\left\{ \begin{array}{ccl}
         \vt & = & \ve e^4\\
        \Omega & = & e^{12} - (\ve + 1)e^{34}, \, \ve\notin\{-1,-\lambda-1,\lambda-2,\lambda-1,-\lambda\}
        \end{array} \right.
\]
In case $\vt_4=-\lambda$, the automorphism \eqref{d_4lambda:aut0} with $x=\frac{\lambda\om_{24}}{\om_{12}(2\lambda-1)}$ and $y=z=0$ fixes $\vt$, while $\Omega'=\omega_{12}(e^{12}-(1-\lambda)e^{34})+\omega_{14}e^{14}$. If $\om_{14}\neq 0$, apply the automorphism \eqref{d_4lambda:aut1} with $a=\frac{1}{\om_{14}}$ and $b=\frac{\om_{14}}{\om_{12}}$; if $\om_{14}=0$, apply \eqref{d_4lambda:aut1} with $a=\frac{1}{\om_{12}}$ and $b=1$. This gives the normal form ($\lambda\not\in\{\frac{1}{2},1\}$)
\[
\left\{ \begin{array}{ccl}
         \vt & = & -\lambda e^4 \\
        \Omega & = & e^{12}-(1-\lambda)e^{34}+\ve e^{14}, \, \ve\in\{0,1\}
        \end{array} \right.
\]
Finally, when $\vt_4=\lambda-1$, use the automorphism \eqref{d_4lambda:aut0}
with $y=-\frac{\om_{14}(\lambda-1)}{\om_{12}(2\lambda-1)}$ and $x=z=0$ to get $\vt'=\vt$ and $\Omega'=\omega_{12}(e^{12}-\lambda e^{34})+\omega_{24}e^{24}$. If $\om_{24}\neq 0$, apply the automorphism \eqref{d_4lambda:aut1} with $a=\frac{\om_{24}}{\om_{12}}$ and $b=\frac{1}{\om_{24}}$; if $\om_{24}=0$, apply \eqref{d_4lambda:aut1} with $a=1$ and $b=\frac{1}{\om_{12}}$. This gives the normal form ($\lambda\not\in\{\frac{1}{2},1\}$)
\[
\left\{ \begin{array}{ccl}
         \vt & = & (\lambda-1) e^4 \\
        \Omega & = & e^{12}-\lambda e^{34}+\ve e^{24}, \, \ve\in\{0,1\}
        \end{array} \right.
\]

We turn now to the issue of uniqueness, modulo automorphisms of the Lie algebra. We first notice that, in case $\lambda\not\in\{\frac{1}{2},1\}$, the lcs structures on $\mathfrak{d}_{4,\lambda}$ have only one Lee form: $\vt_1\coloneq -(\lambda+1)e^4$, $\vt_2\coloneq(\lambda-2)e^4$, $\vt_3\coloneq-\lambda e^4$ and $\vt_4\coloneq(\lambda-1)e^4$. We look now at the different lcs structures with same Lee form. For $\vt_1$, we have $\Omega_1\coloneq-e^{12}+\lambda e^{34}$, $\Omega_2\coloneq e^{13}+e^{24}$ and $\Omega_{3}\coloneq-e^{13}+e^{24}$.
For $\vt_2$, we have $\Omega_4\coloneq e^{12}-(\lambda-1)e^{34}$, $\Omega_5\coloneq e^{14}+e^{23}$ and $\Omega_6\coloneq e^{14}-e^{23}$. For $\vt_3$, $\Omega_7\coloneq e^{12}-(1-\lambda)e^{34}$ and and $\Omega_8\coloneq e^{12}-(1-\lambda)e^{34}+e^{14}$. For $\vt_4$, $\Omega_9\coloneq e^{12}-\lambda e^{34}$ and $\Omega_{0}\coloneq e^{12}-\lambda e^{34}+e^{24}$. In each case, we consider 
the ideal for $A=(a_{jk})$ in the basis $\{e^j\}$ to be a morphism of the Lie algebra sending $\Omega_j$ into $\Omega_k$, and we compute a Gr\"obner basis for it, $B_{jk}$. Then we get that $B_{12}$, $B_{13}$, $B_{45}$, $B_{46}$, $B_{78}$, $B_{90}$ are equal to $(1)$, and $B_{23}$ and $B_{56}$ contain $a_{33}^2+1$.

We tackle now the case $\lambda=\frac{1}{2}$, $\vt_4=-\frac{3}{2}$. In this case the automorphism \eqref{d_4lambda:aut0} becomes
\begin{equation}\label{d_4lambda:aut2}
\left(\begin{array}{cccc}
1 & 0 & 2y & 0 \\
0 & 1 & -2x & 0 \\
0 & 0 & 1 & 0 \\
x & y & z & 1
\end{array}\right) \,.
\end{equation}
The generic lcs structure is
\[
\vt=-\frac{3}{2}e^4, \quad \Om=\om_{12}(e^{12}+\frac{1}{2}e^{34})+\om_{13}e^{13}+\om_{14}e^{14}+\om_{23}e^{23}+\om_{24}e^{24},
\]
with \eqref{eq:non_deg} giving $\om_{12}^2-2\om_{13}\om_{24}+2\om_{14}\om_{23}\neq 0$. Assume $\om_{13}=\om_{23}=0$; then $\om_{12}\neq 0$ and the automorphism \eqref{d_4lambda:aut2} with $x=\frac{\om_{24}}{2\om_{12}}$, $y=-\frac{\om_{14}}{2\om_{12}}$ and $z=0$, followed by \eqref{d_4lambda:aut1} with $a=1$ and $b=\frac{1}{\om_{12}}$, gives the normal form ($\lambda=\frac{1}{2}$)
\[
\left\{
\begin{array}{ccl}
\vt & = & -\frac{3}{2} e^4\\
\Omega & = & e^{12}+\frac{1}{2}e^{34}
\end{array} \right.
\]
If we assume that either $\om_{13}$ or $\om_{23}$ are non-zero, using the automorphism (which exists only for $\lambda=\frac{1}{2}$)
\begin{equation}\label{d_4lambda:aut3}
\left(\begin{array}{cccc}
0 & 1 & 0 & 0 \\
-1 & 0 & 0 & 0 \\
0 & 0 & 1 & 0 \\
0 & 0 & 0 & 1
\end{array}\right)
\end{equation}
we can assume $\om_{13}\neq 0$. The automorphism \eqref{d_4lambda:aut2} with $x=\frac{\om_{12}}{2\om_{13}}$, $y=0$ and $z=-\frac{\om_{14}}{\om_{13}}$ gives $\vt'=\vt$ and
\[
\Om'=\om_{13}e^{13}+\frac{2\om_{13}\om_{24}-\om_{12}^2-2\om_{14}\om_{23}}{2\om_{13}}e^{24}+\om_{23}e^{23}.
\]
The automorphism
\begin{equation}\label{d_4lambda:aut4}
\left(\begin{array}{cccc}
1 & 0 & 0 & 0 \\
w & 1 & 0 & 0 \\
0 & 0 & 1 & 0 \\
0 & 0 & 0 & 1
\end{array}\right)
\end{equation}
with $w=-\frac{\om_{23}}{\om_{13}}$ gives $\vt''=\vt'$ and $\Om''=\om_{13}e^{13}+\frac{2\om_{13}\om_{24}-\om_{12}^2-2\om_{14}\om_{23}}{2\om_{13}}e^{24}$ and, finally, the automorphism \eqref{d_4lambda:aut1} with $a=\frac{\sqrt{\pm(2\om_{13}\om_{24}-\om_{12}^2-2\om_{14}\om_{23})}}{\sqrt{2}\om_{13}}$ and $b=\frac{2\om_{13}}{2\om_{13}\om_{24}-\om_{12}^2-2\om_{14}\om_{23}}$ gives the normal form ($\lambda=\frac{1}{2}$)
\[
\left\{
\begin{array}{ccl}
\vt & = & -\frac{3}{2} e^4\\
\Omega & = & \pm e^{13}+e^{24}
\end{array} \right.
\]

If $\lambda=\frac{1}{2}$ and $\vt_4\neq-\frac{3}{2}$ the generic lcs structure is
\[
\vt=\vt_4e^4, \quad \Om=\om_{12}(e^{12}-(\vt_4+1)e^{34})+\om_{14}e^{14}+\om_{24}e^{24}.
\]
The non-degeneracy yields $(\vt_4+1)\om_{12}^2\neq 0$, hence $\om_{12}\neq 0$ and $\vt_4\neq -1$. We consider an automorphism of the form \eqref{d_4lambda:aut2}
where $x$, $y$ and $z$ are parameters to be determined. While the Lee form is fixed, the 2-form transforms into
\[
\Om'=\om_{12}(e^{12}-(\vt_4+1)e^{34})+(\om_{14}-y\om_{12}(2\vt_4+1))e^{14}+(\om_{24}+x\om_{12}(2\vt_4+1))e^{24}.
\]
If $\vt_4\neq -\frac{1}{2}$, we choose $x=-\frac{\om_{24}}{\om_{12}(2\vt_4+1)}$, $y=\frac{\om_{14}}{\om_{12}(2\vt_4+1)}$ and $z=0$; then $\Om'=\om_{12}(e^{12}-(\vt_4+1)e^{34})$. The automorphism \eqref{d_4lambda:aut1} with $a=1$ and $b=\frac{1}{\om_{12}}$ gives the normal form ($\lambda=\frac{1}{2}$)
\[
\left\{ \begin{array}{ccl}
         \vt & = & \ve e^4\\
        \Omega & = & e^{12}-(\ve+1)e^{34},\, \ve\notin\{-\frac{3}{2},-1,-\frac{1}{2},0\}
        \end{array} \right.
\]
If $\vt_4=-\frac{1}{2}$ the above automorphism will not work. If $\omega_{14}=\omega_{24}=0$, then apply \eqref{d_4lambda:aut1} with $a=\frac{1}{\om_{12}}$ and $b=1$ to get $\vt'=-\frac{1}{2}e^4$ and $\Omega'=e^{12} - \frac{1}{2}e^{34}$.
Assuming either $\om_{14}$ or $\om_{24}$ are non-zero, using the automorphism \eqref{d_4lambda:aut3} we can suppose that this is the case for $\om_{14}$. We consider then the automorphism \eqref{d_4lambda:aut4} with $w=-\frac{\om_{24}}{\om_{14}}$, giving $\Om'=\om_{12}(e^{12}-\frac{1}{2}e^{34})+\om_{14}e^{14}$. Apply then \eqref{d_4lambda:aut1} with $a=\frac{1}{\om_{14}}$ and $b=\frac{\om_{14}}{\om_{12}}$. At last, we get the normal forms ($\lambda=\frac{1}{2}$)
\[
\left\{
\begin{array}{ccl}
\vt & = & -\frac{1}{2} e^4\\
\Omega & = & e^{12}-\frac{1}{2}e^{34}+\ve e^{14},\, \ve\in\{0,1\}
\end{array} \right.
\]

We consider now the uniqueness of the above normal forms, in case $\lambda=\frac{1}{2}$. First of all, as for the Lee forms, we have to prove that there is no automorphism $A=(a_{jk})$ (with respect to $\{e^j\}$) transforming $\vt_1\coloneq\varepsilon_1 e^4$ into $\vt_2\coloneq\ve_2e^4$, where $\ve_1,\ve_2\neq-1$. If it existed, then its entries should satisfy the ideal with Gr\"obner basis containing, in particular, $a_{31}$, $a_{32}$, $a_{34}$, $a_{33}(\varepsilon_1 - \varepsilon_2)$, hence either $\ve_1=\ve_2$ or $A$ is singular. Now, we focus on the lcs structures with same Lee forms. In case $\vt=-\frac{3}{2}e^4$, we have to distinguish $\Omega_1\coloneq e^{12}+\frac{1}{2}e^{34}$, $\Omega_2\coloneq e^{13}+e^{24}$, and $\Omega_3\coloneq-e^{13}+e^{24}$. As before, consider an associated Gr\"obner basis $B_{jk}$ for the pair $(\Omega_j,\Omega_k)$. We get that $B_{12}$ and $B_{13}$ contain $1$, and $B_{23}$ contains $a_{33}^2 + 1$. In case $\vt=-\frac{1}{2}e^4$, we have to distinguish $\Omega_1\coloneq e^{12}-\frac{1}{2}e^{34}$ and $\Omega_2\coloneq e^{12}-\frac{1}{2}e^{34}+e^{14}$. A computation for the associated ideal gives the Gr\"obner basis $(1)$.

Finally, we consider the case $\lambda=1$. The generic Lee form is now $\vt=\vt_2e^2+\vt_4e^4$, with $\vt_2^2+\vt_4^2\neq 0$, and the condition $d_\vt\Om=0$ for a 2-form $\Om$ yields the equations
\begin{enumerate}
\item $\vt_2\om_{13}=0$;
\item $(\vt_4+1)\om_{12}+\om_{34}-\vt_2\om_{14}=0$;
\item $(\vt_4+2)\om_{13}=0$;
\item $(\vt_4+1)\om_{23}+\vt_2\om_{34}=0$.
\end{enumerate}

Suppose first $\vt_2=0$; then $\vt_4\neq 0$; if $\vt_4\notin\{-1,-2\}$ then the above equations imply $\om_{13}=\om_{23}=0$ and $\om_{34}=-(\vt_4+1)\om_{12}$. The generic lcs structure is then
\[
\vt=\vt_4e^4 \quad \textrm{and} \quad \Om=\om_{12}(e^{12}-(\vt_4+1)e^{34})+\om_{14}e^{14}+\om_{24}e^{24}
\]
and \eqref{eq:non_deg} becomes $\om_{12}\neq 0$. The automorphism
\begin{equation}\label{d_4lambda:aut5}
\left(\begin{array}{cccc}
1 & 0 & t & 0 \\
0 & 1 & -s & 0 \\
0 & 0 & 1 & 0 \\
s & 0 & u+\frac{1}{2}st & 1
\end{array}\right)
\end{equation}
with $s=-\frac{\om_{24}}{\vt_4\om_{12}}$, $t=\frac{\om_{14}}{(\vt_4+1)\om_{12}}$ and $u=0$
fixes $\vt$ and gives $\Om'=\om_{12}(e^{12}-(\vt_4+1)e^{34})$. Then \eqref{d_4lambda:aut1} with $a=1$ and $b=\frac{1}{\om_{12}}$ gives the normal form ($\lambda=1$)
\[
\left\{
\begin{array}{ccl}
\vt & = & \ve e^4\\
\Omega & = & e^{12}-(\ve+1)e^{34}, \, \ve\notin\{-2,-1,0\}
\end{array} \right.
\]
If $\vt_4=-1$ then $\om_{13}=\om_{34}=0$, the generic lcs structure is
\[
\vt=-e^4 \quad \textrm{and} \quad \Om=\om_{12}e^{12}+\om_{14}e^{14}+\om_{23}e^{23}+\om_{24}e^{24}
\]
and the non-degeneracy yields $\om_{14}\om_{23}\neq 0$.
The automorphism \eqref{d_4lambda:aut5} with $s=0$, $t=\frac{\om_{12}}{\om_{23}}$ and $u=-\frac{\om_{24}}{\om_{23}}$ fixes $\vt$ and gives $\Om'=\om_{14}e^{14}+\om_{23}e^{23}$. The automorphism \eqref{d_4lambda:aut1} with $a=\frac{1}{\om_{14}}$ and $b=\sqrt{\pm\frac{\om_{14}}{\om_{23}}}$
gives the normal form ($\lambda=1$)
\[
\left\{
\begin{array}{ccl}
\vt & = & -e^4\\
\Omega & = & e^{14}\pm e^{23}
\end{array} \right.
\]
If $\vt_4=-2$ then $\om_{23}=0$ and $\om_{34}=\om_{12}$; the lcs structure is
\[
\vt=-2e^4 \quad \textrm{and} \quad \Om=\om_{12}(e^{12}+e^{34})+\om_{13}e^{13}+\om_{14}e^{14}+\om_{24}e^{24}
\]
and \eqref{eq:non_deg} becomes $\om_{12}^2-\om_{13}\om_{24}\neq 0$. Assuming $\om_{13}\neq 0$, we consider the automorphism \eqref{d_4lambda:aut5} with $s=\frac{\om_{12}}{\om_{13}}$, $u=-\frac{\om_{14}}{\om_{13}}$ and $t=0$ and obtain $\vt'=\vt$ and $\Om'=\om_{13}e^{13}+\frac{\om_{13}\om_{24}-\om_{12}^2}{\om_{13}}e^{24}$. The automorphism \eqref{d_4lambda:aut1} with $a=\frac{\sqrt{\pm(\om_{13}\om_{24}-\om_{12}^2})}{\om_{13}}$ and $b=\frac{\om_{13}}{\om_{13}\om_{24}-\om_{12}^2}$ gives the normal form ($\lambda=1$)
\[
\left\{
\begin{array}{ccl}
\vt & = & -2e^4\\
\Omega & = & \pm e^{13}+e^{24}
\end{array} \right.
\]
If $\om_{13}=0$ then $\om_{12}\neq 0$ and \eqref{d_4lambda:aut5} with $s=\frac{\om_{24}}{2\om_{12}}$, $t=-\frac{\om_{14}}{\om_{12}}$ and $u=0$ gives $\vt'=\vt$ and $\Om'=\om_{12}(e^{12}+e^{34})$. Then \eqref{d_4lambda:aut1} with $a=1$ and $b=\frac{1}{\om_{12}}$ provides the normal form ($\lambda=1$)
\[
\left\{
\begin{array}{ccl}
\vt & = & -2e^4\\
\Omega & = & e^{12}+e^{34}
\end{array} \right.
\]

If $\vt_2\neq 0$ then $\om_{13}=0$, $\om_{14}=\frac{(\vt_4+1)(\vt_2\om_{12}-\om_{23})}{\vt_2^2}$ and $\om_{34}=-\frac{(\vt_4+1)\om_{23}}{\vt_2}$. The generic lcs structure is
$\vt=\vt_2e^2+\vt_4e^4$ and
\[
\Om=\om_{12}e^{12}+\frac{(\vt_4+1)(\vt_2\om_{12}-\om_{23})}{\vt_2^2}e^{14}+\om_{23}e^{23}+\om_{24}e^{24}-\frac{(\vt_4+1)\om_{23}}{\vt_2}e^{34}.
\]
The non-degeneracy forces $(\vt_4+1)\om_{23}^2\neq0$, which implies $\vt_4\neq -1$ and $\om_{23}\neq 0$. 
We consider \eqref{d_4lambda:aut5} with $t=\frac{\om_{12}}{\om_{23}}$, $u=-\frac{\om_{24}}{\om_{23}}$ and $s=0$ to obtain $\vt'=\vt$ and
\[
\Om'=-\frac{(\vt_4+1)\om_{23}}{\vt_2^2}e^{14}+\om_{23}e^{23}-\frac{(\vt_4+1)\om_{23}}{\vt_2}e^{34}.
\]
The automorphism \eqref{d_4lambda:aut1} with $a=-\frac{\vt_2^2}{(\vt_4+1)\om_{23}}$ and $b=\frac{1}{\vt_2}$ gives the normal form ($\lambda=1$)
\[
\left\{
\begin{array}{ccl}
\vt & = & e^2+\ve e^4\\
\Omega & = & e^{14}-\frac{1}{\ve+1}e^{23}+e^{34},\, \ve\neq -1
\end{array} \right.
\]

We turn now to the uniqueness of the models. First, we show that the Lee forms are not equivalent. 
In the case $\lambda=1$, we have first of all to show that the Lee forms $\vt_1\coloneq \ve_1 e^4$, $\vt_2\coloneq \ve_2e^4$, $\vt_3\coloneq e^2+\ve_3e^4$, and $\vt_4\coloneq e^2+\ve_4 e^4$, where $\ve_1,\ve_2\neq0$ and $\ve_3,\ve_4\neq-1$, are not equivalent under automorphisms of the Lie algebra. We set the ideal for $A=(a_{jk})$ in the basis $\{e^j\}$ to represent a morphism of the Lie algebra sending $\vt_j$ into $\vt_k$, and we compute a Gr\"obner basis $B_{jk}$ for it:
\begin{itemize}
\item $B_{12}$ contains $a_{31}\varepsilon_2$, $a_{32}$, $a_{34}$, and $a_{33}(\varepsilon_1 - \varepsilon_2)$;
\item $B_{34}$ contains $a_{33}^2(\varepsilon_3 - \varepsilon_4)$, $a_{32}$, $a_{34}$, $a_{31}^2$;
\item $B_{13}$ contains $a_{11}$, $a_{12}$, $a_{13}$, $a_{14}$.
\end{itemize}
We now show that the lcs structures with same Lee forms are non-equivalent, too. In the case of $\vt=-2e^4$, we have to distinguish $\Omega_1\coloneq e^{12}+e^{34}$, $\Omega_2\coloneq e^{13}+e^{24}$ and $\Omega_3\coloneq -e^{13}+e^{24}$. As before, we compute a Gr\"obner basis $B_{jk}$ for the ideal of morphism $A=(a_{jk})$ of Lie algebra, in the basis $\{e^j\}$, moving $\Omega_j$ into $\Omega_k$: $B_{12}$ and $B_{13}$ contain $1$, while $B_{23}$ contains $a_{33}^2 + 1$.
Finally, in the case of $\vt=-e^4$, we have to distinguish $\Omega_1\coloneq e^{14}+e^{23}$ from $\Omega_2\coloneq e^{14}-e^{23}$. A Gr\"obner basis for the ideal of morphism $A=(a_{jk})$ of Lie algebra, in the basis $\{e^j\}$, moving $\Omega_1$ into $\Omega_2$, contains $a_{33}^2 + 1$.

\subsection{$\mathfrak{d}'_{4,\delta}$, $(\frac{\delta}{2}14+24,-14+\frac{\delta}{2}24,-12+\delta 34,0)$, $\delta\geq 0$}

We take a generic $1$-form $\vartheta=\sum_{j=1}^{4}\vt_je^j$; a computation shows that $d\vt=0$ if and only if $\vt_1=\vt_2=\vt_3=0$. Thus the generic Lee form is $\vt=\vt_4e^4$ with $\vt_4\neq 0$. We consider a $2$-form $\Omega=\sum_{1\leq j < k \leq 4} \omega_{jk}e^{jk}$ and impose $d_\vt\Omega=0$. We obtain the following equations:
\begin{enumerate}
\item $\om_{12}(\delta+\vt_4)+\om_{34}=0$
\item $\omega_{23}-\om_{13}\left(\frac{3\delta}{2}+\vt_4\right)=0$
\item $\omega_{13}+\om_{23}\left(\frac{3\delta}{2}+\vt_4\right)=0$
\end{enumerate}
Equations (2) and (3) imply $\om_{13}=0=\om_{23}$, while equation (1) gives $\om_{34}=-(\delta+\vt_4)\om_{12}$; in particular, \eqref{eq:non_deg} reduces to $\om_{12}^2(\delta+\vt_4)\neq 0$, saying that $\vt_4\neq -\delta$. It follows that the generic lcs structure is given by
\[
\vt=\vt_4e^4 \quad \textrm{and} \quad \Omega=\om_{12}(e^{12}-(\delta+\vt_4)e^{34})+\om_{14}e^{14}+\omega_{24}e^{24}\;.
\]

We consider, in terms of the basis $\{e^1,e^2,e^3,e^4\}$ of $(\mathfrak{d}'_{4,\delta})^*$, the automorphism given by the matrix
\[
\left(\begin{array}{cccc}
1 & 0 & -\frac{2(\delta\ell+2c)}{\delta^2+4} & 0 \\
0 & 1 & \frac{2(\delta c-2\ell)}{\delta^2+4} & 0 \\
0 & 0 & 1 & 0 \\
-c & -\ell & \frac{2\ell^2+2c^2}{\delta^2+4} & 1
\end{array}\right) \;,
\]
where $c,\ell\in\mathbb{R}$ are parameters to be determined. The Lee form remains unaltered under this change of basis. Imposing that the coefficients of the basis vectors $e^{14}$ and $e^{24}$ in the transformed expression for $\Omega$ vanish gives two equations:
\begin{equation}\label{eq:lin_sys}
\left\{ \begin{array}{ccl}
         c(-4\omega_{12}(\delta+\vt_4))+\ell(\omega_{12}(\delta^2+4)-2\delta\omega_{12}(\delta+\vt_4)) & = & \omega_{14}(\delta^2+4)\\
        c(-\omega_{12}(\delta^2+4)+2\delta\omega_{12}(\delta+\vt_4))+\ell(-4\omega_{12}(\delta+\vt_4)) & = & \omega_{24}(\delta^2+4)
        \end{array} \right. 
\end{equation}
The matrix of the linear system \eqref{eq:lin_sys} is
\[
\left(\begin{array}{cc}
-4\omega_{12}(\delta+\vt_4) & \omega_{12}(\delta^2+4)-2\delta\omega_{12}(\delta+\vt_4) \\
-\omega_{12}(\delta^2+4)+2\delta\omega_{12}(\delta+\vt_4) & -4\omega_{12}(\delta+\vt_4)
\end{array}\right) \;,
\]
whose determinant $\omega_{12}^2(16(\delta+\vt_4)^2+((\delta^2+4)^2-2\delta(\delta+\vt_4))^2)$ is always positive. Hence \eqref{eq:lin_sys} has a unique solution and the transformed lcs structure is
\[
\vt'=\vt_4e^4 \quad \textrm{and} \quad \Omega'=\om_{12}(e^{12}-(\delta+\vt_4)e^{34})\;.
\]
According to the sign of $\om_{12}$, we consider the automorphism: 
\[
\left(\begin{array}{cccc}
\frac{1}{\sqrt{\pm\om_{12}}} & 0 & 0 & 0 \\
0 & \frac{1}{\sqrt{\pm\om_{12}}} & 0 & 0 \\
0 & 0 & \pm\frac{1}{\om_{12}} & 0 \\
0 & 0 & 0 & 1
\end{array}\right).
\]
Doing so, we see that every lcs structure on $\mathfrak{d}'_{4,\delta}$ is equivalent to
\[
\left\{ \begin{array}{ccl}
         \vt & = & \ve e^4\\
        \Omega_{\pm} & = & \pm(e^{12}-(\delta+\ve)e^{34}), \, \ve\not\in \{0,-\delta\}
        \end{array} \right.
\]

In the case $\delta=0$, we can further apply the automorphism
$$
\left(\begin{array}{cccc}
1 & 0 & 0 & 0 \\
0 & -1 & 0 & 0 \\
0 & 0 & -1 & 0 \\
0 & 0 & 0 & -1
\end{array}\right)
$$
and so we see that every lcs structure on $\mathfrak{d}'_{4,0}$ is equivalent to
\[
\left\{ \begin{array}{ccl}
         \vt & = & \ve e^4\\
        \Omega & = & e^{12}-\ve e^{34}, \, \ve>0
        \end{array} \right.
\]

The above structure can not be further reduced. Indeed, consider the generic linear morphism with matrix $A=(a_{jk})$ in the basis $\{e^j\}$. The Gr\"obner basis of the ideal associated to the condition of being a morphism of the Lie algebra and to the condition that it transforms $\vt_1\coloneq \ve_1e^4$ to $\vt_2\coloneq \ve_2e^4$, where $\ve_j\not\in\{0,-\delta\}$, contains $a_{31}$, $a_{32}$, $a_{34}$, which are then zero. Then, we get the condition $a_{33}(\ve_1^2-\ve_2^2)=0$, hence $\ve_1=-\ve_2$ is the only non trivial case. The monomial $a_{33}\delta\ve_1$ also appears, proving that no further reduction of the Lee form is possible in the case $\delta\neq0$. Consider now, besides the condition that $A$ yields a morphism of Lie algebra, the assumption that it moves $\Omega_+$ into $\Omega_-$. The computation of the Gr\"obner basis yields the elements $a_{31}$, $a_{32}$, $a_{33} + 1$, $a_{34}$, which give a first reduction of $A$. We then have the elements $a_{41}^2 + a_{42}^2$, $(a_{44}-1)\ve$. We then get $a_{21}^2 + a_{22}^2 + 1$, concluding the proof of the claim.

\subsection{$\fh_4$, $(\frac{1}{2}14+24,\frac{1}{2}24,-12+34,0)$}

A generic $1$-form $\vartheta=\sum_{j=1}^{4}\vt_je^j$ is closed if and only if $\vt_1=\vt_2=\vt_3=0$. Thus the generic Lee form is $\vt=\vt_4e^4$ with $\vt_4\neq 0$. We consider a $2$-form $\Omega=\sum_{1\leq j < k \leq 4} \omega_{jk}e^{jk}$ and impose $d_\vt\Omega=0$. We obtain the following equations:
\begin{enumerate}
\item $(\vt_4+1)\om_{12}+\om_{34}=0$
\item $\left(\vt_4+\frac{3}{2}\right)\om_{13}=0$
\item $\left(\vt_4+\frac{3}{2}\right)\om_{23}+\om_{13}=0$
\end{enumerate}

We assume first that $\vt_4\notin\left\{-\frac{3}{2},-1\right\}$. Then $\om_{13}=0$ by (2), $\om_{23}=0$ by (3) and $\om_{34}=-(\vt_4+1)\om_{12}$ by (1). The generic lcs structure is therefore
\begin{equation}\label{eq:h4:1}
\vt=\vt_4e^4, \quad \Om=\om_{12}(e^{12}-(\vt_4+1)e^{34})+\om_{14}e^{14}+\om_{24}e^{24}
\end{equation}
and \eqref{eq:non_deg} gives $\om_{12}\neq 0$. Assume further that $\vt_4\neq-\frac{1}{2}$; then the automorphism
\begin{equation}\label{aut:h4:1}
\left(\begin{array}{cccc}
1 & 0 & 2c & 0 \\
a & 1 & 4c-2b+2ac & 0\\
0 & 0 & 1 & 0 \\
b & c & 0 & 1
\end{array}\right)
\end{equation}
with $b=\frac{4\om_{14}(\vt_4+1)-\om_{24}(2\vt_4+1)}{(2\vt_4+1)^2\om_{12}}$, $c=\frac{\om_{14}}{(2\vt_4+1)\om_{12}}$ and $a=0$ leaves $\vt$ invariant, while giving $\Om'=\om_{12}(e^{12}-(\vt_4+1)e^{34})$. We consider next the automorphism
\begin{equation}\label{aut:h4:2}
\left(\begin{array}{cccc}
s & 0 & 0 & 0 \\
0 & s & 0 & 0 \\
0 & 0 & s^2 & 0 \\
0 & 0 & 0 & 1
\end{array}\right)
\end{equation}
with $s=\frac{1}{\sqrt{\pm\om_{12}}}$, according to the sign of $\om_{12}$. We get the normal form
\[
\left\{ \begin{array}{ccl}
         \vt & = & \ve e^4\\
        \Omega & = & \pm(e^{12}-(\ve+1)e^{34}), \, \ve\not\in\left\{-\frac{3}{2},-1,-\frac{1}{2},0\right\}
        \end{array} \right.
\]

The above two forms $\Omega_+=e^{12}-(\ve+1)e^{34}$ and $\Omega_-=-(e^{12}-(\ve+1)e^{34})$ can not be reduced one to the other. Indeed, consider the generic matrix $A=(a_{jk})$ and its associated linear map in the basis $\{e^j\}$. The condition for being a morphism of the Lie algebra and for transforming $\Omega_+$ into $\Omega_-$ yields an ideal; if we compute a Gr\"obner basis, we notice that it contains $a_{22}^2 + 1$, proving the claim.

Consider now the case $\vt_4=-\frac{1}{2}$. If $\omega_{14}\neq0$, the automorphism \eqref{aut:h4:1} with $a=-\frac{\om_{24}}{\om_{14}}$ and $b=c=0$ gives $\vt'=\vt$ and $\Om'=\om_{12}(e^{12}-\frac{1}{2}e^{34})+\om_{14}e^{14}$. Using \eqref{aut:h4:2}, according to the sign of $\omega_{12}$, with $s=\frac{1}{\sqrt{\pm\om_{12}}}$ gives 
$\vt''=-\frac{1}{2}e^4$ and $\Om''=\pm(e^{12}- \frac{1}{2}e^{34}) + \frac{\omega_{14}}{\sqrt{\pm\omega_{12}}}e^{14}$. Using again \eqref{aut:h4:2} with $s=-1$, we get
the normal form
\[
\left\{ \begin{array}{ccl}
         \vt & = & -\frac{1}{2}e^4\\
        \Omega & = & \pm(e^{12}-\frac{1}{2}e^{34})+\sigma e^{14}, \ \sigma\in\bR,\sigma\geq0
        \end{array} \right.\,.
\]
If $\om_{14}=0$, the automorphism \eqref{aut:h4:1} with $c=\frac{\om_{24}}{2\om_{12}}$ and $a=b=0$ gives $\vt'=\vt$ and $\Om'=\om_{12}(e^{12}-\frac{1}{2}e^{34})$, which gives no further normal form.

The above forms $\Omega_1=\varepsilon_1(e^{12}-\frac{1}{2}e^{34})+\sigma_1 e^{14}$ and $\Omega_2=\varepsilon_2(e^{12}-\frac{1}{2}e^{34})+\sigma_2 e^{14}$, for $\sigma_1,\sigma_2\in\mathbb{R}$, $\varepsilon_1,\varepsilon_2\in\{1,-1\}$, can not be transformed into one another. Indeed, arguing as before, we find an automorphism of the form $A=(a_{jk})$ with respect to the basis $\{e^j\}$. We notice that we are reduced to
$$
\left(\begin{array}{cccc}
a_{11} & 0 & a_{13} & 0 \\
a_{21} & a_{11} & a_{23} & 0 \\
0 & 0 & a_{11}^{2} & 0 \\
a_{41} & a_{42} & a_{43} & 1
\end{array}\right)
$$
with further conditions which include, in particular, $-a_{11}^2\varepsilon_1 + \varepsilon_2=0$. Then we get that $\varepsilon_1=\varepsilon_2$. By continuing, we have that $\varepsilon_1 (\sigma_1 - \sigma_2)  (\sigma_1 + \sigma_2)=0$. Since $\varepsilon_1\neq0$, then either $\sigma_1=\sigma_2$, or $\sigma_1=-\sigma_2$, concluding the claim.

If $\vt_4=-\frac{3}{2}$ then $\om_{13}=0$ by (3) and $\om_{34}=\frac{1}{2}\om_{12}$ by (1). The generic lcs form is then 
\[
\vt=-\frac{3}{2}e^4, \quad \Om=\om_{12}\left(e^{12}+\frac{1}{2}e^{34}\right)+\om_{14}e^{14}+\om_{23}e^{23}+\om_{24}e^{24}
\]
with $\om_{12}^2+2\om_{14}\om_{23}\neq 0$. If $\om_{23}\neq 0$, the automorphism \eqref{aut:h4:1} with
$a=-\frac{2\om_{23}\om_{24}}{\om_{12}^2+2\om_{14}\om_{23}}$, $c=\frac{\om_{12}}{2\om_{23}}$ and $b=0$ gives $\vt'=\vt$ and $\Om'=\frac{\om_{12}^2+2\om_{14}\om_{23}}{2\om_{23}}e^{14}+\om_{23}e^{23}$. The automorphism \eqref{aut:h4:2} with $s=\frac{2\omega_{23}}{\omega_{12}^2 + 2\omega_{14}\omega_{23}}$ gives the normal form
\[
\left\{ \begin{array}{ccl}
         \vt & = & -\frac{3}{2}e^4\\
         & & \\
        \Omega & = & e^{14}+\sigma e^{23}, \ \sigma\in\mathbb{R}\,.
        \end{array} \right.
\]
If $\om_{23}=0$ then $\om_{12}\neq 0$ and we are back at \eqref{eq:h4:1}. Finally if $\vt_4=-1$ we get $\omega_{13}=\omega_{23}=\omega_{34}=0$, whence $\Omega$ is degenerate.

For different $\sigma\in\bR$, the above normal forms are different. Indeed, trying to find an automorphism of the Lie algebra transforming $\Omega_1\coloneq e^{14}+\sigma_1 e^{23}$ into $\Omega_2\coloneq e^{14}+\sigma_2 e^{23}$ for some $\sigma_1,\sigma_2\in\mathbb{R}$, we have to solve the Gr\"obner ideal
\begin{eqnarray*}
B &=& \left(a_{21} a_{42}, a_{42} \sigma_{2}, a_{43} \sigma_{2} + a_{21}, a_{11} - 1, a_{12}, a_{13} - 2 a_{42}, a_{14}, a_{22} - 1, \right.\\[5pt]
&& \left. a_{23} + 2 a_{41} - 4 a_{42}, a_{24}, a_{31}, a_{32}, a_{33} - 1, a_{34}, a_{44} - 1, \sigma_{1} -  \sigma_{2}\right)
\end{eqnarray*}
which contains, in particular, $\sigma_1 - \sigma_2$.

Finally, we have to prove that there is no automorphism transforming one Lee form $\vt_1\coloneq\varepsilon_1 e^4$ to another $\vt_2\coloneq\varepsilon_2 e^4$. For an automorphism $A=(a_{jk})$ with respect to the basis $\{e^j\}$ we are reduced to
$$
\left(\begin{array}{cccc}
a_{11} & 0 & a_{13} & 0 \\
a_{21} & a_{11} & a_{23} & 0 \\
0 & 0 & a_{33} & 0 \\
a_{41} & a_{42} & a_{43} & 1
\end{array}\right)
$$
with the further conditions
\begin{eqnarray*}
B &=& (a_{21} a_{42}^{2} - \frac{1}{2} a_{13} a_{41} + a_{13} a_{42} - \frac{1}{2} a_{23} a_{42}, a_{33} a_{42}^{2} - \frac{1}{4} a_{13}^{2}, a_{11}^{2} -  a_{33}, a_{11} a_{13} - 2 a_{33} a_{42},\\[5pt]
&&  a_{13} a_{21} -  a_{11} a_{23} - 2 a_{33} a_{41} + 4 a_{33} a_{42},a_{11} a_{41} -  a_{21} a_{42} -  a_{13} + \frac{1}{2} a_{23}, a_{11} a_{42} - \frac{1}{2} a_{13},\\[5pt] 
&& \varepsilon_{1} -  \varepsilon_{2})
\end{eqnarray*}
among which there appears $\varepsilon_1=\varepsilon_2$.
\end{proof}


\begin{thebibliography}{1}

\bibitem{ACHK}
\textsc{D.\ Alekseevsky, V.\ Cort\'es, K.\ Hasegawa and Y.\ Kamishima}, 
\emph{Homogeneous locally conformally K\"ahler and Sasaki manifolds}, Internat.\ J.\ Math.\ \textbf{26} (2015), no.\ 6, 29 pp.

\bibitem{AO}
\textsc{A.\ Andrada and M.\ Origlia}, 
\emph{Lattices in almost abelian Lie groups with locally conformal K\"ahler or symplectic structures},  Manuscripta\ Math.\ \textbf{155} (2018), no. 3-4, 389--417.

\bibitem{ACFM}
\textsc{L.\ C.\ de Andr\'es, L.\ A.\ Cordero, M.\ Fern\'andez and J.\ J.\ Menc\'ia}, 
\emph{Examples of four-dimensional compact locally conformal K\"ahler solvmanifolds}, Geom.\ Dedicata \textbf{29} (1989), 227--232.

\bibitem{ABP-arxiv}
\textsc{D.\ Angella, G.\ Bazzoni and M.\ Parton},
\emph{Structure of locally conformally symplectic Lie algebras and solvmanifolds, arXiv version}, \urlstyle{rm}\url{https://arxiv.org/abs/1704.01197}.

\bibitem{ATO}
\textsc{D.\ Angella, A.\ Otiman and N.\ Tardini},
\emph{Cohomologies of locally conformally symplectic manifolds and solvmanifolds}, Ann.\ Global\ Anal.\ Geom.\ \textbf{53} (2018), no. 1, 67--96.

\bibitem{BandeKotschick}
\textsc{G.\ Bande and D.\ Kotschick}, 
\emph{Moser stability for locally conformally symplectic structures}, Proc.\ Amer.\ Math.\ Soc.\ \textbf{137}, no.\ 7 (2009), 2419--2424.

\bibitem{Banyaga}
\textsc{A.\ Banyaga}, 
\emph{Examples of non $d_\omega$-exact locally conformal symplectic forms}, J.\ Geom.\ \textbf{87} (2007), 1--13.

\bibitem{Banyaga-1}
\textsc{A.\ Banyaga},
\emph{On the geometry of locally conformal symplectic manifolds},
in \emph{Infinite dimensional Lie groups in geometry and representation theory (Washington, DC, 2000)}, 79--91, World Sci. Publ., River Edge, NJ, 2002.

\bibitem{CB}
\textsc{O.\ Baues and V.\ Cort\'es},
\emph{Symplectic Lie groups: symplectic reduction, Lagrangian extensions, and existence of Lagrangian normal subgroups},
Ast\'erisque No.\ \textbf{379} (2016).

\bibitem{Bazzoni}
\textsc{G.\ Bazzoni}, 
\emph{Vaisman nilmanifolds}, Bull.\ Lond.\ Math.\ Soc.\ \textbf{49} (2017), no. 5, 824--830.

\bibitem{BFM}
\textsc{G.\ Bazzoni, M.\ Fern\'andez and V.\ Mu\~noz}, 
\emph{Non-formal co-symplectic manifolds}, Trans.\ Amer.\ Math.\ Soc.\ \textbf{367} (2015), 4459--4481.

\bibitem{BFM2}
\textsc{G.\ Bazzoni, M.\ Fern\'andez and V.\ Mu\~noz}, 
\emph{Formality and the Lefschetz property in sympletic and cosymplectic geometry}, Complex Manifolds \textbf{2} (2015), 53--77.

\bibitem{Bazzoni_Marrero}
\textsc{G.\ Bazzoni and J.\ C.\ Marrero}, 
\emph{On locally conformal symplectic manifolds of the first kind}, . Bull.\ Sci.\ Math.\ \textbf{143} (2018), 1--57.

\bibitem{Belgun}
\textsc{F.\ Belgun}, 
\emph{On the metric structure of non-K\"ahler complex surfaces}, Math.\ Ann.\ \textbf{317} (2000), 1--40.

\bibitem{Bock}
\textsc{C.\ Bock}, 
\emph{On low-dimensional solvmanifolds}, Asian J.\ Math.\ \textbf{20} (2016), no.\ 2, 199--262.

\bibitem{Boothby-Wang}
\textsc{W.\ M.\ Boothby and H.\ C.\ Wang}, 
\emph{On contact manifolds}, Ann.\ of Math.\ (2) 68 (1958), 721--734.

\bibitem{Bourgeois}
\textsc{F.\ Bourgeois}, 
\emph{Odd dimensional tori are contact manifolds}, Int.\ Math.\ Res.\ Not.\ 2002, no.\ 30, 1571--1574.
 
\bibitem{Boyom}
\textsc{N.\ B. Boyom},
\emph{Models for solvable symplectic Lie groups}, Indiana Univ. Math. J. 42 (1993), no. 4, 1149--1168.

\bibitem{Cha_Murph}
\textsc{B.\ Chantrain and E.\ Murphy}, 
\emph{Conformal symplectic geometry of cotangent bundles}, \emph{preprint}, \url{https://arxiv.org/abs/1606.00861}.

\bibitem{Chu}
\textsc{B.\ Y.\ Chu}, 
\emph{Symplectic homogeneous spaces}, Trans.\ Am.\ Math.\ Soc.\ \textbf{197} (1974), 145--159.

\bibitem{Dixmier}
\textsc{J.\ Dixmier}, 
\emph{Cohomologie des alg\`ebres de Lie nilpotentes}, Acta Sci.\ Math.\ Szeged \textbf{16} (1955), 246--250.

\bibitem{DO}
\textsc{S.\ Dragomir and L.\ Ornea}, 
\emph{Locally conformal K\"ahler Geometry}, Progress in Mathematics \textbf{155}, Birkh\"auser, 1998.

\bibitem{El_Murph}
\textsc{Y.\ Eliashberg and E.\ Murphy}, 
\emph{Making cobordisms symplectic}, \emph{preprint}, \url{https://arxiv.org/abs/1504.06312}.

\bibitem{GMO}
\textsc{P.\ Gauduchon, A.\ Moroianu and L.\ Ornea},
\emph{Compact homogeneous lcK manifolds are Vaisman},
Math.\ Ann.\ (2015) 361:1043--1048.

\bibitem{Gray_Hervella}
\textsc{A.\ Gray and L.\ M.\ Hervella},
\emph{The sixteen classes of almost Hermitian manifolds and their linear invariants},
Ann.\ Mat.\ Pura\ Appl.\ (4) \textbf{123} (1980), 35--58.

\bibitem{Gue_Lich}
\textsc{F.\ Guedira and A.\ Lichnerowicz},
\emph{G\'eom\'etrie des alg\`ebres de Lie locales de Kirillov},
J.\ Math.\ Pures Appl.\ (9) \textbf{63} (1984), no.\ 4, 407--484.

\bibitem{HaseKami}
\textsc{K.\ Hasegawa and Y.\ Kamishima},
\emph{Locally conformally K\"ahler structures on homogeneous spaces}, Geometry and Analysis on Manifolds, 353--372, Progr.\ Math., 308, Birkh\"auser/Springer, Cham, 2015.

\bibitem{Hattori}
\textsc{A.\ Hattori},
\emph{Spectral Sequence in the de Rham cohomology of fibre bundles}, J.\ Fac.\ Sci.\ Tokyo Sect.\ I \textbf{8} (1960), 289--331.

\bibitem{KashiSato}
\textsc{T.\ Kashiwada and S.\ Shizuko},
\emph{On harmonic forms in compact locally conformal K\"ahler manifolds with the parallel Lee form},  Ann.\ Fac.\ Sci.\ Univ.\ Nat.\ Za\"ire (Kinshasa) Sect.\ Math.-Phys. 6 (1980), no.\ 1-2, 17--29.

\bibitem{Lee-1}
\textsc{H.\ C.\ Lee},
\emph{A kind of even-dimensional differential geometry and its application to exterior
calculus},
Amer.\ J.\ Math.\ \textbf{65} (1943), no.\ 3 433--438.

\bibitem{Lee}
\textsc{J.\ B.\ Lee, K.\ B.\ Lee, J.\ Shin and S.\ Yi},
\emph{Unimodular subgroups of type $\bR^3\rtimes\bR$},
J.\ Korean.\ Math.\ Soc.\ \textbf{44} (2007), no.\ 5, 1121--1137.

\bibitem{Marrero_Padron} 
\textsc{J.\ C.\ Marrero and E.\ Padr\'on}, 
\emph{Compact generalized Hopf and cosymplectic solvmanifolds and the Heisenberg group $H(n,1)$}, Israeli J.\ Math.\textbf{101} (1997), 189--204.

\bibitem{Mar} 
\textsc{J.\ Martinet}, 
\emph{Formes de contact sur les vari\'et\'es de dimension 3}, Lecture Notes in Mathematics, Vol.\ 209, 142--163. Springer, Berlin, 1971.

\bibitem{Medina_Revoy}
\textsc{A.\ Medina and P.\ Revoy},
\emph{Groupes de Lie \`a structure symplectique invariante},
in "Symplectic Geometry, Grupoids and Integrable Systems, S\'eminaire Sud-Rhodanien de G\'eom\'etrie" (P. Dazord et A. Weinstein Eds.), Mathematical Sciences Research Institute Publications, 247--266, New York-Berlin-Heidelberg, Springer Verlag, 1991.

\bibitem{mostow}
\textsc{G.\ D.\ Mostow},
\emph{Cohomology of topological groups and solvmanifolds},
Ann.\ of Math.\ (2) \textbf{73} (1961), no.\ 1, 20--48.

\bibitem{OrneaVerbitsky}
\textsc{L.\ Ornea and M.\ Verbitsky},
\emph{Local conformal K\"ahler manifolds with potential},
Math.\ Ann. (2003) \textbf{348}, 25--33.

\bibitem{OrneaVerbitsky-2}
\textsc{L.\ Ornea and M.\ Verbitsky},
\emph{LCK rank of locally conformally K\"ahler manifolds with potential},
J.\ Geom.\ Phys.\ \textbf{107} (2016), 92--98.

\bibitem{Otiman}
\textsc{A. Otiman},
\emph{Morse-Novikov cohomology of locally conformally K\"ahler surfaces}, 
to appear in Math.\ Z., \textsc{doi} 10.1007/s00209-017-1968-y, \url{arXiv:1609.07675}.

\bibitem{Ovando}
\textsc{G.\ Ovando}, 
\emph{Four dimensional symplectic Lie algebras}, Beitr\"age Algebra Geom.\ \textbf{47} (2006), no.\ 2, 419--434.

\bibitem{Parton1}
\textsc{M.\ Parton}, 
\emph{Hopf surfaces: locally conformal K\"ahler metrics and foliations}, Ann.\ Mat.\ Pura Appl.\ (4) \textbf{182} (2003), no.\ 3, 287--306.

\bibitem{Parton2}
\textsc{M.\ Parton and V.\ Vuletescu}, 
\emph{Examples of non-trivial rank in locally conformal K\"ahler geometry}, Math.\ Z.\ \textbf{270} (2012), no.\ 1--2, 179--187.

\bibitem{raghunathan}
\textsc{M.\ S.\ Raghunathan},
\emph{Discrete subgroups of Lie groups},
Ergebnisse der Mathematik und ihrer Grenzgebiete, Band \textbf{68}, Springer-Verlag, New York-Heidelberg, 1972.

\bibitem{sage}
\emph{{S}ageMath, the {S}age {M}athematics {S}oftware {S}ystem ({V}ersion 7.3)}, The Sage Developers, 2016, {\url{http://www.sagemath.org}}.

\bibitem{Savelyev}
\textsc{Y.\ Savelyev},
\emph{Gromov-Witten theory of locally conformally symplectic manifolds and the Fuller index}, \emph{preprint}, \urlstyle{rm}\url{https://arxiv.org/pdf/1609.08991.pdf}.

\bibitem{Sawai}
\textsc{H.\ Sawai}, 
\emph{A construction of lattices on certain solvable Lie groups}, Topology and its Applications \textbf{154} (2007), 3125--3134.

\bibitem{Shahbazi2}
\textsc{C.\ S.\ Shahbazi}, \emph{M-theory on non-K\"ahler eight-manifolds}, J.\ High Energy Phys. (2015) 2015:178.

\bibitem{Ugarte}
\textsc{L.\ Ugarte}, 
\emph{Hermitian structures on six-dimensional nilmanifolds}, Transform. Groups \textbf{12} (2007), no.\ 1, 175--202.

\bibitem{Vaisman}
\textsc{I.\ Vaisman}, 
\emph{Locally conformal symplectic manifolds}, Internat.\ J.\ Math.\ \& Math.\ Sci.\ Vol.\ 8, no.\ 3 (1985), 521--536.

\bibitem{VLV}
\textsc{H.\ V\^an L\^e and J.\ Van\v{z}ura}, 
\emph{Cohomology theories on locally conformal symplectic manifolds}, Asian J.\ Math.\ \textbf{19} (2015) no.\ 1, 45--82.

\bibitem{Weibel}
\textsc{C.\ A.\ Weibel}, 
\emph{An introduction to homological algebra}, Cambridge studies in advanced mathematics \textbf{38}, Cambridge University Press (1994).

\bibitem{Weinstein}
\textsc{A.\ Weinstein}, 
\emph{Lectures on symplectic manifolds}, American Mathematical Society, Providence, RI (1977).

\end{thebibliography}
\end{document}